\documentclass[11pt]{article}

\usepackage[margin=1in]{geometry}
\usepackage{amsmath,amssymb,amsthm,mathtools}
\usepackage{enumitem}
\usepackage{booktabs}
\usepackage{array}
\usepackage[numbers]{natbib}
\usepackage{microtype}
\usepackage{graphicx}
\usepackage{float}
\graphicspath{{./figures/}{./}}
\usepackage{pgfplots}
\pgfplotsset{compat=1.18}

\usepackage{subcaption}
\usepackage{tikz}
\usepackage[dvipsnames]{xcolor}
\usepackage[hidelinks]{hyperref}
\hypersetup{
    colorlinks=true,
    linkcolor=blue!70!black,
    citecolor=blue!70!black,
    urlcolor=blue!70!black
}
\usetikzlibrary{calc,shapes.geometric}
\definecolor{lightblue}{rgb}{0.88,1,1}
\definecolor{cleanblue}{rgb}{0.1,0.7,1}

\newtheorem{theorem}{Theorem}[section]
\newtheorem{lemma}{Lemma}[section]
\newtheorem{proposition}{Proposition}[section]
\newtheorem{corollary}{Corollary}[section]
\theoremstyle{definition}
\newtheorem{definition}{Definition}[section]
\newtheorem{assumption}{Assumption}[section]
\newtheorem{example}{Example}[section]
\theoremstyle{remark}
\newtheorem{remark}{Remark}[section]

\newcommand{\R}{\mathbb{R}}

\newcommand{\Rp}{\mathbb{R}_+}
\newcommand{\E}{\mathbb{E}}
\newcommand{\Pbb}{\mathbb{P}}
\newcommand{\one}{\mathbf{1}}
\newcommand{\ip}[2]{\langle #1,#2\rangle}
\newcommand{\norm}[1]{\left\lVert #1 \right\rVert}
\newcommand{\normone}[1]{\norm{#1}_1}
\newcommand{\normtwo}[1]{\norm{#1}_2}
\newcommand{\eps}{\varepsilon}
\newcommand{\F}{\mathcal{F}}
\newcommand{\Sset}{\mathcal{S}}
\newcommand{\PiC}{\Pi}
\newcommand{\Hpi}{H_{\PiC}}
\newcommand{\alphapi}{\alpha(\PiC)}
\newcommand{\alphatwopi}{\alpha_2(\PiC)}
\newcommand{\rcirc}{r^\circ}
\newcommand{\Hface}{\mathcal{H}^\star}
\newcommand{\conv}{\operatorname{conv}}

\newcommand{\argmax}{\operatorname*{arg\,max}}

\newcommand{\dd}{\mathrm{d}}

\newcommand{\floor}[1]{\left\lfloor #1 \right\rfloor}
\newcommand{\Peak}[1]{\operatorname{Peak}_{#1}}
\newcommand{\PeakT}{\Peak{T}}
\newcommand{\safeincludegraphics}[2][]{%
  \IfFileExists{figures/#2}{\includegraphics[#1]{figures/#2}}{%
    \IfFileExists{#2}{\includegraphics[#1]{#2}}{\includegraphics[#1]{figures/#2}}%
  }%
}

\title{Finite-Time Queue Peak Laws in Stochastic Networks:\\ Logarithmic Scaling After Geometric Thresholds}
\author{Hao Liang\footnotemark[1] \and Cheng Tang\footnotemark[1] \and Yunzong Xu\thanks{Authors are listed alphabetically.}}
\date{University of Illinois Urbana--Champaign\\
\texttt{\small\{hliang17,chengt6,xyz\}@illinois.edu}}

\begin{document}
\addtocontents{toc}{\protect\setcounter{tocdepth}{-1}}
\maketitle

\begin{abstract}
We study finite-horizon queue peaks in generalized switches, a standard stochastic-network model in which many queues share constrained service resources. Arrivals may be dependent, nonstationary, and responsive to the system history; the only load
condition is uniform interior slack, meaning the conditional mean arrival
vector stays in a fixed contraction of the capacity region. We show that this slack reshapes the finite-time peak law for drift-minimizing scheduling policies such as MaxWeight. The square-root envelope that is sharp without slack persists only up to a geometry-dependent threshold; beyond that threshold, the running maximum grows only logarithmically with the horizon, both with high probability and in expectation.

The mechanism is self-normalization: in the current queue direction, the projected fluctuation scale is normalized by the stabilizing drift scale. This removes capacity geometry from the logarithmic coefficient, while geometry remains in the threshold. Matching lower bounds show that both the logarithmic term and a geometric threshold are unavoidable. When finite-time state-space collapse is available, the threshold can be sharpened using local bottleneck geometry. For generalized input-queued switches, we obtain finite-time peak bounds with tight logarithmic coefficients. Simulations illustrate the two-phase envelope, local geometric refinements, and variance-sensitive improvements predicted by the theory.
\end{abstract}

\section{Introduction}

Stochastic networks \cite{Kelly11Lectures,Williams16,DaiHarrison20} model systems in
which many queues compete for shared service resources: cloud clusters allocate compute
capacity to jobs, AI inference platforms allocate accelerator capacity to requests,
packet switches match input ports to output ports, and wireless systems allocate
time-frequency resources to users. Classical performance measures---throughput,
stability, steady-state backlog, and time-average cost---describe what a network can
sustain in the long run. Yet many modern systems operate over finite planning windows,
under bursty and highly uncertain demand, with scarce shared capacity and tight
service-level requirements. In this regime, even a brief congestion episode can
constitute an operational failure: a GPU queue spike can cause missed training or
inference deadlines; a burst of inference requests can violate latency guarantees; and
a congested port or wireless bottleneck can trigger packet drops, delay spikes, or
costly intervention, even when long-run averages remain benign. This motivates a
foundational finite-horizon question about extremes: over the next $T$ slots, how large
can the backlog become?

We study this question in the discrete-time generalized switch of \citet*{Stolyar04}, a
canonical stochastic-network model that represents service constraints through a general
capacity geometry. We retain this geometric framework but embed it in more flexible
dynamics. Arrivals follow a nonstationary \emph{adapted-arrival} model: the
next-slot arrival law may depend arbitrarily on the observed history, as in
feedback-driven traffic where demand or routing reacts to recent congestion. Service may
also be random after the scheduling decision, so the realized service in a slot may
fluctuate around the chosen feasible rate. Starting from $Q_0 = 0$, we seek explicit
guarantees for
\[
  \PeakT := \max_{0 \le t \le T} \normtwo{Q_t},
\]
the largest backlog observed up to time $T$. The load condition requires only that the
one-step conditional mean arrival vector remain uniformly $\eps$ inside the capacity
region---the set of arrival rates that the service resources can sustain on average.
The problem is therefore genuinely nonstationary and nonasymptotic: a steady state
need not exist, the arrival law may change with the history, and the guarantee must
control queue peaks for every finite $T$.

Finite-time control of queue peaks has important precursors in the stochastic-network literature. \citet*{ShahTsitsiklisZhong10,shah2014qualitative} proved maximal-excursion inequalities for Bernoulli switched networks and bandwidth-sharing networks under time-homogeneous arrivals. These analyses, however, are restricted to fixed stochastic input laws and specialized capacity geometries, rather than the adapted-arrival setting with general capacity geometry considered here.

The closest starting point for the present paper is \citet*{LTX25}, which placed the
peak question on a minimax, online-learning footing. Their framework accommodates general capacity geometry, nonstationary arrivals, and
post-decision random service, and yields a $\sqrt{T}$ upper bound on
$\PeakT$.%
\footnote{The statements in \cite{LTX25} impose independence across time.
  Theorem~\ref{thm:sqrt-peak} in Section~\ref{sec:model} records the direct extension
  of their $\sqrt{T}$ guarantee to the adapted-arrival setting considered here.}
That rate is sharp at the boundary of the capacity region: when $\eps = 0$ and there is
no interior slack to exploit, it is the correct robust law. But when the system
operates strictly inside capacity, a persistent $\sqrt{T}$ description can be overly
pessimistic, since it still permits the backlog to grow polynomially with the horizon
despite sustained slack.

Classical queueing, scheduling, and stochastic-network theory point toward a much
sharper picture, but they address a different question. The MaxWeight policy of
\citet*{TassiulasEphremides92} showed that interior slack $\eps > 0$ is sufficient for
stability, and subsequent heavy-traffic and diffusion theory for generalized switches
and processing networks identified how bottleneck geometry governs steady-state
performance
\cite{Stolyar04,MaguluriSrikant16,MaguluriBurleSrikant18,Neely10,Williams16}. These
results form the standard asymptotic picture: positive recurrence, steady-state or
time-average bounds, and diffusion limits, typically derived under time-homogeneous or
Markovian input models. They do not, however, answer the finite-time question that
motivates this paper: under adapted, nonstationary arrivals, how large can the backlog
grow before time $T$, with high probability?

This gap is not cosmetic. A bounded steady-state mean controls an average, not the
largest excursion along a finite sample path. The operational horizon may be shorter
than the time needed to approach steady state, and in a time-varying environment
there may be no relevant steady state at all. The peak question therefore does not
merely call for a new interpretation of classical theory; it requires a genuinely new
finite-time theory. Such a theory should retain the generality and strengths of the recent finite-time viewpoint of \citet*{LTX25}---distribution-free over adapted arrivals, explicit in the horizon, and stated directly for the running maximum---while sharply capturing the slack- and geometry-sensitive behavior that classical queueing theory identifies as central.

\subsection{Our contributions}

{We develop a finite-time theory of queue peak laws under interior slack. Our upper bounds are achieved using the MaxWeight policy \cite{TassiulasEphremides92},\footnote{More broadly, the main upper bound extends to any scheduling policy satisfying the directional certificate in Remark~\ref{rem:lyapopt-main}. This includes the LyapOpt policy \cite{LTX25}.} while our lower bounds hold for every scheduling policy. Together, they identify the structure of optimal finite-time peak behavior, summarized below.}

{\paragraph{A new two-phase characterization.} The first message is a two-phase law enabled by interior slack. The robust $\sqrt{T}$ law of \cite{LTX25} remains the right slack-free characterization, but under interior
slack it becomes only a burn-in upper envelope: once the queue length exceeds a threshold, the queue peak grows only
logarithmically. For MaxWeight, our characterization implies\footnote{\cite{LTX25} shows that MaxWeight's dependence on $S_{\rm max}$ is fundamental when $\eps=0$, whereas its LyapOpt policy avoids this dependence in the $\sqrt{T}$ law. Our paper focuses on optimizing the dependence on $T$, $\eps$, and $\alpha(\Pi)$. Whether another policy can improve the $A_{\rm max}$ and $S_{\rm max}$-dependence over MaxWeight is an interesting future direction.}
\begin{equation}\label{eq:main}
\E[\PeakT]
\;\lesssim\;
\min\left\{
D\sqrt{T},
\;
\frac{D}{\eps}\log(T+1)+\frac{D^2}{\eps\alphapi}
\right\},
\qquad D:=A_{\max}+S_{\max}.
\end{equation}
Here \(A_{\max}\) and \(S_{\max}\) bound the one-slot realized arrival and service norms, \(\eps\) is the interior slack, and \(\alphapi\) is the global service margin---the smallest projected service capacity over
nonnegative unit directions---associated with the capacity region $\Pi$. The term
\(D^2/(\eps\alphapi)\) is the threshold. Above it, running the process longer costs only \((D/\eps)\log T\). See Figure~\ref{fig:envelope-sketch} for an illustration.}

The upper bound is complemented by policy-independent lower bounds. A one-dimensional reflected walk shows that both the initial $\sqrt{T}$ regime and the later $\log T$ regime are unavoidable. A separate deterministic construction shows that no theorem at this level of generality can eliminate geometric dependence, such as $\alpha(\Pi)$, from the threshold. Thus, the structure of \eqref{eq:main} is essentially sharp: any general finite-time peak law must include a burn-in envelope, a logarithmic envelope, and a geometric threshold. Conversely, \eqref{eq:main} should be interpreted as a universal upper envelope, not as a claim that every instance undergoes an exact $\sqrt{T}$-to-$\log T$ transition.

{
\paragraph{Geometry-free logarithms via self-normalization.}
A notable feature of \eqref{eq:main} is that the logarithmic coefficient \(D/\eps\) is \emph{geometry-free}: it is universal over capacity geometries and, in particular, independent of \(\alpha(\Pi)\). This is not a generic consequence of classical stability. A direct Lyapunov drift argument would typically carry global geometry into the logarithmic term. MaxWeight avoids this loss through \emph{self-normalization}. In the current queue direction \(u_t=Q_t/\|Q_t\|_2\), it attains the projected service capacity \(H(u_t)\): the stabilizing drift is of order \(\eps H(u_t)\), while the projected fluctuation scale is controlled by \(D H(u_t)\). Since the same factor \(H(u_t)\) controls both drift and fluctuation, a directional exponential-supermartingale calculation cancels it, yielding the universal coefficient \(D/\eps\). Geometry governs the threshold at which self-normalization becomes effective, not the logarithmic growth after that threshold.
}

{
A classical queueing reader may expect logarithmic growth of running maximum from an exponential steady-state tail \cite{ShahTsitsiklisZhong10,shah2014qualitative}. The point here is that this intuition extends to a finite-time, distribution-free, geometry-agnostic form. The coefficient \(D/\eps\) holds without asymptotics, heavy traffic, stationarity, or even the existence of a steady state, and it holds uniformly over all capacity geometries. Thus the logarithmic law is not merely a steady-state consequence; it is a new finite-time principle.
}

\paragraph{Improved geometric thresholds via state-space collapse.}
In \eqref{eq:main}, self-normalization has already removed geometry from the logarithmic coefficient; the remaining geometric cost appears only through the entrance threshold $D^2/(\eps\alphapi)$. This raises a natural question: is the global margin $\alphapi$ the right complexity measure for a given network? For fully uniform guarantees it is, but it can be overly conservative: it protects against all nonnegative directions, including directions that the queue process may never visit. State-space collapse (SSC) \cite{Reiman84SSC} is the classical phenomenon whereby a high-dimensional queueing process remains close to a lower-dimensional bottleneck set. In this sense, SSC is an adaptive form of dimension reduction induced by network geometry. Under a quantitative finite-time complete resource pooling (CRP) condition \cite{Stolyar04,EryilmazSrikant12}, we show that this collapse can be converted directly into a sharper peak law: the global worst-direction margin \(\alphapi\) in the entrance threshold is replaced by the local geometry of the active bottleneck face, together with the finite-time collapse error needed to keep the queue near that face.

Thus, SSC provides a principled way to replace global worst-case geometry by the geometry actually seen by the queue, without relying on asymptotics or i.i.d./Markovian input laws. This \emph{finite-time}, \emph{nonstationary} extension of SSC is a methodological contribution of independent interest.

{\paragraph{Application to input-queued switches and beyond.}
We instantiate the framework on input-queued switches (IQS), a canonical model for wireless scheduling and internet routing. In a generalized IQS model allowing adversarial, dependent, and fractional arrivals, we prove matching upper and lower bounds for the logarithmic coefficient of the total-backlog peak. The bounds become sharper under the quantitative CRP geometry, and also under independent Bernoulli arrivals. Simulations for IQS and for more general parallel-server networks closely track the theory, illustrating the two-phase envelope, the gains from local geometry, and the variance-sensitive improvements.}

\begin{figure}[htbp]
\centering
\begin{tikzpicture}
\begin{axis}[
  clip=false,
  axis lines=left,
  xlabel={horizon $T$},
  ylabel={peak scale},
  xtick=\empty,
  ytick=\empty,
  xmin=0.5, xmax=7.2,
  ymin=0.6, ymax=3.2,
  samples=300,
  domain=0.5:7,
  width=0.4\textwidth,
  height=0.4\textwidth,
  no markers,
  legend style={draw=none, fill=none},
]

 \addplot[
  red!75!black,
  thick,
  dashed
]
{0.95*sqrt(x)-0.075};

\node[red!75!black, anchor=west] at (axis cs:2.5,2.6)
{$\sqrt{T}$ burn-in envelope};

\addplot[
  blue,
  thick,
  densely dotted
]
{0.18*ln(x) + 1.50};

\node[blue, anchor=west] at (axis cs:5.2,2)
{$\varepsilon^{-1}\log T + \text{threshold}$};

\addplot[
  black,
  very thick
]
{(1-(1-exp(-1.5*(x-0.5))))*(0.95*sqrt(x)-0.075)
+
(1-exp(-1.5*(x-0.5)))*
(
  -0.2*ln(
    exp(-(0.95*sqrt(x)-0.075)/0.2)
    +
    exp(-(0.18*ln(x)+1.40)/0.2)
  )
)};

\end{axis}
\end{tikzpicture}

\caption{An illustration of the two-phase upper envelope given by \eqref{eq:main}. The red dashed curve represents the slack-free $\sqrt{T}$ burn-in envelope from Theorem~\ref{thm:sqrt-peak}. The blue dotted curve represents the slack-sensitive logarithmic envelope from Corollary~\ref{cor:two-phase-main}, shifted by the queue-length threshold $D^2/(\eps\alphapi)$. The black curve illustrates a possible realized or typical growth pattern of the queue peak as the horizon $T$ increases; we do not claim an exact $\sqrt{T}$-to-$\log T$ transition for every instance.}
\label{fig:envelope-sketch}
\end{figure}

\subsection{Relation to prior work}\label{sec:related_work}

This paper sits at the intersection of two complementary viewpoints. Classical stochastic-network theory captures the roles of slack, bottlenecks, and capacity geometry, but mainly through asymptotic, stationary, or steady-state guarantees: the MaxWeight and heavy-traffic literatures study stability, time averages, steady-state backlog, tails, and diffusion limits, typically under fixed or Markovian input laws
\cite{TassiulasEphremides92,Stolyar04,MaguluriSrikant16,
MaguluriBurleSrikant18,Neely10,Williams16,DaiHarrison20}. Recent finite-time theory turns instead to sample-path questions over a fixed horizon, but existing results either rely on fixed input laws and specialized geometries
\cite{ShahTsitsiklisZhong10,shah2014qualitative}, or apply in much greater generality while giving a robust \(\sqrt T\) envelope rather than a slack- and geometry-sensitive logarithmic law \cite{LTX25}. We show that the two perspectives can be combined: under adapted arrivals, interior slack and bottleneck geometry yield finite-time peak bounds that cross over from a \(\sqrt T\) burn-in to \(\log T\) growth after a geometric entrance threshold.

The logarithmic scale has a natural queueing interpretation. For stable queues,
exponential steady-state tail bounds suggest that the maximum over a window of
length $T$ should grow on the order of $\log T$. Our theorem reaches the same
extreme-value scale without assuming that a steady-state distribution exists:
the logarithm is generated directly along the finite-horizon sample path by a
self-normalized drift argument. Remark~\ref{rem:ss-analogue} makes this
comparison precise. The result also identifies the sharp coefficient of the
$\log T$ term, and therefore captures not only the scale but also the
multiplicative prefactor.

The concentration tool behind our finite-horizon drift analysis is a Hajek-type scalar drift-to-peak lemma; see Appendix~\ref{app:scalar}. \citet*{Hajek82Drift} proved exponential first-hitting and occupation-time bounds for real-valued processes with negative drift above a threshold and exponentially controlled increments. Since the peak event $\{\max_{0\le t\le T} X_t\ge x\}$ can be written as a hitting event $\{\tau_x\le T\}$, where $\tau_x:=\inf\{t:X_t\ge x\}$, our scalar lemma should be viewed as a finite-horizon peak formulation of \cite{Hajek82Drift}, with the stopping-time bookkeeping tailored to the peak bounds needed here. The new technical challenges addressed in this paper lie not in the scalar concentration argument itself, but in how it is coupled to high-dimensional, controlled stochastic networks. Sections~\ref{sec:selfnorm} and~\ref{sec:geometry} explain these contributions in detail.

A distinct finite-time perspective was recently developed by \cite{nguyen2025finite} for the Erlang-C, or \(M/M/n\), queue. They establish quantitative convergence to steady state in \(\chi^2\)-distance, and use these mixing estimates to obtain finite-time bounds on time-average queue lengths and fixed-time tails. Their results address when steady-state approximations become valid for a reversible birth-death Markov chain. Our question is complementary: rather than establishing distributional convergence to equilibrium, we directly bound the running maximum of a finite-horizon sample path. Moreover, in our adapted-arrival model, a stationary distribution need not exist, and our results apply to more general controlled stochastic networks.\footnote{Technically, our direct peak analysis also avoids the large cold-start
factor that arises when \(\chi^2\)-mixing bounds are used to convert
fixed-time tail estimates into running-maximum bounds.}

Worst-case approaches such as adversarial queueing and network calculus
\cite{BorodinEtAl01,LeBoudecThiran01} provide another form of finite-time robustness.
They deliberately avoid slack conditions, and therefore give robust envelopes rather
than the slack-sensitive logarithmic law proved here.

Our input-queued-switch applications are informed by a line of work on IQS
geometry, state-space collapse, and heavy-traffic behavior
\citep{ShahTsitsiklisZhong11,ShahTsitsiklisZhong14,ShahWaltonZhong14,
MaguluriSrikant16,MaguluriBurleSrikant18,XuZhong20}. We defer a more detailed
comparison with this literature to Section~\ref{sec:iqs}.

Finally, the paper also points to a useful bridge between online learning \cite{cesa2006prediction,hazan2016introduction} and queueing \cite{walton2021learning}.
Online learning supplies the finite-time standard: guarantees should hold along a single
path, with minimal probabilistic structure, and with rates governed by
the complexity of the instance or instance family. Queueing supplies a different set of ideas: persistent
state, negative drift, bottleneck geometry, and collapse. The generalized switch is a
natural meeting point. Actions create future state, as in reinforcement
learning and control, but the feasible actions retain enough geometric structure to permit sharp
pathwise guarantees. From this viewpoint, \cite{LTX25} gives the minimax theory of queue peaks.
The present paper gives the sharper instance-dependent theory: under interior
slack and the right geometry, the worst-case \(\sqrt T\) scale gives way to \(\log T\).

\subsection{Organization and notation}

Section~\ref{sec:model} states the main upper and lower bounds and gives the two-phase envelope for finite-time queue peaks. The resulting characterization isolates three fundamental features: square-root burn-in, geometry-free logarithmic growth, and a geometry-dependent entrance threshold. Section~\ref{sec:selfnorm} explains why the logarithmic coefficient is geometry-free through self-normalization, and identifies queue-Bernstein conditions as a natural unbounded-arrival class for which this mechanism survives. Section~\ref{sec:geometry} shows how state-space collapse can sharpen the geometric threshold: a two-queue example illustrates how favorable local geometry removes the global margin, and the CRP theorem converts finite-horizon collapse into a local peak law. Section~\ref{sec:iqs} specializes the framework to generalized input-queued switches, proving sharp logarithmic peak bounds in the robust model, CRP refinements, and a variance-sensitive Bernoulli bound. Section~\ref{sec:experiments} reports simulations of the predicted two-phase envelope, the gap between global and local geometry, and the effects of bottleneck structure and arrival variance. Section~\ref{sec:discussion} closes with the main open directions.

We use the following global conventions. Vector inequalities are coordinatewise, $x^+$ denotes coordinatewise projection onto $\Rp^d$, $\ip{\cdot}{\cdot}$ is the Euclidean inner product, and $\norm{\cdot}_p$ is the $\ell_p$ norm. When $A_t$ is a vector, $A_i(t)$ denotes its $i$th coordinate. In Section~\ref{sec:iqs}, where $A_t$ is a matrix, $A_{ij}(t)$ denotes its $(i,j)$ entry. The same convention applies to the mean vector or matrix $\lambda_t$ when its entries are used. Throughout, $c$ and $C$ denote positive universal numerical constants that may change from line to line. We write $f\lesssim g$, $f\gtrsim g$, and $f\asymp g$ to mean $f\le Cg$, $f\ge cg$, and $cg\le f\le Cg$, respectively. Model-specific notation is introduced in Section~\ref{sec:model}.

\section{Model and main results}\label{sec:model}

We work with a discrete-time generalized switch \cite{Stolyar04}, initialized at $Q_0=0$. Building on the general formulation of \cite{LTX25}, we allow arrivals to be nonstationary and history-dependent, and we allow service randomness to be resolved only after the scheduling decision in each slot. This pathwise formulation differs from the classical generalized-switch setting, which typically assumes i.i.d. arrivals and service uncertainty resolved before the decision \cite{Stolyar04,HurtadoLangeVarmaMaguluri22}. In this sense, the model is also a stateful online learning problem; see Section~\ref{sec:related_work}.

\paragraph{Dynamics.}
Let $\Sset$ be a finite set of feasible mean-service vectors (induced by the shared service resources). At slot $t=0,1,2,\dots$, the scheduler observes the queue vector $Q_t\in\Rp^d$, chooses a mean-service vector $s_t\in\Sset$, and then the arrival vector $A_t\in\Rp^d$ and realized service vector $S_t\in\Rp^d$ are revealed. The queue evolves as
\begin{equation}\label{eq:queue-recursion}
Q_{t+1}=(Q_t+A_t-S_t)^+,
\end{equation}
where $(\cdot)^+$ denotes coordinatewise projection onto $\Rp^d$. Let $\F_t$ be the pre-decision history at slot $t$, i.e.,
\[
\F_t:=\sigma\bigl(Q_0,(s_\tau,A_\tau,S_\tau):0\le \tau<t\bigr).
\]
Thus $Q_t$ is $\F_t$-measurable, the decision $s_t$ is $\F_t$-measurable, and $A_t$ and $S_t$ are revealed after the decision. The realized service satisfies
\begin{equation}\label{eq:service-mean}
\E[S_t\mid\F_t]=s_t
\qquad\text{a.s.}
\end{equation}
We write
\[
\lambda_t:=\E[A_t\mid \F_t]
\]
for the one-step conditional mean arrival vector.

\paragraph{Policy.}
Throughout the main upper bounds the scheduler uses the MaxWeight policy \cite{TassiulasEphremides92}:
\begin{equation*}\label{eq:MW}
s_t\in\argmax_{s\in\Sset}\ip{Q_t}{s}.
\end{equation*}
The scheduler need not know $\lambda_t$. Extensions beyond MaxWeight are discussed in Remark~\ref{rem:lyapopt-main}.

\paragraph{Capacity geometry.}
The capacity region is the downward closure of the convexified service set,
\begin{equation}\label{eq:capacity-downward}
\PiC
:=
\bigl\{x\in\Rp^d:\text{there exists }r\in\conv(\Sset)\text{ such that }0\le x\le r\text{ coordinatewise}\bigr\}.
\end{equation}
This is the usual stability region: by classical stochastic-network theory \cite{Dai99,DaiHarrison20}, arrival-rate
vectors in the interior of \(\PiC\) are stabilizable, while rates outside \(\PiC\)
cannot be supported. We use this convention throughout.

For nonnegative directions, define the support function
\begin{equation*}\label{eq:Hdef}
H_\Pi(q):=\max_{s\in\mathcal S}\langle q,s\rangle
=
\max_{x\in\PiC}\langle q,x\rangle,
\qquad q\in\mathbb R_+^d .
\end{equation*}
When $u$ is a unit queue direction, $H_\Pi(u)$ is the projected service capacity in direction $u$. Since the region is fixed within the model, we write $H(q)=\Hpi(q)$. MaxWeight gives the identity $\ip{Q_t}{s_t}=H(Q_t)$, which is the directional fact used throughout the proofs. The global Euclidean service margin is
\begin{equation*}\label{eq:alphaPi-def}
\alphapi:=\alphatwopi
:=\inf\{\Hpi(w): w\in\Rp^d,\ \normtwo{w}=1\}.
\end{equation*}
Thus $\alphapi$ measures the capacity in the worst-served nonnegative Euclidean unit direction. We will show that it plays a fundamental role in connecting geometry with queue peaks. Whenever $\alphapi$ appears, we work on the served subspace and assume $\alphapi>0$.

\paragraph{Interior slack and bounded primitives.}
The next assumption is the finite-horizon analogue of operating strictly inside capacity. It constrains the arrivals only in conditional mean: individual realizations may still lie outside the capacity region.

\begin{assumption}[Uniform conditional interior slack]\label{ass:interior}
There exists $\eps\in(0,1)$ such that
\begin{equation}\label{eq:interior}
\lambda_t\in(1-\eps)\PiC
\qquad\text{a.s. for every }t\ge0.
\end{equation}
Equivalently, for every $q\in\Rp^d$,
\begin{equation*}\label{eq:support-slack}
\ip{q}{\lambda_t}\le (1-\eps)H(q)
\qquad\text{a.s.}
\end{equation*}
\end{assumption}
The main upper bounds are derived based on the following boundedness assumptions on arrivals and service; we extend our results to the unbounded case in Section~\ref{sec:qb-unbounded}.
\begin{assumption}[Bounded post-decision primitives]\label{ass:bounded-primitives}
There are finite constants $A_{\max}$ and $S_{\max}$ such that, for every $t$,
\begin{equation*}\label{eq:bounded-primitives}
\normtwo{A_t}\le A_{\max},
\qquad
\normtwo{S_t}\le S_{\max}
\qquad\text{a.s.}
\end{equation*}
and $\sup_{s\in\mathcal{S}}\normtwo{s}\le S_{\max}$. Moreover, conditional on $\F_t$, the arrival randomness $A_t$ and the post-decision service randomness $S_t$ are independent.
\end{assumption}

\subsection{The self-normalized peak law}

The first main bound is the logarithmic law. 

\begin{theorem}[Self-normalized logarithmic peak law]\label{thm:selfnorm-main}
Consider MaxWeight scheduling under Assumptions~\ref{ass:interior} and~\ref{ass:bounded-primitives}. Let $D:=A_{\max}+S_{\max}$. For every $T\ge1$ and $\delta\in(0,1)$, with probability at least $1-\delta$,
\[
\PeakT
\;\lesssim\;
\frac{D}{\eps}\log\frac{T+1}{\delta}
+
\frac{D^2}{\eps\alphapi}.
\]
Consequently,
\begin{align*}
  \E[\PeakT]
\;\lesssim\;
\frac{D}{\eps}\log(T+1)
+
\frac{D^2}{\eps\alphapi}.
\end{align*}
If service is deterministic, the logarithmic coefficient $D/\eps$ can be replaced by $A_{\max}/\eps$.
\end{theorem}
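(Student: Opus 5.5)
We propose to prove Theorem~\ref{thm:selfnorm-main} by reducing to a scalar drift-to-peak statement for the process $X_t:=\normtwo{Q_t}$. We then invoke the Hajek-type scalar lemma of Appendix~\ref{app:scalar}. That lemma gives $\Pbb(\max_{t\le T}X_t\ge x)\le (T+1)e^{-\theta(x-b)}$ whenever three conditions hold. First, the increments satisfy $\E[e^{\theta(X_{t+1}-X_t)}\mid\F_t]\le 1$ on $\{X_t\ge b\}$. Second, the increments are bounded, $|X_{t+1}-X_t|\le D$. Third, $X_t$ starts at $0$. The bounded-increment condition holds because the projection $(\cdot)^+$ is $1$-Lipschitz, which gives $|X_{t+1}-X_t|\le\normtwo{A_t-S_t}\le D$. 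So the work reduces to verifying the exponential supermartingale condition with $\theta\asymp\eps/D$ above a threshold $b\asymp D^2/(\eps\alphapi)$. Setting $x=b+\theta^{-1}\log((T+1)/\delta)$ then gives the high-probability bound. The bound in expectation follows by integrating the tail, $\E[\PeakT]\le b+\theta^{-1}\log(T+1)+\theta^{-1}$.

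\emph{One-step directional expansion.} Let $u_t=Q_t/\normtwo{Q_t}$ and $\xi_t=A_t-S_t$. Since $\normtwo{(Q+\xi)^+}\le\normtwo{Q+\xi}$, we expand
\[
X_{t+1}-X_t\le \ip{u_t}{\xi_t}+\frac{\normtwo{\xi_t}^2}{2X_t}.
\]
The curvature term is at most $D^2/(2X_t)$. MaxWeight gives $\E[\ip{u_t}{S_t}\mid\F_t]=H(u_t)$, and Assumption~\ref{ass:interior} gives $\E[\ip{u_t}{A_t}\mid\F_t]\le(1-\eps)H(u_t)$. Hence the linear part $Y_t:=\ip{u_t}{\xi_t}$ has conditional mean at most $-\eps H(u_t)$. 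Because $H(u_t)\ge\alphapi$, the curvature term is dominated by half the drift once $X_t\ge D^2/(\eps\alphapi)$. This is the only place geometry enters, and it produces the threshold.

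\emph{Self-normalized exponential bound (the main obstacle).} We need $\E[e^{\theta Y_t}\mid\F_t]\le e^{-\theta\eps H(u_t)/2}$ with $\theta$ free of $H$. A naive Hoeffding bound uses $|Y_t|\le D$. This yields $\E e^{\theta Y}\le \exp(-\theta\eps H+\theta^2D^2/2)$, which forces $\theta\lesssim \eps H/D^2\ge\eps\alphapi/D^2$ and would put geometry into the log coefficient. The key is a Bernstein-type bound that uses the second moment rather than the range. Since $A_t,S_t\ge0$ and $u_t\ge0$, we have $\ip{u_t}{A_t}\in[0,A_{\max}]$ with mean at most $H(u_t)$. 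This gives $\E[\ip{u_t}{A_t}^2\mid\F_t]\le A_{\max}H(u_t)$, and similarly $\E[\ip{u_t}{S_t}^2\mid\F_t]\le S_{\max}H(u_t)$. The variance proxy of $Y_t$ is therefore at most $D\,H(u_t)$. Combining this with the range bound $D$ through $e^{y}\le 1+y+y^2$ for $y\le1$, and taking $\theta\le 1/D$, we get
\[
\E[e^{\theta Y_t}\mid\F_t]\le 1-\theta\eps H(u_t)+\theta^2 D H(u_t).
\]
This is at most $\exp(-\theta\eps H(u_t)/2)$ when $\theta=\eps/(2D)$. The common factor $H(u_t)$ cancels, which is the self-normalization. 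The sign of the centered terms needs care: we handle the arrival and service parts separately and use the conditional independence in Assumption~\ref{ass:bounded-primitives} to factor the MGF. We then fold in the curvature term as the deterministic factor $e^{\theta D^2/(2X_t)}\le e^{\theta\eps H(u_t)/4}$, which holds above the threshold.

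When service is deterministic, $S_t=s_t$ and the service part contributes no variance. The variance proxy becomes $A_{\max}H(u_t)$, so we may take $\theta=\eps/(2A_{\max})$ with the same cancellation, which gives the coefficient $A_{\max}/\eps$. The curvature threshold still involves $D^2$, consistent with the statement.
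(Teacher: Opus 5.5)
Your proposal is correct and follows the paper's proof essentially step for step. You use the norm expansion of $X_t=\normtwo{Q_t}$ along $u_t$; the threshold $D^2/(\eps\alphapi)$ from comparing the curvature term with the drift $\eps H(u_t)\ge\eps\alphapi$; a Bernstein-type MGF bound with variance proxy proportional to $H(u_t)$, so that $\theta\asymp\eps/D$ (or $\eps/A_{\max}$ under deterministic service) is geometry-free; and the scalar drift-to-peak lemma of Appendix~\ref{app:scalar}.

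Two minor remarks. First, that lemma carries an upcrossing-overshoot factor $M_+(\theta)\le e^{\theta D}$, which your stated tail bound omits; it only adds an $O(D)\lesssim D^2/(\eps\alphapi)$ term to the threshold. Second, your one-shot bound via $e^y\le 1+y+y^2$ and $(a-s)^2\le a^2+s^2$ for $a,s\ge 0$ already controls $\E[e^{\theta\ip{u_t}{\xi_t}}\mid\F_t]$ without the conditional-independence factorization used in Lemma~\ref{lem:netinput-mgf}. For the deterministic-service refinement, apply it to $\ip{u_t}{A_t}$ alone and treat $-H(u_t)$ as a shift.
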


In Theorem~\ref{thm:selfnorm-main}, geometry enters only through the entrance level $D^2/(\varepsilon\alpha(\Pi))$. Once the process is above that level, the coefficient of $\log T$, $D/\varepsilon$, is independent of $\alpha(\Pi)$. The main mechanism behind Theorem~\ref{thm:selfnorm-main} will be explained in detail in Section~\ref{sec:selfnorm}.

We make two remarks below.

\begin{remark}[An exponential steady-state tail analogue without steady state]\label{rem:ss-analogue}
Exponential steady-state tails for positive recurrent queues \cite{ShahTsitsiklisZhong10,shah2014qualitative} imply that the maximum over $T$ approximately steady-state observations should scale as $\log T$. Theorem~\ref{thm:selfnorm-main} gives a finite-time counterpart to this heuristic: it requires neither stationarity, positive recurrence, nor a fixed arrival law, and it applies under general network geometry. Thus, the logarithmic term in our bound should not be viewed as a finite-horizon shadow of an equilibrium tail estimate. It is produced directly by the self-normalization argument in Section~\ref{sec:selfnorm}, which identifies a foundational finite-time mechanism for logarithmic peak growth.
\end{remark}

\begin{remark}[Extension beyond MaxWeight]\label{rem:lyapopt-main}

We extend the proof to any $\F_t$-measurable policy satisfying a directional certificate given in Appendix~\ref{app:lyapopt-certificate}. As an example, Appendix~\ref{app:lyapopt-certificate} verifies this certificate for the LyapOpt policy  \cite{LTX25}, so the same logarithmic law applies to LyapOpt.
\end{remark}

\subsection{The finite-time two-phase envelope}

The logarithmic law is a finite-time theorem on its own, but its entrance threshold matters. When \(T\) is small relative to a polynomial scale in \(1/\eps\), or when \(\eps\) is small, or when the global service margin \(\alphapi\) is small, the minimax \(\sqrt T\) envelope of \citet*{LTX25} can be smaller than the logarithmic bound because the threshold term \(D^2/(\eps\alphapi)\) dominates. It is therefore natural to combine the robust \(\sqrt T\) bound with our logarithmic bound, taking the better of the two, to obtain a sharper cross-regime characterization of finite-time queue peaks.

\begin{theorem}[A slack-free $\sqrt{T}$ envelope for the peak]\label{thm:sqrt-peak}
Consider MaxWeight scheduling under Assumption~\ref{ass:bounded-primitives}. Assume only the boundary-load condition $\lambda_t\in\PiC$ a.s. for every $t$; no strict interior slack is used. Let $D:=A_{\max}+S_{\max}$. Then for every $T\ge1$ and every $r>0$,
\[
\Pbb\!\left(\PeakT\ge r\right)
\le \frac{D^2T}{r^2}.
\]
In particular, $\E[\PeakT]\le 2D\sqrt{T}$.
\end{theorem}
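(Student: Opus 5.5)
We will prove Theorem~\ref{thm:sqrt-peak} with the quadratic Lyapunov function $V_t:=\normtwo{Q_t}^2$ and Doob's maximal inequality. The aim is to show that $V_t - D^2 t$ is a nonnegative-compensated supermartingale, i.e.\ $\E[V_{t+1}\mid\F_t]\le V_t + D^2$. Since $\PeakT\ge r$ iff $\max_{t\le T} V_t\ge r^2$, this will yield the tail bound.

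\emph{Step 1 (one-step drift).} Because projection onto $\Rp^d$ is nonexpansive and $0\in\Rp^d$, we have $\normtwo{(x)^+}\le\normtwo{x}$. Hence
\[
V_{t+1}\le \normtwo{Q_t+A_t-S_t}^2
= V_t + 2\ip{Q_t}{A_t-S_t} + \normtwo{A_t-S_t}^2 .
\]
The last term is at most $(A_{\max}+S_{\max})^2=D^2$ almost surely. Conditioning on $\F_t$ and using $\E[S_t\mid\F_t]=s_t$ and $\E[A_t\mid\F_t]=\lambda_t$, the cross term has conditional mean $2(\ip{Q_t}{\lambda_t}-\ip{Q_t}{s_t})$. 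MaxWeight gives $\ip{Q_t}{s_t}=H(Q_t)$, and the boundary-load condition $\lambda_t\in\PiC$ gives $\ip{Q_t}{\lambda_t}\le H(Q_t)$ because $Q_t\ge0$. So the cross term is nonpositive in conditional expectation, and
\[
\E[V_{t+1}\mid\F_t]\le V_t+D^2 .
\]
The conditional-independence part of Assumption~\ref{ass:bounded-primitives} is not needed here.

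\emph{Step 2 (maximal inequality).} Since $V_t\ge0$, we have $\max_{t\le T}V_t\ge r^2$ iff $\max_{t\le T}(V_t+D^2(T-t))\ge r^2$ (adding nonnegative terms, and the equivalence holds at $t=T$). Define $M_t:=V_t+D^2(T-t)$ for $t\le T$. This is a nonnegative supermartingale, so Ville's/Doob's maximal inequality gives
\[
\Pbb\Big(\max_{t\le T}M_t\ge r^2\Big)\le \frac{\E M_0}{r^2}=\frac{D^2T}{r^2},
\]
using $Q_0=0$. It is cleaner to note directly that $\max_t V_t\le\max_t M_t$, so the event is contained in $\{\max_t M_t\ge r^2\}$, and the bound follows.

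\emph{Step 3 (expectation).} Integrate the tail:
\[
\E[\PeakT]=\int_0^\infty\Pbb(\PeakT\ge r)\,\dd r
\le \int_0^\infty\min\{1,D^2T/r^2\}\,\dd r .
\]
Splitting at $r=D\sqrt T$ gives $D\sqrt T + D^2T/(D\sqrt T)=2D\sqrt T$.

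The only delicate point is Step 1: the cross term must be handled by conditioning. That requires $Q_t$ and $s_t$ to be $\F_t$-measurable, which the model guarantees, and it requires the support-function inequality for $\lambda_t\in\PiC$ with nonnegative $Q_t$ (the downward closure does not affect this). Everything else is routine.
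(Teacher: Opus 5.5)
Your proposal is correct and follows the paper's proof essentially line for line: the same one-step bound $\E[\normtwo{Q_{t+1}}^2\mid\F_t]\le\normtwo{Q_t}^2+D^2$ from nonexpansiveness of $(\cdot)^+$, MaxWeight, and $\lambda_t\in\PiC$; the same nonnegative supermartingale $\normtwo{Q_t}^2+(T-t)D^2$ with Doob's maximal inequality; and the same tail integration split at $D\sqrt T$. One small slip: the ``iff'' in Step~2 is false in the reverse direction, but the argument only needs the containment $\max_t V_t\le\max_t M_t$, which you state immediately afterward.
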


Appendix~\ref{app:upper-burnin} proves the estimate by telescoping the one-step quadratic drift and applying Doob's maximal inequality. This is the bounded-primitive version of the finite-time viewpoint of \citet*{LTX25}: before one uses slack, $\sqrt{T}$ growth is the natural robust envelope.

Combining Theorem~\ref{thm:sqrt-peak} with Theorem~\ref{thm:selfnorm-main} yields the two-phase upper envelope.

\begin{corollary}[Two-phase upper envelope]\label{cor:two-phase-main}
Consider MaxWeight scheduling under Assumptions~\ref{ass:interior} and~\ref{ass:bounded-primitives}. Assume $\alphapi>0$, and let $D:=A_{\max}+S_{\max}$. Then
\[
\E[\PeakT]
\;\lesssim\;
\min\left\{
D\sqrt{T},
\;
\frac{D}{\eps}\log(T+1)
+
\frac{D^2}{\eps\alphapi}
\right\}.
\]
With
$
x_{\mathrm{th}}:={D^2}/(\eps\alphapi)  
$ denoting the geometric threshold,
the transition between the two phases occurs near $T\asymp x_{\mathrm{th}}^2/D^2$, which in normalized bounded models is $T\asymp(\eps\alphapi)^{-2}$.
\end{corollary}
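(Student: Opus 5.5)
The corollary follows by taking the minimum of two bounds already established for the same quantity. Under Assumptions~\ref{ass:interior} and~\ref{ass:bounded-primitives}, the hypotheses of both Theorem~\ref{thm:sqrt-peak} and Theorem~\ref{thm:selfnorm-main} hold. For Theorem~\ref{thm:sqrt-peak}, interior slack implies the boundary-load condition: $(1-\eps)\PiC\subseteq\PiC$ because $\PiC$ is convex, downward closed and contains $0$. Theorem~\ref{thm:sqrt-peak} then gives $\E[\PeakT]\le 2D\sqrt T$. The expectation part of Theorem~\ref{thm:selfnorm-main} gives $\E[\PeakT]\le C\bigl(\frac{D}{\eps}\log(T+1)+\frac{D^2}{\eps\alphapi}\bigr)$. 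Both bounds hold simultaneously for the same deterministic quantity $\E[\PeakT]$, so
\[
\E[\PeakT]\le \max\{2,C\}\,\min\Bigl\{D\sqrt T,\ \tfrac{D}{\eps}\log(T+1)+\tfrac{D^2}{\eps\alphapi}\Bigr\},
\]
which is the displayed inequality. No probabilistic combination is needed, since we are only taking a minimum of two valid upper bounds on one number.

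For the transition statement, I would compare the two branches directly. The square-root branch $D\sqrt T$ equals the threshold $x_{\mathrm{th}}=D^2/(\eps\alphapi)$ exactly when $T=x_{\mathrm{th}}^2/D^2$. The remaining task is to show that near this crossover the logarithmic term $\frac{D}{\eps}\log(T+1)$ is at most a constant multiple of $x_{\mathrm{th}}$, up to logarithmic factors.

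\textbf{Normalized case.} Take $D\asymp1$. Then $x_{\mathrm{th}}\asymp(\eps\alphapi)^{-1}$ and the crossover is $T\asymp(\eps\alphapi)^{-2}$. At this $T$, the logarithmic term is $\eps^{-1}\log\bigl((\eps\alphapi)^{-2}\bigr)$. This is dominated by $(\eps\alphapi)^{-1}$ whenever $\alphapi\log(1/(\eps\alphapi))\lesssim1$, which holds in normalized models where $\alphapi\lesssim1$, up to the logarithmic factor. Hence the second branch is $\asymp x_{\mathrm{th}}$ at the crossover, up to logs.

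\textbf{Regime description.} For $T\ll x_{\mathrm{th}}^2/D^2$ the $\sqrt T$ branch is the smaller one. For $T\gg x_{\mathrm{th}}^2/D^2$ the threshold-plus-logarithm branch is smaller, since $\sqrt T$ eventually dominates $\log T$. I would state the transition only at this order-of-magnitude level, matching the paper's wording ``occurs near''.

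The only delicate step is this last one: making precise the sense in which the crossover is ``near'' $x_{\mathrm{th}}^2/D^2$, given the extra $\log$ factor and the constant $\alphapi$. I would handle it by stating the comparison up to logarithmic factors rather than attempting an exact crossover point.
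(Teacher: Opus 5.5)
Your proposal is correct and matches the paper's argument. You take the minimum of the expectation bounds in Theorem~\ref{thm:sqrt-peak} (applicable because $(1-\eps)\PiC\subseteq\PiC$) and Theorem~\ref{thm:selfnorm-main}, and you read off the transition time heuristically by equating $D\sqrt T$ with $x_{\mathrm{th}}$; stating that crossover only up to logarithmic factors is appropriate, since the paper itself claims it only at the order-of-magnitude level.
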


We emphasize that this corollary is an upper envelope, not a claim that every instance has a sharp deterministic transition time. See Figure~\ref{fig:envelope-sketch} for an illustration, and see Section~\ref{sec:experiments} for empirical queue peak scaling simulations.

\subsection{Why the two phases and the threshold are necessary}

The upper envelope has two independent obstructions. The first is probabilistic: even a one-dimensional queue has an initial diffusive regime and, after enough time, a logarithmic maximum. The second is geometric: even at constant slack, a scheduler may need to serve many weak directions before the backlog can be kept below the global threshold scale.

\begin{theorem}[One-dimensional lower bound at arbitrary arrival scale]\label{thm:1d-lower-main}
There exist universal constants $c,C>0$ such that the following holds. For every $A\ge1$,  $\eps\in(0,1/8]$, and $T\ge C A$, there is a one-dimensional queue in the present model, with deterministic unit service $S_t=s_t=1$, arrivals taking values in $\{0,A+1\}$, such that
\[
\mathbb E\Big[\max_{0\le t\le T}\|Q_t\|_1\Big]
\;\ge\;
c\,
\min\!\left\{
\sqrt {AT},\;
\frac{A}{\varepsilon}\log\!\Big(1+\frac{\varepsilon^2}{A}T\Big)
\right\}.
\]
\end{theorem}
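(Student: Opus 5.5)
The plan is to take an explicit Bernoulli-batch queue and reduce the peak to the running supremum of increments of a random walk, via Lindley's representation. Fix $p:=(1-\eps)/(A+1)$ and let $A_t\in\{0,A+1\}$ be i.i.d. with $\Pbb(A_t=A+1)=p$, with deterministic service $S_t=s_t=1$. Then $\lambda_t=1-\eps\in(1-\eps)\PiC$ with $\PiC=[0,1]$, so Assumptions~\ref{ass:interior} and~\ref{ass:bounded-primitives} hold. Since there is only one service vector, the construction is policy-independent. Put $X_t:=A_t-1$ and $W_t:=\sum_{\tau<t}X_\tau$. From $Q_0=0$ we have $Q_t=\max_{0\le k\le t}(W_t-W_k)$, hence
\[
\PeakT=\max_{0\le k\le t\le T}(W_t-W_k).
\]
The increments satisfy $\E X=-\eps$, $\mathrm{Var}(X)=(A+1)^2p-(1-\eps)^2\asymp A$ (using $A\ge1$ and $\eps\le1/8$), and $|X|\le A$. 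It therefore suffices to lower bound a maximal increment of this walk in two regimes.

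\textbf{Diffusive regime.} Here $T\le C_0A/\eps^2$. We use $\PeakT\ge W_T^+$ and write $W_T=M_T-\eps T$, where $M_T$ is a centered sum with variance $\asymp AT$. Its fourth moment is bounded by $\lesssim TA^3+(AT)^2\lesssim (AT)^2$, and this is where $T\ge CA$ is used. Paley--Zygmund then gives $\Pbb(M_T\ge c_1\sqrt{AT})\ge c_2$. When $\eps T\le \tfrac{c_1}{2}\sqrt{AT}$, this yields $\E[\PeakT]\gtrsim\sqrt{AT}$. If instead $T$ lies slightly above $A/\eps^2$ but the logarithm below is still $O(1)$, we apply the same argument on the initial window of length $\asymp A/\eps^2$. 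This gives $\gtrsim A/\eps$, which matches the second term of the minimum up to constants.

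\textbf{Logarithmic regime.} Here $T\ge C_0A/\eps^2$, and we use an exponential martingale. Let $\theta^*>0$ solve $\E e^{\theta^*X}=1$. A second-order Taylor expansion with bounded jumps, valid because $\theta(A+1)=O(1)$ in the relevant range, shows $\theta^*\asymp \eps/A$. We set $y:=A/\eps$ and take a target level $x$. Let $\tau$ be the exit time of $W$ from $(-y,x)$. Optional stopping for $e^{\theta^*W}$, together with an overshoot bounded by $A+1$ at the top and $A$ at the bottom, gives
\[
\Pbb(W_\tau\ge x)\ \ge\ \frac{1-e^{-\theta^*y}}{e^{\theta^*(x+A+1)}}\ \gtrsim\ e^{-\theta^*x}.
\]
Wald's identity gives $\E\tau\le (x+y+2A)/\eps$. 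We then choose blocks of length $L:=C_1(x+y)/\eps$, so that Markov's inequality makes $\{\tau\le L\}$ fail with probability at most, say, $1/2$ of the hitting bound. This requires a little care: we intersect with the hitting event and use that $\Pbb(\tau>L)\le \E\tau/L$ is small compared to $e^{-\theta^*x}$ only after conditioning. If this fails, we fall back on the stopped process $e^{\theta^*W_{\tau\wedge L}}$ directly, bounding $\E[e^{\theta^*W_L};\tau>L]$ by the negative drift on the event of no exit.

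Next we split $[0,T]$ into $N=\lfloor T/L\rfloor$ disjoint blocks. The block events $\{\text{the walk restarted at the block start reaches } +x \text{ within the block}\}$ are independent, each has probability $q\gtrsim e^{-\theta^*x}$, and each forces $\PeakT\ge x$. Hence $\Pbb(\PeakT\ge x)\ge 1-(1-q)^N\ge 1-e^{-Nq}$. We choose $x:=c_3(A/\eps)\log(1+\eps^2T/A)$, so that $e^{-\theta^*x}=(1+\eps^2T/A)^{-cc_3}$. Since $L\asymp (A/\eps^2)\log(1+\eps^2T/A)$, we get $N\asymp (\eps^2T/A)/\log(\cdot)$. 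For $c_3$ small this makes $Nq\ge1$, and so $\E[\PeakT]\ge (1-e^{-1})x$. Combining the two regimes gives the stated minimum.

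The main obstacle is the logarithmic regime: obtaining a two-sided estimate $\theta^*\asymp\eps/A$ uniformly in $A\ge1$ and $\eps\le1/8$, and making the within-block hitting-time truncation rigorous, since the hitting probability is tiny and must not be swamped by $\Pbb(\tau>L)$. I would handle the truncation first by a supermartingale bound on $\E[e^{\theta^* W_L}\one\{\tau>L\}]$, which decays like $e^{-c\eps^2L/A}$ times $e^{\theta^* x}$. With $C_1$ large this is negligible.
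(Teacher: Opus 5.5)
Your construction, the Lindley reduction $\PeakT=\max_{0\le k\le t\le T}(W_t-W_k)$, and the diffusive regime match the paper. One small point there: Paley--Zygmund applied to $M_T^2$ only controls $|M_T|$, so pass to the positive part via $\E M_T^+=\tfrac12\E|M_T|$, as the paper does.

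The logarithmic regime takes a genuinely different route. The paper regenerates at zero. Cycles are i.i.d.\ with $\E L_0\le 3/\eps$, so $\lfloor\eps T/12\rfloor$ complete cycles fit in $[0,T]$ with probability $3/4$ by Markov's inequality. A one-sided gambler's-ruin bound for the martingale $r^{W_t}$ (started at $A$ after the first up-jump, stopped at $0$) then gives a cycle-peak tail at level $k$ of order $(\eps/A)e^{-C\eps(k+A)/A}$.

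You instead use deterministic blocks of length $L\asymp(x+A/\eps)/\eps$ and a two-sided exit from $(-A/\eps,x)$. The distant lower barrier removes the $\eps/A$ prefactor from the per-block hitting probability, the blocks are longer by the same factor, and $Nq$ comes out the same up to the logarithm. Block independence is then free, but you must make $\Pbb(\tau>L)$ exponentially small, which the regenerative route never needs.

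As you noticed, Markov plus Wald cannot do this: it gives $\Pbb(\tau>L)\lesssim 1/C_1$, a constant, against a target of order $e^{-\theta^*x}$. Drop that sentence and keep only the exponential fix, which works. Take a tilt $\theta'\le\theta^*$ with $\theta'\asymp\eps/A$ and $\E e^{\theta'X}\le 1-c\eps^2/A$. On $\{\tau>L\}$ you have $W_L<x$, hence $e^{\theta^*W_L}\le e^{\theta^*x}e^{\theta'W_L}$ and $\E[e^{\theta^*W_L};\tau>L]\le e^{\theta^*x-c\eps^2L/A}$. Since $\theta^*\le 8\eps/A$ and $\eps^2L/A=C_1(\eps x/A+1)$, this is negligible against $1-e^{-\theta^*A/\eps}\gtrsim1$ once $C_1$ is large.

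The step that is not justified as written is the intermediate range $cA/\eps^2<T<C_0A/\eps^2$. Re-running Paley--Zygmund on an initial window of length $T_w=cA/\eps^2$ needs $T_w\ge CA$ (your own fourth-moment condition), i.e.\ $\eps\le\sqrt{c/C}$. Here $c$ is the small constant forced by $\eps T_w\lesssim\sqrt{AT_w}$. Nothing ties these universal constants to the standing assumption $\eps\le 1/8$, and with the constants Paley--Zygmund actually delivers, $\sqrt{c/C}$ is well below $1/8$. So for $\eps$ of constant order the window is too short, and you need a separate argument there.

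The fix is easy. In that case $A/\eps^2\asymp A$ and the target is $\asymp A$. Since $T\ge CA$ and $p\ge 7/(16A)$, with probability at least $1-(1-p)^T\ge\tfrac12$ some slot before $T$ receives an arrival of size $A+1$. This gives $\PeakT\ge A$ and hence $\E[\PeakT]\gtrsim A$. This is the paper's Lemma~\ref{lem:1d-big-jump}; with it added, your case analysis closes.
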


Since $A_{\max}=A+1$, the logarithmic term may be written with $A_{\max}$ in place of $A$, up to universal constants. In particular, for horizons above $A_{\max}/\eps^2$, the deterministic-service coefficient $(A_{\max}/\eps)\log T$ is unavoidable. The logarithmic coefficient matches the deterministic-service upper bound in Theorem~\ref{thm:selfnorm-main}. A complementary $(S_{\max}/\eps)\log T$ lower bound can be obtained by a random-service construction, which we omit here.

\paragraph{Proof idea.}
The mechanism is clearest in the regenerative analysis of Appendix~\ref{app:1d}.
A typical excursion lasts \(O(1/\eps)\) steps, while its maximum has an exponential
tail with decay rate \(\Theta(\eps/A)\). Maximizing over the excursions seen by time
\(T\) gives the logarithmic term, and the same construction has a diffusive initial
regime, giving the square-root term.

\begin{theorem}[Geometric threshold lower bound under constant slack]\label{thm:geometry-lower-main}
There exists a universal constant $c>0$ such that, for every $d\ge2$, $\eps\in[1/4,1/2]$, there is a switched network with deterministic arrivals and services, satisfying Assumptions~\ref{ass:interior} and~\ref{ass:bounded-primitives}, with $A_{\max}\le1$, $S_{\max}=1$, and $\alphapi=1/\sqrt d$, such that the following holds. For any scheduling policy,
\[
\max_{0\le t\le T_0} \normtwo{Q_t}
\;\ge\;
\frac{c}{\eps\alphapi},
\]
where the horizon $T_0:=\floor{d/2}$ is polynomial in $1/\alphapi$.
\end{theorem}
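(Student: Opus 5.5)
The plan is to build an explicit deterministic network in which every slot can drain only one coordinate, while arrivals are spread thinly over all $d$ coordinates. No policy can then keep up with the number of coordinates that need attention. The capacity geometry is the simplex, and the lower bound comes from a counting argument over never-served queues.

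\textbf{Construction.} Take $\Sset=\{e_1,\dots,e_d\}$, so $S_{\max}=1$ and $\PiC=\{x\in\Rp^d:\sum_i x_i\le 1\}$. Then $\Hpi(w)=\max_i w_i$ for $w\in\Rp^d$. Over nonnegative Euclidean unit vectors, $\max_i w_i$ is minimized by the uniform vector $w=d^{-1/2}\one$, because $1=\normtwo{w}^2\le d\max_i w_i^2$. Hence $\alphapi=1/\sqrt d$. Use deterministic service $S_t=s_t$ and deterministic arrivals $A_t=\lambda_t:=\frac{1-\eps}{d}\one$ for every $t$.
\begin{itemize}
\item Since $\sum_i\lambda_i(t)=1-\eps$, we have $\lambda_t\in(1-\eps)\PiC$, so Assumption~\ref{ass:interior} holds.
\item Since $\normtwo{A_t}=(1-\eps)/\sqrt d\le 1$, we have $A_{\max}\le1$.
\item Conditional independence of $A_t$ and $S_t$ is trivial, so Assumption~\ref{ass:bounded-primitives} holds.
\end{itemize}

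\textbf{Counting argument.} Fix any (even anticipating) policy. Up to time $t$ at most $t$ slots have occurred, so at most $t$ distinct coordinates have ever been selected by some $s_\tau=e_i$ with $\tau<t$. Every other coordinate $j$ has received no service, so $Q_j(t)=t(1-\eps)/d$ exactly, because the recursion \eqref{eq:queue-recursion} only adds arrivals to it. Take $t=T_0=\floor{d/2}$. At least $d-T_0\ge d/2$ coordinates then satisfy $Q_j(T_0)\ge (1-\eps)\floor{d/2}/d\ge (1-\eps)/6$ for $d\ge2$, since $\floor{d/2}\ge d/3$. This gives
\[
\normtwo{Q_{T_0}}\ \ge\ \frac{1-\eps}{6}\sqrt{d/2}\ \ge\ \frac{\sqrt d}{12\sqrt2}.
\]

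\textbf{Matching the target scale.} For $\eps\in[1/4,1/2]$ we have $\frac{1}{\eps\alphapi}=\frac{\sqrt d}{\eps}\le 4\sqrt d$. Hence
\[
\max_{t\le T_0}\normtwo{Q_t}\ \ge\ \normtwo{Q_{T_0}}\ \ge\ \frac{c}{\eps\alphapi}
\]
with, for example, $c=1/(48\sqrt2)$. Finally, $T_0=\floor{d/2}\le 1/(2\alphapi^2)$, which is polynomial in $1/\alphapi$.

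The main point needing care is the claim that service is wasted. It holds because serving $e_i$ reduces only coordinate $i$, and the positive-part projection means any service exceeding queue $i$ is lost rather than transferred to other coordinates. Beyond this, the argument is bookkeeping; the key design choice is the uniform spread of arrivals, which aligns the load with the worst-served direction of $\PiC$.
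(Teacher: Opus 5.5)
Your proposal is correct and is essentially the paper's own proof: the same simplex schedule set $\Sset=\{e_1,\dots,e_d\}$ with uniform deterministic arrivals $\frac{1-\eps}{d}\one$, and the same pathwise count showing that at least $d-T_0$ coordinates are never served by time $T_0=\floor{d/2}$, each carrying backlog $T_0(1-\eps)/d$. The only difference is bookkeeping: you weaken $(1-\eps)\floor{d/2}/d\ge(1-\eps)/3$ to $(1-\eps)/6$, so you obtain $c=1/(48\sqrt2)$ instead of the paper's $1/(24\sqrt2)$, which does not affect the result.
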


\paragraph{Proof idea.}
The proof in Appendix~\ref{app:geometry-lower-bound} uses the simplex schedule set. In each slot a policy can serve at most one coordinate. By time $T_0=\floor{d/2}$, at least $d-T_0$ coordinates have never been served, and these untouched coordinates alone carry Euclidean backlog of order $\sqrt d=1/\alphapi$. Since $\eps$ is bounded away from zero, this is the same scale as $1/(\eps\alphapi)$.

\vspace{1em}
Therefore, the structure of Corollary~\ref{cor:two-phase-main} is essentially sharp: any general finite-time peak law must include a burn-in envelope, a logarithmic envelope, and a geometric threshold. Section~\ref{sec:selfnorm} explains why the logarithmic coefficient is already geometry-free, and Section~\ref{sec:geometry} shows how state-space collapse can replace the global service margin by local bottleneck geometry.

\section{Self-normalization and geometry-free logarithms}\label{sec:selfnorm}

We now explain the mechanism behind Theorem~\ref{thm:selfnorm-main}. The full proof appears in Appendix~\ref{app:upper-selfnorm}.

\subsection{Key mechanism: self-normalization}\label{sec:proof-idea}

\paragraph{Scalar drift-to-peak lemma.}
The one-queue case is the cleanest setting in which to see where the logarithm comes from. In a single queue, there is no network-level scheduling decision, and the peak question reduces to bounding excursions of a scalar adapted process with negative drift, as in a bounded-increment $G/G/1$ workload process whose traffic intensity is strictly below one. This scalar question is already covered, in essence, by Hajek's classical drift-to-hitting machinery~\cite{Hajek82Drift}, which applies to general real-valued processes. We use a finite-horizon drift-to-peak formulation of this mechanism, with stopping-time bookkeeping tailored to peak bounds. Appendix~\ref{app:scalar} gives the formal statement; the following simplified version captures the intuition.

Let $(X_t)_{t\ge0}$ be a nonnegative adapted process. Suppose that, above a level $x_0$, its conditional drift satisfies
\[
\E[X_{t+1}-X_t\mid \F_t]\le -\Delta,
\]
its upward jumps are bounded by $L$, and its centered increments have conditional Bernstein variance proxy $\sigma^2$. Then, with probability at least $1-\delta$,
\[
\max_{0\le t\le T}X_t
\;\lesssim\;
x_0
+L
+\frac{\sigma^2}{\Delta}
\log\frac{T+1}{\delta}.
\]
Thus the logarithmic coefficient is governed by a one-step fluctuation-to-drift ratio, $\sigma^2/\Delta$. For a single queue, this scalar estimate applies directly. In a stochastic network, however, the correct use is tricky: one must first identify a scalar coordinate of the high-dimensional queue whose drift and fluctuation can be controlled on the right scale. This choice of coordinate, and the cancellation it creates, is the self-normalization step.

\paragraph{The queue direction is the right coordinate.}
A direct quadratic-Lyapunov argument gives the right stability mechanism but loses the sharp logarithmic coefficient when inserted into the scalar drift-to-peak lemma. It bounds fluctuations by the worst one-step scale $D^2$ and drift by the worst directional margin $\eps\alphapi$. The resulting ratio $D^2/(\eps\alphapi)$ combines two worst cases that need not occur in the same queue direction. This mismatch, rather than any single bad direction by itself, is what injects unfavorable geometric dependence into the logarithmic coefficient. Appendix~\ref{app:upper-warmup} gives this calculation.

The self-normalized proof avoids this mismatch by keeping the current queue direction inside the estimate. Let
\[
X_t:=\|Q_t\|_2,
\qquad
u_t:=Q_t/X_t
\quad\text{when }X_t>0.
\]
In one slot the realized net input is $A_t-S_t$. When the queue is large, the Euclidean norm is well approximated by its directional derivative:
\begin{equation}\label{eq:selfnorm-norm-expansion-main}
X_{t+1}-X_t
\le
\ip{u_t}{A_t-S_t}
+\frac{\|A_t-S_t\|_2^2}{2X_t}.
\end{equation}
The first term is the net input projected onto the current queue direction. By Assumption~\ref{ass:interior} and the MaxWeight choice of $S_t$,
\begin{equation}\label{eq:neg_drift}
\E\!\left[\ip{u_t}{A_t-S_t}\mid \F_t\right]
\le
(1-\eps)H(u_t)-H(u_t)
=
-\eps H(u_t).
\end{equation}
The second term in \eqref{eq:selfnorm-norm-expansion-main} is the curvature cost of replacing the norm increment by its linear projection. It is quadratic in the one-slot movement and inverse in the current queue size. Since
\[
\|A_t\|_2\le A_{\max},
\qquad
\|S_t\|_2\le S_{\max},
\qquad
D=A_{\max}+S_{\max},
\]
this curvature term is at most $D^2/(2X_t)$. Hence, once
\[
X_t\gtrsim \frac{D^2}{\eps\alphapi},
\]
the curvature cost is dominated by the projected MaxWeight drift. Beyond this entrance threshold, the norm increment is governed by the projected net input
$
\ip{u_t}{A_t-S_t}.
$

The decisive point is that the same support value $H(u_t)$ controls both the conditional mean and the conditional variance of $
\ip{u_t}{A_t-S_t}
$. These two quantities determine, respectively, the drift scale and the fluctuation scale of $X_t$ once the process is above the threshold. The drift scale is $\eps H(u_t)$, as shown in \eqref{eq:neg_drift}. The fluctuation scale is also proportional to $H(u_t)$: for $\theta\lesssim 1/D$,
\begin{equation}\label{eq:selfnorm-mgf-main}
\log \E\!\left[
\exp\!\left(\theta\Bigl(
\ip{u_t}{A_t-S_t}
-
\E[\ip{u_t}{A_t-S_t}\mid\F_t]
\Bigr)\right)
\middle|\F_t\right]
\lesssim
\theta^2 DH(u_t).
\end{equation}
This is the self-normalization mechanism. The factor $H(u_t)$ measures how much service MaxWeight can extract in direction $u_t$; the same factor also measures the size of the projected randomness in that direction.

Thus a poorly served direction has weaker drift, but it also has proportionally weaker projected fluctuation. The scalar ratio is not formed by pairing the largest possible fluctuation with the smallest possible drift. Along the realized path, the relevant ratio is directional:
\[
\frac{\text{projected fluctuation scale at time }t}
{\text{directional drift scale at time }t}
\;\asymp\;
\frac{DH(u_t)}{\eps H(u_t)}
=
\frac{D}{\eps}.
\]
The support value cancels. Geometry still matters, but only through the entrance level needed for the norm expansion to be drift dominated. Once the process is beyond that threshold, the logarithmic coefficient is the local fluctuation-to-drift ratio $D/\eps$, uniformly over the queue directions $u_t$ exposed along the sample path. If service is deterministic after the scheduling decision, the service-noise contribution in \eqref{eq:selfnorm-mgf-main} disappears, and the coefficient improves from $D/\eps$ to $A_{\max}/\eps$.

\subsection{Beyond bounded primitives: queue-Bernstein conditions}\label{sec:qb-unbounded}

Boundedness (Assumption \ref{ass:bounded-primitives}) is only one route to \eqref{eq:selfnorm-mgf-main}. The real
structural requirement is mass sensitivity: a direction that carries little mean traffic should also carry little stochastic variation. Motivated by this requirement, we identify a new, general distribution class of random vectors that enable self-normalization, leading to an unbounded, variance-sensitive extension of Theorem~\ref{thm:selfnorm-main}.

\begin{definition}[Queue-Bernstein condition]\label{def:queue-bernstein}
Let \(Z\in\Rp^d\) be nonnegative and let \(\mathcal G\) be a sigma-field. Write \(m:=\E[Z\mid\mathcal G]\). We say that \(Z\) is \emph{queue-Bernstein} with parameters \((\nu,b)\) conditional on \(\mathcal G\) if, for every \(w\in[0,1]^d\) and every \(|\theta|<1/b\),
\begin{equation}\label{eq:QB}
\log \E\!\left[
\exp\!\left(\theta\ip{w}{Z-m}\right)
\middle| \mathcal G\right]
\le
\frac{\theta^2\nu\ip{w}{m}}{2(1-b|\theta|)}
\qquad\text{a.s.}
\end{equation}
Here \(1/0=\infty\). A deterministic primitive has parameters \((0,0)\).
\end{definition}

Queue-Bernstein can be understood as a strengthened version of the sub-exponential condition. The point of \eqref{eq:QB} is that every nonnegative projection has a Bernstein MGF whose variance proxy is proportional to its own mean. We provide many natural examples satisfying the queue-Bernstein condition below.

\begin{example}[Standard queue-Bernstein examples]\label{ex:qb-primitives}
The following primitives are queue-Bernstein conditionally on any past with respect to which their means are fixed. Appendix~\ref{app:qb-calculus} gives the calculations.
\begin{enumerate}[label=\textup{(\roman*)},leftmargin=2.2em,itemsep=0.25em]
\item \emph{Bernoulli and binomial coordinates.} Independent unit Bernoulli coordinates, and sums of independent unit Bernoulli variables, satisfy \eqref{eq:QB} with \((\nu,b)=(1,1)\).
\item \emph{Poisson coordinates.} Independent Poisson coordinates with arbitrary predictable means satisfy \eqref{eq:QB} with \((\nu,b)=(1,1)\).
\item \emph{Bounded total mass.} If \(\|Z\|_1\le L\) a.s., then \(Z\) is queue-Bernstein with \((\nu,b)=(L,L)\).
\item \emph{Bounded-batch compound Poisson input.} If \(Z=\sum_{k=1}^N B_k\), where \(N\) is Poisson and \(\|B_k\|_1\le L\) a.s., then \(Z\) is queue-Bernstein with \((\nu,b)=(L,L)\).
\item \emph{Gamma and exponential coordinates.} Independent Gamma coordinates with scales bounded by \(\beta\), including exponential coordinates, satisfy \eqref{eq:QB} with \((\nu,b)=(\beta,\beta)\).
\end{enumerate}
\end{example}

The following theorem
extends Theorem~\ref{thm:selfnorm-main} to general queue-Bernstein arrival and service classes.

\begin{theorem}[Peak law under queue-Bernstein conditions]\label{thm:qb-unbounded-main}
Consider MaxWeight scheduling under Assumption~\ref{ass:interior}. Conditional on $\F_t$, assume $A_t$ and $S_t$ are independent, $A_t$ is queue-Bernstein with parameters $(\nu_A,b_A)$ and mean $\lambda_t$, and $S_t$ is queue-Bernstein with parameters $(\nu_S,b_S)$ and mean $s_t$, uniformly over $t$. Let $x_{\rm QB}$ be the geometric threshold formula defined in \eqref{eq:xQB-app} (deferred to Appendix~\ref{app:qb-entrance-scale}). Then, for every $T\ge1$ and $\delta\in(0,1)$, with probability at least $1-\delta$,
\[
\PeakT
\lesssim\left[
  x_{\rm QB}
  +
  \left(b_A+b_S+\frac{\nu_A+\nu_S}{\eps}\right)
  \log\frac{T+1}{\delta}
\right].
\]
Moreover, if $\|A_t-S_t\|_2\le D$ a.s., then
\[
  x_{\rm QB}\lesssim\left(1+D+\frac{D^2}{\eps\alphapi}\right).
\]
Consequently, if the queue-Bernstein constants also satisfy $b_A+b_S+\nu_A+\nu_S\lesssim D$, the guarantee recovers the bounded self-normalized law, up to universal constants.
\end{theorem}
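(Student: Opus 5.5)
We follow the self-normalization route of Section~\ref{sec:selfnorm}, with the bounded-increment MGF bound \eqref{eq:selfnorm-mgf-main} replaced by the queue-Bernstein condition. Set $X_t=\normtwo{Q_t}$ and $u_t=Q_t/X_t$. We then verify the hypotheses of the scalar drift-to-peak lemma of Appendix~\ref{app:scalar} for $X_t$ above a level $x_{\rm QB}$. The target is drift $-\Delta_t\asymp-\eps H(u_t)$ and a Bernstein proxy with variance $\sigma_t^2\asymp(\nu_A+\nu_S)H(u_t)$ and scale $b\asymp b_A+b_S$. The scalar lemma, in its Bernstein form with a predictable ratio, then yields the coefficient $\sigma_t^2/\Delta_t+b=(\nu_A+\nu_S)/\eps+b_A+b_S$.

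\textbf{Step 1 (linear part).} Apply \eqref{eq:QB} with $w=u_t\in[0,1]^d$ to $A_t$, and with $\theta\mapsto-\theta$ to $S_t$. Conditional independence adds the two log-MGFs. Using $\ip{u_t}{\lambda_t}\le(1-\eps)H(u_t)\le H(u_t)$ and the MaxWeight identity $\ip{u_t}{s_t}=H(u_t)$, we obtain for $|\theta|<1/(b_A+b_S)$
\[
\log\E\!\left[e^{\theta\ip{u_t}{A_t-S_t}}\,\middle|\,\F_t\right]\le -\theta\eps H(u_t)+\frac{\theta^2(\nu_A+\nu_S)H(u_t)}{2(1-(b_A+b_S)|\theta|)}.
\]
This is exactly the self-normalized cancellation: $H(u_t)$ multiplies both the drift and the variance proxy.

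\textbf{Step 2 (curvature, the main obstacle).} We still have $X_{t+1}-X_t\le\ip{u_t}{A_t-S_t}+\normtwo{A_t-S_t}^2/(2X_t)$, since the projection $(\cdot)^+$ only decreases the norm. Now, however, the curvature term is unbounded and has no mass-proportional structure. The plan is to control it inside the exponential by Cauchy--Schwarz (or H\"older) on the joint MGF. The factor $\E[e^{2\theta\normtwo{A_t-S_t}^2/(2X_t)}\mid\F_t]$ must be bounded by $e^{c\theta\eps H(u_t)}$, for instance $e^{\theta\eps H(u_t)/2}$, whenever $X_t\ge x_{\rm QB}$. This requires a tail bound on $\normtwo{A_t-S_t}^2$. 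Such a bound follows from queue-Bernstein applied coordinatewise with $w=e_i$, together with union or sub-exponential bounds, and yields a quantity depending on $\nu,b$, $\sup\normone{\lambda_t}$, and $\alphapi$ (via $H(u_t)\ge\alphapi$). The threshold $x_{\rm QB}$ in \eqref{eq:xQB-app} is defined as the smallest level at which this absorption works for all $\theta$ in the admissible range $\theta\lesssim\min\{1/b,\eps/\nu\}$. Once the curvature is absorbed, the conditional MGF of the increment of $X_t$ satisfies a one-step bound of the form $\E[e^{\theta^\ast(X_{t+1}-X_t)}\mid\F_t]\le1$ for $X_t\ge x_{\rm QB}$, with $\theta^\ast\asymp\min\{1/(b_A+b_S),\,\eps/(\nu_A+\nu_S)\}$. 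Crucially, $\theta^\ast$ is independent of $u_t$.

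\textbf{Step 3 (peak bound and bounded case).} Since $e^{\theta^\ast X_t}$ is a supermartingale above $x_{\rm QB}$, we feed it into the scalar lemma. The stopping-time bookkeeping restarts at each upcrossing of $x_{\rm QB}$. An overshoot also appears at entry, and it is handled by the MGF of a single increment, which costs $O(1/\theta^\ast)$. A union bound over the at most $T+1$ entry times then gives $\PeakT\lesssim x_{\rm QB}+(1/\theta^\ast)\log((T+1)/\delta)$, which is the claim. For the bounded statement, when $\normtwo{A_t-S_t}\le D$ the curvature is deterministically at most $D^2/(2X_t)\le\eps\alphapi/4\le\eps H(u_t)/4$ once $X_t\ge 2D^2/(\eps\alphapi)$. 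The entry overshoot is then at most $D$, and the extra $1$ accounts for normalization. This gives $x_{\rm QB}\lesssim1+D+D^2/(\eps\alphapi)$. Under $b_A+b_S+\nu_A+\nu_S\lesssim D$, the log coefficient is $\lesssim D/\eps$ because $\eps<1$, so the bound recovers Theorem~\ref{thm:selfnorm-main}.
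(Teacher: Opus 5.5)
Your overall architecture matches the paper's. You use the queue-Bernstein bound with $w=u_t$, cancel $H(u_t)$ between drift and variance proxy via MaxWeight, split off the curvature by Cauchy--Schwarz, make the curvature budget uniform through $H(u_t)\ge\alphapi$, and finish with the restarted drift-to-peak lemma.

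The gap is in Step~2. You exponentiate the \emph{untruncated} curvature term, so you need $\E\bigl[\exp(\theta J_t^2/X_t)\mid\F_t\bigr]$ to be small, where $J_t:=\normtwo{A_t-S_t}$. Queue-Bernstein primitives are only sub-exponential. For Poisson or Gamma/exponential coordinates (items (ii) and (v) of Example~\ref{ex:qb-primitives}, which are exactly the unbounded cases the theorem is meant to cover), $J_t$ has exponential tails. Hence $\E[e^{aJ_t^2}\mid\F_t]=\infty$ for every $a>0$, at every level $X_t$. Coordinatewise sub-exponential or union bounds cannot repair this, because the best tail they give is $\Pbb(J_t^2>r)\le e^{-c\sqrt r}$. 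Your absorption therefore fails at every level, and your threshold is $+\infty$ in the unbounded regime. As written, the argument proves the theorem only when $J_t^2$ itself has an exponential moment, for example bounded jumps.

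The missing idea is to truncate the curvature before exponentiating. Since also $\normtwo{q+y}-\normtwo{q}\le\normtwo{y}$, on $\{\normtwo{q}\ge x\}$ the error satisfies
$\normtwo{q+y}-\normtwo{q}-\ip{u}{y}\le\rho_x(\normtwo{y}):=\min\{\normtwo{y}^2/(2x),\,2\normtwo{y}\}$.
The linear branch has a finite exponential moment at tilt $4\theta$ via the all-ones queue-Bernstein projection. This uses $J_t\le\normone{A_t}+\normone{S_t}$ together with the uniform bounds on $\normone{\lambda_t}$ and $\normone{s_t}$. Splitting on $\{J_t\le R\}$, then letting $x\to\infty$ and afterwards $R\to\infty$, gives $\sup_t\log\E[e^{2\theta\rho_x(J_t)}\mid\F_t]\downarrow0$. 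The paper's $x_{\rm curv}(\theta)$ is the level at which this drops below $\theta\eps\alphapi/4$.

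Two smaller points. First, the entry overshoot costs $\theta^{-1}\log M_+(\theta)$. By Jensen this is at least $\E[(\Delta X_t)^+\mid\F_t]$, so it scales with the typical one-slot jump, not with $1/\theta^\ast$. It cannot be folded into the logarithmic term and must sit in the threshold, which is why \eqref{eq:xQB-app} contains $\theta_{\rm QB}^{-1}\log K_{\rm up}(\theta_{\rm QB})$. Second, the degenerate case $b_A+b_S+(\nu_A+\nu_S)/\eps=0$ gives $\theta^\ast=\infty$ in your setup. It needs a separate deterministic pathwise argument.
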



The bound is \emph{variance-sensitive}. Even in a bounded model, the Euclidean bound may see only a worst-case jump of size $D$, while \eqref{eq:QB} sees the projected conditional mean. When many small independent coordinates replace one large batch, $\nu_A$ can stay order one although $D$ grows with dimension. This is the improvement exploited for Bernoulli IQS in Section~\ref{sec:iqs} and illustrated in the experiments.

\paragraph{One-dimensional special case: recovering  Kingman's scale.} To further illustrate Theorem~\ref{thm:qb-unbounded-main}, we consider the one-dimensional queue as a special case, where the queue-Bernstein parameter $\nu$ can be put in familiar queueing language. Write the scalar workload recursion as
\begin{equation*}\label{eq:scalar-gg1-queue}
  Q_{t+1}=\bigl(Q_t+A_t-S_t\bigr)^+,
\end{equation*}
where $A_t$ is work added and $S_t$ is the service opportunity in the slot. This is the $G/G/1$ Lindley recursion.\footnote{Kingman's approximation is usually stated for the steady-state waiting time in the $G/G/1$ Lindley recursion. We write the same recursion as a scalar workload queue to keep notation aligned with the rest of the paper; the mathematics is the same.}

In one dimension there is no Euclidean curvature term, so the geometric entrance threshold disappears. Appendix~\ref{app:one-dimensional-qb} proves the following consequence of the queue-Bernstein condition: if $\lambda_t:=\E[A_t\mid\F_t]$, $s_t:=\E[S_t\mid\F_t]$, and $\lambda_t\le(1-\eps)s_t$, then, up to terms independent of $1/\eps$,
\[
  \PeakT
  \;\lesssim\;
  \frac{\nu_A+\nu_S}{\eps}\log\frac{T+1}{\delta}
\]
with probability at least $1-\delta$. For i.i.d. primitives, $\nu$ is the variance-to-mean scale. Thus, with $\tau:=\E[S]$, $\rho:=1-\eps\in[0.5,1)$, and squared coefficients of variation $c_s^2,c_a^2$, the dominant logarithmic coefficient has the same order as
\begin{equation*}
\frac{\nu_A +\nu_S}{\eps}
\asymp
\frac{\rho}{1-\rho}\frac{c_s^2+c_a^2}{2}\,\tau
\end{equation*}
where $\asymp$ means equality up to universal numerical constants. This is the Kingman scale \cite{Kingman61}. Its role here is different: it is not an asymptotic approximation to the steady-state mean, which is strictly smaller, but the nonasymptotic logarithmic coefficient governing the finite-horizon peak.

\section{Geometric thresholds and state-space collapse}\label{sec:geometry}

Section~\ref{sec:selfnorm} showed that the logarithmic term need not carry the global geometry of the capacity region. What remains is the entrance threshold. The global theorem, Theorem~\ref{thm:selfnorm-main}, waits until the queue is large enough to certify drift uniformly over all directions; this costs the global margin $\alphapi$. That cost is unavoidable without further structure, as Theorem~\ref{thm:geometry-lower-main} shows. In many networks, however, the queue does not explore all directions. It accumulates near the bottleneck set selected by the active capacity constraints. Then the final peak bound should pay for entering a neighborhood of that local set, not for the worst margin of the whole $\Pi$.

State-space collapse (SSC) is the geometric statement that makes this refinement precise. It says that, over the relevant time horizon, the queue does not explore the whole state space; instead, it remains close to a lower-dimensional bottleneck set determined by the active face of the capacity region. This principle is classical in heavy-traffic theory; see \citet*{Reiman84SSC} and \citet*{Williams16} for background, and \citet*{Stolyar04} and \citet*{EryilmazSrikant12} for MaxWeight-type collapse arguments. Here we extend the machinery to a finite-time, nonstationary form. On a high-probability collapse event, the queue stays within a controlled neighborhood of the local bottleneck set throughout the horizon. We then analyze the queue peak accumulated inside the neighborhood, where the global margin can be replaced by the local bottleneck geometry.

\subsection{Two queues: collapse removes the global margin}

The following two-queue example is the smallest setting in which the global and local geometries separate. The parameter $a$ makes the capacity triangle thin, so the global margin satisfies $\alphapi\asymp a$. A geometry-free theorem that must protect against every direction therefore predicts an entrance cost of order $1/a$. The peak bound proved in this subsection does not.

\begin{example}[A two-queue geometry]\label{ex:two-queue}
Fix $a\in(0,1]$ and let
\[
\Sset=\{(1,0),(0,a)\}\subset\Rp^2.
\]
Throughout this example service is deterministic. With our capacity-region convention,
\[
\PiC=\{r\in\Rp^2:r_1+r_2/a\le1\},
\qquad
H(q)=\max\{q_1,aq_2\},
\qquad
\alphapi=\frac{a}{\sqrt{1+a^2}}\asymp a.
\]
Recall that $A_i(t)$ denotes the $i$th component of $A_t$. Assume
\[
A_1(t)\in\{0,1\},
\qquad
0\le A_2(t)\le1
\qquad\text{a.s. for every }t.
\]
Assume also that the conditional means satisfy
\begin{equation}\label{eq:twoqueue-slack}
\E[A_1(t)\mid\F_t]+\frac{1}{a}\E[A_2(t)\mid\F_t]\le1-\eps
\qquad\text{a.s. for every }t.
\end{equation}
\end{example}

Figure~\ref{fig:twoqueue-ssc-shadow} illustrates the geometry. The green triangle is the capacity region $\PiC$. Its upper green segment lies on
\(
r_1+r_2/a=1\); this is the exposed/active face associated with the load constraint in \eqref{eq:twoqueue-slack}. Its normal is $v:=(a,1)$. Define
\[
Y_t:=\ip{v}{Q_t}=aQ_{1,t}+Q_{2,t}.
\]
Thus $Y_t$ is the queue length projected onto the bottleneck normal. A bound on $Y_t$ alone is not enough to tightly control $\PeakT$: from $Y_t\le M$ we can only get $Q_{1,t}\le M/a$, which still pays $1/a$. The \emph{collapse strip} in Figure~\ref{fig:twoqueue-ssc-shadow} is the mechanism that removes this global-margin loss: state-space collapse confines \(Q_t\) to the strip, so the analysis no longer has to protect directions that the process never reaches.

\begin{figure}[H]
\centering
\begin{tikzpicture}[scale=1.05, line cap=round, line join=round]

  \draw[->] (0,0) -- (5.2,0) node[below] {$q_1$};
  \draw[->] (0,0) -- (0,4.4) node[left] {$q_2$};
  \fill[black!10] (0,0) -- (0,4.2) -- (2.88, 4.2) -- (2.08,0) -- cycle;
    \coordinate (A) at (0.0,0.4);
    \coordinate (B) at (2,0.0);
  \fill[green!16] (0,0) -- (A) -- (B) -- cycle;
  \draw[blue!70!black,thick,dashed] (2.08,0) -- (2.88,4);
  \node[blue!70!black,anchor=west] at (2.58,2.3) {$q_1-aq_2 =1+a^2$};
      \node[black] at (-0.2,0.17) {$a$};
    \node[black] at (1,-0.25) {$1$};
  \node[black!70] at (1.25,4) {collapse strip};

  \draw[->,very thick,red!70!black] (1,0.2) -- (1.36,2);
  \node[red!70!black,anchor=west] at (1.1,2.35) {$v$};
    \foreach \p in {A,B} {
  \fill[red!85!black] (\p) circle (1.5pt);
}
\draw[green!40!black, thick] (A)--(B);

\end{tikzpicture}
\caption{The local geometry behind the two-queue bound. The green triangle is the capacity region, and the green segment is the bottleneck face associated with the load constraint. The red normal $v=(a,1)$ defines the projected queue length $Y=\langle v,q\rangle$. Without the strip, $Y\le M$ permits $q_1$ as large as $M/a$; inside the strip $q_1\le aq_2+O(1)$, the same projection controls $q_1+q_2$ at constant cost.}
\label{fig:twoqueue-ssc-shadow}
\end{figure}

\begin{lemma}[Two-queue finite-time collapse and local drift]\label{lem:twoqueue-ssc}
In the deterministic-service two-queue instance of Example~\ref{ex:two-queue}, the following two facts hold for every sample path.
\begin{enumerate}[leftmargin=2.2em,itemsep=0.25em,topsep=0.25em]
\item \emph{Collapse strip.} The orthogonal overshoot is uniformly bounded:
\[
0\le (Q_{1,t}-aQ_{2,t})^+\le1+a^2
\qquad\text{for every }t\ge0.
\]
\item \emph{No-waste projection drift.} If $Y_t\ge2$, then the chosen service is fully used in the bottleneck direction and
\[
Y_{t+1}=Y_t+aA_1(t)+A_2(t)-a.
\]
Consequently,
\[
\E[Y_{t+1}-Y_t\mid\F_t]\le -a\eps
\qquad\text{on }\{Y_t\ge2\}.
\]
\end{enumerate}
\end{lemma}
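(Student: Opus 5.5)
The plan is to work with the explicit form of MaxWeight in this instance. Since $H(q)=\max\{q_1,aq_2\}$, MaxWeight serves queue~1 (mean service $(1,0)$) when $Q_{1,t}>aQ_{2,t}$, serves queue~2 (mean service $(0,a)$) when $Q_{1,t}<aQ_{2,t}$, and may break ties either way. Both parts are deterministic statements about this recursion. They use only $A_1(t)\in\{0,1\}$, $0\le A_2(t)\le 1$, deterministic service and $Q_0=0$; the slack condition \eqref{eq:twoqueue-slack} enters only in the final drift line.

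\emph{Part 1 (collapse strip).} Set $Z_t:=Q_{1,t}-aQ_{2,t}$. I will prove by induction that $Z_{t+1}^+\le\max\{Z_t^+,\,1+a^2\}$; since $Z_0=0$, this gives the claim. There are two cases.
\begin{itemize}
\item If queue~1 is served, then $Q_{2,t+1}=Q_{2,t}+A_2(t)\ge Q_{2,t}$ and $Q_{1,t+1}=(Q_{1,t}+A_1(t)-1)^+$. If the positive part is zero, then $Z_{t+1}\le0$. Otherwise $Z_{t+1}\le Z_t+A_1(t)-1\le Z_t$.
\item If queue~2 is served, then $Z_t\le0$. Moreover $Q_{1,t+1}=Q_{1,t}+A_1(t)$ and $Q_{2,t+1}\ge Q_{2,t}+A_2(t)-a$. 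Hence $Z_{t+1}\le Z_t+A_1(t)-aA_2(t)+a^2\le 1+a^2$.
\end{itemize}
Ties ($Z_t=0$) fall under whichever case is chosen, so they need no separate treatment.

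\emph{Part 2 (no-waste drift).} The key is to show that on $\{Y_t\ge2\}$ the projection $(\cdot)^+$ is inactive in the served coordinate, i.e.\ the served queue has at least one full service unit available.
\begin{itemize}
\item \emph{Queue~2 served.} Then $Q_{1,t}\le aQ_{2,t}$, so $Y_t=aQ_{1,t}+Q_{2,t}\le(1+a^2)Q_{2,t}$. This gives $Q_{2,t}\ge 2/(1+a^2)\ge1\ge a$, and no service is wasted.
\item \emph{Queue~1 served.} This is the step I expect to be the main obstacle. Using only $Q_{1,t}\ge aQ_{2,t}$ and $Y_t\ge2$ gives merely $Q_{1,t}\ge 2a/(1+a^2)$, which can be less than $1$ when $a$ is small. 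The fix I would use is integrality. Queue~1 starts at $0$, receives arrivals in $\{0,1\}$, and is served in units of $1$, so $Q_{1,t}\in\mathbb Z_{\ge0}$ along every path (a one-line induction). If $Q_{1,t}=0$, then $aQ_{2,t}\le Q_{1,t}=0$ forces $Q_{2,t}=0$ and $Y_t=0<2$. Hence on $\{Y_t\ge 2\}$ we have $Q_{1,t}\ge1$, and again no service is wasted.
\end{itemize}

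In either case, $Y_{t+1}=a(Q_{1,t}+A_1(t))+Q_{2,t}+A_2(t)-a=Y_t+aA_1(t)+A_2(t)-a$. Taking conditional expectations and using \eqref{eq:twoqueue-slack},
\[
\E[aA_1(t)+A_2(t)\mid\F_t]=a\Bigl(\E[A_1(t)\mid\F_t]+\tfrac1a\E[A_2(t)\mid\F_t]\Bigr)\le a(1-\eps).
\]
Therefore $\E[Y_{t+1}-Y_t\mid\F_t]\le -a\eps$ on $\{Y_t\ge2\}$, which is $\F_t$-measurable.
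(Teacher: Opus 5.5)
Your proof is correct and follows the paper's argument. Part 1 is an induction on the overshoot, split by which queue MaxWeight serves. For the no-waste step you use the same integrality observation in the queue-1 case ($Q_{1,t}\in\mathbb Z_{\ge0}$, so $Q_{1,t}=0$ would force $Y_t=0$) and the same bound $Q_{2,t}\ge 2/(1+a^2)\ge a$ in the queue-2 case. The only difference is in Part 1: there you use $(x)^+\ge x$ and treat the truncated case directly, which avoids the paper's appeal to integrality at that point.
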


The lemma separates the two ingredients. The no-waste identity gives negative drift for the scalar projection once $Y_t$ is above a constant. The collapse strip gives the geometric conversion. Indeed, on the strip,
\[
Q_{1,t}+Q_{2,t}\le (1+a)Y_t+O(1),
\]
whereas without the strip the same projected bound would lose a factor $1/a$. This is why the final peak bound below has no global-margin term.

\begin{proposition}[A local threshold in the two-queue example]\label{prop:twoqueue-peak}
For the deterministic-service two-queue instance in Example~\ref{ex:two-queue}, for every $T\ge1$ and $\delta\in(0,1)$, with probability at least $1-\delta$
\[
\PeakT
\;\lesssim\;
1+\frac{1}{\eps}\log\frac{T+1}{\delta}.
\]
In particular, the bound is uniform in the anisotropy parameter $a$, even though $\alphapi\asymp a$.
\end{proposition}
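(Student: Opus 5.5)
The plan is to reduce the two-dimensional peak to the scalar projection $Y_t=\ip{v}{Q_t}=aQ_{1,t}+Q_{2,t}$, using Lemma~\ref{lem:twoqueue-ssc}. I would then apply the scalar drift-to-peak lemma (Appendix~\ref{app:scalar}) to $Y_t$, and finally convert the bound on $Y$ into a bound on $\PeakT$ through the collapse strip.

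\textbf{Step 1: verify the scalar hypotheses for $Y_t$ with threshold $x_0=2$.} By part 2 of Lemma~\ref{lem:twoqueue-ssc}, on $\{Y_t\ge2\}$ we have the exact identity $Y_{t+1}-Y_t=aA_1(t)+A_2(t)-a$, whose conditional mean is at most $-a\eps$ by \eqref{eq:twoqueue-slack}. Upward jumps are bounded by $L=a+1\le2$. For the variance proxy, set $Z:=aA_1(t)+A_2(t)\in[0,a+1]$ with mean $\mu\le a(1-\eps)\le a$. Since $Z$ is nonnegative and bounded by $1+a\le2$, its conditional variance is at most $2\mu\le 2a$. The Bernstein MGF bound therefore gives, for $\theta\lesssim1$,
\[
\log\E\bigl[e^{\theta(Z-\mu)}\mid\F_t\bigr]\lesssim\theta^2 a .
\]
This is the self-normalization: the variance proxy is $\sigma^2\asymp a$ and the drift is $\Delta=a\eps$, so $\sigma^2/\Delta\asymp1/\eps$ and the factor $a$ cancels.

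\textbf{Step 2: apply the scalar lemma.} With probability at least $1-\delta$,
\[
\max_{t\le T}Y_t\lesssim 2+2+\frac{1}{\eps}\log\frac{T+1}{\delta}.
\]
Below level $2$ we need no drift information; we only need the jump bound $L$, which is part of the scalar lemma's stopping-time bookkeeping.

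\textbf{Step 3: geometric conversion.} By part 1 of Lemma~\ref{lem:twoqueue-ssc}, $Q_{1,t}\le aQ_{2,t}+1+a^2\le Y_t+2$. Also $Q_{2,t}\le Y_t$. Hence
\[
\normtwo{Q_t}\le Q_{1,t}+Q_{2,t}\le 2Y_t+2,
\]
and the bound from Step 2 transfers to $\PeakT$ with constants independent of $a$.

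\textbf{Main obstacle.} The delicate step is Step 1's MGF bound. It must scale as $\theta^2 a$ rather than $\theta^2$, uniformly for $\theta$ up to a constant. A crude Hoeffding bound gives $\theta^2(1+a)^2$, which would reintroduce $1/a$ into the coefficient. I would first try the elementary inequality $e^{x}\le1+x+x^2$ for $|x|\le1$, together with $\E[Z^2\mid\F_t]\le(1+a)\mu$. If more care is needed, I would instead invoke the bounded-total-mass queue-Bernstein example (Example~\ref{ex:qb-primitives}(iii)) with $w=(a,1)$ and $L\le2$.
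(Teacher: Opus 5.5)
Your proposal is correct and follows essentially the same route as the paper: reduce to $Y_t$, get a moment-generating-function bound with variance proxy of order $a$ against drift $a\eps$ above level $2$, apply the scalar peak lemma with upward jumps at most $2$, and convert back through the collapse strip. The only difference is cosmetic: the paper checks the exponential-drift condition \eqref{eq:ED} directly via the chord bound $e^{\theta x}\le 1+\frac{x}{2}(e^{2\theta}-1)$ on $[0,2]$ with $\theta=\eps/4$ and then applies Lemma~\ref{lm:exponential-peak}, which also sidesteps the fact that Definition~\ref{def:CB} is phrased globally, whereas you verified the Bernstein bound only on $\{Y_t\ge2\}$ (which is all the argument actually uses).
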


\paragraph{Proof sketch.}
The scalar recursion for $Y_t$ has drift of order $a\eps$ above a constant level. Its upward fluctuations have variance proxy $O(a)$ because the projection sees $aA_1(t)+A_2(t)$. The factors of $a$ cancel in the Bernstein peak estimate, giving
\[
\max_{t\le T}Y_t\lesssim 1+\frac1\eps\log\frac{T+1}{\delta}
\]
with high probability. The collapse strip then converts this projected bound to the stated total-backlog bound. Appendix~\ref{app:prep2queue} gives the details.

\subsection{Single-bottleneck collapse and complete resource pooling}
We now pass from the two-queue example to a generalized switch. The global theorem pays the margin \(\alphapi\) because, without further structure, a large queue can point in any nonnegative direction. State-space collapse changes the geometry. If the dynamics keep the queue close to its active bottleneck set, then the peak is governed by the local bottleneck geometry, together with the finite-horizon cost of keeping the queue near that set.

Complete resource pooling (CRP) is the cleanest setting in which this reduction becomes one-dimensional. It is the canonical single-bottleneck condition in the heavy-traffic theory of state-space collapse \cite{Reiman84SSC,Stolyar04,EryilmazSrikant12,Williams16}. Under CRP, the saturated capacity face has a normal cone generated by a single ray. The queue may fluctuate, but its large component is pulled toward this bottleneck ray. Thus the high-dimensional peak problem separates into two tasks: prove that the queue remains in a tube around the ray, and, within that tube, control the one-dimensional bottleneck excursion while paying the tube radius to translate this control back to the full queue peak.

We answer these questions in that order. First, CRP gives finite-horizon SSC: with high probability, the queue remains in a controlled tube around the bottleneck ray. Second, on this SSC event, MaxWeight reduces the remaining peak analysis to a one-dimensional motion along the bottleneck ray. The tube radius is then paid when translating this one-dimensional bound back into a bound on \(\PeakT\). The reusable conversion from a single-bottleneck collapse bound to a queue-peak bound is proved in Appendix~\ref{app:collapse-to-peak}; the main text states the resulting CRP peak law directly.

Under Assumption~\ref{ass:interior}, define
\begin{equation}\label{eq:nominal-load-def}
\rcirc_t:=\frac{\lambda_t}{1-\eps}\in\PiC .
\end{equation}
as the \emph{nominal load point}. An exposed face is \emph{active} for the load process if it contains the nominal points $\rcirc_t$. CRP says that the same active facet is selected at every time and that the nominal points stay a positive distance from its relative boundary. The normal to this facet is the bottleneck direction.

\begin{assumption}[Nonstationary complete resource pooling (CRP)]
\label{ass:crp}
There is a vector $v\in\Rp^d$ with $\normtwo{v}=1$ and $\mu:=H(v)>0$ such that
\[
\Hface:=\{r\in\R^d:\ip{v}{r}=\mu\},
\qquad
F^\star:=\PiC\cap\Hface
\]
is the unique exposed bottleneck facet.\footnote{Here ``facet'' is used in the relative sense: $F^\star$ is full-dimensional inside its supporting hyperplane, equivalently $\operatorname{aff}(F^\star)=\Hface$, and has nonempty relative interior in $\Hface$.} The nominal points in \eqref{eq:nominal-load-def} satisfy $\rcirc_t\in F^\star$ a.s. for every $t\ge0$. Moreover, there is a deterministic constant $\delta_{\mathrm{face}}>0$ such that, almost surely, for every integer $t\ge0$,
\begin{equation}\label{eq:face-margin-def}
\bigl(B_2(\rcirc_t,\delta_{\mathrm{face}})\cap\Hface\bigr)
\subseteq F^\star .
\end{equation}
That is, the distance from $\rcirc_t$ to the relative boundary of $F^\star$ is uniformly bounded below by $\delta_{\mathrm{face}}$.
\end{assumption}

The slack $\eps$ and the face margin $\delta_{\mathrm{face}}$ have different geometric meanings. The slack measures distance below the bottleneck face, in the normal direction $v$. The face margin measures room to move \emph{within} the active face. That transverse room is what lets MaxWeight compare its chosen schedule with nearby points of $F^\star$ and push the perpendicular component back toward the ray.

Figure~\ref{fig:crp_geometry} illustrates the assumption. In a stationary system, $\rcirc_t\equiv\rcirc$ and CRP is a positive relative-inradius condition around one nominal load point. Here $\rcirc_t$ may move with time and may depend on the past, but it must remain on the same face. The bottleneck direction is fixed even though the input law is not.

\begin{figure}[htbp]
\centering
\begin{tikzpicture}[scale=0.85, line cap=round, line join=round]

\coordinate (A) at (0.0,0.0);
\coordinate (B) at (4.2,0.0);
\coordinate (C) at (5.25,1.6);
\coordinate (D) at (4.00,4.65);
\coordinate (E) at (1.85,4.65);
\coordinate (F) at (0.45,3.35);
\coordinate (H) at (1.25,3.05); %
\coordinate (G) at (3.35,3.55); %
\coordinate (I) at (1.25,1.05); %
\coordinate (J) at (4.35,1.75); %

\coordinate (H2) at (1.7500,2.6551);
\coordinate (G2) at (3.0970,2.9758);
\coordinate (J2) at (3.5887,2.0907);
\coordinate (I2) at (1.7500,1.6755);
\draw[->, thin] (C) -- ++(1.15,0) node[right] {$q_2$};
\draw[->, thin] (E) -- ++(0,0.95) node[above] {$q_3$};
\draw[->, thin] (A) -- ++(-1.45,-1.10) node[below left] {$q_1$};

\fill[lightblue] (H) -- (G) -- (J) -- (I) -- cycle;
\colorlet{edgec}{green!40!black}
\draw[edgec, thick] (A)--(B)--(C)--(D)--(E)--(F)--(A);
\draw[edgec, thick] (J)--(C);
\draw[edgec, thick] (F)--(H);
\draw[edgec, thick] (I)--(B);
\draw[edgec, thick] (H)--(G);
\draw[edgec, thick] (G)--(D);
\draw[edgec, thick] (G)--(J);
\draw[edgec, thick] (I)--(H);
\draw[edgec, thick] (I)--(J);
\draw[edgec, thick] (I)--(A);
\draw[edgec, thick] (J)--(B);
\draw[edgec, thick] (H)--(G)--(J)--(I)--cycle;
\draw[black, dashed] (H2)--(G2)--(J2)--(I2)--cycle;
\foreach \p in {A,B,C,D,E,F,G,H,I,J} {
  \fill[red!85!black] (\p) circle (2.2pt);
}

\node at (0.95,4.5) {\color{black}$\PiC$};
\node at (3.85,2.0) {\color{cleanblue}\small$F^\star$};
\node at (2.70, 2.45) {\color{black}\small$\rcirc_t$};
\node[rotate=13.5] at (2.68,3.13) {\color{black}\tiny$\delta_{\mathrm{face}}$};
\node at (4.7,-0.3) {\color{red!85!black}$\mathcal S$};

\coordinate (Rcirc) at (2.35,2.3);
\draw[black, densely dotted] ($(2.4235,2.8155)$) -- ($(2.3000,3.3000)$);
\node[
    star,
    star points=5,
    star point ratio=2.25,
    fill=black,
    draw=none,
    minimum size=6pt,
    inner sep=0pt,
    rotate=0
] at (Rcirc){};
\end{tikzpicture}
\caption{Nonstationary CRP geometry. The nominal point $\rcirc_t$ may move
with time and depend on the past, but it always lies in the
$\delta_{\mathrm{face}}$-interior of the same exposed face $F^\star$,
illustrated by the dashed inner polygon. The normal to this face is the
bottleneck direction $v$.}
\label{fig:crp_geometry}
\end{figure}

For the CRP normal $v$, define
\[
Y_t:=\ip{v}{Q_t},
\qquad
Q_t^\parallel:=Y_tv,
\qquad
Q_t^\perp:=Q_t-Q_t^\parallel .
\]
We call $Y_t$ the \emph{bottleneck projection}. Since $v,Q_t\ge0$ and $\normtwo{v}=1$, $Q_t^\parallel$ is the orthogonal projection of $Q_t$ onto the ray spanned by $v$, while $Q_t^\perp$ is the transverse residual.

The first result is the promised tube estimate. It is a finite-time,
nonstationary analogue of the perpendicular-drift argument behind
complete-resource-pooling SSC \citep{Stolyar04,EryilmazSrikant12}.
If \(Q_t^\perp\) is large, the face-margin condition
\eqref{eq:face-margin-def} lets us move a fixed fraction of
\(\delta_{\mathrm{face}}\) from \(\rcirc_t\) in the direction
\(Q_t^\perp/\normtwo{Q_t^\perp}\) while staying on the same active face.
MaxWeight must score at least as well as this feasible comparison point,
and the resulting transverse gain gives negative drift for \(Q_t^\perp\).

\begin{theorem}[Finite-time state-space collapse under CRP]\label{thm:finite_horizon_ssc_crp}
Consider MaxWeight scheduling under Assumptions~\ref{ass:interior}, \ref{ass:bounded-primitives}, and~\ref{ass:crp}. Let $D:=A_{\max}+S_{\max}$. Suppose $\eps\le \delta_{\mathrm{face}}/(2S_{\max})$. Let $v$ be the CRP normal and set $Q_t^\perp:=Q_t-\ip{v}{Q_t}v$. For every $T\ge1$ and $\delta\in(0,1)$, with probability at least $1-\delta$,
\[
\max_{0\le t\le T}\normtwo{Q_t^\perp}
\lesssim
\frac{D^2}{\delta_{\mathrm{face}}}\log\frac{e(T+1)}{\delta}.
\]
Equivalently, the network satisfies finite-horizon ray collapse with radius
\begin{equation}\label{eq:ssc-Rperp-def}
R_\perp(T,\delta)
:=
C\,\frac{D^2}{\delta_{\mathrm{face}}}\log\frac{e(T+1)}{\delta}
\end{equation}
for a universal constant $C$.
\end{theorem}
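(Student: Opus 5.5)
Let $Z_t:=\normtwo{Q_t^\perp}$. The plan is to apply the scalar drift-to-peak lemma of Appendix~\ref{app:scalar} to $Z_t$. We need three ingredients: a drift of order $-\delta_{\mathrm{face}}$ above a level $x_0\asymp D^2/\delta_{\mathrm{face}}$, increments bounded by $L\lesssim D$, and a Bernstein variance proxy $\sigma^2\lesssim D^2$. The lemma then gives
\[
\max_{t\le T}Z_t\lesssim x_0+L+\frac{\sigma^2}{\delta_{\mathrm{face}}}\log\frac{T+1}{\delta}\lesssim \frac{D^2}{\delta_{\mathrm{face}}}\log\frac{e(T+1)}{\delta}.
\]
Here $D\le D^2/\delta_{\mathrm{face}}$ because $\delta_{\mathrm{face}}\le 2S_{\max}$ (a ball inside a face of a set of norm at most $S_{\max}$). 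The increment bound is immediate: $Q\mapsto Q^\perp$ is an orthogonal projection, so $\normtwo{Q_{t+1}^\perp-Q_t^\perp}\le\normtwo{Q_{t+1}-Q_t}\le D$. This gives $L\le D$ and $\sigma^2\lesssim D^2$ by bounded increments. The variance here is crude, but it matches the stated $D^2$ scale, so no self-normalization is needed.

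For the drift, I would use the Lyapunov function $W(Q)=\normtwo{Q^\perp}$ with the standard Eryilmaz--Srikant expansion. Write $U_t:=Q_{t+1}-Q_t$. Concavity-type bounds on the norm give
\[
Z_{t+1}-Z_t\le \frac{\normtwo{Q_{t+1}^\perp}^2-\normtwo{Q_t^\perp}^2}{2Z_t}.
\]
Then decompose $\normtwo{Q^\perp}^2=\normtwo{Q}^2-\ip{v}{Q}^2$. This yields
\[
\normtwo{Q_{t+1}^\perp}^2-\normtwo{Q_t^\perp}^2\le 2\ip{Q_t^\perp}{A_t-S_t}+C D^2+(\text{unused-service terms}).
\]
The unused-service terms need care, and they are the first step I would check. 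Write $Q_{t+1}=Q_t+A_t-S_t+\Upsilon_t$ with $\Upsilon_t\ge0$, $\ip{Q_{t+1}}{\Upsilon_t}=0$ and $\normtwo{\Upsilon_t}\le S_{\max}$. Then $\ip{Q_{t+1}}{\Upsilon_t}=0$ and $\ip{v}{\Upsilon_t}\ge0$ give a correction $-\ip{Q_{t+1}^\parallel}{\Upsilon_t}\le0$ after expanding, absorbed into $CD^2$ as in the classical proof.

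The key step, which I expect to be the main obstacle, is showing
\[
\E[\ip{Q_t^\perp}{A_t-S_t}\mid\F_t]\le -c\,\delta_{\mathrm{face}}Z_t.
\]
I would write $\lambda_t=(1-\eps)\rcirc_t$ and let $e:=Q_t^\perp/Z_t$, which lies in $v^\perp$, so $\rcirc_t+\tfrac{\delta_{\mathrm{face}}}{2}e\in\Hface$. By \eqref{eq:face-margin-def} this point is in $F^\star\subseteq\PiC$. MaxWeight optimality against it gives
\[
\ip{Q_t}{s_t}\ge \ip{Q_t}{\rcirc_t}+\tfrac{\delta_{\mathrm{face}}}{2}Z_t.
\]
Splitting $Q_t=Y_tv+Q_t^\perp$ gives
\[
\ip{Q_t^\perp}{s_t-\rcirc_t}\ge \tfrac{\delta_{\mathrm{face}}}{2}Z_t-Y_t\bigl(\ip{v}{s_t}-\mu\bigr)\ge\tfrac{\delta_{\mathrm{face}}}{2}Z_t,
\]
using $\ip{v}{s_t}\le\mu=\ip{v}{\rcirc_t}$ and $Y_t\ge0$. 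Finally,
\[
\ip{Q_t^\perp}{\lambda_t-s_t}=\ip{Q_t^\perp}{\rcirc_t-s_t}-\eps\ip{Q_t^\perp}{\rcirc_t}\le -\tfrac{\delta_{\mathrm{face}}}{2}Z_t+\eps S_{\max}Z_t\le -\tfrac{\delta_{\mathrm{face}}}{4}Z_t,
\]
using the hypothesis $\eps\le\delta_{\mathrm{face}}/(2S_{\max})$ (possibly with a factor adjustment). Combining, $\E[Z_{t+1}-Z_t\mid\F_t]\le -\delta_{\mathrm{face}}/4+CD^2/Z_t\le-\delta_{\mathrm{face}}/8$ once $Z_t\ge 8CD^2/\delta_{\mathrm{face}}$.

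The remaining worry is the unused-service correction. The difficulty is that $Q^\perp$ can have negative coordinates, so the projection term does not sign cleanly. If the clean bound fails, I would fall back to bounding $\normtwo{\Upsilon_t}^2$ and the cross term $\ip{Q_{t+1}}{\Upsilon_t}=0$ directly. I would write $\ip{Q_t^\perp}{\Upsilon_t}=\ip{Q_{t+1}-Y_{t+1}v}{\Upsilon_t}+O(D^2)=-Y_{t+1}\ip{v}{\Upsilon_t}+O(D^2)\le O(D^2)$. This yields the needed $O(D^2)$ error, and the scalar lemma closes the argument.
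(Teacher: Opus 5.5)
Your proposal is essentially the paper's proof. Both apply the Bernstein drift-to-peak lemma to $\normtwo{Q_t^\perp}$, get the drift by comparing the MaxWeight schedule with a transverse point $\rcirc_t+\rho e_t$ of $F^\star$, bound increments by $D$, and use $\delta_{\mathrm{face}}\le 2S_{\max}$ to absorb the additive $D$.

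One constant must be fixed. With radius $\rho=\delta_{\mathrm{face}}/2$ and the hypothesis $\eps S_{\max}\le\delta_{\mathrm{face}}/2$, your chain only gives $\ip{Q_t^\perp}{\lambda_t-s_t}\le 0$, so there is no negative drift. Take $\rho=3\delta_{\mathrm{face}}/4$, as the paper does; any $\rho<\delta_{\mathrm{face}}$ keeps the comparison point in $F^\star$. This yields $-\tfrac{\delta_{\mathrm{face}}}{4}\normtwo{Q_t^\perp}$.

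Your worry about unused service is unnecessary, because your primary expansion already closes. With $P_\perp z:=z-\ip{v}{z}v$ one has exactly $\normtwo{Q_{t+1}^\perp}^2=\normtwo{P_\perp(Q_t+A_t-S_t)}^2-2Y_{t+1}\ip{v}{\Upsilon_t}-\normtwo{P_\perp\Upsilon_t}^2$. This uses only $\ip{Q_{t+1}}{\Upsilon_t}=0$, $Y_{t+1}\ge0$ and $v,\Upsilon_t\ge0$, never the signs of the coordinates of $Q^\perp$. The paper avoids $\Upsilon_t$ altogether: it bounds the drift of $\normtwo{Q_t}^2$ from above via $(x^+)^2\le x^2$, bounds the drift of $Y_t^2$ from below via $Y_{t+1}\ge(Y_t+\ip{v}{A_t-S_t})^+$, and subtracts.
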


The restriction $\eps\le\delta_{\mathrm{face}}/(2S_{\max})$ is automatic in heavy traffic for a fixed CRP geometry. It appears only because the statement is nonasymptotic: the transverse comparison point must remain inside the face after the load is scaled by $1-\eps$. Appendix~\ref{app:ssc} gives the proof.

On the collapse event, the network is essentially one-dimensional:
\[
Q_t=Y_tv+Q_t^\perp,
\qquad
\normtwo{Q_t^\perp}\le R_\perp(T,\delta).
\]
It remains to identify the entrance level above which the scalar projection has clean negative drift. Two local obstructions must be cleared. First, the transverse error can tilt a MaxWeight comparison between a $v$-optimal schedule and a nearly optimal one; the schedule gap $\Delta_{\mathrm{gap}}$ determines how large $Y_t$ must be before this tilt is irrelevant. Second, even a $v$-optimal schedule contributes its full bottleneck service only when the queues in the positive coordinates of $v$ are not empty; this is the no-waste threshold governed by $v_{\min}$.

Under CRP, the scalar arrival rate in the bottleneck direction is exactly
\[
\ip{v}{\lambda_t}=(1-\eps)\mu
\qquad\text{a.s. for every }t,
\]
because $\lambda_t=(1-\eps)\rcirc_t$ and $\rcirc_t\in F^\star$. Thus, once the two local obstructions are absent, the projection $Y_t$ has drift $-\eps\mu$ and can be controlled by the same one-dimensional self-normalized peak argument used earlier.

\begin{theorem}[Local peak law under CRP]\label{thm:crp-main}
Consider MaxWeight scheduling under Assumptions~\ref{ass:interior}, \ref{ass:bounded-primitives}, and~\ref{ass:crp}. Let $D:=A_{\max}+S_{\max}$ and suppose $\eps\le\delta_{\mathrm{face}}/(2S_{\max})$. Let $v$ be the CRP normal, set $\mu:=H(v)>0$, and let $R_\perp$ be the CRP collapse radius in \eqref{eq:ssc-Rperp-def}. Define
\[
\Delta_{\mathrm{gap}}
:=
\min\{\mu-\ip{v}{s}:s\in\Sset,\ \ip{v}{s}<\mu\},
\]
with the convention $\Delta_{\mathrm{gap}}=\infty$ if every schedule is $v$-optimal, and interpret $a/\infty:=0$ for finite $a$. Let
\[
v_{\min}:=\min\{v_i:v_i>0\}.
\]
For $T\ge1$ and $\delta\in(0,1)$, write
\[
R_\delta:=R_\perp(T,\delta/2)
\]
and set
\[
x_{\mathrm{loc}}(T,\delta)
:=
\max\left\{
\frac{4S_{\max}R_\delta}{\Delta_{\mathrm{gap}}},
\frac{S_{\max}+R_\delta}{v_{\min}}
\right\}.
\]
Then, with probability at least $1-\delta$, both bounds hold:
\[
\PeakT
\;\lesssim\;
\frac{D}{\eps}\log\frac{T+1}{\delta}
+
x_{\mathrm{loc}}(T,\delta)
+
R_\delta,
\]
and
\[
\max_{0\le t\le T}\normone{Q_t}
\;\lesssim\;
\frac{\normone{v}\,D}{\eps}\log\frac{T+1}{\delta}
+
\normone{v}\,x_{\mathrm{loc}}(T,\delta)
+
\sqrt d\,R_\delta.
\]
If service is deterministic, $S_t=s_t$ a.s., then $D$ can be replaced by $A_{\max}$ in the displayed logarithmic coefficients. The collapse radius remains the one supplied by \eqref{eq:ssc-Rperp-def}.
\end{theorem}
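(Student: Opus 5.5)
We will prove Theorem~\ref{thm:crp-main} by working on the collapse event from Theorem~\ref{thm:finite_horizon_ssc_crp} and reducing the peak question to a scalar drift-to-peak estimate for the bottleneck projection $Y_t=\ip{v}{Q_t}$. First, apply Theorem~\ref{thm:finite_horizon_ssc_crp} with confidence $\delta/2$. This gives an event $E_1$ with $\Pbb(E_1)\ge1-\delta/2$ on which $\normtwo{Q_t^\perp}\le R_\delta$ for all $t\le T$. The final bounds then follow from $\normtwo{Q_t}\le Y_t+\normtwo{Q_t^\perp}$ and $\normone{Q_t}\le \normone{v}Y_t+\sqrt d\,\normtwo{Q_t^\perp}$. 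The second inequality uses $\normone{Y_tv}=Y_t\normone{v}$ and Cauchy--Schwarz on the residual. It therefore suffices to show that, with probability at least $1-\delta/2$, $\max_{t\le T}Y_t\lesssim x_{\mathrm{loc}}+\frac{D}{\eps}\log\frac{T+1}{\delta}$ on $E_1$.

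Since $E_1$ is not adapted, we use a stopped process. Let $\tau:=\inf\{t:\normtwo{Q_t^\perp}>R_\delta\}$ and work with $\tilde Y_t:=Y_{t\wedge\tau}$. This process coincides with $Y$ on $E_1$ up to time $T$. It is adapted and has the same one-step behavior as $Y$ on $\{t<\tau\}$, with zero increments afterward.

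Next we verify local drift for $t<\tau$ with $Y_t\ge x_{\mathrm{loc}}$.
\begin{enumerate}
\item \emph{MaxWeight selects a $v$-optimal schedule.} For any $s$ with $\ip{v}{s}\le\mu-\Delta_{\mathrm{gap}}$ and any $v$-optimal $s^\star$, we have
\[
\ip{Q_t}{s^\star-s}\ge Y_t\Delta_{\mathrm{gap}}-|\ip{Q_t^\perp}{s^\star-s}|\ge Y_t\Delta_{\mathrm{gap}}-2S_{\max}R_\delta,
\]
which is positive once $Y_t\ge 4S_{\max}R_\delta/\Delta_{\mathrm{gap}}$. Hence $\ip{v}{s_t}=\mu$.
\item \emph{No waste.} Each coordinate $i$ with $v_i>0$ satisfies $Q_{i,t}\ge v_iY_t-R_\delta\ge v_{\min}Y_t-R_\delta\ge S_{\max}$ once $Y_t\ge(S_{\max}+R_\delta)/v_{\min}$. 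Since $S_{i,t}\le S_{\max}$, the projection is not truncated on the support of $v$. Coordinates with $v_i=0$ do not affect $Y$.
\end{enumerate}
Together these give the exact identity $Y_{t+1}-Y_t=\ip{v}{A_t-S_t}$. Its conditional mean is $(1-\eps)\mu-\mu=-\eps\mu$, using $\ip{v}{\lambda_t}=(1-\eps)\mu$. Its increments are bounded by $D$. As in \eqref{eq:selfnorm-mgf-main}, with $u_t$ replaced by $v$, the centered MGF is at most $\exp(C\theta^2 D\mu)$ for $\theta\lesssim1/D$. This is the step that needs care: the variance proxy must be proportional to $\mu=H(v)$. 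It holds because $\ip{v}{A_t}\le A_{\max}$ and $\E\ip{v}{A_t}\le\mu$, so the conditional variance is at most $A_{\max}\mu$. The service term is handled the same way, using $\E\ip{v}{S_t}=\mu$ and $|\ip{v}{S_t}|\le S_{\max}$, together with conditional independence.

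Finally, apply the scalar drift-to-peak lemma of Appendix~\ref{app:scalar} to $\tilde Y$ with threshold $x_0=x_{\mathrm{loc}}$, drift $\Delta=\eps\mu$, jump bound $L=D$, and variance proxy $\sigma^2\asymp D\mu$. The factor $\mu$ cancels, giving $\max_t\tilde Y_t\lesssim x_{\mathrm{loc}}+D+\frac{D}{\eps}\log\frac{T+1}{\delta}$ with probability at least $1-\delta/2$. The additive $D$ is absorbed into the logarithmic term since $\eps<1$. A union bound with $E_1$ finishes the proof.

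For deterministic service, the service term has no noise. The variance proxy then becomes $A_{\max}\mu$, and the jump term is handled identically, since only upward jumps, which are at most $A_{\max}$, matter in the Bernstein tail. This yields the coefficient $A_{\max}/\eps$.

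The main obstacle is the stopping and measurability bookkeeping. The drift certificate holds only on $\{t<\tau\}\cap\{Y_t\ge x_{\mathrm{loc}}\}$, and the scalar lemma must tolerate the frozen dynamics after $\tau$. I would handle this by defining the drift condition on $\{t<\tau\}$ and noting that frozen increments trivially satisfy the lemma's hypotheses with zero drift and zero variance. If the lemma requires strict negative drift everywhere above $x_0$, I would instead apply it to $\tilde Y_t-\eps\mu\,(t-\tau)^+$, which has strict negative drift after $\tau$ and equals $\tilde Y_t$ up to time $\tau$.
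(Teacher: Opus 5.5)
Your proposal is correct and follows the paper's route, which applies Proposition~\ref{prop:collapse-to-peak} with the collapse radius from Theorem~\ref{thm:finite_horizon_ssc_crp}. You stop at exit from the collapse tube, use the schedule-gap and no-waste thresholds to get the exact projection recursion with drift $-\eps\mu$ and an MGF proxy proportional to $\mu$, apply the scalar peak lemma, and convert back through $Q_t=Y_tv+Q_t^\perp$. The only cosmetic difference is that the paper kills the process instead of freezing it, setting $\widetilde Y_t:=Y_t\one_{\{t<\tau_\perp\}}$; this drops it below the threshold after exit and removes the post-$\tau$ drift bookkeeping that your fallback (adding $-\eps\mu(t-\tau)^+$) handles.
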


\paragraph{Proof sketch.}
Apply Theorem~\ref{thm:finite_horizon_ssc_crp} with error probability $\delta/2$ and stop the process when it leaves the resulting tube. Inside the tube, the first term in $x_{\mathrm{loc}}$ makes the bottleneck component dominate the transverse error in the MaxWeight comparison, so every selected schedule is $v$-optimal. The second term ensures that service in the positive coordinates of $v$ is actually used. Therefore the stopped projection has drift $-\eps\mu$ above $x_{\mathrm{loc}}$ and upward jumps bounded by $D$. Appendix~\ref{app:scalar} gives the scalar peak bound; the identities $Q_t=Y_tv+Q_t^\perp$ and $\normtwo{Q_t^\perp}\le R_\delta$ convert it back to Euclidean peak and total backlog. Appendix~\ref{app:collapse-to-peak} proves this conversion in reusable form, and Appendix~\ref{app:locallaw} applies it here.

\vspace{1em}

Theorem~\ref{thm:crp-main} is the local counterpart of the global peak theorem. The global argument waits until the queue is large enough to certify drift uniformly over the whole nonnegative orthant; the price is the global margin $\alphapi$. CRP first rules out most of those directions. The entrance threshold is then governed by local quantities: the schedule gap $\Delta_{\mathrm{gap}}$, the face margin through $R_\perp$, and the no-waste scale $v_{\min}^{-1}$. None of these carries the global $1/\alphapi$ cost. After localization, the only remaining growth is scalar, with logarithmic coefficient $D/\eps$, or $A_{\max}/\eps$ under deterministic service.

The theorem still contains the finite-horizon cost of proving localization. The generic SSC estimate above gives a logarithmic radius $R_\perp(T,\delta)$, and that logarithm appears in the final bound. Thus the result should be read as a principled replacement of global geometry by local bottleneck geometry, rather than an identification of the optimal local geometric threshold independent of \(T\). The two-queue bound in Proposition~\ref{prop:twoqueue-peak} illustrates what is possible when a sharper, model-specific collapse estimate yields a tube radius that is independent of the horizon.

\section{Applications to input-queued switches}\label{sec:iqs}

We now apply the finite-time framework to input-queued switches \citep{ShahTsitsiklisZhong11}. This is the canonical matching network behind packet switches, crossbar routers, and related data-center and internet-routing architectures. The model is simple enough for the geometry to be explicit and rich enough to contain one of the central queue-size scaling problems in stochastic networks.

\subsection{Summary of results for the three IQS models}

An $n\times n$ input-queued switch has $d=n^2$ queues indexed by input-output pairs $(i,j)$. A schedule $s\in\Sset_{\mathrm{IQS}}$ is a full permutation matrix. Thus every schedule serves exactly one queue from each row and one queue from each column, and service is deterministic: $S_t=s_t$. With the downward-closed convention \eqref{eq:capacity-downward}, the capacity region is the substochastic bipartite matching polytope,
\[
\PiC_{\rm IQS}
=
\left\{x\in\Rp^{n\times n}:\sum_jx_{ij}\le1\ \forall i,
\quad
\sum_ix_{ij}\le1\ \forall j\right\}.
\]
For nonnegative queue weights, maximizing over full permutation matrices has the same support function as maximizing over this downward-closed polytope, since any partial matching can be completed to a full permutation without decreasing a nonnegative weight. We identify matrices with their vectorizations and use the Frobenius norm as the Euclidean norm.

We consider three IQS settings. We call the first setting the \emph{generalized IQS} model (to be defined shortly in the next subsection): arrivals may be adapted, dependent, and fractional, subject to a pathwise Frobenius envelope and conditional row/column slack. The second is the same generalized model under CRP, where a single row or column is the active bottleneck. Collapse then exposes a smaller logarithmic coefficient for the bottleneck projection, while the total-backlog bound also contains the finite-time collapse-radius contribution. The third is a Bernoulli IQS model with predictable, possibly
time-varying arrival probabilities.  This contains the classical
time-homogeneous Bernoulli IQS as a special case, but is nonstationary in
nature.  It is not a
pathwise submodel of the generalized IQS, but its entrywise independence  
structure gives stronger variance-sensitive concentration.

The upper bound results of three IQS models are summarized in Table~\ref{tab:IQS three regimes}. The results are shown for the \emph{total backlog} peak $\max_{0\le t\le T}\normone{Q_t}$ rather than $\PeakT$ due to convention in IQS literature.
\begin{table}[t]
    \centering
    \begin{tabular}{ccc}
\toprule
Setting & Logarithmic coefficient & Geometric threshold \\
\midrule
Generalized IQS (Theorem~\ref{thm:iqs-upper-main}) &$n^{3/2}/\eps$ & $n^{5/2}/\eps$\\
Generalized IQS under CRP (Corollary~\ref{cor:iqs-crp})& $n/\eps+n^2/\delta_{\rm face}$ & $n/\eps+n^2/\delta_{\rm face}+n^{3/2}$ \\
Bernoulli IQS (Theorem~\ref{thm:bernoulli-iqs-upper})& $n/\eps$ & $n^{5/2}/\eps$ \\
\bottomrule
    \end{tabular}
    \caption[Upper bounds for the three IQS regimes]{Upper bounds of $\max_{0\le t\le T}\normone{Q_t}$ for three IQS regimes. The CRP row reports the bound obtained by combining the local theorem with the generic finite-time SSC radius: $n/\eps$ is the bottleneck-projection coefficient, while $n^2/\delta_{\rm face}$ comes from the collapse radius.}
    \label{tab:IQS three regimes}
\end{table}

We note that the classical independent Bernoulli IQS model is usually studied in steady state, where the optimal queue-size scaling problem remains open~\citep{ShahTsitsiklisZhong11,ShahTsitsiklisZhong14,ShahWaltonZhong14,MaguluriSrikant16,
MaguluriBurleSrikant18,XuZhong20}. Our bounds address finite-time peaks, whose geometric thresholds can be larger than the steady-state scales, and we do not intend to improve existing steady-state bounds. The contributions here are different: we identify the \emph{tight} coefficient multiplying $\log T$ under robust nonstationary input models and show how geometry and variance change that coefficient.

Before explaining the detailed guarantees of each IQS model, we state some elementary properties shared by the three IQS models. The proof is deferred to Appendix~\ref{app:iqs-general-upper}.

\begin{proposition}[Euclidean geometry of IQS models]\label{prop:iqs-alpha2}
For the $n\times n$ input-queued switch,
\[
S_{\max}=\sqrt n,
\qquad
\alphapi=\frac1{\sqrt n},
\qquad
\normone{Q}\le n\normtwo{Q}
\quad\text{for all }Q\in\Rp^{n\times n}.
\]
\end{proposition}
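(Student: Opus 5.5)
The three claims are independent and elementary; I would verify each directly from the definitions in Section~\ref{sec:model}, using that $\Sset_{\mathrm{IQS}}$ is the set of $n\times n$ permutation matrices and that the Euclidean norm on matrices is the Frobenius norm.

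\emph{Step 1: $S_{\max}=\sqrt n$.} Every permutation matrix has exactly $n$ unit entries, so $\normtwo{s}=\sqrt n$ for all $s\in\Sset_{\mathrm{IQS}}$. Service is deterministic, $S_t=s_t$, so the realized service bound is the same. Hence the smallest admissible constant in Assumption~\ref{ass:bounded-primitives} is $\sqrt n$.

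\emph{Step 2: $\alphapi=1/\sqrt n$.} For the upper bound, I would exhibit a bad direction: let $w$ be supported on a single row, $w_{1j}=1/\sqrt n$ for all $j$ and zero elsewhere. Then $\normtwo{w}=1$, and any permutation picks exactly one entry of row~$1$, so $H(w)=1/\sqrt n$. For the lower bound, take an arbitrary $w\ge0$ with $\normtwo{w}=1$. The $n^2$ entries are partitioned into $n$ disjoint permutation matrices, for instance the cyclic shifts $P_k$ with $(P_k)_{i,i+k \bmod n}=1$. Therefore
\[
\sum_{k}\ip{w}{P_k}=\normone{w}\ge\normtwo{w}=1 ,
\]
and so some $k$ has $\ip{w}{P_k}\ge 1/n$. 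This only gives $H(w)\ge 1/n$, which is too weak. The sharper bound needs a better argument, and this is the main obstacle.

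The plan there is to work with the maximal entry $m:=\max_{ij}w_{ij}$. Since $\sum w_{ij}^2=1$ over $n^2$ entries, we have $m\ge 1/n$, and trivially $H(w)\ge m$. I would also use the cyclic shifts with the Cauchy--Schwarz inequality:
\[
1=\sum_k\sum_{(i,j)\in P_k}w_{ij}^2\le m\sum_k\ip{w}{P_k}\le m\,n\,H(w).
\]
This gives $H(w)\ge 1/(mn)$. Combining the two, $H(w)\ge\max\{m,1/(mn)\}\ge 1/\sqrt n$, since the minimum over $m$ occurs at $m=1/\sqrt n$. Together with the example this gives $\alphapi=1/\sqrt n$.

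\emph{Step 3: $\normone{Q}\le n\normtwo{Q}$.} This is Cauchy--Schwarz on the $d=n^2$ coordinates: $\normone{Q}\le\sqrt{d}\,\normtwo{Q}=n\normtwo{Q}$.

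The only step requiring care is the lower bound on $\alphapi$. The two-sided estimate $\max\{m,1/(mn)\}$ resolves it. Using the downward-closed polytope instead of the permutations would not change $H$ on nonnegative directions, as noted just before the proposition.
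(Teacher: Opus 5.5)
Your proof is correct. Steps 1 and 3 and your single-row extremal example coincide with the paper's. For the lower bound on $\alphapi$ your route differs somewhat from the paper's.

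The paper averages over a uniformly random permutation $\sigma$. With $Z:=\sum_i W_{i,\sigma(i)}\le H(W)$, nonnegativity of the cross terms gives $Z^2\ge\sum_i W_{i,\sigma(i)}^2$. Since each entry is hit with probability $1/n$, this yields $H(W)^2\ge\E[Z^2]\ge 1/n$ in one line.

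You use the same averaging idea in derandomized form: the $n$ cyclic shifts cover every entry exactly once. You then bound $\sum_{(i,j)\in P_k}w_{ij}^2$ by $m\,\ip{w}{P_k}$ rather than by $\ip{w}{P_k}^2$. That choice forces the separate estimate $H(w)\ge m$ and the balancing of $\max\{m,1/(mn)\}$. It works and gives the sharp constant, but it is a detour.

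Since $w\ge0$, you can instead use $\sum_{(i,j)\in P_k}w_{ij}^2\le\ip{w}{P_k}^2\le H(w)^2$ in your display. That gives $1\le n\,H(w)^2$ directly, which is exactly the paper's argument with the random permutation replaced by the cyclic-shift partition. Your version is fully explicit and deterministic. The paper's is shorter and does not need the extra observation that every entry lies on some permutation.

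In a final write-up, drop the exploratory remark about the too-weak bound $H(w)\ge 1/n$.
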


\subsection{Generalized IQS: achieving the tight logarithmic coefficient}

The generalized IQS model is defined by the following assumption.

\begin{assumption}[Generalized IQS arrival envelope]\label{ass:gIQS}
The arrival matrix $A_t\in\Rp^{n\times n}$ is $\F_{t+1}$-measurable and satisfies
\[
\normtwo{A_t}=\norm{A_t}_F\le\sqrt n
\qquad\text{a.s. for every }t.
\]
Its conditional mean $\lambda_t:=\E[A_t\mid\F_t]$ satisfies
\begin{equation}\label{eq:iqs-slack}
\sum_{j=1}^n \lambda_{ij}(t)\le1-\eps,
\qquad
\sum_{i=1}^n \lambda_{ij}(t)\le1-\eps
\qquad\text{a.s. for all }i,j,t.
\end{equation}
\end{assumption}

The Frobenius envelope is normalized to the natural service scale: every
permutation schedule has Frobenius norm \(\sqrt n\). The row and column
inequalities in \eqref{eq:iqs-slack} are exactly the condition
\(\lambda_t\in(1-\eps)\PiC_{\rm IQS}\). Thus Assumption~\ref{ass:gIQS}
preserves the usual IQS capacity region and slack condition while discarding
the classical independence structure: arrivals may be correlated across queues
and time, and may even be chosen adaptively, subject only to the aggregate
slack condition and the pathwise Frobenius envelope.\footnote{The classical
independent Bernoulli IQS is not literally a submodel of
Assumption~\ref{ass:gIQS} with the pathwise \(\sqrt n\) envelope, since one slot
may contain more than \(n\) arrivals. Nevertheless, generalized IQS captures a
natural deterministic-envelope analogue of the classical model. Let
\(N_t=\|A_t\|_F^2\). In the independent Bernoulli model,
\(\mathbb E[N_t\mid\mathcal F_t]\le n\). Bernstein's inequality and a union
bound give, with \(L=\log(T/\delta)\),
\[
\mathbb P\left(
\max_{0\le t<T}N_t>
n+\sqrt{2nL}+\frac{2}{3}L
\right)\le\delta .
\]
When \(n\gtrsim L\), this recovers the generalized-IQS envelope in
Assumption~\ref{ass:gIQS}, up to universal constants. For smaller \(n\), any
pathwise envelope of this form must depend on the horizon. The
queue-Bernstein theorem (Theorem~\ref{thm:qb-unbounded-main}) later treats
independent Bernoulli IQS directly, without this localization, and yields
Theorem~\ref{thm:bernoulli-iqs-upper}.}

Our global theorem, stated below, immediately gives a finite-time total-backlog peak bound. The important term is the logarithmic coefficient; the threshold has higher dependence on $n$, which is why this estimate is not a steady-state-mean improvement.

\begin{theorem}[Generalized IQS upper bound]\label{thm:iqs-upper-main}
Consider the $n\times n$ input-queued switch with full-permutation deterministic service $S_t=s_t$ and  MaxWeight scheduling. Suppose the arrivals satisfy Assumption~\ref{ass:gIQS} for some $\eps\in(0,1)$. Then, for every $T\ge1$ and $\delta\in(0,1)$, with probability at least $1-\delta$, we have
\[
\max_{0\le t\le T}\normone{Q_t}
\;\lesssim\;
\frac{n^{3/2}}{\eps}\log\frac{T+1}{\delta}
+
\frac{n^{5/2}}{\eps}.
\]
\end{theorem}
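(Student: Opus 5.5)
The plan is to read the statement off from Theorem~\ref{thm:selfnorm-main}, taking the IQS geometry from Proposition~\ref{prop:iqs-alpha2}.

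\textbf{Verifying the assumptions.} First I check that the generalized IQS model meets the hypotheses of Theorem~\ref{thm:selfnorm-main}. The slack conditions \eqref{eq:iqs-slack} say exactly that $\lambda_t\in(1-\eps)\PiC_{\rm IQS}$, which is Assumption~\ref{ass:interior}. For Assumption~\ref{ass:bounded-primitives}:
\begin{itemize}
\item the Frobenius envelope in Assumption~\ref{ass:gIQS} gives $A_{\max}=\sqrt n$;
\item every permutation matrix has Frobenius norm $\sqrt n$, so $S_{\max}=\sqrt n$, and this also bounds $\sup_{s\in\Sset}\normtwo{s}$;
\item service is deterministic given $\F_t$ ($S_t=s_t$), so the conditional-independence requirement between $A_t$ and $S_t$ holds trivially.
\end{itemize}
Proposition~\ref{prop:iqs-alpha2} also supplies $\alphapi=1/\sqrt n>0$.

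\textbf{Applying the theorem.} Since service is deterministic, I use the deterministic-service version of Theorem~\ref{thm:selfnorm-main}, where the logarithmic coefficient is $A_{\max}/\eps=\sqrt n/\eps$. The threshold term is $D^2/(\eps\alphapi)$ with $D=A_{\max}+S_{\max}=2\sqrt n$, which equals $4n\cdot\sqrt n/\eps=4n^{3/2}/\eps$. So with probability at least $1-\delta$,
\[
\PeakT\lesssim \frac{\sqrt n}{\eps}\log\frac{T+1}{\delta}+\frac{n^{3/2}}{\eps}.
\]

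\textbf{Converting to total backlog.} Proposition~\ref{prop:iqs-alpha2} gives $\normone{Q}\le n\normtwo{Q}$ for nonnegative matrices. This inequality can be strengthened (Cauchy--Schwarz gives $\sqrt{d}=n$, so it is exactly Cauchy--Schwarz). Multiplying the bound above by $n$ gives
\[
\max_{t\le T}\normone{Q_t}\lesssim \frac{n^{3/2}}{\eps}\log\frac{T+1}{\delta}+\frac{n^{5/2}}{\eps},
\]
which is the claim.

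\textbf{Main obstacle.} No step here is hard. The only point needing care is that the deterministic-service refinement of Theorem~\ref{thm:selfnorm-main} reduces only the logarithmic coefficient to $A_{\max}/\eps$, while the threshold keeps the full $D^2$. Even so, $D^2\asymp n$, so this does not change the order of the bound. All the real content is in Theorem~\ref{thm:selfnorm-main} itself.
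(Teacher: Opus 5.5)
Your proposal is correct and is essentially the paper's own proof. The paper also applies the deterministic-service part of Theorem~\ref{thm:selfnorm-main} with $A_{\max}=S_{\max}=\sqrt n$ and $\alphapi=1/\sqrt n$ to get the Euclidean bound, then multiplies by $n$ using $\normone{Q}\le n\normtwo{Q}$ from Proposition~\ref{prop:iqs-alpha2}.
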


This is a corollary of Theorem~\ref{thm:selfnorm-main} (deterministic service part) plus Proposition~\ref{prop:iqs-alpha2}; Appendix~\ref{app:iqs-general-upper} records the calculation. We establish a matching lower bound (in terms of the logarithmic coefficient) by constructing a hard instance satisfying the all-ports-loaded slack form, meaning that every row and every column of the mean arrival matrix has sum $1-\eps$.

\begin{theorem}[Generalized IQS lower bound]\label{thm:iqs-lower-main}
There exist universal constants $c,C>0$ such that, for every $n\ge4$, $\eps\in(0,1/16]$, there is a generalized $n\times n$ IQS arrival instance satisfying the all-ports-loaded slack form with the following property. Under any nonanticipative full-permutation deterministic-service scheduling rule, in particular under MaxWeight, for every $T\ge C\sqrt n/\eps^2$,
\begin{equation}\label{inequ:IQSlower1}
 \E\Big[\max_{0\le t\le T}\normone{Q_t}\Big]
\;\ge\;
c\,\frac{n^{3/2}}{\eps}
\log\!\Big(1+\frac{\eps^2}{\sqrt n}T\Big).
\end{equation}
\end{theorem}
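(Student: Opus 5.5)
The plan is to reduce to the one-dimensional lower bound, Theorem~\ref{thm:1d-lower-main}, through an instance in which every arrival batch is spread uniformly over all $n^2$ queues. Let $J$ denote the $n\times n$ all-ones matrix and set $B:=n^{-1/2}J$. Then $\norm{B}_F=n^{-1/2}\cdot n=\sqrt n$, and the total mass of $B$ is $\normone{B}=n^{3/2}$. The arrivals are i.i.d. over time: $A_t=B$ with probability $p:=(1-\eps)/\sqrt n$, and $A_t=0$ otherwise. The conditional mean is $\lambda_t=p\,n^{-1/2}J$. Every row and every column of $\lambda_t$ sums to $p\sqrt n=1-\eps$, so the instance has the all-ports-loaded slack form. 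The Frobenius envelope $\norm{A_t}_F\le\sqrt n$ holds pathwise, so Assumption~\ref{ass:gIQS} is met. Because $n\ge4$, we also have $p\le 1/2$.

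The key step is a policy-independent comparison. For any permutation schedule $s_t$ we have $\normone{s_t}=n$, and coordinatewise $(x)^+\ge x$. Hence
\[
\normone{Q_{t+1}}\ge \normone{Q_t}+\normone{A_t}-n ,
\]
and trivially $\normone{Q_{t+1}}\ge0$. Define the scalar Lindley recursion $W_{t+1}=(W_t+\normone{A_t}-n)^+$ with $W_0=0$. Since the map $w\mapsto(w+a-n)^+$ is monotone, induction gives $\normone{Q_t}\ge W_t$ for every $t$, under every nonanticipative rule and in particular under MaxWeight. Rescaling, $W_t/n$ is a one-dimensional queue with deterministic unit service. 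Its arrivals take values in $\{0,\sqrt n\}$ with probability $p$ of a batch, so its mean arrival is $1-\eps$. This is exactly the reflected walk of Theorem~\ref{thm:1d-lower-main} with $A+1=\sqrt n$, that is, $A=\sqrt n-1\in[1,\sqrt n]$ for $n\ge4$, at slack $\eps\le1/16\le1/8$.

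Applying Theorem~\ref{thm:1d-lower-main} then gives
\[
\E\Big[\max_{t\le T}\normone{Q_t}\Big]\ \ge\ n\,\E\Big[\max_{t\le T}W_t/n\Big]\ \ge\ c\,n\min\Big\{\sqrt{AT},\ \tfrac{A}{\eps}\log\big(1+\tfrac{\eps^2}{A}T\big)\Big\}.
\]
To identify which term is the minimum, set $x:=\eps^2T/A$. Then $\sqrt{AT}=(A/\eps)\sqrt x$, and $\sqrt x\ge\log(1+x)$ for every $x\ge0$, so the logarithmic term is always the minimum. The horizon condition $T\ge C\sqrt n/\eps^2$ ensures $T\ge C'A$, which is the horizon hypothesis of Theorem~\ref{thm:1d-lower-main}. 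Finally, $A\asymp\sqrt n$, and $\log(1+\eps^2T/A)\ge\log(1+\eps^2T/\sqrt n)$, so the bound becomes $c\,n^{3/2}\eps^{-1}\log(1+\eps^2T/\sqrt n)$, which is \eqref{inequ:IQSlower1}.

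The main obstacle I expect is that Theorem~\ref{thm:1d-lower-main} only asserts that \emph{some} one-dimensional instance with arrivals in $\{0,A+1\}$ exists. I need that instance to be exactly the Bernoulli one with mean $1-\eps$, or at least one whose batch probability can be realised by the uniform-matrix construction above. If the construction in Appendix~\ref{app:1d} uses a different arrival probability, the first fallback is to adjust $p$ and check slack. Any mean $\le 1-\eps$ keeps the slack assumption, but the all-ports-loaded form requires exactly $p\sqrt n=1-\eps$. If that still fails, I would rerun the regenerative argument directly for $W_t/n$. Excursions last $O(1/\eps)$ slots; counting slots gives $\gtrsim \eps^2T/A$ independent excursions over the horizon (the $\eps^2/A$ rate inside the logarithm of the stated bound). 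Each excursion maximum has an exponential tail with rate $\Theta(\eps/A)$, obtained from the exponential change of measure for the $\pm$ walk with jumps $A$. Taking the maximum over these excursions produces the $(A/\eps)\log$ term.
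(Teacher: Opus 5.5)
Your proposal is correct and follows the paper's proof essentially step for step. It uses the same uniform batch $n^{-1/2}J$ with probability $(1-\eps)/\sqrt n$, the same policy-independent domination of total backlog by a reflected walk with service $n$ per slot, and the same rescaling by $n$ to the one-dimensional construction with $A=\sqrt n-1$. The obstacle you anticipate does not arise, because the construction in Appendix~\ref{app:1d} uses exactly batch probability $(1-\eps)/(A+1)$. Your check that $\sqrt{x}\ge\log(1+x)$ forces the minimum to be the logarithmic term is a clean way to carry out the paper's ``after adjusting constants'' step.
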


The upper and lower bounds match in the coefficient of the $\log T$ term. This is the finite-time message of the generalized IQS: under adapted, dependent, fractional arrivals, the $n^{3/2}\eps^{-1}\log T$ contribution to the total-backlog peak is fundamental. The lower bound does not rule out better $n$ scaling in the classical independent Bernoulli model, because the generalized IQS construction uses dependent and fractional arrivals that are excluded there.

\subsection{Generalized IQS under CRP: local refinement via state-space collapse}

The global generalized IQS bound treats the switch as an $n^2$-dimensional object. In a CRP regime with a single tight row or column, the switch is approximately described by the corresponding bottleneck projection plus a transverse collapse error. The bottleneck vector is the normalized indicator of the tight row or column, and hence $\normone{v}=\sqrt n$. Feeding this local direction into the deterministic-service form of Theorem~\ref{thm:crp-main} sharpens the projected-coordinate logarithmic coefficient from $n^{3/2}/\eps$ to $n/\eps$. If the CRP-based SSC estimate of Theorem~\ref{thm:finite_horizon_ssc_crp} is used to supply the collapse radius, the total-backlog upper bound also includes the collapse logarithmic coefficient of order $n^2/\delta_{\rm face}$. See Corollary~\ref{cor:iqs-crp} for the formal guarantee.

\begin{corollary}[Generalized IQS under CRP: upper bound]\label{cor:iqs-crp}
Consider the generalized $n\times n$ IQS with full-permutation deterministic service, MaxWeight scheduling, and nonstationary bounded arrivals satisfying Assumption~\ref{ass:gIQS}. Write $\rcirc_t:=\lambda_t/(1-\eps)$. Suppose Assumption~\ref{ass:crp} holds and its bottleneck face is one row or one column. Assume $\eps\le\delta_{\mathrm{face}}/(2S_{\max})$ and let
\[
R_{\perp,\mathrm{IQS}}(T,\delta)
:=
\frac{C_0 n}{\delta_{\mathrm{face}}}
\log\frac{e(T+1)}{\delta}
\]
for a universal constant $C_0$. Then, with probability at least $1-\delta$,
\[
\max_{0\le t\le T}\normone{Q_t}
\;\lesssim\;
\frac{n}{\eps}\log\frac{T+1}{\delta}
+
n^{3/2}+n\,R_{\perp,\mathrm{IQS}}(T,\delta/2).
\]
\end{corollary}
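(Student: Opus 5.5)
The plan is to specialize the second (total-backlog) bound of Theorem~\ref{thm:crp-main} to the IQS geometry, using the deterministic-service form with $A_{\max}$ in place of $D$. First I would fix the CRP parameters. The bottleneck facet is a single row or column, so $v=n^{-1/2}\mathbf 1_{\text{row }i}$ (or the analogous column indicator). This gives $\normtwo{v}=1$, $\normone{v}=\sqrt n$, $v_{\min}=n^{-1/2}$, and $\mu=H(v)=n^{-1/2}$. By Proposition~\ref{prop:iqs-alpha2} and Assumption~\ref{ass:gIQS}, $S_{\max}=\sqrt n$ and $A_{\max}\le\sqrt n$, so $D\le 2\sqrt n$. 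Every permutation serves exactly one entry of row $i$, so $\ip{v}{s}=\mu$ for every $s\in\Sset_{\rm IQS}$. Hence every schedule is $v$-optimal, $\Delta_{\mathrm{gap}}=\infty$, and the first term in $x_{\mathrm{loc}}$ vanishes.

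Next I would evaluate the collapse radius. Theorem~\ref{thm:finite_horizon_ssc_crp} gives $R_\perp\lesssim (D^2/\delta_{\mathrm{face}})\log(e(T+1)/\delta)\asymp (n/\delta_{\mathrm{face}})\log(e(T+1)/\delta)$, which is exactly $R_{\perp,\mathrm{IQS}}$ after choosing $C_0$. The standing hypothesis $\eps\le\delta_{\mathrm{face}}/(2S_{\max})$ is assumed in the statement, so it can be used directly. With $R_\delta=R_{\perp,\mathrm{IQS}}(T,\delta/2)$,
\[
x_{\mathrm{loc}}=\frac{S_{\max}+R_\delta}{v_{\min}}=\sqrt n\,(\sqrt n+R_\delta)=n+\sqrt n\,R_\delta .
\]

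Then I would substitute into the total-backlog bound of Theorem~\ref{thm:crp-main}:
\[
\normone{v}\frac{A_{\max}}{\eps}\log\frac{T+1}{\delta}+\normone{v}\,x_{\mathrm{loc}}+\sqrt d\,R_\delta
\lesssim \frac{n}{\eps}\log\frac{T+1}{\delta}+n^{3/2}+nR_\delta+nR_\delta ,
\]
using $\sqrt d=n$. This is the claimed bound.

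The main point to check carefully is the logarithmic coefficient. One needs $A_{\max}\lesssim\sqrt n$ rather than anything larger, so that $\normone{v}A_{\max}/\eps\asymp n/\eps$, and the deterministic-service replacement $D\to A_{\max}$ must be legitimate. The IQS service is deterministic ($S_t=s_t$), so the replacement holds. Even with $D$ one would only lose a constant factor, since $D\le 2\sqrt n$.

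I would also verify two setup issues. The first is that the downward-closed capacity polytope has the row facet as an exposed facet in the relative sense of Assumption~\ref{ass:crp}. The second is that $\Delta_{\mathrm{gap}}$ is defined over $\Sset$, the set of full permutations, rather than over partial matchings; on partial matchings a gap would reappear.
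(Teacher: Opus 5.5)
Your proposal is correct and follows essentially the same route as the paper's proof: you specialize the deterministic-service total-backlog bound of Theorem~\ref{thm:crp-main} with $v=n^{-1/2}$ times the row (or column) indicator, giving $\normone{v}=\sqrt n$, $v_{\min}=n^{-1/2}$, and $\Delta_{\mathrm{gap}}=\infty$ because every permutation is $v$-optimal. With $D=2\sqrt n$ the radius from Theorem~\ref{thm:finite_horizon_ssc_crp} is of order $(n/\delta_{\mathrm{face}})\log(e(T+1)/\delta)$, and $\sqrt d=n$ then yields the stated bound. The paper additionally writes out the relative distance from $\rcirc_t$ to the boundary of the row face to make $\delta_{\mathrm{face}}$ concrete, but that computation is explanatory and is not needed for the bound.
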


Appendix~\ref{app:iqs-local-upper} records the calculation. The corollary is not a replacement for the global theorem; it is the payoff when collapse information is available. It separates the bottleneck-projection coefficient, $n/\eps$, from the additional finite-time price of proving collapse. With the generic SSC radius displayed above, this additional logarithmic coefficient is of order $n^2/\delta_{\mathrm{face}}$.

We also establish a lower bound, stated below.

\begin{theorem}[Generalized IQS under CRP: lower bound]\label{thm:iqs-local-lower-main}
There exist universal constants $c',C'>0$ such that, for every $n\ge4$, $\eps\in(0,1/16]$, there is a generalized $n\times n$ IQS arrival instance satisfying Assumption~\ref{ass:crp} with the following property. Under any nonanticipative full-permutation deterministic-service scheduling rule, in particular under MaxWeight, started from $Q_0=0$, for every $T\ge C'n/\eps^2$,
\begin{equation}\label{inequ:IQSlower2}
\E\Big[\max_{0\le t\le T}\normone{Q_t}\Big]
\;\ge\;
c'\,\frac{n}{\eps}
\log\!\Big(1+\frac{\eps^2}{n}T\Big).
\end{equation}
\end{theorem}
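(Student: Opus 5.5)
The plan is to reduce to the one-dimensional lower bound, Theorem~\ref{thm:1d-lower-main}, by loading a single bottleneck row so that it behaves like one reflected queue with arrival scale $A\asymp n$. Concretely, take row $1$ as the CRP bottleneck with normal $v=\one_{\text{row }1}/\sqrt n$. In each slot, with probability $p$ put a batch of size $B$ spread uniformly over row $1$: every entry $(1,j)$ receives $B/n$. Otherwise row $1$ receives nothing. The remaining rows receive deterministic fractional arrivals that give every other row and column mean sum $(1-\eps)$, small enough that the nominal point $\rcirc_t$ sits in the relative interior of the row-$1$ facet with a constant face margin $\delta_{\mathrm{face}}$. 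We choose $pB=1-\eps$ and $B\asymp\sqrt n$, so that $\norm{A_t}_F\le B/\sqrt n+O(\sqrt n)\lesssim\sqrt n$. One checks that row $1$ is the unique tight facet, so Assumption~\ref{ass:crp} holds with $v$ as above.

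The key observation is policy-independent. Every full permutation serves exactly one entry of row $1$ per slot. Hence $W_t:=\sum_j Q_{1j}(t)$ satisfies
\[
W_{t+1}\ge W_t+\sum_jA_{1j}(t)-1,
\]
with equality whenever the served entry is nonempty. Reflection only helps the lower bound, so $W_t$ dominates the reflected walk $\tilde W_{t+1}=(\tilde W_t+\tilde A_t-1)^+$, where $\tilde A_t\in\{0,B\}$ has mean $1-\eps$. This domination is proved by a pathwise induction: if $W_t\ge\tilde W_t$, then $W_{t+1}\ge W_t+\tilde A_t-1$ and also $W_{t+1}\ge0$. Since $\normone{Q_t}\ge W_t$, it suffices to lower bound $\E\max_{t\le T}\tilde W_t$.

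Next, apply Theorem~\ref{thm:1d-lower-main} with $A+1=B$, that is $A\asymp\sqrt n$. This gives a bound of order $\min\{\sqrt{AT},(A/\eps)\log(1+\eps^2T/A)\}$. That only yields coefficient $\sqrt n/\eps$, which falls short of $n/\eps$. To fix this, increase the batch: take $B\asymp n$ while keeping the Frobenius envelope. This works because a row-$1$ batch spread as $B/n$ per entry has Frobenius norm $B/\sqrt n$, which is $\le\sqrt n$ exactly when $B\le n$. So we set $B=n$ and reduce the other rows' loads slightly to keep the total Frobenius norm $\le\sqrt n$. Alternatively, in batch slots we can route the other rows' mass in off-slots so that their contribution stays bounded by a constant times $\sqrt n$. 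With $A\asymp n$, Theorem~\ref{thm:1d-lower-main} gives
\[
c\min\Bigl\{\sqrt{nT},\ \frac{n}{\eps}\log\bigl(1+\tfrac{\eps^2}{n}T\bigr)\Bigr\}.
\]
For $T\ge C'n/\eps^2$, the logarithmic term is the minimum: writing $x=\eps^2T/n\ge C'$, we have $\sqrt{nT}=(n/\eps)\sqrt x\ge(n/\eps)\log(1+x)$. This yields \eqref{inequ:IQSlower2}.

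The main obstacle is the bookkeeping that makes the construction satisfy all hypotheses simultaneously. The $\sqrt n$ Frobenius envelope must hold in batch slots, the column sums must stay at most $1-\eps$ in conditional mean, and the face margin $\delta_{\mathrm{face}}$ must remain bounded below. The row-$1$ batch contributes only $p\cdot B/n=(1-\eps)/n$ to each column's mean, so the other rows can fill each column to $1-\eps$ with per-entry means of order $1/n$. That keeps $\rcirc_t$ interior in the row facet, at distance of order $1/\sqrt n$ or better. It also keeps the deterministic part's Frobenius norm at $O(1)$. After verifying these, the rest is the direct invocation of Theorem~\ref{thm:1d-lower-main}.
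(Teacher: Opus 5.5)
Your reduction is the paper's, and it is correct:
\begin{itemize}
\item a row-one burst $E^{(1)}$, with one unit in each entry of row one, arriving with probability $(1-\eps)/n$;
\item the policy-independent domination of the row-one backlog by the reflected walk with increments in $\{n-1,-1\}$;
\item Theorem~\ref{thm:1d-lower-main} applied with $A=n-1$.
\end{itemize}
Your pathwise induction and the final comparison $\sqrt{nT}\ge(n/\eps)\log(1+\eps^2T/n)$ are also fine.

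The gap is in the background load, which is exactly where the CRP hypothesis has to be earned. You require every other row and every column to have mean sum $1-\eps$, and claim that filling each column to $1-\eps$ keeps $\rcirc_t$ in the relative interior of the row-one facet. It does the opposite. Every row and column sum of $\rcirc_t=\lambda_t/(1-\eps)$ then equals $1$ (with equal entries, $\rcirc_t=J/n$ for $J$ the all-ones matrix), so $\rcirc_t$ lies on all $2n$ facets of $\PiC_{\rm IQS}$. Concretely, $e_{2j}$ is tangent to $\Hface=\{r:\sum_j r_{1j}=1\}$, but $\rcirc_t+\rho e_{2j}$ violates the column-$j$ constraint for every $\rho>0$. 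Hence \eqref{eq:face-margin-def} fails for every $\delta_{\mathrm{face}}>0$, and row one is not the unique active face. This is the all-ports-loaded, non-CRP situation of Theorem~\ref{thm:iqs-lower-main}, so as written your instance does not satisfy Assumption~\ref{ass:crp}.

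CRP needs every non-bottleneck row constraint, every column constraint and every nonnegativity constraint to be strictly slack at $\rcirc_t$. The background must therefore be light and strictly positive, not saturating. The paper takes $\rho_{1j}=1/n$ and $\rho_{ij}=1/(4n)$ for $i\ge2$, so the other rows sum to $1/4$ and the columns to $1/n+(n-1)/(4n)<1$. It then sets $\rcirc=(1-\eta)E^{(1)}/n+\eta\rho$ with $\eta=\eps/4$.

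There is also a Frobenius issue. Assumption~\ref{ass:gIQS} demands $\norm{A_t}_F\le\sqrt n$ exactly, not up to constants. With $B=n$ the burst alone already has Frobenius norm exactly $\sqrt n$, so ``reducing the other rows' loads slightly'' cannot work. The background must arrive only in slots disjoint from the bursts; this is your second alternative, and it is the paper's choice $A_t\in\{E^{(1)},\rho,0\}$.

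With these repairs your argument goes through. For instance, put entries $1/(4n)$ in rows $i\ge2$ in non-burst slots only, and feed row one only by bursts; then the row-one walk even has slack exactly $\eps$, instead of the paper's $\eps'\in[\eps,5\eps/4]$.

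Finally, the constant or order-$1/\sqrt n$ face margin you claim is impossible. The normals $e_{ij}$, $i\ge2$, are tangent to $\Hface$, and the column constraints force $\min_{i\ge2}\rcirc_{ij}\le1/(n-1)$. So any such instance has $\delta_{\mathrm{face}}\lesssim1/n$. This is harmless here, since the theorem needs only some $\delta_{\mathrm{face}}>0$.
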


Thus the SSC refinement has the right bottleneck-projection logarithmic coefficient in the generalized model. With the generic finite-time SSC estimate inserted, the total-backlog coefficient is the sum of this projected-coordinate coefficient and the collapse coefficient shown in Table~\ref{tab:IQS three regimes}. 

\subsection{Bernoulli IQS: variance-sensitive improvement}

The global generalized-IQS theorem pays for the crude Frobenius scale
\(A_{\max}=\sqrt n\) in Assumption~\ref{ass:gIQS}.  Independent Bernoulli input
has more structure, even when its rates are nonstationary.  Throughout this
subsection, ``Bernoulli IQS'' means that, conditional on the past, the entries
of \(A_t\) are independent Bernoulli variables with predictable means
\(\lambda_{ij}(t)\) satisfying the row and column slack inequalities
\eqref{eq:iqs-slack}.  The classical time-homogeneous Bernoulli IQS corresponds to
the special case in which these means are constant in \(t\).

In every nonnegative queue direction, the projected arrival
\(\ip{u_t}{A_t}\) is queue-Bernstein with a dimension-free
variance-to-mean constant.  Applying the variance-sensitive queue-Bernstein
theorem of Section~\ref{sec:qb-unbounded}, we find that the logarithmic
coefficient for \(\PeakT\) is \(1/\eps\), corresponding to \(n/\eps\) for
\(\max_{0\le t\le T}\normone{Q_t}\), rather than \(\sqrt n/\eps\),
corresponding to \(n^{3/2}/\eps\) for
\(\max_{0\le t\le T}\normone{Q_t}\).

\begin{theorem}[Bernoulli IQS upper bound]\label{thm:bernoulli-iqs-upper}
Consider the
$n\times n$ input-queued switch with full-permutation deterministic service and
MaxWeight scheduling. Let $\eps\in(0,1)$. For every $t$, assume
that, conditional on $\F_t$, the entries of $A_t$ are independent Bernoulli
random variables with predictable conditional means $\lambda_{ij}(t)\in[0,1]$:
\[
\Pbb\bigl(A_{ij}(t)=1\mid\F_t\bigr)=\lambda_{ij}(t),
\qquad 1\le i,j\le n,
\]
and assume that the predictable mean matrix $\lambda_t$ satisfies the row and
column slack inequalities \eqref{eq:iqs-slack}. Then, for every $T\ge1$ and
$\delta\in(0,1)$, with probability at least $1-\delta$,
\[
\max_{0\le t\le T}\normtwo{Q_t}
\lesssim
\frac{1}{\eps}\log\frac{T+1}{\delta}
+
\frac{n^{3/2}}{\eps}.
\]
Consequently, with the same probability,
\[
\max_{0\le t\le T}\normone{Q_t}
\lesssim
\frac{n}{\eps}\log\frac{T+1}{\delta}
+
\frac{n^{5/2}}{\eps}.
\]
\end{theorem}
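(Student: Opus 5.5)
We derive the bound from the queue-Bernstein peak law, Theorem~\ref{thm:qb-unbounded-main}. We verify its hypotheses with dimension-free constants, bound the threshold $x_{\rm QB}$, and then convert from the Euclidean norm to the $\ell_1$ norm using Proposition~\ref{prop:iqs-alpha2}.

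\textbf{Step 1: the primitives are queue-Bernstein.} Service is deterministic, so $S_t=s_t$ is queue-Bernstein with $(\nu_S,b_S)=(0,0)$ and is trivially conditionally independent of $A_t$. Conditional on $\F_t$, the entries of $A_t$ are independent Bernoulli variables with predictable means. By Example~\ref{ex:qb-primitives}(i), $A_t$ is queue-Bernstein with $(\nu_A,b_A)=(1,1)$.

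For the record, the one-line check is as follows. For $w\in[0,1]^{n\times n}$ and $|\theta|<1$, each entry satisfies
\[
\log\E\bigl[e^{\theta w_{ij}(A_{ij}-\lambda_{ij})}\bigm|\F_t\bigr]\le \lambda_{ij}\bigl(e^{\theta w_{ij}}-1-\theta w_{ij}\bigr)\le \frac{\theta^2 w_{ij}\lambda_{ij}}{2(1-|\theta|)},
\]
where we used $w_{ij}^2\le w_{ij}$. Independence across entries then lets us sum these bounds over $(i,j)$.

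The slack condition \eqref{eq:iqs-slack} is exactly $\lambda_t\in(1-\eps)\PiC_{\rm IQS}$, which is Assumption~\ref{ass:interior}. Theorem~\ref{thm:qb-unbounded-main} therefore gives, with probability at least $1-\delta$,
\[
\PeakT\lesssim x_{\rm QB}+\Bigl(1+\tfrac1\eps\Bigr)\log\tfrac{T+1}{\delta}.
\]
Since $\eps<1$, the logarithmic coefficient is $\lesssim 1/\eps$, as the statement requires.

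\textbf{Step 2: the threshold (main obstacle).} The difficulty is that Bernoulli arrivals have no pathwise envelope of order $\sqrt n$. We do, however, have the trivial envelope $\norm{A_t}_F\le n$, since the matrix has $n^2$ entries each in $\{0,1\}$. Together with $\norm{s_t}_F=\sqrt n$, this gives $\normtwo{A_t-S_t}\le D:=n+\sqrt n\lesssim n$.

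The bounded clause of Theorem~\ref{thm:qb-unbounded-main}, with $\alphapi=1/\sqrt n$ from Proposition~\ref{prop:iqs-alpha2}, yields
\[
x_{\rm QB}\lesssim 1+n+\frac{n^2\sqrt n}{\eps},
\]
which is $n^{5/2}/\eps$. This is too large for the Euclidean claim by a factor of $n$.

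To obtain $n^{3/2}/\eps$, we go back to the definition \eqref{eq:xQB-app} of $x_{\rm QB}$. We expect it to depend on the curvature term $\E[\normtwo{A_t-S_t}^2\mid\F_t]/X_t$ in \eqref{eq:selfnorm-norm-expansion-main} through a conditional second moment (or MGF), not through a pathwise bound. For Bernoulli input,
\[
\E\bigl[\norm{A_t}_F^2\bigm|\F_t\bigr]=\sum_{i,j}\lambda_{ij}(t)\le n(1-\eps)\le n,
\]
so the effective squared jump scale is $D_{\rm eff}^2\lesssim n$, with the service contribution $\norm{s_t}_F^2=n$ also included. The curvature cost is then dominated by the drift $\eps H(u_t)\ge\eps\alphapi$ once
\[
X_t\gtrsim \frac{n}{\eps\alphapi}=\frac{n^{3/2}}{\eps}.
\]

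The plan is therefore to show that $x_{\rm QB}\lesssim 1+\sqrt n+n^{3/2}/\eps$. We would do this by plugging the conditional second-moment bound into \eqref{eq:xQB-app}. The exponential tail of the centered quadratic term should be controlled in the Bernstein form: $\norm{A_t}_F^2=\sum_{i,j}A_{ij}(t)$ is itself a sum of independent Bernoullis with mean at most $n$.

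If the appendix formula instead uses a pathwise $D$, we would rerun its entrance-scale argument with $\normtwo{A_t-S_t}^2$ replaced by its conditional mean plus a centered queue-Bernstein fluctuation. Dividing by $X_t\ge n^{3/2}/\eps$ makes this fluctuation contribute only $O(1/\eps)$ to the logarithmic coefficient, so the coefficient is unchanged.

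\textbf{Step 3: conversion to total backlog.} The Euclidean bound holds on the event above. Proposition~\ref{prop:iqs-alpha2} gives $\normone{Q}\le n\normtwo{Q}$, and multiplying the Euclidean bound by $n$ gives the total-backlog bound on the same event.
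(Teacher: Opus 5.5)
Your proposal follows essentially the same route as the paper. It applies Theorem~\ref{thm:qb-unbounded-main} with $(\nu_A,b_A)=(1,1)$, $(\nu_S,b_S)=(0,0)$ and $\theta_{\rm QB}\asymp\eps$, bounds $x_{\rm QB}$ through the conditional Poisson-binomial MGF of $N_t=\norm{A_t}_F^2$ (mean at most $n$) rather than a pathwise envelope, which gives $\kappa_\theta(x)\lesssim\theta n/x$ and hence $x_{\rm curv}\lesssim n^{3/2}/\eps$, and then multiplies by $n$. The one term you leave implicit, $\theta^{-1}\log K_{\rm up}(\theta)$, the paper handles with the same MGF via $J_t\le N_t+n+1$; this gives $O(n)$, which is absorbed into $n^{3/2}/\eps$.
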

From Table~\ref{tab:IQS three regimes}, we can see that compared to generalized IQS, the independent Bernoulli structure sharpens the logarithmic term.

\section{Experiments}\label{sec:experiments}

The preceding results establish three quantitative, testable finite-time mechanisms: a two-phase peak envelope transitioning from $\sqrt{T}$ to $\log T$, sharper geometric thresholds under state-space collapse, and variance-sensitive peak scaling under the queue-Bernstein condition. The experiments below confirm each of them.

In all experiments we simulate MaxWeight scheduling, start from $Q_0=0$, and record the running peak $\max_{0\le s\le t}\norm{Q_s}$, using total backlog or Euclidean norm according to the theorem being illustrated. The main control parameters were the horizon $T$, slack $\eps$, network size, and bottleneck geometry. Curves report empirical means over independent replications; shaded bands are $\pm$ one standard error. Experiment~1 averages over 500 runs, while Experiments~2 and~3 average over 200 independent runs. All experiments were run on standard CPU resources. No GPU or accelerator was used. The reported experiments require at most a few CPU-hours in total.

Appendix~\ref{app:experiments} contains the full experimental details, additional diagnostics, and instructions for accessing the GitHub replication package.

\paragraph{Experiment 1: the two-phase law.} The first experiment tests the two regimes in Corollary~\ref{cor:two-phase-main}: $\sqrt{T}$ burn-in followed by $\log T$ growth after a geometric entrance threshold. A direct consequence is that the transition becomes visible at shorter horizons when the slack $\varepsilon$ is larger. 

We examine two networks: the first network is an $n\times n$ input-queued switch with $n=20$, independent Bernoulli arrivals $A_{ij}(t)\sim\mathrm{Bernoulli}((1-\eps)/n)$ and deterministic full-permutation service, under MaxWeight scheduling. We plot the Frobenius-norm peak; the second is a discrete-time parallel-server system with $L=K=40$ customer classes and servers, connected by a ring compatibility graph: each class $\ell$ can be served by servers $\ell$ and $(\ell{+}1)\bmod L$. Arrivals per class are Bernoulli $A_\ell(t) \sim \operatorname{Bernoulli}((1-\varepsilon) \mu K / L)$, and each server has geometric service times with parameter $\mu=0.05$. Idle servers are assigned to compatible waiting classes by MaxWeight. We plot the Euclidean-norm peak. In both networks, we sweep $\varepsilon \in\{0.005,0.01,0.02,0.05\}$.
 
The top curves plot the empirical mean running peak $\mathbb{E}\left[\mathrm{Peak}_T\right]$, and the bottom curves plot the local $\log$-log slope $\beta(T):=\frac{\mathrm{d}}{\mathrm{d} \log T} \log \mathbb{E}\left[\mathrm{Peak}_T\right]$. A pure $T^x$ growth has $\beta(T)=x$, while a logarithmic growth has $\beta(T) \asymp 1 / \log T$. In both the IQS and parallel-server examples, larger slack makes $\beta(T)$ drop away from the $\sqrt{T}$ reference line much earlier, illustrating the transition from the burn-in envelope to the logarithmic-growth regime.

\begin{figure}[h]
    \centering
    \begin{subfigure}[t]{0.4\linewidth}
        \centering
        \safeincludegraphics[width=\linewidth]{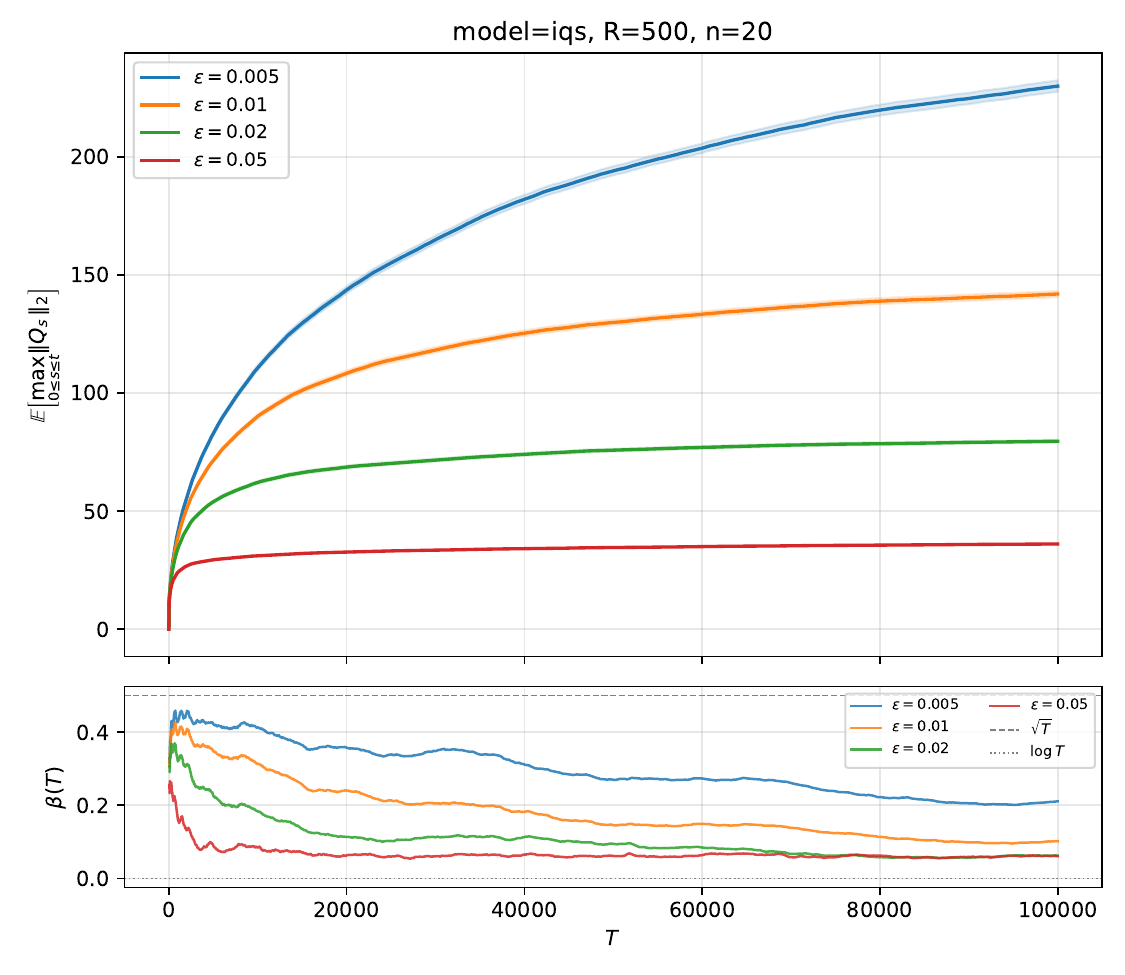}
        \caption{{Input-queued switch, $n=20$.}}
        \label{fig:iqs-two-phase}
    \end{subfigure}
    \hspace{3em}
    \begin{subfigure}[t]{0.4\linewidth}
        \centering
        \safeincludegraphics[width=\linewidth]{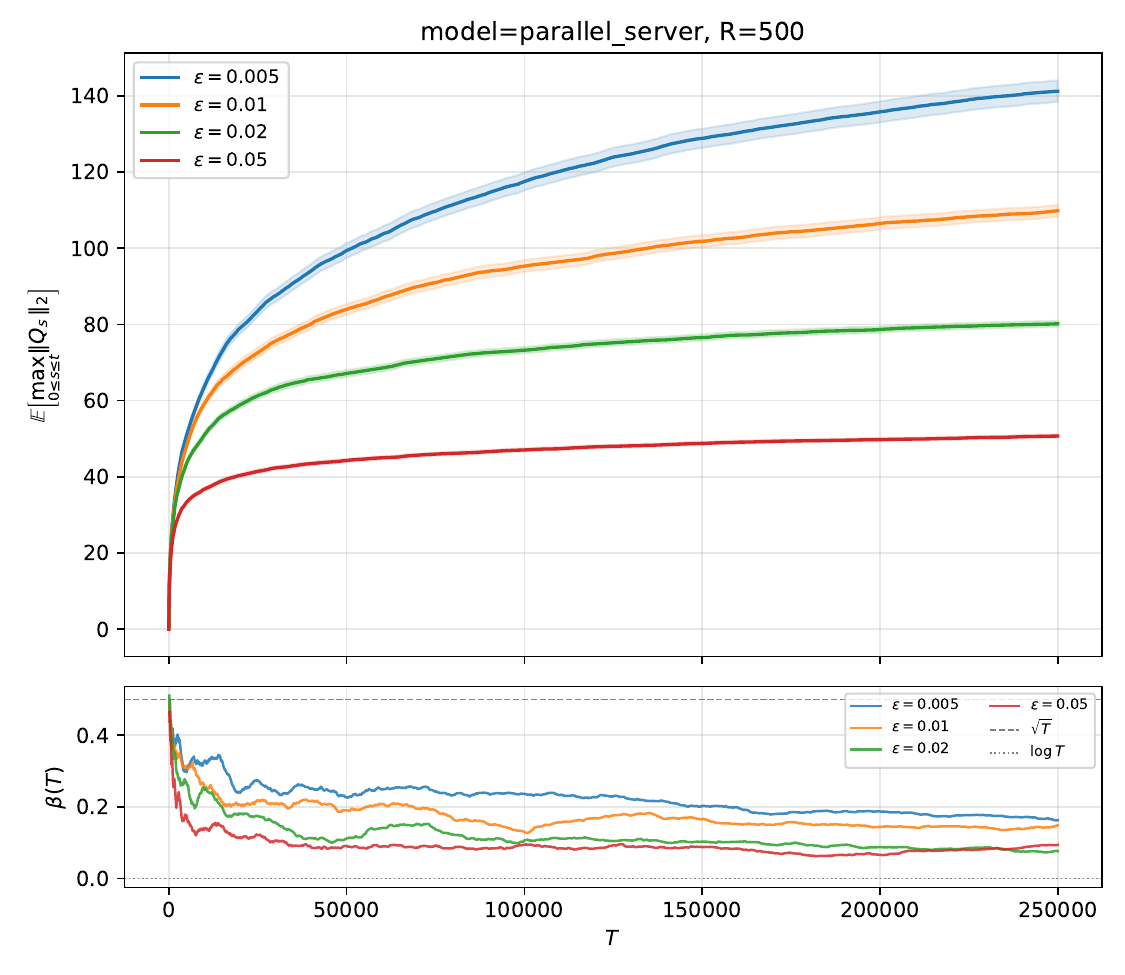}
        \caption{{Parallel-server, 40 servers \& 40 classes}.}
        \label{fig:ps-two-phase}
    \end{subfigure}
    \caption{{Two-phase behavior of the mean running peak. Top: mean peak with $\pm$SE bands. Bottom: local log-log slope $\beta(T)$.}}
    \label{fig:two-phase}
\end{figure}

\paragraph{Experiment 2: local versus global geometry.} 
This experiment uses the two-queue system of Section~\ref{sec:geometry} with service set $\Sset=\{(1,0),(0,a)\}$ and global margin $\alphapi\asymp a$, under MaxWeight scheduling.
Figure~\ref{fig:twoqueue-local-global} varies the anisotropy parameter $a\in\{0.5,0.25,0.1\}$ and compares three curves: the simulated peak, the global prediction from Theorem~\ref{thm:selfnorm-main} (which pays the worst-direction margin $\alphapi$), and the local prediction from Proposition~\ref{prop:twoqueue-peak} (which exploits collapse). The simulations track the local prediction closely, and the gap with the global prediction grows as $a$ decreases. This is the finite-time signature of state-space collapse: the process pays for the geometry it actually visits, not for the worst direction of the capacity region.

\begin{figure}
    \centering
    \safeincludegraphics[width=1\linewidth]{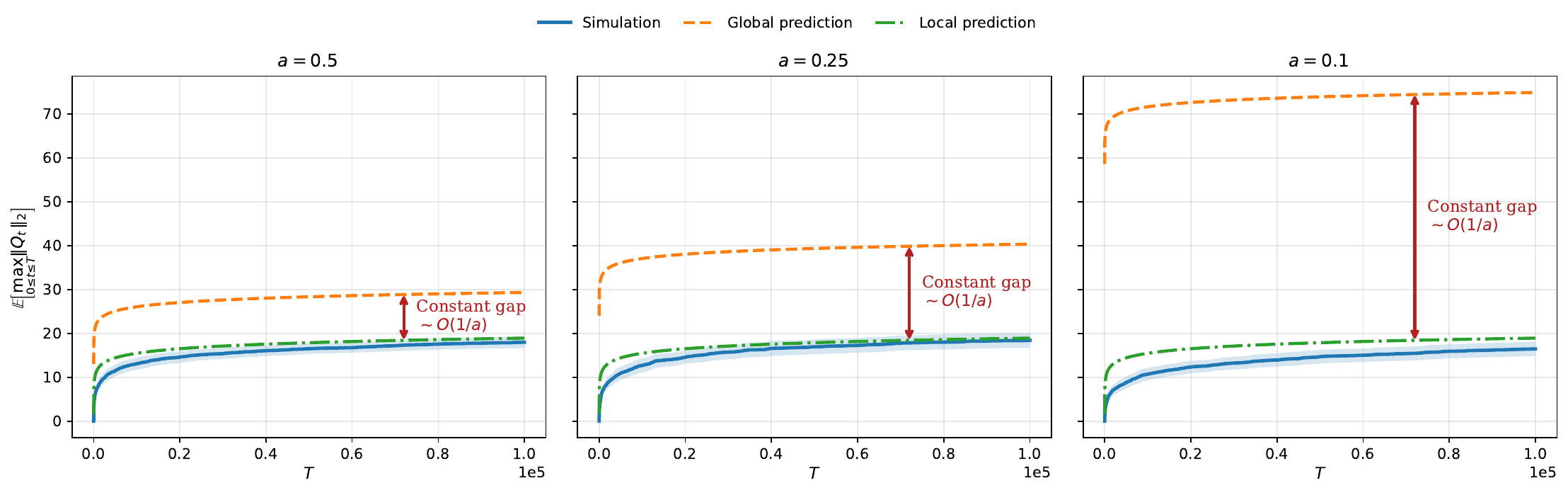}
    \caption{Two-queue example. Global prediction uses Theorem~\ref{thm:selfnorm-main}; local prediction uses Proposition~\ref{prop:twoqueue-peak}.}
    \label{fig:twoqueue-local-global}
\end{figure}

\paragraph{Experiment 3: CRP and variance.}
The third experiment separates two effects that are invisible from Theorem \ref{thm:selfnorm-main} alone: bottleneck geometry and arrival variability. Both comparisons use a generalized $n\times n$ IQS with $n=20$, $\eps=0.01$. Service is deterministic and schedules are chosen by MaxWeight.
 
\emph{CRP versus non-CRP geometry} (Figure~\ref{fig:crp-variance-comparison}(a)). We compare two instances whose arrivals equal a fixed matrix $M$ with probability $1-\eps$ and zero otherwise, 
\[
    A_t
    :=
    \begin{cases}
    0, & \text{with probability } \eps,\\
    M, & \text{with probability } 1-\eps,
    \end{cases}
\]
so the mean load is $\lambda = (1-\eps)M$ in both cases.
The non-CRP instance loads the first row \emph{and} the first column at level $1/n$, with all other entries at $1/n^2$:
\[
M^{\mathrm{nonCRP}}_{ij}
:=
\begin{cases}
1/n, & i=1 \text{ or } j=1,\\
1/n^2, & i\ne1 \text{ and } j\ne1.
\end{cases}
\]
This creates two nearly active bottleneck constraints, since both the first row and the first
column carry elevated traffic.
In the CRP instance, the elevated traffic is concentrated on the first
row and on the diagonal:
\[
M^{\mathrm{CRP}}_{ij}
:=
\begin{cases}
1/n, & i=1 \text{ or } i=j,\\
1/n^2, & \text{otherwise}.
\end{cases}
\]
Here the first row is the sole dominant bottleneck. For each instance we simulate the running total-backlog peak \(\max_{s\le t}\|Q_s\|_1\) over independent replications. Figure~\ref{fig:crp-variance-comparison}(a) shows that the CRP instance has a visibly smaller peak, consistent with the local refinement: collapse near a single bottleneck face reduces the projected-coordinate contribution from the global \(n^{3/2}/\eps\) scale toward the local \(n/\eps\) scale, while a finite-time proof must still account for the collapse radius itself.
 
\emph{Independent versus synchronous arrivals} (Figure~\ref{fig:crp-variance-comparison}(b)). We compare two arrival processes with the same entrywise means but different dependence across entries. In the independent model, $A_{ij}(t)\sim\mathrm{Bernoulli}((1-\eps)/n)$ independently across entries and slots. In the synchronous model, arrivals remain independent across slots but are dependent across entries: for every slot, with probability $(1-\eps)/n$ every entry receives one arrival simultaneously, and otherwise every entry is zero. Thus the two models have the same entrywise mean load, but very different dependence and total-mass variance. The synchronous model produces larger peaks, illustrating that finite-time peaks are controlled not only by mean arrivals but also by the directional variance structure captured by the queue-Bernstein condition.

\begin{figure}[h]
    \centering
    \begin{subfigure}[b]{0.49\textwidth}
        \centering
        \safeincludegraphics[width=\textwidth]{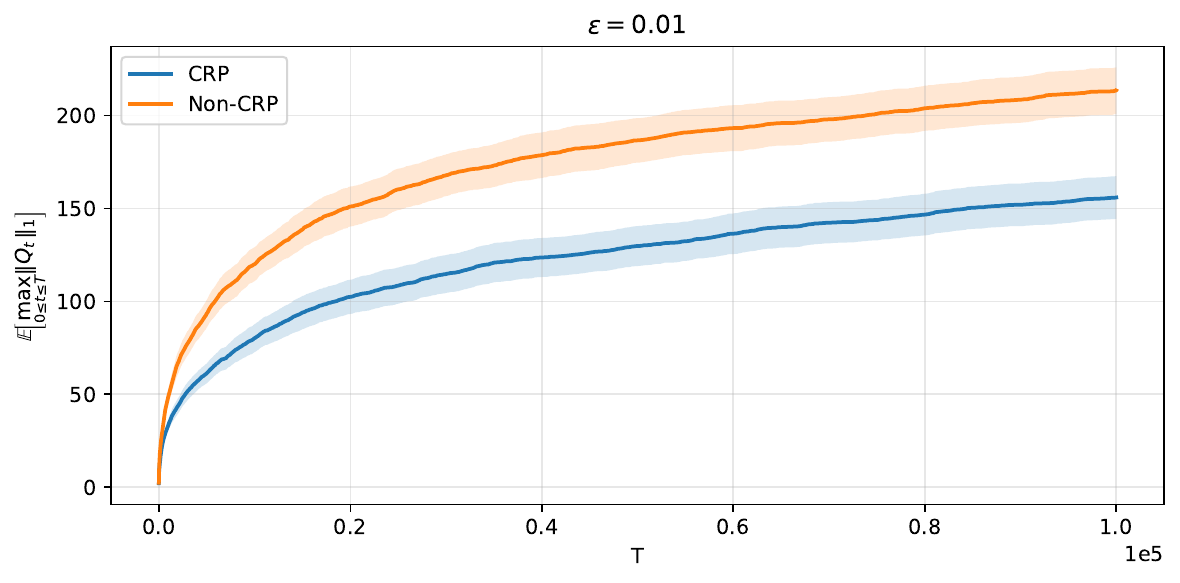}
        \caption{CRP vs. non-CRP geometry.}
        \label{fig:crp-vs-noncrp}
    \end{subfigure}
    \begin{subfigure}[b]{0.49\textwidth}
        \centering
        \safeincludegraphics[width=\textwidth]{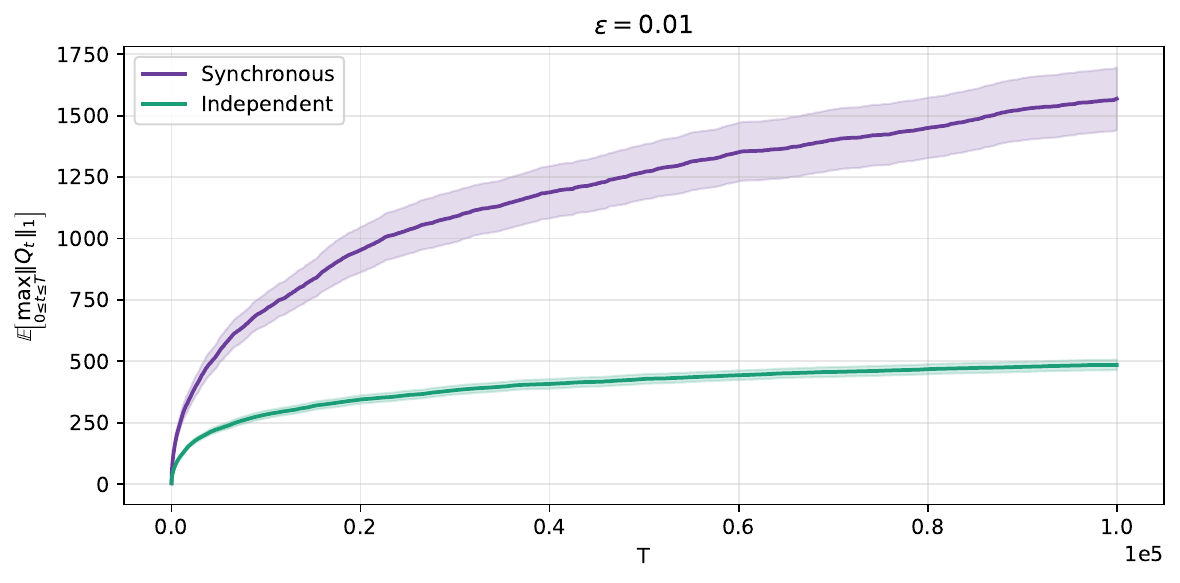}
        \caption{Independent vs. synchronous arrivals.}
        \label{fig:independent-vs-synchronous}
    \end{subfigure}
    \caption{Geometry and variance effects in generalized IQS.}
    \label{fig:crp-variance-comparison}
\end{figure}

\section{Discussion and open directions}\label{sec:discussion}

This paper identifies the finite-time outer shape of queue peaks. Before the queue has had time to exploit the network's global or local geometry, the natural envelope is the minimax $\sqrt{T}$ law. After the queue has paid the geometric entrance fee, additional growth is only logarithmic. The logarithmic coefficient is  geometry-free, while the entrance scale is geometric. This separation, together with its origin and possible refinements, is the main conceptual outcome of the paper and points to many future research directions. 

We discuss two open directions below.

\paragraph{Beyond CRP-based state-space collapse.}

Theorem~\ref{thm:geometry-lower-main} shows that the global margin $\alphapi$ is necessary for fully uniform peak laws. Section~\ref{sec:geometry} shows why it can nevertheless be overly pessimistic for localized dynamics: once the queue is known to stay near its active bottleneck set, the relevant margin is determined by that local geometry. Our local theorem treats the cleanest case, quantitative CRP, where a unique exposed bottleneck facet yields the classical single-bottleneck collapse picture \cite{Reiman84SSC,Stolyar04,EryilmazSrikant12,Williams16}.

The remaining challenge is to move beyond CRP. In non-CRP networks, several bottleneck constraints may bind at once, so the local state space is no longer a ray, but a cone, or more generally a stratified family of bottleneck faces. The natural extension would attach to each such face a collapse radius and a local service margin, extending Proposition~\ref{prop:collapse-to-peak}. This would turn the richer heavy-traffic geometry known for switches with multiple saturated resources \cite{MaguluriSrikant16,MaguluriBurleSrikant18,DaiHarrison20} into nonasymptotic peak laws.

\paragraph{Beyond uniform slack.}

A second limitation is the uniform slack condition in Assumption~\ref{ass:interior}. Weakening this assumption is delicate. A time-averaged slack condition
\[
\frac{1}{T}\sum_{t=0}^{T-1}\lambda_t\in(1-\eps)\Pi
\]
by itself is too weak: one can construct environments whose average interior margin over the horizon is of order $\eps$, yet whose queue length still grows linearly in $T$, simply because the slack arrives too late to undo earlier backlog accumulation. Any useful replacement for uniform slack must therefore control not only how much slack exists on average, but also how it is distributed in time relative to the queue dynamics.

Existing nonstationary queueing frameworks, such as Markov-modulated service or channel-fading models \cite{Neely10}, suggest one possible route. These models impose enough temporal structure to recover meaningful long-run guarantees without full stationarity. Developing a finite-time peak law under weaker, temporally structured slack conditions remains an appealing direction for future work.

\bibliographystyle{plainnat}
\bibliography{ref}

\clearpage
\appendix
\renewcommand{\contentsname}{Appendix Contents}
\addtocontents{toc}{\protect\setcounter{tocdepth}{2}}
\tableofcontents

\clearpage

\section{Experimental details}\label{app:experiments}

This appendix gives the simulation details for the experiments in
Section~\ref{sec:experiments}.

\subsection{Experiment 1: two-phase behavior}

This experiment visualizes the two regimes in
Corollary~\ref{cor:two-phase-main}. The qualitative prediction is that smaller slack delays the transition, while
larger slack moves the logarithmic regime into smaller, practically visible
horizons.

\paragraph{Local log-log slope.}
For the two-phase experiment we also plot the local log-log slope \[\beta(T):=\frac{d\log \PeakT}{d\log T},\]
estimated from the downsampled log-log curve by finite differences.  This is a
diagnostic for the local growth exponent.  If \( \PeakT \asymp T^x\), then
\(\beta(T)=x\).  If \( \PeakT \asymp \log T\), then $\beta(T)=\frac{d\log\log T}{d\log T}=\frac1{\log T}$.
Thus \(\beta(T)\approx 1/2\) signals \(\sqrt T\)-type growth, while a small
positive value at large \(T\) indicates logarithmic-scale growth.  The
slope estimate is more sensitive to downsampling than the raw peak curve,
because it is derivative-like.  We therefore choose the downsampling resolution $10^4$ grid points to stabilize the displayed finite-difference estimate; nearby choices give the
similar qualitative conclusions.

\paragraph{Input-queued switch.}
The IQS experiment uses an \(n\times n\) input-queued switch with \(n=20\).
Arrivals are independent Bernoulli across entries and time slots:
\[
  A_{ij}(t)\sim
  \mathrm{Bernoulli}\!\left(\frac{1-\eps}{n}\right).
\]
The scheduling rule is MaxWeight. We sweep
\[
  \eps\in\{0.005,0.01,0.02,0.05\}.
\]
The plotted metric is $\max_{0\le t\le T}\normtwo{Q(t)}$. The horizon is $T=10^5$.

\paragraph{Parallel-server model.}
We consider a discrete-time parallel-server queueing system with $L$ customer
classes and $K$ servers. Compatibility between classes and servers is described
by a bipartite graph $G=(\mathcal{L},\mathcal{K},E)$, $\mathcal{L}=\{0,1,\dots,L-1\}$ and $\mathcal{K}=\{0,1,\dots,K-1\}$, 
where $(\ell,k)\in E$ means that server $k$ is able to serve class $\ell$. Figure~\ref{fig:parallel-server-diagram} illustrates the compatibility graph
used in our experiments. In the two-phase parallel-server setup we use a sparse ring (``ring zigzag'')
connectivity with \(L=K=40\): each class \(\ell\) is compatible with
two servers, namely \(k=\ell\) and \(k=(\ell+1)\bmod L\). Equivalently,
\[
E=\{(\ell,\ell),(\ell,(\ell+1)\bmod L): \ell=0,1,\dots,L-1\},
\]
so \(|E|=2L\) and both sides have degree \(2\). For example, class \(0\)
connects to servers \(\{0,1\}\), while class \(39\) connects to servers
\(\{39,0\}\).

\begin{figure}[t]
    \centering
    \includegraphics[width=0.6\linewidth]{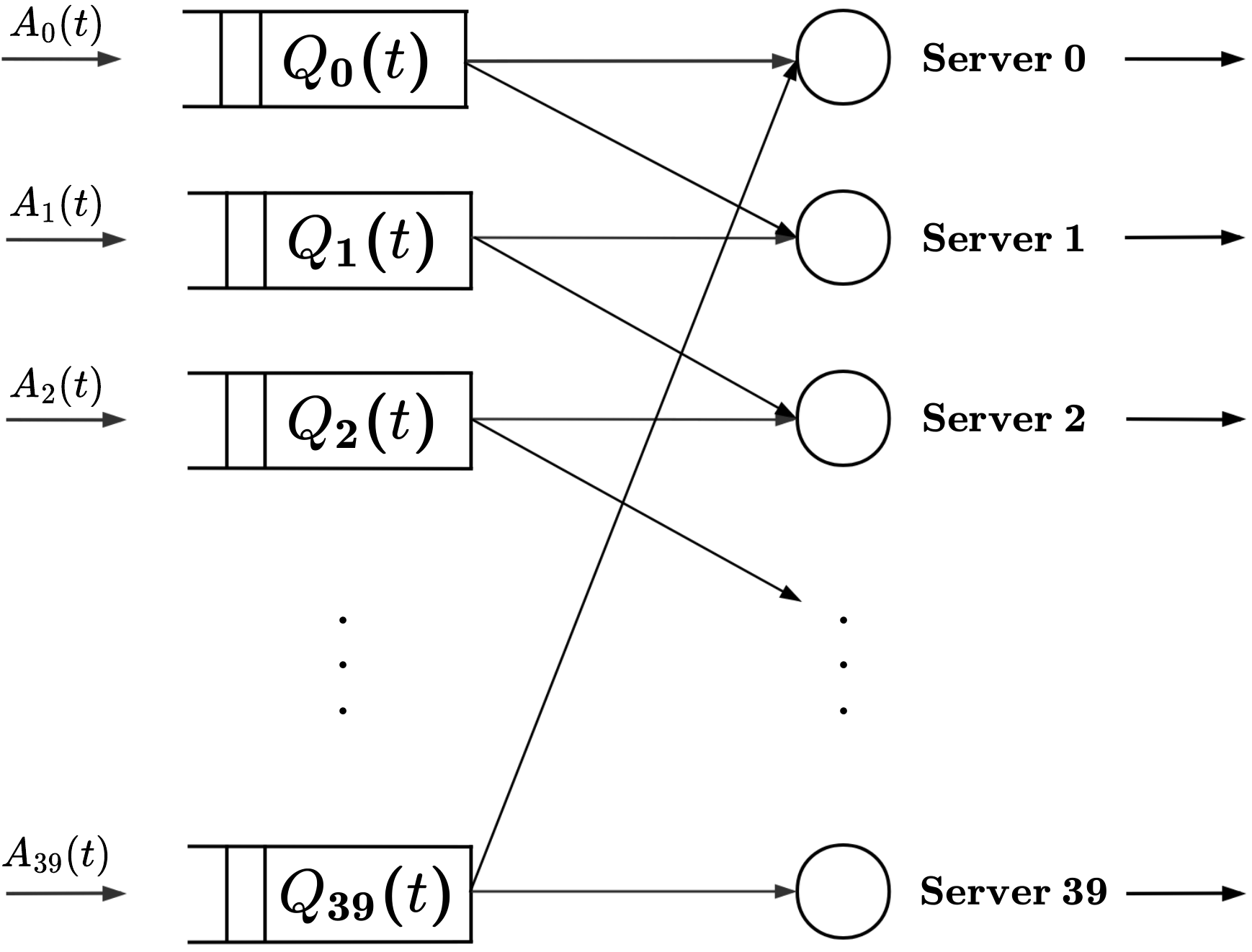}
    \caption{
    Illustrative compatibility graph for the parallel-server model with
    \(L=K=40\). The left column represents customer classes and the right
    column represents servers.
    }
    \label{fig:parallel-server-diagram}
\end{figure}

Let $Q_\ell(t)$ denote the number of waiting class-$\ell$ customers at the
beginning of slot $t$, and write $Q(t)=(Q_0(t),\ldots,Q_{L-1}(t))$. Arrivals are independent Bernoulli across classes and time slots:
\[
    A_\ell(t)\sim \mathrm{Bernoulli}\left(\frac{(1-\eps)\mu K}{L} \right).
\]
Each server has i.i.d. service probability $p_{\mathrm{svc}}=0.05$ (geometric service times with parameter $\mu$). A busy server remains unavailable until its
current service is completed; upon completion, it becomes available for a new
assignment. 

At each slot, after service completions are revealed, the scheduling policy
assigns idle servers to waiting compatible customers. Let $x_{\ell k}(t)\in
\{0,1\}$ indicate whether idle server $k$ is assigned to class $\ell$ in slot
$t$. Feasibility ensures $x_{\ell k}(t)=0$ if $(\ell,k)\notin E$, $\sum_{\ell:(\ell,k)\in E} x_{\ell k}(t)\le 1$, and $\sum_{k:(\ell,k)\in E} x_{\ell k}(t)\le Q_\ell(t)$,
so that no class is assigned more servers than its current queue length. In our
experiments we use MaxWeight. The waiting queues then evolve as
\[
    Q_\ell(t+1)
    =
    Q_\ell(t)
    - \sum_{k:(\ell,k)\in E} x_{\ell k}(t)
    + A_\ell(t),
    \qquad \ell\in[L].
\]
Thus $Q(t)$ tracks waiting customers only; customers that have been assigned to
servers leave the waiting queue and occupy their servers for a random service
duration. We sweep
\[
  \eps\in\{0.005,0.01,0.02,0.05\}.
\]
The plotted metric is $\max_{0\le t\le T}\normtwo{Q(t)}$. The horizon is $T=2.5\times10^5$.

\subsection{Experiment 2: local versus global geometry}

This experiment uses the two-queue system from Section~\ref{sec:geometry} with
service set
\[
  \mathcal S=\{(1,0),(0,a)\}.
\]
The support function is
\[
  H(q)=\max\{q_1,aq_2\},
\]
and the global Euclidean margin satisfies \(\alpha(\Pi)\asymp a\).  We vary
the anisotropy parameter \(a\) and compare the simulated expected peak with two
predictions:
\begin{itemize}
  \item the global prediction from Theorem~\ref{thm:selfnorm-main},
  which pays the worst-direction margin \(\alpha(\Pi)\);
  \item the local prediction from Proposition~\ref{prop:twoqueue-peak}, exploiting state-space collapse.
\end{itemize}
The simulations track the local prediction much more closely than the global
one, illustrating that state-space collapse can improve the entrance threshold
by replacing worst-case geometry with the geometry actually visited by the
process.

\subsection{Experiment 3: CRP and variance in IQS}

This experiment compares two effects that are invisible from the scalar slack alone: bottleneck geometry and arrival variability. Both comparisons use a generalized $n\times n$ IQS with $n=20$ and $\eps=0.01$. The horizon is $T=10^5$ and the plotted metric is the total-backlog peak $\max_{0\le t\le T}\normone{Q_t}$.

\paragraph{CRP versus non-CRP geometry.}

The third experiment first isolates the effect of bottleneck geometry in a generalized IQS. We fix a switch size \(n\), a fractional arrival matrix $M$, a slack parameter \(\eps\), we let the arrival process $A_t$ satisfy:
\begin{align*}
    A_{t}
:=
\begin{cases}
0, &  \text{ with probability } \varepsilon,\\
M, &  \text{ with probability } 1-\varepsilon.
\end{cases}
\end{align*}
Hence the mean matrix is
\[
\lambda=(1-\eps)M.
\]
Thus \(M\) is the displayed normalized load matrix, and the factor \(1-\eps\) enforces the same row/column slack in both instances. Service is deterministic and schedules are chosen by MaxWeight.
We compare two choices of \(M\). The two load matrices $M^{\mathrm{nonCRP}}$ and $M^{\mathrm{CRP}}$ are defined in Section~\ref{sec:experiments}, which makes the dominant bottleneck effectively single-face: the first row is the main active constraint, while the diagonal entries distribute the remaining traffic without creating the same row/column intersection bottleneck as in the non-CRP construction. In both the actual arrival mean is \((1-\eps)M\), so the comparison keeps the global load scale fixed and changes only the active-face geometry.

\paragraph{Independent versus synchronous arrivals.}
The two arrival processes are defined in Section~\ref{sec:experiments}. Both have mean load $(1-\eps)/n$ per entry. 

\subsection{Code and data availability}
The replication package is provided at \href{https://github.com/learning-decision/finite-time-queue-peak-laws}{https://github.com/learning-decision/finite-time-queue-peak-laws}. The experiments use synthetic data
generated by the code; no proprietary data is required.

\section{Scalar drift-to-peak lemma}\label{app:scalar}

The queueing arguments in this paper use a Hajek-type scalar drift-to-peak lemma
to produce logarithmic finite-horizon bounds. \citet*{Hajek82Drift} proved exponential bounds
for first-hitting and occupation times of real-valued processes with
uniform negative drift above a level and exponentially controlled
increments, with queueing applications including the G/G/1 queue. Since
\[
    \left\{\max_{0\le t\le T}X_t\ge x\right\}
    =
    \{\tau_x\le T\},
    \qquad
    \tau_x:=\inf\{t:X_t\ge x\},
\]
the scalar peak lemma developed below should be understood as a tailored
finite-horizon formulation of this hitting-time mechanism.

There is also a quantitative sharpening in the way we use this mechanism. A finite-horizon peak bound can be obtained from \cite{Hajek82Drift} by first deriving the associated fixed-time exponential tail estimate and then union bounding over $0,\ldots,T$. In our notation, this route gives a bound similar to \eqref{eq:exponential-peak}, but with an additional term coming from the uniform fixed-time exponential-moment estimate. After specialization to queueing, that term typically contributes an additive term of order $\eps^{-1}\log(1/\eps)$ to the peak bound, even for a single queue. This term is not intrinsic to the finite-horizon peak and can be unfavorable in the heavy traffic regime where $\eps\downarrow0$. Our scalar lemma avoids this loss. By restarting the stopped exponential supermartingale at each upcrossing of the threshold, it pays only the one-sided upward MGF envelope $M_+(\eta)$ defined in \eqref{eq:UP}, rather than a uniform fixed-time normalization.

\subsection{Exponential drift above a threshold}\label{app:scalar-expdrift}

Let $(X_t)_{t\ge 0}$ be a nonnegative process adapted to $(\F_t)$ and write $\Delta X_t:=X_{t+1}-X_t$.

\begin{definition}[Exponential drift above a threshold]\label{def:ED}
Fix $\theta>0$. We say that $X_t$ satisfies \emph{exponential drift above a threshold} with parameters $(\theta,x_0,\gamma)$ if for every $t\ge 0$,
\begin{equation}\label{eq:ED}
\E\big[e^{\theta \Delta X_t}\mid \F_t\big]
\le e^{-\gamma}
\qquad\text{a.s. on }\{X_t\ge x_0\}.
\end{equation}
\end{definition}

\begin{definition}[One-sided upward MGF envelope]\label{def:UP}
Fix $\theta>0$. We say that $X_t$ admits a \emph{one-sided upward MGF envelope} at level $\theta$ if there exists $M_+(\theta)<\infty$ such that
\begin{equation}\label{eq:UP}
\sup_{t\ge 0}\ \E\big[e^{\theta (\Delta X_t)^+}\mid \F_t\big]
\le M_+(\theta)
\qquad\text{a.s.}
\end{equation}
\end{definition}

\begin{lemma}[Exponential drift-to-peak]\label{lm:exponential-peak}
Assume that $X_t$ satisfies \eqref{eq:ED} and \eqref{eq:UP} for some $\theta>0$, $x_0\ge 0$, $\gamma>0$, and $M_+(\theta)<\infty$, and assume also that $\E[e^{\theta X_0}]<\infty$. Then for every $T\ge 0$ and every $u\ge 0$,
\begin{equation}\label{eq:exponential-tail}
\Pbb\!\left(\max_{0\le t\le T} X_t \ge x_0+u\right)
\le
e^{-\theta(x_0+u)}\E[e^{\theta X_0}]
+
T M_+(\theta)e^{-\theta u}.
\end{equation}
If $X_0\le x_0$ a.s., then
\[
\Pbb\!\left(\max_{0\le t\le T} X_t \ge x_0+u\right)
\le (T+1) M_+(\theta)e^{-\theta u}.
\]
In particular, if $X_0\le x_0$ a.s., then with probability at least $1-\delta$,
\begin{equation}\label{eq:exponential-peak}
\max_{0\le t\le T} X_t
\le
x_0 + \frac{1}{\theta}\log M_+(\theta)
+\frac{1}{\theta}\log\frac{T+1}{\delta}.
\end{equation}
\end{lemma}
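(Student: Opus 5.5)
The plan is to decompose the peak event according to the last time before the peak at which the process sits below the threshold, and to run a stopped exponential supermartingale on each excursion above $x_0$. Fix $x:=x_0+u$ and let $\tau:=\inf\{t\le T: X_t\ge x\}$. On $\{\tau\le T\}$ there are two cases. Either $X_s\ge x_0$ for all $0\le s\le\tau$, in which case the whole path up to $\tau$ is one excursion started at time $0$. Or there is a last time $\sigma<\tau$ with $X_\sigma<x_0$, and then $X_s\ge x_0$ for $\sigma<s\le\tau$. Since $\sigma$ ranges over at most $T$ values $\{0,\dots,T-1\}$, I will bound the probability of the event $E_k$ that $\sigma=k$ for each fixed $k$, and then union-bound over $k$. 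This produces the factor $T$. The first case produces the $\E[e^{\theta X_0}]$ term.

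\emph{Excursion from time $0$.} Define $Z_t:=e^{\theta X_{t\wedge\rho}}$, where $\rho:=\inf\{t: X_t<x_0\}$. On $\{t<\rho\}$ we have $X_t\ge x_0$, so \eqref{eq:ED} gives
\[
\E[e^{\theta X_{t+1}}\mid\F_t]\le e^{-\gamma}e^{\theta X_t}\le e^{\theta X_t}.
\]
Hence $Z$ is a nonnegative supermartingale. Apply the maximal inequality, or optional stopping at $\tau\wedge T$. On the event that the path stays $\ge x_0$ up to $\tau$, we have $Z_{\tau}\ge e^{\theta x}$. This gives probability at most $e^{-\theta x}\E[e^{\theta X_0}]$.

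\emph{Excursion from time $k$.} On $E_k$ we have $X_k<x_0$ and $X_{k+1}\ge x_0$, with the path then staying $\ge x_0$ until it reaches $x$. Restart the stopped supermartingale at time $k+1$: $W_t:=e^{\theta X_{(t\wedge\rho_k)}}\mathbf 1\{X_k<x_0\}$ for $t\ge k+1$, where $\rho_k$ is the first exit below $x_0$ after $k$. The key step is bounding the initial value:
\[
\E\big[e^{\theta X_{k+1}}\mathbf 1\{X_k<x_0\}\big]
\le e^{\theta x_0}\,\E\big[e^{\theta(\Delta X_k)^+}\mathbf 1\{X_k<x_0\}\big]
\le e^{\theta x_0}M_+(\theta),
\]
using $X_{k+1}\le X_k+(\Delta X_k)^+< x_0+(\Delta X_k)^+$ and conditioning on $\F_k$ with \eqref{eq:UP}. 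The supermartingale maximal inequality from time $k+1$ then gives $\Pbb(E_k)\le e^{-\theta x}e^{\theta x_0}M_+(\theta)=M_+(\theta)e^{-\theta u}$. Summing over $k$ gives \eqref{eq:exponential-tail}.

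For the corollaries: if $X_0\le x_0$, the first term is at most $e^{-\theta u}\le M_+(\theta)e^{-\theta u}$, since $M_+\ge1$ (because $(\Delta X)^+\ge0$). This yields the $(T+1)$ bound. Setting the right side equal to $\delta$ and solving for $u$ gives \eqref{eq:exponential-peak}.

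The main obstacle is the bookkeeping at the restart. I must make sure that the events $E_k$ really cover the case in which the process crosses $x$ directly from below $x_0$ in a single jump; this is the case $\tau=k+1$, where $W_{k+1}$ itself is $\ge e^{\theta x}$, and the maximal inequality counts time $k+1$. I must also make sure the indicator $\mathbf 1\{X_k<x_0\}$ is $\F_k$-measurable, so that the stopped process remains a supermartingale. No use of the strict drift $\gamma>0$ beyond $e^{-\gamma}\le1$ is needed.
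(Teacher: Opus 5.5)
Your proposal is correct and is essentially the paper's argument. You use a stopped exponential supermartingale for the initial excursion, restart it at each upcrossing of $x_0$ with initial value controlled by $e^{\theta x_0}M_+(\theta)$ via $X_{k+1}< x_0+(\Delta X_k)^+$, and union-bound over the $T$ possible upcrossing times. The only cosmetic difference is that you index by the last sub-threshold time before the first hit of $x_0+u$, which makes your events disjoint, whereas the paper indexes every upcrossing whose excursion reaches the level; this changes nothing, since both arguments finish with a union bound.
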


\begin{proof}
The path is naturally decomposed into excursions above the threshold $x_0$.

For the initial excursion, define
\[
\tau_0:=\inf\{t\ge 0: X_t<x_0\ \text{or}\ X_t\ge x_0+u\}\wedge (T+1).
\]
On $\{t<\tau_0\}$ we have $X_t\ge x_0$, so \eqref{eq:ED} implies
\[
\E[e^{\theta X_{t+1}}\mid \F_t] \le e^{\theta X_t}.
\]
Hence $(e^{\theta X_{t\wedge\tau_0}})_{t\ge 0}$ is a nonnegative supermartingale. Optional stopping gives
\[
e^{\theta(x_0+u)}\Pbb(X_{\tau_0}\ge x_0+u)
\le \E[e^{\theta X_{\tau_0}}]
\le \E[e^{\theta X_0}],
\]
that is,
\begin{equation}\label{eq:exponential-initial}
\Pbb(X_{\tau_0}\ge x_0+u)\le e^{-\theta(x_0+u)}\E[e^{\theta X_0}].
\end{equation}

Now fix $s\in\{0,\dots,T-1\}$ and consider an excursion that starts between times $s$ and $s+1$. Let
\[
E_s:=\Bigl\{X_s<x_0,\ X_{s+1}\ge x_0,\ \exists\,t\in\{s+1,\dots,T\}: X_t\ge x_0+u
\ \text{before the next return below }x_0\Bigr\}.
\]
On $E_s$, define
\[
\tau_s:=\inf\{t\ge s+1: X_t<x_0\ \text{or}\ X_t\ge x_0+u\}\wedge (T+1).
\]
Conditionally on $\F_{s+1}$, the same supermartingale argument on the interval $[s+1,\tau_s]$ yields
\[
\Pbb(E_s\mid \F_{s+1})
\le
\one_{\{X_s<x_0,\ X_{s+1}\ge x_0\}}
e^{-\theta(x_0+u)}e^{\theta X_{s+1}}.
\]
Taking conditional expectation with respect to $\F_s$, and using $X_{s+1}\le x_0 + (\Delta X_s)^+$ on $\{X_s<x_0,\ X_{s+1}\ge x_0\}$, we obtain
\[
\Pbb(E_s\mid \F_s)
\le
\one_{\{X_s<x_0\}} e^{-\theta u}\E[e^{\theta(\Delta X_s)^+}\mid \F_s]
\le
M_+(\theta)e^{-\theta u}.
\]
Therefore
\begin{equation}\label{eq:exponential-excursion}
\Pbb(E_s)\le M_+(\theta)e^{-\theta u}
\qquad\text{for every }s=0,\dots,T-1.
\end{equation}

If $\max_{0\le t\le T}X_t\ge x_0+u$, then either the initial excursion hits $x_0+u$, or there exists $s\in\{0,\dots,T-1\}$ such that $E_s$ occurs. Combining \eqref{eq:exponential-initial} and \eqref{eq:exponential-excursion} with a union bound proves \eqref{eq:exponential-tail}. If $X_0\le x_0$ a.s., then the first term in \eqref{eq:exponential-tail} is at most $e^{-\theta u}$, and Jensen's inequality gives $M_+(\theta)\ge 1$, so
\[
\Pbb\!\left(\max_{0\le t\le T}X_t\ge x_0+u\right)\le (T+1)M_+(\theta)e^{-\theta u}.
\]
The high-probability bound is the inversion of this inequality.
\end{proof}

\subsection{From negative drift and Bernstein fluctuations to exponential drift}\label{app:scalar-bernstein}

The previous lemma becomes useful once the exponential drift is verified from a more primitive drift condition and a one-step fluctuation bound.

\begin{definition}[Negative drift above a threshold]\label{def:ND}
We say that $X_t$ satisfies \emph{negative drift above a threshold} with parameters $(x_0,\Delta)$ if for every $t\ge 0$,
\begin{equation}\label{eq:ND}
\E[\Delta X_t\mid \F_t]\le -\Delta
\qquad\text{a.s. on }\{X_t\ge x_0\}.
\end{equation}
\end{definition}

\begin{definition}[Conditional Bernstein increments]\label{def:CB}
We say that $X_t$ satisfies \emph{conditional Bernstein increments} with parameters $(\sigma^2,b)$ if for every $t$ and every $\theta\in[0,1/b)$,
\begin{equation}\label{eq:CB}
\log \E\!\left[
\exp\Big(\theta(\Delta X_t - \E[\Delta X_t\mid \F_t])\Big)
\middle| \F_t\right]
\le \frac{\theta^2\sigma^2}{2(1-\theta b)}
\qquad\text{a.s.}
\end{equation}
When $b=0$ we interpret $1/b=\infty$.
\end{definition}

\begin{lemma}[Bernstein drift-to-peak]\label{lm:Bernstein-peak}
Assume \eqref{eq:ND} and \eqref{eq:CB}. Fix \(\theta>0\) with \(\theta<1/b\), interpreting \(1/0=\infty\), and assume that the upward MGF envelope \eqref{eq:UP} holds at this \(\theta\). Define
\begin{equation*}
\gamma(\theta)
:=
\theta\Delta - \frac{\theta^2\sigma^2}{2(1-\theta b)}.
\end{equation*}
If \(\gamma(\theta)>0\), then Lemma~\ref{lm:exponential-peak} applies with this choice of \(\theta\) and \(\gamma(\theta)\). In particular, if \(X_0\le x_0\) a.s., then
\[
\Pbb\!\left(\max_{0\le t\le T} X_t \ge x_0+u\right)
\le (T+1)M_+(\theta)e^{-\theta u}.
\]
If \(\sigma^2+b>0\), choose
\begin{equation}\label{eq:theta-star}
\theta_* := \min\left\{\frac{\Delta}{2\sigma^2},\,\frac{1}{2b}\right\},
\end{equation}
where \(\Delta/(2\sigma^2):=\infty\) when \(\sigma^2=0\), and \(1/(2b):=\infty\) when \(b=0\). Then \(\theta_*<\infty\) and \(\gamma(\theta_*)\ge\theta_*\Delta/4\). If \(X_0\le x_0\) a.s. and \eqref{eq:UP} holds at \(\theta_*\), then
\[
\max_{0\le t\le T} X_t
\;\le\;
 x_0 + \frac{1}{\theta_*}\log M_+(\theta_*)
+ \frac{1}{\theta_*}\log\frac{T+1}{\delta}
\]
with probability at least \(1-\delta\). Moreover \(1/\theta_*\asymp \max\{\sigma^2/\Delta,b\}\).

If \(\sigma^2=b=0\), then the Bernstein fluctuation term vanishes and \(\gamma(\theta)=\theta\Delta\) for every \(\theta>0\). In this deterministic-centered case the same tail bound holds for every \(\theta\) at which \eqref{eq:UP} holds; no finite Bernstein scale restricts the tilt.
\end{lemma}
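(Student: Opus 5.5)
The plan is to verify the exponential drift condition \eqref{eq:ED} of Definition~\ref{def:ED} directly from \eqref{eq:ND} and \eqref{eq:CB}, and then invoke Lemma~\ref{lm:exponential-peak}. Fix $\theta\in(0,1/b)$ and work on $\{X_t\ge x_0\}$. Write $\Delta X_t=\E[\Delta X_t\mid\F_t]+(\Delta X_t-\E[\Delta X_t\mid\F_t])$. Since $\theta>0$, the first part is $\F_t$-measurable and at most $-\Delta$ by \eqref{eq:ND}. Factoring it out of the conditional expectation and applying \eqref{eq:CB} to the centered part gives
\[
\E\bigl[e^{\theta\Delta X_t}\mid\F_t\bigr]\le e^{-\theta\Delta}\exp\Bigl(\frac{\theta^2\sigma^2}{2(1-\theta b)}\Bigr)=e^{-\gamma(\theta)} .
\]
So \eqref{eq:ED} holds with parameters $(\theta,x_0,\gamma(\theta))$ whenever $\gamma(\theta)>0$. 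Together with the assumed envelope \eqref{eq:UP} at this $\theta$, Lemma~\ref{lm:exponential-peak} applies. When $X_0\le x_0$ it yields the tail bound $(T+1)M_+(\theta)e^{-\theta u}$, and $\E[e^{\theta X_0}]<\infty$ is then automatic.

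Next, I check the specific tilt $\theta_*$ in \eqref{eq:theta-star}. If $\sigma^2+b>0$, at least one entry of the minimum is finite, so $\theta_*<\infty$. Also $\theta_*\le 1/(2b)<1/b$, so \eqref{eq:CB} is usable there, and $1-\theta_*b\ge 1/2$. Hence
\[
\frac{\theta_*^2\sigma^2}{2(1-\theta_* b)}\le \theta_*^2\sigma^2\le \theta_*\cdot\frac{\Delta}{2},
\]
using $\theta_*\le\Delta/(2\sigma^2)$. This gives $\gamma(\theta_*)\ge\theta_*\Delta/2\ge\theta_*\Delta/4>0$.

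The high-probability statement then follows by setting $(T+1)M_+(\theta_*)e^{-\theta_* u}=\delta$ and solving for $u$, exactly as in \eqref{eq:exponential-peak}. For the scale, note that $1/\theta_*=\max\{2\sigma^2/\Delta,\,2b\}$, which is $\asymp\max\{\sigma^2/\Delta,b\}$.

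For the degenerate case $\sigma^2=b=0$, condition \eqref{eq:CB} holds for every $\theta\ge0$ with right-hand side $0$. The same factoring then gives $\gamma(\theta)=\theta\Delta>0$ for all $\theta>0$, so the tail bound holds at any $\theta$ where \eqref{eq:UP} is available.

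I expect no real obstacle; the argument only combines \eqref{eq:ND} and \eqref{eq:CB} inside the exponential. Two points need care. First, the drift term must be pulled out of the conditional expectation using measurability and the sign of $\theta$. Second, the conventions $1/0=\infty$ must be handled so that $\theta_*$ is finite and lies strictly below $1/b$.
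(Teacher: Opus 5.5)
Your proposal is correct and follows the paper's proof essentially line for line. You split off the predictable drift to verify \eqref{eq:ED} with $\gamma(\theta)$, use $\theta_* b\le 1/2$ and $\theta_*\sigma^2\le\Delta/2$ to get $\gamma(\theta_*)\ge\theta_*\Delta/2$, and then invert the tail bound of Lemma~\ref{lm:exponential-peak}; your remark that $X_0\le x_0$ makes $\E[e^{\theta X_0}]<\infty$ automatic is a small detail the paper leaves implicit.
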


\begin{proof}
Under \eqref{eq:ND} and \eqref{eq:CB}, on \(\{X_t\ge x_0\}\) we have
\begin{align*}
\log \E[e^{\theta \Delta X_t}\mid \F_t]
&=
\theta\E[\Delta X_t\mid \F_t]
+
\log \E\!\left[
\exp\Big(\theta(\Delta X_t-\E[\Delta X_t\mid \F_t])\Big)
\middle| \F_t\right] \\
&\le -\theta\Delta + \frac{\theta^2\sigma^2}{2(1-\theta b)}
= -\gamma(\theta).
\end{align*}
Thus \eqref{eq:ED} holds, and Lemma~\ref{lm:exponential-peak} gives the asserted tail bound.

Assume next that \(\sigma^2+b>0\). The conventions in \eqref{eq:theta-star} make \(\theta_*\) finite and place it in \((0,1/b)\). Since \(\theta_*b\le1/2\) and \(\theta_*\sigma^2\le\Delta/2\),
\[
\frac{\theta_*^2\sigma^2}{2(1-\theta_*b)}
\le
\theta_*^2\sigma^2
\le
\frac{\theta_*\Delta}{2},
\]
so \(\gamma(\theta_*)\ge\theta_*\Delta/2\), and in particular \(\gamma(\theta_*)\ge\theta_*\Delta/4\). The displayed high-probability bound follows by taking
\(u=\theta_*^{-1}\log((T+1)M_+(\theta_*)/\delta)\). The relation \(1/\theta_*\asymp\max\{\sigma^2/\Delta,b\}\) is immediate from the definition of \(\theta_*\).

When \(\sigma^2=b=0\), the curvature term in \(\gamma\) is identically zero, so \(\gamma(\theta)=\theta\Delta\) for every admissible \(\theta\). The first part of the proof therefore applies at any \(\theta>0\) for which the upward MGF envelope is available.
\end{proof}

\section[Proofs for Section 2]{Proofs for Section~\ref{sec:model}}\label{app:upper}

This section proves finite-time peak bounds for generalized switches under MaxWeight.

\subsection[A warm-up via global geometry]{Theorem~\ref{thm:warmup-main}: a warm-up via global geometry}\label{app:upper-warmup}

Let
\[
\Phi(q):=\frac12 \normtwo{q}^2.
\]
The starting point is the standard quadratic drift inequality.

\begin{lemma}[Quadratic drift under conditional interior slack]\label{lem:quad-drift}
Consider MaxWeight scheduling under Assumptions~\ref{ass:interior} and~\ref{ass:bounded-primitives}. Set $D:=A_{\max}+S_{\max}$ and $B:=D^2/2$. Then
\begin{equation}\label{eq:quad-drift}
\E\big[\Phi(Q_{t+1})-\Phi(Q_t)\mid \F_t\big]
\le B - \eps H(Q_t)
\qquad\text{a.s. for every }t.
\end{equation}
\end{lemma}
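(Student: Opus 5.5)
The plan is to expand the one-step change of $\Phi$ exactly and bound the resulting terms separately. Write $Z_t:=Q_t+A_t-S_t$. Since the coordinatewise projection onto $\Rp^d$ is a Euclidean projection onto a convex set containing $0$, it is nonexpansive and satisfies $\normtwo{x^+}\le\normtwo{x}$. Hence
\[
\Phi(Q_{t+1})=\tfrac12\normtwo{Z_t^+}^2\le\tfrac12\normtwo{Z_t}^2
=\Phi(Q_t)+\ip{Q_t}{A_t-S_t}+\tfrac12\normtwo{A_t-S_t}^2 .
\]
Coordinatewise this is simply $(z^+)^2\le z^2$.

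Two terms then need to be controlled. For the quadratic term, Assumption~\ref{ass:bounded-primitives} and the triangle inequality give $\normtwo{A_t-S_t}\le A_{\max}+S_{\max}=D$ almost surely, so this term is at most $D^2/2=B$ pathwise.

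For the linear term, take conditional expectations given $\F_t$. Since $Q_t$ is $\F_t$-measurable,
\[
\E[\ip{Q_t}{A_t-S_t}\mid\F_t]=\ip{Q_t}{\lambda_t}-\ip{Q_t}{s_t},
\]
using $\E[S_t\mid\F_t]=s_t$ from \eqref{eq:service-mean}. MaxWeight gives $\ip{Q_t}{s_t}=H(Q_t)$. The support-function form of Assumption~\ref{ass:interior}, applied with $q=Q_t\in\Rp^d$, gives $\ip{Q_t}{\lambda_t}\le(1-\eps)H(Q_t)$. The two bounds together yield $-\eps H(Q_t)$.

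The only delicate step is applying the slack inequality at the random point $q=Q_t$. The assumption is stated for each fixed $q$, almost surely. It can be upgraded to hold simultaneously for all $q$, because $\lambda_t\in(1-\eps)\PiC$ almost surely and $H$ is the support function of $\PiC$. Alternatively, one can condition on $\F_t$, under which $Q_t$ is fixed. Integrability is not an issue: $Q_t$ is bounded by $tD$, so every expectation above is finite. Summing the two bounds proves \eqref{eq:quad-drift}.
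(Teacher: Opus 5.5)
Your proposal is correct and follows the paper's proof essentially step for step: the bound $(z^+)^2\le z^2$, the pathwise bound $\normtwo{A_t-S_t}^2\le D^2$, and the conditional linear term handled by \eqref{eq:service-mean}, MaxWeight, and the support form of Assumption~\ref{ass:interior}. Your remark that the slack inequality holds simultaneously for all $q$ on the event $\{\lambda_t\in(1-\eps)\PiC\}$ is a valid justification for applying it at the random point $Q_t$, a point the paper leaves implicit.
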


\begin{proof}
From \eqref{eq:queue-recursion} and $(x^+)^2\le x^2$ componentwise,
\[
\normtwo{Q_{t+1}}^2
\le
\normtwo{Q_t + A_t - S_t}^2
=
\normtwo{Q_t}^2 + \normtwo{A_t-S_t}^2 + 2\ip{Q_t}{A_t-S_t}.
\]
Divide by $2$ and take conditional expectation. Since $\normtwo{A_t-S_t}\le D$ a.s., the quadratic term contributes at most $B$. By \eqref{eq:service-mean} and MaxWeight,
\[
\E[\ip{Q_t}{S_t}\mid \F_t]=\ip{Q_t}{s_t}=H(Q_t).
\]
Finally, \eqref{eq:interior} implies
\[
\ip{Q_t}{\lambda_t}\le (1-\eps) H(Q_t).
\]
Substituting these bounds proves \eqref{eq:quad-drift}.
\end{proof}

\begin{lemma}[Geometry comparison]\label{lem:H-alpha2}
For every $q\in\Rp^d$,
\[
H(q)\ge \alphapi \normtwo{q}.
\]
\end{lemma}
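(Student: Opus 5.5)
The bound $H(q)\ge\alphapi\normtwo{q}$ is a homogeneity statement, and the plan is to reduce it directly to the definition of $\alphapi$ as an infimum over the nonnegative Euclidean unit sphere.

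\emph{Zero vector.} If $q=0$, then $H(0)=\max_{s\in\Sset}\ip{0}{s}=0$, and the right-hand side also vanishes. The inequality holds trivially.

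\emph{Nonzero vectors.} For $q\in\Rp^d\setminus\{0\}$, set $w:=q/\normtwo{q}$. Then $w\in\Rp^d$ and $\normtwo{w}=1$, so $w$ is admissible in the infimum defining $\alphapi$. This gives $H(w)\ge\alphapi$.

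Next, $H$ is positively homogeneous of degree one. Since $H(q)=\max_{s\in\Sset}\ip{q}{s}$ is a maximum of linear functionals, for every $c>0$ we have $H(cq)=\max_{s}c\ip{q}{s}=cH(q)$. Applying this with $c=\normtwo{q}$ yields
\[
H(q)=\normtwo{q}\,H(w)\ge\alphapi\normtwo{q}.
\]

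\emph{Equivalent forms of $H$.} The paper defines $H$ both as a maximum over $\Sset$ and as a maximum over $\PiC$. Either form is positively homogeneous, so the argument works with either one. Their equality for nonnegative $q$ is already recorded in the definition and does not need to be reproved: downward closure and convexification cannot increase a nonnegative linear functional.

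The only point needing a remark is the standing convention that $\alphapi>0$ on the served subspace. Even without it the inequality remains true, since $\alphapi=0$ makes it trivial because $H\ge0$ on $\Rp^d$.
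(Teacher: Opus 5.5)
Your proposal is correct and follows essentially the same argument as the paper: handle $q=0$ trivially, then normalize to $w=q/\normtwo{q}$, use $H(w)\ge\alphapi$ from the definition of the infimum, and conclude by positive homogeneity of $H$. Your side remarks on the two forms of $H$ and the case $\alphapi=0$ are also correct.
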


\begin{proof}
If $q=0$ there is nothing to prove. Otherwise write $w=q/\normtwo{q}$. By definition of $\alphapi$, $H(w)\ge \alphapi$. Since $H$ is positively homogeneous, $H(q)=\normtwo{q}H(w)\ge \alphapi \normtwo{q}$.
\end{proof}

\begin{theorem}[Baseline peak law from global geometry]\label{thm:warmup-main}
Consider MaxWeight scheduling under Assumptions~\ref{ass:interior} and~\ref{ass:bounded-primitives}. Let $D:=A_{\max}+S_{\max}$ and assume $\alphapi>0$. Then for every $T\ge1$ and $\delta\in(0,1)$, with probability at least $1-\delta$,
\[
\PeakT
\;\lesssim\;
\frac{D^2}{\eps\alphapi}\log\frac{T+1}{\delta}
+
\frac{D^2}{\eps\alphapi}.
\]
\end{theorem}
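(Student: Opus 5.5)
We apply the scalar machinery of Appendix~\ref{app:scalar} to the quadratic potential $X_t:=\Phi(Q_t)=\tfrac12\normtwo{Q_t}^2$, and only at the end translate back to $\normtwo{Q_t}=\sqrt{2X_t}$. Working with $\Phi$ rather than the norm means no curvature expansion is needed: Lemma~\ref{lem:quad-drift} already gives the exact one-step drift bound. This is the warm-up route, so the global margin $\alphapi$ will enter both the threshold and the logarithmic coefficient.

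\textbf{Step 1: drift.} Combining Lemma~\ref{lem:quad-drift} with Lemma~\ref{lem:H-alpha2} gives
\[
\E[\Delta X_t\mid\F_t]\le \tfrac{D^2}{2}-\eps\alphapi\normtwo{Q_t}.
\]
Hence on $\{\normtwo{Q_t}\ge r_0\}$ with $r_0:=D^2/(\eps\alphapi)$ the drift is at most $-\tfrac12\eps\alphapi\normtwo{Q_t}$. This is not a constant drift in $X$: the increments of $X$ scale like $\normtwo{Q_t}\,D$, and so does the drift. A fixed-$\theta$ Bernstein argument on $X$ therefore loses.

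\textbf{Step 2: pass to the norm.} It is cleaner to run the scalar lemma on $Y_t:=\normtwo{Q_t}$. Write $Y_{t+1}^2\le Y_t^2+2\ip{Q_t}{A_t-S_t}+D^2$. On $\{Y_t\ge r_0\}$ this gives $Y_{t+1}\le Y_t+\ip{u_t}{A_t-S_t}+D^2/(2Y_t)$, where $u_t:=Q_t/Y_t$. Taking conditional expectations,
\[
\E[\Delta Y_t\mid\F_t]\le -\eps H(u_t)+\tfrac{D^2}{2Y_t}\le -\eps\alphapi+\tfrac12\eps\alphapi=-\tfrac12\eps\alphapi
\]
once $Y_t\ge r_0$.

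\textbf{Step 3: fluctuations and upward jumps.} The increments satisfy $|\Delta Y_t|\le D$, because the map $q\mapsto(q+A-S)^+$ is $1$-Lipschitz and hence $\normtwo{Q_{t+1}-Q_t}\le\normtwo{A_t-S_t}\le D$. By Hoeffding's lemma, condition \eqref{eq:CB} holds with $\sigma^2\asymp D^2$ and $b=0$, and \eqref{eq:UP} holds with $M_+(\theta)\le e^{\theta D}$. We then apply Lemma~\ref{lm:Bernstein-peak} with $\Delta=\tfrac12\eps\alphapi$, $x_0=r_0$ and $Y_0=0\le x_0$. This gives $\theta_*\asymp \eps\alphapi/D^2$, so $1/\theta_*\asymp D^2/(\eps\alphapi)$, while $\tfrac1{\theta_*}\log M_+(\theta_*)\le D$. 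The conclusion is
\[
\PeakT\lesssim r_0+D+\frac{D^2}{\eps\alphapi}\log\frac{T+1}{\delta}.
\]
The extra $D$ is absorbed into $r_0$ because $\eps\alphapi\le\alphapi\le S_{\max}\le D$ (indeed $H(w)\le S_{\max}$ for unit $w$). This is exactly the stated bound.

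\textbf{Main obstacle.} The only delicate point is the drift step. Lemma~\ref{lem:quad-drift} controls $\Phi$, not the norm, so the drift has to be transferred to $Y_t$. This must be done with the pathwise inequality $Y_{t+1}\le Y_t+\ip{u_t}{A_t-S_t}+D^2/(2Y_t)$, derived from $Y_{t+1}^2\le Y_t^2+2Y_t\ip{u_t}{A_t-S_t}+D^2$ together with $\sqrt{1+x}\le1+x/2$. Applying Jensen to $\E[Y_{t+1}^2]$ instead would give the wrong direction. The self-normalization of $H(u_t)$ is deliberately not used here: the variance is bounded crudely by $D^2$, which is why the logarithmic coefficient carries $\alphapi$.
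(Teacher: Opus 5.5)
Your proof is correct and follows the paper's argument essentially step for step: pass from the quadratic drift of Lemma~\ref{lem:quad-drift} to $\normtwo{Q_t}$ via concavity of the square root, obtain drift at most $-\tfrac12\eps\alphapi$ above the level $D^2/(\eps\alphapi)$, use $|\Delta\normtwo{Q_t}|\le D$ with Hoeffding's lemma and Lemma~\ref{lm:Bernstein-peak}, and absorb the additive $D$ using $\alphapi\le S_{\max}\le D$. One side remark, which your proof does not use, is mistaken: Jensen in the form $\E[\normtwo{Q_{t+1}}\mid\F_t]\le(\E[\normtwo{Q_{t+1}}^2\mid\F_t])^{1/2}$ goes in the right direction and gives the same drift bound $D^2/(2\normtwo{Q_t})-\eps H(u_t)$.
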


This appendix benchmark is the direction-blind quadratic-drift proof. It is useful for comparison because it places the global margin $\alphapi$ in both the threshold and the logarithmic coefficient.

\begin{proof}[Proof of Theorem~\ref{thm:warmup-main}]
The proof turns the quadratic drift of Lemma~\ref{lem:quad-drift} into a linear drift for $X_t:=\normtwo{Q_t}$. The concavity of the square root gives
\[
X_{t+1}-X_t
\le \frac{\Phi(Q_{t+1})-\Phi(Q_t)}{X_t}
\qquad\text{on }\{X_t>0\}.
\]
Taking conditional expectation and using Lemmas~\ref{lem:quad-drift} and \ref{lem:H-alpha2} yields
\[
\E[\Delta X_t\mid \F_t]
\le \frac{B}{X_t} - \eps \alphapi
\qquad\text{on }\{X_t>0\}.
\]
Hence if $X_t\ge x_0:=2B/(\eps\alphapi)$, then
\[
\E[\Delta X_t\mid \F_t] \le -\frac12\eps\alphapi.
\]
Also $|\Delta X_t|\le D$ a.s. by the triangle inequality, so Hoeffding's lemma gives \eqref{eq:CB} with $\sigma^2=D^2$ and $b=0$, while \eqref{eq:UP} holds with $M_+(\theta)\le e^{\theta D}$. Lemma~\ref{lm:Bernstein-peak} therefore yields
\[
\max_{0\le t\le T}X_t
\;\lesssim\;
x_0 + D + \frac{D^2}{\eps\alphapi}\log\frac{T+1}{\delta}
\qquad\text{with probability at least }1-\delta.
\]
Since $\alphapi\le S_{\max}\le D$, we have $x_0=D^2/(\eps\alphapi)\ge D$, so the additive $D$ is absorbed into the threshold. This proves the stated bound.
\end{proof}

The theorem is already informative: it is the nonasymptotic analogue of the classical mean bound $\E_\pi\normtwo{Q}\lesssim D^2/(\eps\alphapi)$. But it still attributes the logarithmic term to the worst global direction. The next subsection shows that this is not the right fluctuation scale.

\subsection[The self-normalized theorem for MaxWeight]{Theorem~\ref{thm:selfnorm-main}: the self-normalized theorem for MaxWeight}\label{app:upper-selfnorm}

The key estimate is a one-step bound for the queue norm itself.

\begin{lemma}[Norm increment bound]\label{lem:norm-inc}
For every $q\in\R^d\setminus\{0\}$, every $y\in\R^d$, and $u=q/\normtwo{q}$,
\begin{equation}\label{eq:norm-inc}
\normtwo{q+y}-\normtwo{q}
\le \ip{u}{y} + \frac{\normtwo{y}^2}{2\normtwo{q}}.
\end{equation}
\end{lemma}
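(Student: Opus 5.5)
The plan is to compare the norm with its square and use that the norm is at least its linear approximation at the point $q$. Write $a:=\normtwo{q}>0$ and $b:=\normtwo{q+y}\ge0$. Expanding the square gives the exact identity
\[
b^2=\normtwo{q+y}^2=a^2+2\ip{q}{y}+\normtwo{y}^2
= a^2+2a\ip{u}{y}+\normtwo{y}^2 .
\]
The goal is to extract $b-a$ from $b^2-a^2=(b-a)(b+a)$ without losing anything in the first-order term.

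The key elementary step is the inequality $b-a\le \frac{b^2-a^2}{2a}$, valid for all $b\ge0$ and $a>0$. It is equivalent to $2ab-2a^2\le b^2-a^2$, that is, to $(b-a)^2\ge0$. This is the same concavity-of-the-square-root fact already used in the proof of Theorem~\ref{thm:warmup-main}. The inequality $\sqrt{x}\le \sqrt{x_0}+\frac{x-x_0}{2\sqrt{x_0}}$ is applied at $x=b^2$ and $x_0=a^2$.

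Substituting the identity for $b^2-a^2$ then gives
\[
\normtwo{q+y}-\normtwo{q}\le \frac{2a\ip{u}{y}+\normtwo{y}^2}{2a}
=\ip{u}{y}+\frac{\normtwo{y}^2}{2\normtwo{q}},
\]
which is exactly \eqref{eq:norm-inc}.

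There is no real obstacle. The only point to watch is that the bound must hold for arbitrary $y$, including when $q+y=0$. The tangent-line inequality covers this case because it holds for every $b\ge0$, so no sign conditions on $\ip{u}{y}$ or on $q+y$ are needed. In the later application, $y=A_t-S_t$ will be used together with $\normtwo{(Q_t+y)^+}\le\normtwo{Q_t+y}$, but that step lies outside the lemma itself.
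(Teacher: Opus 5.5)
Your proof is correct and is essentially the paper's argument: the paper factors out $\normtwo{q}$ and applies $\sqrt{1+z}\le 1+z/2$ for $z\ge -1$, which is your tangent-line inequality $b-a\le (b^2-a^2)/(2a)$ after normalizing by $a$. Your observation that it holds for every $b\ge 0$, including $q+y=0$, corresponds to the paper's condition $z\ge -1$, which is automatic since $1+z=\normtwo{q+y}^2/\normtwo{q}^2$.
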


\begin{proof}
Write
\[
\normtwo{q+y}
=
\sqrt{\normtwo{q}^2 + 2\ip{q}{y} + \normtwo{y}^2}
=
\normtwo{q}\sqrt{1 + \frac{2\ip{u}{y}}{\normtwo{q}} + \frac{\normtwo{y}^2}{\normtwo{q}^2}},
\]
and apply $\sqrt{1+z}\le 1+z/2$ for $z\ge -1$.
\end{proof}

\begin{lemma}[MGF of a bounded nonnegative variable]\label{lem:bounded-nonneg-mgf}
Let $Z$ be a nonnegative random variable with $0\le Z\le A$ a.s., and write $m:=\E[Z]$. Then for every $\theta\in[0,1/A)$,
\[
\log \E[e^{\theta Z}]
\le
\theta m + \frac{\theta^2 A m}{2(1-\theta A)}.
\]
The same bound holds conditionally: if $Z$ is $\F_{t+1}$-measurable and $0\le Z\le A$ a.s., then
\[
\log \E[e^{\theta Z}\mid \F_t]
\le
\theta \E[Z\mid \F_t]
+
\frac{\theta^2 A\,\E[Z\mid \F_t]}{2(1-\theta A)}
\qquad\text{a.s.}
\]
\end{lemma}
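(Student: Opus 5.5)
The plan is to use convexity of the exponential on $[0,A]$, which turns the MGF of $Z$ into the MGF of a scaled Bernoulli with the same mean, and then bound the log of that. For $z\in[0,A]$, write $z=(1-z/A)\cdot 0+(z/A)\cdot A$. Convexity then gives
\[
e^{\theta z}\le 1+\frac{z}{A}\bigl(e^{\theta A}-1\bigr).
\]
Taking expectations,
\[
\E[e^{\theta Z}]\le 1+\frac{m}{A}\bigl(e^{\theta A}-1\bigr).
\]
Applying $\log(1+x)\le x$ yields
\[
\log\E[e^{\theta Z}]\le \frac{m}{A}\bigl(e^{\theta A}-1\bigr).
\]

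It remains to bound $e^{x}-1$ for $x:=\theta A\in[0,1)$. I will expand the exponential as a power series and use $k!\ge 2\cdot 1^{k-2}$ for $k\ge2$:
\[
e^{x}-1-x=\sum_{k\ge2}\frac{x^k}{k!}\le \frac{x^2}{2}\sum_{j\ge0}x^j=\frac{x^2}{2(1-x)}.
\]
Multiplying by $m/A$ gives
\[
\frac{m}{A}\bigl(e^{\theta A}-1\bigr)\le \frac{m}{A}\Bigl(\theta A+\frac{\theta^2A^2}{2(1-\theta A)}\Bigr)=\theta m+\frac{\theta^2 A m}{2(1-\theta A)},
\]
which is the claimed bound. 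If $A=0$, then $Z=0$ a.s. and the statement is trivial. If $\theta=0$, both sides are $0$.

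For the conditional version, I apply the same pointwise convexity inequality to $Z$. I then take conditional expectation given $\F_t$, replacing $m$ by $\E[Z\mid\F_t]$, which lies in $[0,A]$ a.s. The remaining steps ($\log(1+x)\le x$ and the series bound) are deterministic inequalities applied pathwise, so they go through almost surely.

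I do not expect any real obstacle here. The only care needed is making sure the variance proxy comes out proportional to the mean $m$ rather than to $A^2$, and the convex chord bound is exactly what guarantees this.
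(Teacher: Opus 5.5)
Your proof is correct, but it linearizes in $Z$ differently from the paper. The paper applies the scalar bound $e^x\le 1+x+x^2/(2(1-x))$ pointwise at $x=\theta Z$. It then uses $Z^2\le AZ$, together with the implicit $1-\theta Z\ge 1-\theta A$, to get $e^{\theta Z}\le 1+\theta Z+\theta^2AZ/(2(1-\theta A))$ before taking expectations. You instead first replace $e^{\theta z}$ by its chord on $[0,A]$, which reduces to the extremal two-point law on $\{0,A\}$. Only then do you invoke the same series bound, at the single point $x=\theta A$.

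What your route buys is the sharper Bennett-type intermediate estimate $\log\E[e^{\theta Z}]\le (m/A)(e^{\theta A}-1)$, which holds for every real $\theta$. The same two lines therefore also give the lower-tail bound $\log\E[e^{-\eta Z}]\le -\eta m+\eta^2Am/2$ for $\eta\ge0$. The paper re-derives this separately, via $e^{-x}\le 1-x+x^2/2$ and $Z^2\le AZ$, for the service-shortfall term in Lemma~\ref{lem:netinput-mgf} and for the bounded-total-mass example. The paper's route, for its part, is a purely pointwise Taylor argument that needs no convexity reduction. Its ``$Z^2\le AZ$'' device is the one reused in the later queue-Bernstein calculations.
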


\begin{proof}
For $x\in[0,1)$ we have
\[
e^x \le 1+x+\frac{x^2}{2(1-x)}.
\]
Applying this with $x=\theta Z$ and using $Z^2\le AZ$ gives
\[
e^{\theta Z}
\le
1+\theta Z+\frac{\theta^2 A Z}{2(1-\theta A)}.
\]
Taking expectations and using $\log(1+y)\le y$ proves the unconditional bound; the conditional version is identical.
\end{proof}

\begin{lemma}[Directional net-input MGF]\label{lem:netinput-mgf}
Assume Assumption~\ref{ass:bounded-primitives}. Fix $t$ and an $\F_t$-measurable vector $u\in\Rp^d$ with $\normtwo u=1$. Let
\[
 m_t(u):=\ip{u}{\lambda_t},
 \qquad
 g_t(u):=\ip{u}{s_t}.
\]
For $D:=A_{\max}+S_{\max}$ and every $\theta\in[0,1/D)$,
\begin{equation}\label{eq:netinput-mgf}
\log \E\!\left[
\exp\!\left(\theta\bigl(\ip{u}{A_t}+\ip{u}{s_t-S_t}\bigr)\right)
\middle|\F_t\right]
\le
\theta m_t(u)+\frac{\theta^2 D\,g_t(u)}{2(1-\theta D)}
\end{equation}
whenever $m_t(u)\le g_t(u)$. If service is deterministic, $S_t=s_t$ a.s., then the sharper bound
\begin{equation}\label{eq:arrival-only-mgf}
\log \E\!\left[e^{\theta\ip{u}{A_t}}\middle|\F_t\right]
\le
\theta m_t(u)+\frac{\theta^2 A_{\max}g_t(u)}{2(1-\theta A_{\max})}
\end{equation}
holds for every $\theta\in[0,1/A_{\max})$.
\end{lemma}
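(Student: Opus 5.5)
The plan is to use the conditional independence in Assumption~\ref{ass:bounded-primitives} to split the conditional MGF into an arrival factor and a service factor:
\[
\E\!\left[e^{\theta(\ip{u}{A_t}+\ip{u}{s_t-S_t})}\middle|\F_t\right]
=
\E\!\left[e^{\theta\ip{u}{A_t}}\middle|\F_t\right]\cdot
\E\!\left[e^{\theta(g_t(u)-\ip{u}{S_t})}\middle|\F_t\right].
\]
This is valid because $u$ and $s_t$ are $\F_t$-measurable. I will bound the logarithm of each factor separately and then merge the two bounds.

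\emph{Arrival factor.} Since $u\ge0$, $\normtwo{u}=1$ and $A_t\ge0$, Cauchy--Schwarz gives $0\le Z_A:=\ip{u}{A_t}\le A_{\max}$, and $\E[Z_A\mid\F_t]=m_t(u)$. Lemma~\ref{lem:bounded-nonneg-mgf}, applied conditionally with $A=A_{\max}$, gives
\[
\log\E[e^{\theta Z_A}\mid\F_t]\le\theta m_t(u)+\frac{\theta^2A_{\max}m_t(u)}{2(1-\theta A_{\max})}.
\]
This holds for all $\theta<1/A_{\max}$, and in particular for all $\theta<1/D$. Under $m_t(u)\le g_t(u)$ this is already \eqref{eq:arrival-only-mgf}, which settles the deterministic case: there $S_t=s_t$, so the service factor is identically $1$. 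I read that statement under the same proviso $m_t(u)\le g_t(u)$, which is the case used later since it holds for $u=u_t$ by Assumption~\ref{ass:interior} and MaxWeight.

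\emph{Service factor.} Here the tilt is downward, so I need a lower-tail bound for a bounded nonnegative variable. Let $Z_S:=\ip{u}{S_t}$. Then $0\le Z_S\le S_{\max}$, and $\E[Z_S\mid\F_t]=g_t(u)$ by \eqref{eq:service-mean}. I will use $e^{-x}\le1-x+x^2/2$ for $x\ge0$ together with $Z_S^2\le S_{\max}Z_S$ to get
\[
\E[e^{-\theta Z_S}\mid\F_t]\le1-\theta g_t(u)+\frac{\theta^2S_{\max}g_t(u)}{2}.
\]
Then $\log(1+y)\le y$ yields
\[
\log\E[e^{\theta(g_t(u)-Z_S)}\mid\F_t]\le\frac{\theta^2S_{\max}g_t(u)}{2}.
\]
This bound needs no restriction on $\theta$. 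Its key feature is that the variance proxy is the mean $g_t(u)$ itself; this is the self-normalizing feature the lemma is meant to capture.

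\emph{Combination.} Adding the two logarithmic bounds and using $m_t(u)\le g_t(u)$ in the quadratic arrival term gives
\[
\theta m_t(u)+\frac{\theta^2g_t(u)}{2}\left(\frac{A_{\max}}{1-\theta A_{\max}}+S_{\max}\right).
\]
Since $1-\theta D\le1-\theta A_{\max}$ and $1-\theta D\le1$, the bracket is at most $(A_{\max}+S_{\max})/(1-\theta D)=D/(1-\theta D)$, which gives \eqref{eq:netinput-mgf}. The only point needing care is the service factor. One cannot simply reuse Lemma~\ref{lem:bounded-nonneg-mgf} there, because the tilt is negative. One also must not use Hoeffding, which would produce a proxy $S_{\max}^2$ rather than $S_{\max}g_t(u)$ and destroy the cancellation. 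The elementary inequality for $e^{-x}$ above resolves this.
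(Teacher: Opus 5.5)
Your proposal is correct and follows the paper's proof essentially step for step. Both arguments factorize the conditional MGF by conditional independence, bound the arrival factor with Lemma~\ref{lem:bounded-nonneg-mgf}, and handle the service shortfall via $e^{-x}\le 1-x+x^2/2$ together with $Z_S^2\le S_{\max}Z_S$. Both then merge the two bounds using $m_t(u)\le g_t(u)$ and $1-\theta D\le 1-\theta A_{\max}$. Your reading of the deterministic-service bound as also requiring $m_t(u)\le g_t(u)$ matches how the paper's proof actually invokes it.
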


\begin{proof}
We first control the arrival term. Since $u\in\Rp^d$ and $\normtwo u=1$,
$0\le \ip{u}{A_t}\le A_{\max}$ a.s. Lemma~\ref{lem:bounded-nonneg-mgf} gives
\[
\log \E\!\left[e^{\theta\ip{u}{A_t}}\middle|\F_t\right]
\le
\theta m_t(u)+\frac{\theta^2 A_{\max}m_t(u)}{2(1-\theta A_{\max})}.
\]
For the service-shortfall term, set $Y:=\ip{u}{S_t}$. Then $0\le Y\le S_{\max}$ and
$\E[Y\mid\F_t]=g_t(u)$. The elementary inequality $e^{-x}\le 1-x+x^2/2$ for $x\ge0$ yields
\[
\E[e^{-\theta Y}\mid\F_t]
\le 1-\theta g_t(u)+\frac{\theta^2S_{\max}g_t(u)}{2}.
\]
Multiplying by $e^{\theta g_t(u)}$ and using $\log(1+z)\le z$ gives
\[
\log \E[e^{\theta(g_t(u)-Y)}\mid\F_t]
\le \frac{\theta^2S_{\max}g_t(u)}{2}.
\]
Conditional independence of $A_t$ and $S_t$ given $\F_t$ factorizes the two conditional MGFs. Since $m_t(u)\le g_t(u)$ and $\theta D<1$,
\[
\frac{A_{\max}m_t(u)}{1-\theta A_{\max}}+S_{\max}g_t(u)
\le
\frac{(A_{\max}+S_{\max})g_t(u)}{1-\theta D}.
\]
Combining the displayed bounds proves \eqref{eq:netinput-mgf}. If $S_t=s_t$ a.s., the service-shortfall term vanishes and the arrival estimate, together with $m_t(u)\le g_t(u)$, gives \eqref{eq:arrival-only-mgf}.
\end{proof}

\begin{proof}[Proof of Theorem~\ref{thm:selfnorm-main}]
Set $X_t:=\normtwo{Q_t}$ and let $u_t:=Q_t/X_t$ on $\{X_t>0\}$. By nonexpansiveness of projection onto $\Rp^d$ and Lemma~\ref{lem:norm-inc},
\begin{equation}\label{eq:deltaX-basic}
\Delta X_t
\le
\ip{u_t}{A_t}+\ip{u_t}{s_t-S_t}-H(u_t)+\frac{D^2}{2X_t}
\qquad\text{on }\{X_t>0\}.
\end{equation}
Here $D=A_{\max}+S_{\max}$ and we used $\ip{u_t}{s_t}=H(u_t)$ under MaxWeight.

Choose
\[
x_0 := \frac{D^2}{\eps\alphapi}.
\]
If $X_t\ge x_0$, then Lemma~\ref{lem:H-alpha2} gives
\[
\frac{D^2}{2X_t}
\le \frac{\eps\alphapi}{2}
\le \frac{\eps}{2}H(u_t),
\]
and hence
\begin{equation}\label{eq:deltaX-reduced-random-service}
\Delta X_t
\le
\ip{u_t}{A_t}+\ip{u_t}{s_t-S_t}
-
\Bigl(1-\frac\eps2\Bigr)H(u_t)
\qquad\text{on }\{X_t\ge x_0\}.
\end{equation}
For this direction, set $m_t:=\ip{u_t}{\lambda_t}$ and $g_t:=\ip{u_t}{s_t}=H(u_t)$. The slack condition gives $m_t\le(1-\eps)g_t$, so Lemma~\ref{lem:netinput-mgf} yields, for every $\theta\in[0,1/D)$,
\begin{align*}
\log \E[e^{\theta\Delta X_t}\mid \F_t]
&\le
-\theta\Bigl(1-\frac\eps2\Bigr)g_t
+\theta m_t
+\frac{\theta^2Dg_t}{2(1-\theta D)} \\
&\le
-\frac{\theta\eps}{2}g_t
+\frac{\theta^2Dg_t}{2(1-\theta D)}.
\end{align*}
Take
\[
\theta:=\frac{\eps}{4D}.
\]
Then $\theta D\le 1/4$, and the last display is at most
\[
-\frac{\theta\eps}{3}g_t
\le
-\frac{\theta\eps\alphapi}{3}
\qquad\text{on }\{X_t\ge x_0\}.
\]
Thus the exponential-drift condition \eqref{eq:ED} holds. Moreover, a service realization can only decrease queues, so positive norm increments are caused by arrivals:
\[
(\Delta X_t)^+
\le \normtwo{(Q_t+A_t)^+}-\normtwo{Q_t}
\le \normtwo{A_t}
\le A_{\max}
\le D.
\]
Therefore \eqref{eq:UP} holds with $M_+(\theta)\le e^{\theta D}$. Since $Q_0=0$, Lemma~\ref{lm:exponential-peak} gives
\[
\max_{0\le t\le T}X_t
\lesssim
x_0+D+\frac1\theta\log\frac{T+1}{\delta}
\]
with probability at least $1-\delta$. Finally, $1/\theta=4D/\eps$ and $x_0\ge D$ because $\alphapi\le H(u)\le S_{\max}\le D$ for some unit $u\in\Rp^d$. This proves the high-probability bound.

\paragraph{Expectation bound.} More precisely, we have just shown that
\[
\Pbb\!\left(\PeakT\ge x_0 + D + s\right)\le (T+1)e^{-\theta s}
\qquad\text{for every }s\ge 0,
\]
where $x_0=D^2/(\eps\alphapi)$ and $\theta=\eps/(4D)$. Therefore
\begin{align*}
\E[\PeakT]
&\le x_0 + D + \int_0^\infty \min\{1,(T+1)e^{-\theta s}\}\,\dd s \\
&\le x_0 + D + \frac{1+\log(T+1)}{\theta}.
\end{align*}
Since $x_0\ge D$ and $1/\theta\asymp D/\eps$, this yields
\[
\E[\PeakT]\lesssim \frac{D}{\eps}\log(T+1) + \frac{D^2}{\eps\alphapi},
\]
as claimed in the expectation bound.

\paragraph{Deterministic service refinement.} When $S_t=s_t$ a.s., the term $\ip{u_t}{s_t-S_t}$ vanishes in \eqref{eq:deltaX-reduced-random-service}. Applying the deterministic-service part of Lemma~\ref{lem:netinput-mgf} gives, on $\{X_t\ge x_0\}$,
\[
\log \E[e^{\theta\Delta X_t}\mid\F_t]
\le
-\frac{\theta\eps}{2}H(u_t)+\frac{\theta^2A_{\max}H(u_t)}{2(1-\theta A_{\max})}.
\]
With $\theta:=\eps/(4A_{\max})$ when $A_{\max}>0$, the right-hand side is at most $-c\theta\eps H(u_t)$ for a universal $c>0$. Also $(\Delta X_t)^+\le A_{\max}$. The same exponential-drift peak lemma therefore replaces $1/\theta\asymp D/\eps$ by $A_{\max}/\eps$, while the entrance threshold remains $x_0=D^2/(\eps\alphapi)$. If $A_{\max}=0$, the queue is nonincreasing from $Q_0=0$ and the claim is immediate. Integrating the resulting tail bound proves the expectation statement.

This proves the entire theorem.
\end{proof}

\subsection{Comparison with the classical steady-state bound}\label{app:upper-expected}

The finite-time peak law should be compared with the classical steady-state mean bound derived from the same quadratic drift.

\begin{proposition}[Classical steady-state mean comparison]\label{prop:steady-state-mean}
Assume the time-homogeneous Markov-chain version of Lemma~\ref{lem:quad-drift}: the primitive conditional laws are fixed, MaxWeight is used, and the bounded drift inequality \eqref{eq:quad-drift} holds with the same constants at every state. Suppose the chain has an invariant distribution \(\pi\) such that, for \(\bar Q\sim\pi\), \(\E\|\bar Q\|_2^2<\infty\). For a continuous, nonnegative, positively homogeneous functional \(V:\Rp^d\to\Rp\) with $V(q)>0$ for all $q\ne0, q\ge0$, define
\[
\alpha_V(\PiC):=\inf\{\Hpi(w): w\in\Rp^d,\ V(w)=1\}.
\]
If \(\alpha_V(\PiC)>0\), then
\[
\E[V(\bar Q)]\le \frac{D^2}{2\eps\alpha_V(\PiC)}.
\]
In particular, taking \(V(q)=\|q\|_2\), for which \(\alpha_V(\PiC)=\alphapi\), gives
\[
\E\normtwo{\bar Q}\le \frac{D^2}{2\eps\alphapi}.
\]
\end{proposition}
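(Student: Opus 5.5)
The plan is the usual stationarity argument applied to the quadratic drift of Lemma~\ref{lem:quad-drift}. Start the chain from its invariant law, $Q_t\sim\pi$. Then $\Phi(Q_t)$ and $\Phi(Q_{t+1})$ have the same distribution, and the hypothesis $\E\|\bar Q\|_2^2<\infty$ makes both expectations finite. Hence $\E[\Phi(Q_{t+1})-\Phi(Q_t)]=0$. We can subtract these expectations legitimately because each one is finite; this is exactly where the second-moment assumption enters, and it lets us avoid a truncated Lyapunov function. We then take the unconditional expectation of \eqref{eq:quad-drift}, which holds at every state by the time-homogeneous hypothesis, to get
\[
0\le B-\eps\,\E[H(\bar Q)],\qquad B=D^2/2,
\]
so that $\E[H(\bar Q)]\le D^2/(2\eps)$. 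Note that $H(\bar Q)\le S_{\max}\|\bar Q\|_2$ is integrable.

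Next comes the analogue of Lemma~\ref{lem:H-alpha2} for a general $V$. We claim $H(q)\ge\alpha_V(\PiC)V(q)$ for every $q\in\Rp^d$. For $q=0$ this should hold trivially: homogeneity gives $V(0)=0$, since $V(0)=V(c\cdot 0)=cV(0)$ for any $c>0$. For $q\neq0$ we have $V(q)>0$, so $w:=q/V(q)$ is a nonnegative vector with $V(w)=1$. By the definition of the infimum, $H(w)\ge\alpha_V(\PiC)$. Positive homogeneity of both $H$ and $V$ then gives $H(q)=V(q)H(w)\ge\alpha_V(\PiC)V(q)$. Continuity of $V$ is not actually needed for this step; it only ensures $V(\bar Q)$ is measurable.

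Combining the two steps gives
\[
\alpha_V(\PiC)\,\E V(\bar Q)\le \E H(\bar Q)\le \frac{D^2}{2\eps},
\]
and dividing by $\alpha_V(\PiC)>0$ yields the claim. For the special case $V(q)=\|q\|_2$, the set $\{V(w)=1\}\cap\Rp^d$ is the nonnegative unit sphere, so $\alpha_V(\PiC)=\alphapi$ by definition.

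The only delicate point is the stationarity cancellation. We need $\E[\Phi(Q_{t+1})\mid Q_t]$ to be computed under the same kernel that preserves $\pi$, and integrability must hold so that we are never subtracting infinities. Both are supplied directly by the hypotheses, so I expect no real obstacle.
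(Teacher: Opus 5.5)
Your proposal is correct and follows the paper's proof essentially line by line. In both, stationarity together with the finite second moment gives $\E[\Phi(Q_{t+1})-\Phi(Q_t)]=0$, integrating \eqref{eq:quad-drift} yields $\E[H(\bar Q)]\le D^2/(2\eps)$, and positive homogeneity gives $H(q)\ge\alpha_V(\PiC)V(q)$. Your extra checks ($V(0)=0$ and the integrability of $H(\bar Q)$) fill in details that the paper leaves implicit.
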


\begin{proof}
The finite second-moment assumption justifies taking expectations of the unbounded quadratic Lyapunov drift in steady state. Since \(Q_t\sim\pi\) implies \(Q_{t+1}\sim\pi\),
\[
\E[\Phi(Q_{t+1})-\Phi(Q_t)]=0.
\]
Taking expectations in \eqref{eq:quad-drift} gives
\[
\E[H(\bar Q)]\le \frac{D^2}{2\eps}.
\]
By definition of \(\alpha_V(\PiC)\), positive homogeneity gives \(H(q)\ge \alpha_V(\PiC)V(q)\) for every \(q\in\Rp^d\). The result follows.
\end{proof}

The proposition controls a steady-state mean. Theorem~\ref{thm:selfnorm-main} controls the entire peak process with high probability. The two bounds have the same base scale. The extra $\log T$ is the price of looking at extremes over a horizon instead of at a single steady-state sample. This is why we call the logarithmic term the finite-time shadow of exponential steady-state tails.

\subsection{Directional certificate and LyapOpt}\label{app:lyapopt-certificate}

Here we provide the directional certificate in Remark~\ref{rem:lyapopt-main}: any scheduling policy satisfying this certificate will enjoy a self-normalized logarithmic queue peak law. We then verify this directional certificate for LyapOpt \cite{LTX25}.

\begin{proposition}[A general directional certificate]\label{prop:directional-certificate-peak}
Consider any $\F_t$-measurable policy $s_t\in\Sset$ under Assumptions~\ref{ass:interior} and~\ref{ass:bounded-primitives}, with $Q_0=0$. Let $X_t:=\normtwo{Q_t}$ and let $u_t:=Q_t/X_t$ on $\{X_t>0\}$. Suppose that there exist constants $x_{\rm cert}\ge0$, $\eps_\star\in(0,1)$, $c_G\ge1$, and $\underline\alpha>0$, and an $\F_t$-measurable random variable $G_t$, such that, on $\{X_t\ge x_{\rm cert}\}$,
\[
G_t\ge \underline\alpha,
\qquad
G_t\le\ip{u_t}{s_t}\le c_GG_t,
\qquad
\ip{u_t}{\lambda_t}\le(1-\eps_\star)G_t.
\]
Set $D:=A_{\max}+S_{\max}$. Then, for every $T\ge1$ and $\delta\in(0,1)$,
\[
\Pbb\!\left(
\PeakT
\lesssim
x_{\rm cert}
+
\frac{D^2}{\eps_\star\underline\alpha}
+
\frac{c_GD}{\eps_\star}\log\frac{T+1}{\delta}
\right)
\ge1-\delta.
\]
Moreover,
\[
\E[\PeakT]
\lesssim
x_{\rm cert}
+
\frac{D^2}{\eps_\star\underline\alpha}
+
\frac{c_GD}{\eps_\star}\log(T+1).
\]
If service is deterministic after the decision, the two conclusions hold with $c_GD/\eps_\star$ replaced by $c_GA_{\max}/\eps_\star$.
\end{proposition}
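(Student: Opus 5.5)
The plan is to rerun the proof of Theorem~\ref{thm:selfnorm-main} verbatim, with the certificate quantity $G_t$ taking the role that $H(u_t)$ played there. Set $X_t=\normtwo{Q_t}$. By nonexpansiveness of $(\cdot)^+$ and Lemma~\ref{lem:norm-inc}, on $\{X_t>0\}$ we have
\[
\Delta X_t\le \ip{u_t}{A_t}+\ip{u_t}{s_t-S_t}-\ip{u_t}{s_t}+\frac{D^2}{2X_t}.
\]
This inequality holds for any policy, since MaxWeight was used only to identify $\ip{u_t}{s_t}$ with $H(u_t)$.

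Next, take the entrance level $x_0:=\max\{x_{\rm cert},\,D^2/(\eps_\star\underline\alpha)\}$. On $\{X_t\ge x_0\}$ the certificate gives $D^2/(2X_t)\le \eps_\star\underline\alpha/2\le \eps_\star G_t/2$. The mean drift is then at most
\[
\ip{u_t}{\lambda_t}-\ip{u_t}{s_t}+\frac{\eps_\star G_t}{2}\le(1-\eps_\star)G_t-G_t+\frac{\eps_\star G_t}{2}=-\frac{\eps_\star G_t}{2},
\]
where we used $\ip{u_t}{s_t}\ge G_t$.

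For the fluctuation, apply Lemma~\ref{lem:netinput-mgf} with $u=u_t$ and $g_t=\ip{u_t}{s_t}$. Its hypothesis $m_t\le g_t$ holds because $m_t\le(1-\eps_\star)G_t\le g_t$. The variance proxy is then $D g_t\le c_G D G_t$. Combining drift and fluctuation,
\[
\log\E[e^{\theta\Delta X_t}\mid\F_t]\le -\frac{\theta\eps_\star}{2}G_t+\frac{\theta^2 c_G D G_t}{2(1-\theta D)}.
\]
Choose $\theta:=\eps_\star/(4c_GD)$. Since $c_G\ge1$, this gives $\theta D\le1/4$, and the right-hand side is at most $-c\,\theta\eps_\star G_t\le -c\,\theta\eps_\star\underline\alpha<0$. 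This verifies \eqref{eq:ED}.

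The upward envelope is unchanged from the MaxWeight case. We have $(\Delta X_t)^+\le\normtwo{A_t}\le A_{\max}\le D$, so $M_+(\theta)\le e^{\theta D}$ and $\theta^{-1}\log M_+\le D$. Since $X_0=0\le x_0$, Lemma~\ref{lm:exponential-peak} yields, with probability at least $1-\delta$,
\[
\PeakT\le x_0+D+\frac{4c_GD}{\eps_\star}\log\frac{T+1}{\delta}.
\]
The additive $D$ is absorbed into the logarithmic term because $c_G/\eps_\star\ge1$ and $\log((T+1)/\delta)\ge\log 2$. Bounding $x_0$ by the sum $x_{\rm cert}+D^2/(\eps_\star\underline\alpha)$ then gives the stated high-probability bound.

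For the expectation bound, integrate the tail $\Pbb(\PeakT\ge x_0+D+s)\le (T+1)e^{-\theta s}$ exactly as in the proof of Theorem~\ref{thm:selfnorm-main}. This gives $\E[\PeakT]\le x_0+D+(1+\log(T+1))/\theta$.

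For deterministic service, the shortfall term $\ip{u_t}{s_t-S_t}$ vanishes. We then use \eqref{eq:arrival-only-mgf}, which has variance proxy $A_{\max}g_t\le c_GA_{\max}G_t$, and take $\theta=\eps_\star/(4c_GA_{\max})$. The increments are still bounded by $(\Delta X_t)^+\le A_{\max}$, so the logarithmic coefficient becomes $c_GA_{\max}/\eps_\star$. The threshold keeps its $D^2$, coming from the curvature term.

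The main obstacle I expect is bookkeeping rather than a new idea. The certificate constraints hold only on $\{X_t\ge x_{\rm cert}\}$, whereas Lemma~\ref{lem:netinput-mgf} needs an $\F_t$-measurable unit direction and the ordering $m_t\le g_t$. Both are available on $\{X_t\ge x_0\}$, because $x_0\ge x_{\rm cert}$ and $u_t$ is $\F_t$-measurable, and \eqref{eq:ED} is only required on that event. The other point to check is that the ratio of fluctuation to drift stays of order $c_GD/\eps_\star$ independently of $G_t$. This holds because both scale linearly in $G_t$, which is exactly the self-normalization, now with the distortion factor $c_G$.
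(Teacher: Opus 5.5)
Your proposal is correct and follows the paper's proof essentially step for step. It uses the same entrance level $x_0=\max\{x_{\rm cert},D^2/(\eps_\star\underline\alpha)\}$, the same application of Lemma~\ref{lem:netinput-mgf} with $m_t\le(1-\eps_\star)G_t\le g_t\le c_GG_t$, the same tilt $\theta=\eps_\star/(4c_GD)$ giving exponential drift of order $-\theta\eps_\star G_t$, and the same upward envelope $(\Delta X_t)^+\le A_{\max}$ fed into Lemma~\ref{lm:exponential-peak}. Your explicit absorption of the additive $D$ into the logarithmic term is a small bookkeeping step the paper leaves implicit. The only thing you omit is the trivial deterministic-service case $A_{\max}=0$: there your tilt $\eps_\star/(4c_GA_{\max})$ is undefined, but the queue simply stays at $0$.
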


\begin{proof}
We repeat the MaxWeight proof with $G_t$ replacing $H(u_t)$. Set
\[
 x_0:=\max\left\{x_{\rm cert},\frac{D^2}{\eps_\star\underline\alpha}\right\}.
\]
On $\{X_t\ge x_0\}$, the norm increment bound gives
\[
\Delta X_t
\le
\ip{u_t}{A_t}+\ip{u_t}{s_t-S_t}-\ip{u_t}{s_t}
+\frac{D^2}{2X_t}
\le
\ip{u_t}{A_t}+\ip{u_t}{s_t-S_t}
-\left(1-\frac{\eps_\star}{2}\right)G_t .
\]
Indeed, the last step uses $\ip{u_t}{s_t}\ge G_t$ and $D^2/(2X_t)\le \eps_\star\underline\alpha/2\le\eps_\star G_t/2$. Put $m_t:=\ip{u_t}{\lambda_t}$ and $g_t:=\ip{u_t}{s_t}$. The certificate gives $m_t\le(1-\eps_\star)G_t\le g_t$, so Lemma~\ref{lem:netinput-mgf} yields, for $\theta\in[0,1/D)$,
\[
\log\E[e^{\theta\Delta X_t}\mid\F_t]
\le
-\theta\left(1-\frac{\eps_\star}{2}\right)G_t
+\theta m_t
+\frac{\theta^2Dg_t}{2(1-\theta D)}
\le
-\frac{\theta\eps_\star}{2}G_t
+\frac{\theta^2Dc_GG_t}{2(1-\theta D)} .
\]
Choose $\theta:=\eps_\star/(4c_GD)$. Since $c_G\ge1$ and $\eps_\star<1$, we have $\theta D\le1/4$, and the last display is at most $-\theta\eps_\star G_t/3\le -\theta\eps_\star\underline\alpha/3$. Thus the exponential-drift condition of Definition~\ref{def:ED} holds above $x_0$. As in the proof of Theorem~\ref{thm:selfnorm-main}, service can only decrease the queue relative to adding arrivals alone, so $(\Delta X_t)^+\le A_{\max}\le D$ and Definition~\ref{def:UP} holds with $M_+(\theta)\le e^{\theta D}$. Since $Q_0=0$, Lemma~\ref{lm:exponential-peak} gives the stated high-probability bound, and integrating its tail gives the expectation bound.

When $S_t=s_t$ a.s., the service-noise term vanishes. The arrival-only bound in Lemma~\ref{lem:netinput-mgf} gives the same argument with $D$ replaced by $A_{\max}$ in the logarithmic coefficient; if $A_{\max}=0$, then the queue is nonincreasing from $Q_0=0$ and the claim is immediate.
\end{proof}

\begin{definition}[LyapOpt in the present model]\label{def:lyapopt}
The LyapOpt policy chooses any $\F_t$-measurable minimizer
\begin{equation}\label{eq:LyapOpt}
 s_t^{\rm LO}
 \in
 \arg\min_{s\in\Sset}\normtwo{(Q_t-s)^+}^2 .
\end{equation}
This is the one-step quadratic Lyapunov lookahead of \cite{LTX25}, specialized to the fixed scheduling set and mean-service convention used here.
\end{definition}

\begin{lemma}[LyapOpt preserves directional service above the entrance scale]\label{lem:lyapopt-directional-service}
Let $q\in\Rp^d\setminus\{0\}$, set $u:=q/\normtwo{q}$, and let $s^{\rm LO}\in\arg\min_{s\in\Sset}\normtwo{(q-s)^+}^2$. Then
\[
\ip{u}{s^{\rm LO}}
\ge
H(u)-\frac{S_{\max}^2}{2\normtwo{q}}.
\]
\end{lemma}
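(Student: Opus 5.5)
The plan is to compare the LyapOpt objective at $s^{\rm LO}$ with its value at a MaxWeight schedule $s^{\rm MW}\in\argmax_{s\in\Sset}\ip{q}{s}$, for which $\ip{q}{s^{\rm MW}}=H(q)=\normtwo{q}H(u)$. Two pointwise bounds on $\normtwo{(q-s)^+}^2$ should suffice, one from above and one from below.

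For the upper bound, I use $((x)^+)^2\le x^2$ coordinatewise. This gives
\[
\normtwo{(q-s^{\rm MW})^+}^2\le\normtwo{q-s^{\rm MW}}^2=\normtwo{q}^2-2\ip{q}{s^{\rm MW}}+\normtwo{s^{\rm MW}}^2\le \normtwo{q}^2-2H(q)+S_{\max}^2 .
\]
The last step uses $\sup_{s\in\Sset}\normtwo{s}\le S_{\max}$ from Assumption~\ref{ass:bounded-primitives}.

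For the lower bound, valid for any $s\in\Rp^d$, I claim
\[
\normtwo{(q-s)^+}^2\ge\normtwo{q}^2-2\ip{q}{s}.
\]
I would verify this coordinatewise with $q_i\ge0$ and $s_i\ge0$. If $q_i\ge s_i$, then $(q_i-s_i)^2=q_i^2-2q_is_i+s_i^2\ge q_i^2-2q_is_i$. If $q_i<s_i$, the left side is $0$, and $q_i^2-2q_is_i=q_i(q_i-2s_i)\le0$ since $q_i-2s_i<0$. Summing over coordinates gives the claim.

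Next I combine the two bounds through optimality of $s^{\rm LO}$:
\[
\normtwo{q}^2-2\ip{q}{s^{\rm LO}}\le\normtwo{(q-s^{\rm LO})^+}^2\le\normtwo{(q-s^{\rm MW})^+}^2\le\normtwo{q}^2-2H(q)+S_{\max}^2 .
\]
Rearranging yields $\ip{q}{s^{\rm LO}}\ge H(q)-S_{\max}^2/2$. Dividing by $\normtwo{q}>0$ and using positive homogeneity of $H$ gives $\ip{u}{s^{\rm LO}}\ge H(u)-S_{\max}^2/(2\normtwo{q})$.

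The only step that needs care is the coordinatewise lower bound. There the naive inequality $(x^+)^2\ge x^2$ fails, and one has to exploit nonnegativity of both $q$ and $s$. The remaining steps are routine.
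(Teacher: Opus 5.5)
Your proposal is correct and follows the paper's proof essentially step for step: the same coordinatewise lower bound $\normtwo{(q-s)^+}^2\ge\normtwo{q}^2-2\ip{q}{s}$, the same comparison against a support-maximizing (MaxWeight) schedule using $(x^+)^2\le x^2$ and $\normtwo{s}\le S_{\max}$, and the same division by $\normtwo{q}$. Your explicit case check for the lower bound, which uses $q_i,s_i\ge0$, supplies a detail the paper states without proof.
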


\begin{proof}
For every $s\in\Sset$ and every coordinate $i$,
\[
\bigl(q_i-s_i\bigr)_+^2\ge q_i^2-2q_is_i .
\]
Hence
\[
\normtwo{(q-s)^+}^2
\ge
\normtwo{q}^2-2\ip{q}{s}.
\]
Let $\bar s\in\argmax_{s\in\Sset}\ip{u}{s}$, so $\ip{u}{\bar s}=H(u)$. By optimality of $s^{\rm LO}$ and by $\normtwo{\bar s}\le S_{\max}$,
\[
\normtwo{q}^2-2\ip{q}{s^{\rm LO}}
\le
\normtwo{(q-s^{\rm LO})^+}^2
\le
\normtwo{(q-\bar s)^+}^2
\le
\normtwo{q-\bar s}^2
\le
\normtwo{q}^2-2\normtwo{q}\,H(u)+S_{\max}^2 .
\]
Dividing by $2\normtwo{q}$ proves the claim.
\end{proof}

\begin{proposition}[Self-normalized peak law for LyapOpt]\label{prop:lyapopt-selfnorm}
Assume Assumptions~\ref{ass:interior} and~\ref{ass:bounded-primitives}, the LyapOpt policy \eqref{eq:LyapOpt}, and $Q_0=0$. Let $D:=A_{\max}+S_{\max}$ and assume $\alphapi>0$. Then, for every $T\ge1$ and $\delta\in(0,1)$, with probability at least $1-\delta$,
\[
\PeakT
\lesssim
\frac{D}{\eps}\log\frac{T+1}{\delta}
+
\frac{D^2}{\eps\alphapi}.
\]
Consequently,
\[
\E[\PeakT]
\lesssim
\frac{D}{\eps}\log(T+1)
+
\frac{D^2}{\eps\alphapi} .
\]
If service is deterministic, the logarithmic coefficient $D/\eps$ can be replaced by $A_{\max}/\eps$.
\end{proposition}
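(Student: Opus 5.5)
The plan is to verify the directional certificate of Proposition~\ref{prop:directional-certificate-peak} for LyapOpt, using Lemma~\ref{lem:lyapopt-directional-service}, and then read off the constants. The natural choice is $G_t:=\tfrac12 H(u_t)$ (or more generally $G_t:=(1-\eta)H(u_t)$ for a constant fraction $\eta$ tied to $\eps$), together with an entrance level $x_{\rm cert}$ of order $S_{\max}^2/(\eps\alphapi)$. The certificate then requires three things on $\{X_t\ge x_{\rm cert}\}$: a lower bound $G_t\ge\underline\alpha$, a sandwich $G_t\le\ip{u_t}{s_t}\le c_G G_t$, and a slack inequality $\ip{u_t}{\lambda_t}\le(1-\eps_\star)G_t$.

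\textbf{The sandwich.} The upper bound $\ip{u_t}{s_t}\le H(u_t)$ is automatic, since $s_t\in\Sset$. For the lower bound, Lemma~\ref{lem:lyapopt-directional-service} gives
\[
\ip{u_t}{s_t}\ge H(u_t)-\frac{S_{\max}^2}{2X_t}.
\]
Lemma~\ref{lem:H-alpha2} gives $H(u_t)\ge\alphapi$. Hence once $X_t\ge S_{\max}^2/(\kappa\eps\alphapi)$, the loss term is at most $\kappa\eps H(u_t)$.

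\textbf{The choice of $G_t$.} The main obstacle is picking $G_t$ so that the slack survives. We have $\ip{u_t}{\lambda_t}\le(1-\eps)H(u_t)$, and we need this to be at most $(1-\eps_\star)G_t$ with $G_t\le\ip{u_t}{s_t}$. Taking $G_t=\tfrac12 H(u_t)$ fails, because $(1-\eps)H>\tfrac12 H$. So instead set
\[
G_t:=\Bigl(1-\frac\eps4\Bigr)H(u_t),\qquad x_{\rm cert}:=\frac{2S_{\max}^2}{\eps\alphapi}.
\]
Then the loss term is at most $\tfrac{\eps}{4}\alphapi\le\tfrac{\eps}{4}H(u_t)$, so $\ip{u_t}{s_t}\ge G_t$. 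The remaining constants are as follows.
\begin{itemize}
\item $c_G:=(1-\eps/4)^{-1}\le 4/3$, which gives $\ip{u_t}{s_t}\le H(u_t)=c_GG_t$.
\item $\underline\alpha:=\tfrac34\alphapi$, since $G_t\ge(1-\eps/4)\alphapi$.
\item $\eps_\star:=\eps/2$, using $(1-\eps)\le(1-\eps/2)(1-\eps/4)$, which holds because $(1-\eps/2)(1-\eps/4)=1-\tfrac34\eps+\eps^2/8\ge1-\eps$.
\end{itemize}
All three certificate conditions are then measurable with respect to $\F_t$, since $G_t$ depends only on $Q_t$.

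\textbf{Conclusion.} Apply Proposition~\ref{prop:directional-certificate-peak} with these constants. The bound becomes
\[
x_{\rm cert}+\frac{D^2}{\eps_\star\underline\alpha}+\frac{c_GD}{\eps_\star}\log\frac{T+1}{\delta}\lesssim\frac{D^2}{\eps\alphapi}+\frac{D}{\eps}\log\frac{T+1}{\delta},
\]
using $S_{\max}\le D$. The expectation bound and the deterministic-service refinement, with $A_{\max}$ in place of $D$, follow directly from the corresponding clauses of that proposition.
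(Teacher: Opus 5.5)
Your proposal is correct and follows essentially the same route as the paper. It verifies the directional certificate of Proposition~\ref{prop:directional-certificate-peak} via Lemma~\ref{lem:lyapopt-directional-service}, taking $G_t$ to be a $(1-O(\eps))$-fraction of $H(u_t)$, $\eps_\star=\eps/2$, and an entrance level of order $S_{\max}^2/(\eps\alphapi)$, and then reads off the constants. The only difference is cosmetic: the paper uses $G_t=(1-\eps/2)H(u_t)$, $x_{\rm cert}=S_{\max}^2/(\eps\alphapi)$, $\underline\alpha=(1-\eps/2)\alphapi$ and $c_G=(1-\eps/2)^{-1}$, while your $(1-\eps/4)$ choice with a doubled entrance level works equally well.
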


\begin{proof}
Set $X_t:=\normtwo{Q_t}$. Fix a time $t$ with $X_t>0$ and write $u_t=Q_t/X_t$. Lemma~\ref{lem:lyapopt-directional-service} gives
\[
\ip{u_t}{s_t^{\rm LO}}
\ge
H(u_t)-\frac{S_{\max}^2}{2X_t} .
\]
On the event
\[
X_t\ge x_{\rm cert}:=\frac{S_{\max}^2}{\eps\alphapi},
\]
Lemma~\ref{lem:H-alpha2} gives $H(u_t)\ge\alphapi$, and therefore
\[
\ip{u_t}{s_t^{\rm LO}}
\ge
\left(1-\frac\eps2\right)H(u_t).
\]
Set
\[
G_t:=\left(1-\frac\eps2\right)H(u_t),
\qquad
\eps_\star:=\frac\eps2,
\qquad
\underline\alpha:=\left(1-\frac\eps2\right)\alphapi,
\qquad
c_G:=\frac{1}{1-\eps/2}.
\]
Since $\eps\in(0,1)$, $\underline\alpha\ge\alphapi/2$ and $c_G\le2$. The preceding display gives $\ip{u_t}{s_t^{\rm LO}}\ge G_t$, while the support bound gives $\ip{u_t}{s_t^{\rm LO}}\le H(u_t)=c_GG_t$. The interior slack condition gives
\[
\ip{u_t}{\lambda_t}
\le
(1-\eps)H(u_t)
\le
\left(1-\frac\eps2\right)G_t
=
(1-\eps_\star)G_t .
\]
Thus LyapOpt satisfies the certificate of Proposition~\ref{prop:directional-certificate-peak}. Applying that proposition and using $S_{\max}\le D$, $\eps_\star=\eps/2$, $\underline\alpha\ge\alphapi/2$, and $c_G\le2$ gives the high-probability bound. The expectation and deterministic-service statements follow from the corresponding conclusions of Proposition~\ref{prop:directional-certificate-peak}.
\end{proof}

\subsection[One-dimensional lower bound]{Theorem~\ref{thm:1d-lower-main}: one-dimensional lower bound}\label{app:1d}

Fix $A\ge1$ and let
\[
p:=\frac{1-\eps}{A+1}.
\]
Let $(X_t)_{t\ge0}$ be i.i.d. with
\begin{equation}\label{eq:1d-X-dist}
X_t=
\begin{cases}
A,&\text{with probability }p,\\[1mm]
-1,&\text{with probability }1-p.
\end{cases}
\end{equation}
Then $\E X_t=pA-(1-p)=-\eps$. Define
\[
Q_{t+1}=(Q_t+X_t)^+,
\qquad
Q_0=0,
\qquad
M_T:=\max_{0\le t\le T}Q_t.
\]

\begin{lemma}\label{lem:1d-regen}
Let $\sigma_0:=0$ and $\sigma_{r+1}:=\inf\{t>\sigma_r:Q_t=0\}$. Set
\[
L_r:=\sigma_{r+1}-\sigma_r,
\qquad
P_r^{\mathrm{cyc}}:=\max_{\sigma_r\le t\le\sigma_{r+1}}Q_t.
\]
Then $(L_r,P_r^{\mathrm{cyc}})_{r\ge0}$ are i.i.d. Moreover, for every integer $M\ge1$, if $\Sigma_M:=\sum_{r=0}^{M-1}L_r=\sigma_M\le T$, then
\begin{equation*}
M_T\ge\max_{0\le r<M}P_r^{\mathrm{cyc}}.
\end{equation*}
\end{lemma}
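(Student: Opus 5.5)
The plan is to treat $Q$ as a time-homogeneous Markov chain driven by the i.i.d.\ sequence $(X_t)$ and to apply the strong Markov property at the successive return times $\sigma_r$ to $0$. First I would check that every $\sigma_r$ is almost surely finite. The recursion $Q_{t+1}=(Q_t+X_t)^+$ has $\E X_t=-\eps<0$, so the random walk $\sum X_t\to-\infty$ a.s. Also $X_t=-1$ with positive probability, which means the walk can only move down one unit at a time. A reflected walk with negative drift started from $0$ therefore returns to $0$: once $Q_t\ge1$, we have $Q_{t+1}=Q_t+X_t$ as long as the level stays positive, and because the downward steps have size exactly $1$ and $Q$ takes integer values, the path cannot jump over $0$. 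It hits $0$ exactly at the first time the partial sums drop below their starting level. If $Q_{\sigma_r+1}=0$ (because $X_{\sigma_r}=-1$), then $\sigma_{r+1}=\sigma_r+1$. In every case $\sigma_{r+1}<\infty$ a.s.

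Next comes the regeneration step. Each $\sigma_r$ is a stopping time for $\mathcal G_t:=\sigma(X_0,\dots,X_{t-1})$, and $Q_{\sigma_r}=0$. The shifted sequence $(X_{\sigma_r+k})_{k\ge0}$ is independent of $\mathcal G_{\sigma_r}$ and has the same i.i.d.\ law as $(X_k)_{k\ge0}$; this follows from the standard strong Markov property for i.i.d.\ sequences, which one verifies by decomposing over $\{\sigma_r=n\}$. The post-$\sigma_r$ path $(Q_{\sigma_r+k})_{k\ge0}$ is a fixed measurable function of this shifted sequence, namely the same recursion started from $0$. The pair $(L_r,P_r^{\mathrm{cyc}})$ is that same function applied to the shifted sequence: the length of the first excursion back to $0$ and the maximum over it. Hence $(L_r,P_r^{\mathrm{cyc}})$ has the law of $(L_0,P_0^{\mathrm{cyc}})$ and is independent of $\mathcal G_{\sigma_r}$. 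Moreover, every earlier pair $(L_j,P_j^{\mathrm{cyc}})$ with $j<r$ is $\mathcal G_{\sigma_r}$-measurable, since it depends only on the path up to $\sigma_{j+1}\le\sigma_r$. An induction on $r$ then gives that the pairs are mutually independent and identically distributed.

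The final claim is deterministic. If $\sigma_M\le T$, then every cycle interval $[\sigma_r,\sigma_{r+1}]$ with $r<M$ lies inside $[0,T]$. Each $P_r^{\mathrm{cyc}}$ is therefore a maximum of $Q_t$ over a subset of $\{0,\dots,T\}$, and so $P_r^{\mathrm{cyc}}\le M_T$.

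The only delicate point is the measurability bookkeeping in the strong Markov step. Specifically, one has to confirm that $P_r^{\mathrm{cyc}}$, which includes the endpoint $\sigma_{r+1}$ where $Q=0$, is a function of the shifted increments alone. I would handle this by writing the shifted path explicitly as $\Psi(X_{\sigma_r},X_{\sigma_r+1},\dots)$ and conditioning on $\{\sigma_r=n\}$.
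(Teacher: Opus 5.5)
Your proposal is correct and follows the paper's argument: regeneration at the returns to $0$ via the strong Markov property for the i.i.d.\ increments, then the deterministic observation that the first $M$ cycles lie in $[0,T]$ when $\sigma_M\le T$; you simply write out the stopping-time and measurability bookkeeping that the paper leaves implicit. One harmless slip is that $A\ge1$ need not be an integer, so $Q$ need not be integer-valued, but you do not need that, because the truncation $(\cdot)^+$ already gives $\sigma_{r+1}-\sigma_r=\inf\{m\ge1:\sum_{k=0}^{m-1}X_{\sigma_r+k}\le0\}$, which is finite a.s.\ since the partial sums drift to $-\infty$.
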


\begin{proof}
The process regenerates whenever it returns to zero, so the cycle pairs are i.i.d. If $\sigma_M\le T$, then the first $M$ cycles are completed by time $T$, and their peaks are included in the horizon-$T$ maximum.
\end{proof}

\begin{lemma}\label{lem:1d-cycle-length}
There is a universal constant $C$ such that $\E L_0\le 3/\eps$. Consequently, if $M:=\lfloor\eps T/12\rfloor$, then
\begin{equation}\label{eq:1d-SigmaM-good}
\Pbb(\Sigma_M\le T)\ge \frac34.
\end{equation}
\end{lemma}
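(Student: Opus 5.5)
Our plan is to bound $\E L_0$ with Wald's identity and then pass to $\Sigma_M$ by Markov's inequality. The cycle $L_0$ is the first return time to $0$ of $Q_{t+1}=(Q_t+X_t)^+$ started at $Q_0=0$.

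\emph{Step 1: stopping time for the free walk.} Consider the free walk $W_t:=\sum_{s<t}X_s$ and the stopping time
\[
\kappa:=\inf\{t\ge1:W_t\le 0\}.
\]
While $Q$ stays strictly positive it coincides with $W$. The increments are $-1$ or $A$, so $W$ moves down by unit steps and is integer-valued (assuming $A$ is an integer; otherwise we replace $W_t\le0$ by the reflection time). Hence the first time $W_t\le 0$ has $W_t=0$ exactly, and that time equals $\sigma_1=L_0$. If the first step is $-1$, then $Q_1=0$ and $L_0=1$, which is consistent with the definition.

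\emph{Step 2: Wald's identity.} Truncate at $\kappa\wedge n$. Because $\E X=-\eps$ and the increments are bounded, Wald gives
\[
\E W_{\kappa\wedge n}=-\eps\,\E[\kappa\wedge n].
\]
The overshoot is bounded: $W_{\kappa\wedge n}\ge -1$ always, since before $\kappa$ we have $W>0$ and a single step lowers $W$ by at most $1$. Therefore $\eps\,\E[\kappa\wedge n]\le 1$, and monotone convergence yields $\E L_0\le 1/\eps\le 3/\eps$. The slack factor $3$ absorbs the non-integer-$A$ case, where we instead use $W_{\kappa}\ge -1$ with $\kappa$ the first time the reflected process hits zero. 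Because the walk is skip-free downward, the overshoot is still at most $1$.

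\emph{Step 3: from $\E L_0$ to $\Sigma_M$.} By Lemma~\ref{lem:1d-regen}, $\E\Sigma_M=M\,\E L_0\le (\eps T/12)(3/\eps)=T/4$. Markov's inequality then gives $\Pbb(\Sigma_M>T)\le 1/4$, which is \eqref{eq:1d-SigmaM-good}. If $M=0$ the claim is trivial.

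The main obstacle is justifying that $\kappa$ coincides with the regeneration time. This requires the skip-free-downward property, namely that only $-1$ decrements occur, so no mass is lost to reflection before the first hit of zero. It also requires integrability via truncation. Both are routine, and everything else is Wald plus Markov.
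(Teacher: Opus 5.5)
Your argument is correct and is essentially the paper's: optional stopping for the martingale $W_t+\eps t$ with the overshoot bound $W\ge-1$, then Markov's inequality on $\Sigma_M$. The paper first splits on the initial increment and restarts the free walk from $A$, while you start the free walk at $0$ and stop at the first $t\ge1$ with $W_t\le0$, which gives the sharper $\E L_0\le 1/\eps$. Your integrality hedge is unnecessary: $Q_t=W_t^+$ for $0\le t\le\kappa$ for any real $A$, so $\kappa=\sigma_1$ always, and the unit downward step is needed only for the overshoot bound, not for this identification.
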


\begin{proof}
If the first increment of a cycle is $-1$, the cycle length is one. If it is $+A$, compare the subsequent evolution with the unreflected walk
\[
W_m=A+\sum_{r=1}^m X_r,
\qquad
\tau:=\inf\{m\ge0:W_m\le0\}.
\]
For $N\ge1$, the stopped time $\tau\wedge N$ is bounded, so optional stopping for the martingale $W_m+\eps m$ gives
\[
\E W_{\tau\wedge N}+\eps\E[\tau\wedge N]=A.
\]
Before time \(\tau\) the walk is positive, and at time \(\tau\) it can undershoot the origin by at most one, because the only downward jump is \(-1\). Thus \(W_{\tau\wedge N}\ge -1\). Hence
\[
  \eps\E[\tau\wedge N]
  = A-\E W_{\tau\wedge N}
  \le A+1.
\]
Monotone convergence gives \(\E\tau\le(A+1)/\eps\). Therefore
\[
\E L_0\le 1+p\left(1+\frac{A+1}{\eps}\right)
\le 2+\frac{1}{\eps}
\le \frac3\eps.
\]
Thus $\E\Sigma_M\le 3M/\eps\le T/4$, and Markov's inequality gives \eqref{eq:1d-SigmaM-good}.
\end{proof}

\begin{lemma}\label{lem:1d-beta}
For $k\ge A$, let $\beta_k:=\Pbb(P_0^{\mathrm{cyc}}\ge k)$. There exists $r>1$ solving
\begin{equation}\label{eq:1d-CL}
p r^A+(1-p)r^{-1}=1,
\end{equation}
and for this $r$,
\begin{equation}\label{eq:1d-beta-pre}
\beta_k\ge p\,(r^A-1)\,r^{-(k+A)}.
\end{equation}
\end{lemma}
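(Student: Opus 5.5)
Two things must be shown: that the Cramér--Lundberg root $r>1$ exists, and that the cycle maximum obeys the lower tail bound \eqref{eq:1d-beta-pre}. The approach is the exponential martingale $r^{W_m}$ together with optional stopping, using the fact that a jump up from level $A$ is the only way to start a nontrivial excursion.

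\emph{Existence of $r$.} Set $\phi(x):=p x^A+(1-p)x^{-1}$ on $(0,\infty)$. Then $\phi(1)=1$ and $\phi'(1)=pA-(1-p)=-\eps<0$, so $\phi<1$ just to the right of $1$. Since $\phi(x)\to\infty$ as $x\to\infty$ and $\phi$ is strictly convex, the intermediate value theorem gives a unique $r>1$ with $\phi(r)=1$.

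\emph{Reduction to the walk.} A cycle reaches level $k\ge A$ only if its first increment is $+A$, which has probability $p$. After that jump, the process follows the unreflected walk $W_m=A+\sum_{j\le m}X_j$ until it first reaches $\le 0$. The cycle hits $0$ exactly when $W$ first becomes $\le0$; in this model $W$ moves down by exactly $1$, so that happens at $W=0$. Hence
\[
\beta_k=p\,\Pbb(W \text{ reaches } [k,\infty) \text{ before } 0).
\]
By the choice of $r$, the process $r^{W_m}$ is a martingale. Let $\tau:=\inf\{m:W_m\le 0 \text{ or } W_m\ge k\}$. Since $\tau<\infty$ a.s. (negative drift) and $r^{W_{m\wedge\tau}}\le r^{k+A}$ is bounded, optional stopping applies:
\[
r^A=\E r^{W_\tau}.
\]

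\emph{Splitting the stopped value.} On the down-exit event, $W_\tau=0$ exactly: downward steps are $-1$ and $W$ starts at a positive integer (in this model $A$ is an integer, as implied by the arrivals in $\{0,A+1\}$; if $A$ were non-integer I would use the undershoot bound $W_\tau\ge -1$ instead). On the up-exit event, $W_\tau\le k-1+A$, since the overshoot is less than $A$. Writing $\pi:=\Pbb(\text{up-exit})$, we get
\[
r^A\le (1-\pi)+\pi\, r^{k+A},
\]
so $\pi\ge (r^A-1)/(r^{k+A}-1)\ge (r^A-1)r^{-(k+A)}$. Multiplying by $p$ gives \eqref{eq:1d-beta-pre}.

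The step needing the most care is the boundary bookkeeping in the last paragraph. The bound needs the lower exit value pinned at $0$ (or at least $\ge -1$, costing a factor $r$) and the upper overshoot bounded by $A$. I also need to check that the cycle peak exceeds $k$ exactly when the walk up-exits, which uses the condition $k\ge A$ so that the starting point $A$ lies inside the interval.
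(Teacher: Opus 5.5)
Your proposal is correct and takes essentially the same route as the paper: the Cram\'er--Lundberg root, the exponential martingale $r^{W_m}$ stopped on exiting $(0,k)$, and the split $r^A\le(1-\pi)+\pi r^{k+A}$. One small correction: $A$ need not be an integer, since the theorem allows any real $A\ge1$, but you never need the undershoot bound, because on the down-exit event $W_\tau\le0$ already gives $r^{W_\tau}\le1$ at no cost, which is how the paper handles it.
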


\begin{proof}
First we justify the root. Put
\[
\phi(\theta):=p e^{A\theta}+(1-p)e^{-\theta}.
\]
Then $\phi(0)=1$, $\phi'(0)=-\eps<0$, $\phi$ is strictly convex, and $\phi(\theta)\to\infty$ as $\theta\to\infty$. Hence there is a unique positive root $\theta^*>0$ of $\phi(\theta)=1$; set $r=e^{\theta^*}>1$. This is exactly \eqref{eq:1d-CL}.

Condition on the first increment being \(+A\). Until the cycle returns to zero, the reflected and unreflected walks agree. Let the unreflected walk start from \(A\), and let \(\tau_0\) be its first time at or below zero and \(\tau_k\) its first time at or above \(k\). The process \(r^{W_t}\) is a martingale by \eqref{eq:1d-CL}.

First note that \(\tau_0\wedge\tau_k<\infty\) a.s. For \(N\ge1\), optional stopping for the martingale \(W_t+\eps t\) at \((\tau_0\wedge\tau_k)\wedge N\) gives
\[
  \E W_{(\tau_0\wedge\tau_k)\wedge N}
  +\eps\E[(\tau_0\wedge\tau_k)\wedge N]
  =A.
\]
The stopped walk is always at least \(-1\): it is positive before \(\tau_0\), and the first crossing of the nonpositive half-line has overshoot at most one. Hence
\[
  \E[(\tau_0\wedge\tau_k)\wedge N]\le\frac{A+1}{\eps}.
\]
Letting \(N\to\infty\) shows that \(\tau_0\wedge\tau_k\) is finite a.s.

Now stop the martingale \(r^{W_t}\) at \((\tau_0\wedge\tau_k)\wedge N\). The stopped values are bounded by \(r^{k+A}\): before stopping the walk lies below \(k\), at \(\tau_k\) it overshoots by at most \(A\), and at \(\tau_0\) it lies in \([-1,0]\). Dominated convergence gives
\[
  r^A=\E_A r^{W_{\tau_0\wedge\tau_k}}.
\]
On \(\{\tau_0<\tau_k\}\), the terminal value is at most \(1\); on \(\{\tau_k<\tau_0\}\), it is at most \(r^{k+A}\). Therefore
\[
r^A
\le
1\cdot\Pbb_A(\tau_0<\tau_k)+r^{k+A}\Pbb_A(\tau_k<\tau_0),
\]
which gives \(\Pbb_A(\tau_k<\tau_0)\ge (r^A-1)r^{-(k+A)}\). Multiplying by the probability \(p\) of the first upward jump proves the claim.
\end{proof}

\begin{lemma}\label{lem:1d-theta}
Let $\theta^*:=\log r$, where $r$ is defined by \eqref{eq:1d-CL}. For $\eps\le1/8$,
\begin{equation*}\label{eq:1d-theta-bounds}
\frac{\eps}{16A}\le \theta^*\le \frac{8\eps}{A}.
\end{equation*}
Consequently, for all $k\ge A$,
\begin{equation}\label{eq:1d-beta-exp}
\beta_k\ge c\,\frac{\eps}{A}\exp\!\left(-C\frac{\eps}{A}(k+A)\right)
\end{equation}
for universal constants $c,C>0$.
\end{lemma}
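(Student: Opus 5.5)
The plan is to work with the function $\phi(\theta)=pe^{A\theta}+(1-p)e^{-\theta}$ from the proof of Lemma~\ref{lem:1d-beta}, with $p=(1-\eps)/(A+1)$, and to locate its positive root $\theta^*$ by sign checks. Since $\phi$ is strictly convex with $\phi(0)=1$ and $\phi'(0)=-\eps<0$, the set $\{\theta>0:\phi(\theta)<1\}$ is exactly the interval $(0,\theta^*)$. Hence it suffices to show two things: $\phi(\eps/(16A))<1$, which gives the lower bound $\theta^*\ge\eps/(16A)$, and $\phi(8\eps/A)\ge1$, which gives the upper bound $\theta^*\le 8\eps/A$. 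I will expand $\phi(\theta)-1=-\eps\theta+R(\theta)$, where the remainder is
\[
R(\theta)=p\bigl(e^{A\theta}-1-A\theta\bigr)+(1-p)\bigl(e^{-\theta}-1+\theta\bigr)\ge0 .
\]

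For the lower bound, take $\theta=\eps/(16A)$, so that $A\theta\le1/128$. I will use $e^x-1-x\le x^2$ for $|x|\le1$ to get
\[
R(\theta)\le pA^2\theta^2+\theta^2\le \theta^2(A+1)\le 2A\theta^2,
\]
using $pA^2\le A$ and $A\ge1$. Then $R(\theta)\le 2A\theta\cdot\theta=\eps\theta/8<\eps\theta$, so $\phi(\theta)<1$.

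For the upper bound, take $\theta=8\eps/A$, so $A\theta=8\eps\le1$. Here I need $R(\theta)\ge\eps\theta$, and I will keep only the arrival part of the remainder: $R\ge p(e^{A\theta}-1-A\theta)\ge pA^2\theta^2/2$. Since $\eps\le1/8$ and $A\ge1$, we have $p\ge(7/8)/(2A)$, so
\[
pA^2\theta^2/2\ge \tfrac{7}{32}A\theta^2=\tfrac{7}{32}\cdot 8\eps\,\theta=\tfrac{7}{4}\eps\theta\ge\eps\theta .
\]
Thus $\phi(8\eps/A)\ge1$ and $\theta^*\le 8\eps/A$. This step is the one that needs care: it must use $p\gtrsim 1/A$, and that is where $A\ge1$ enters.

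For the consequence \eqref{eq:1d-beta-exp}, I will plug these bounds into \eqref{eq:1d-beta-pre}. First, $r^{-(k+A)}=e^{-\theta^*(k+A)}\ge e^{-8(\eps/A)(k+A)}$. Second,
\[
r^A-1=e^{A\theta^*}-1\ge A\theta^*\ge \eps/16 ,
\]
and $p\ge 7/(16A)$. Combining these, $p(r^A-1)\ge c\,\eps/A$, which gives the claim with $C=8$.
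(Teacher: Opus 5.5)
Your proposal is correct and follows the paper's proof essentially step for step: both locate the positive root of $\phi(\theta)=pe^{A\theta}+(1-p)e^{-\theta}$ by checking $\phi(\eps/(16A))<1$ and $\phi(8\eps/A)>1$ with elementary exponential inequalities ($pA^2\le A$ for the first check, $pA^2\gtrsim A$ for the second), and then insert $r^A-1\gtrsim\eps$, $p\asymp 1/A$, and $\theta^*\le 8\eps/A$ into \eqref{eq:1d-beta-pre}. The differences are minor refinements of the same argument: you state explicitly the strict-convexity reason why the two sign checks bracket $\theta^*$, and you use the slightly sharper bound $e^{A\theta^*}-1\ge A\theta^*$ in place of the paper's $A\theta^*/2$.
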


\begin{proof}
Let $\phi(\theta):=\E e^{\theta X_0}$. Then $\phi(0)=1$, $\phi'(0)=-\eps$, and $\theta^*$ is the positive root of $\phi(\theta)=1$.
For $\theta_0:=\eps/(16A)$, using $e^x\le1+x+x^2$ for $0\le x\le1$ and $e^{-x}\le1-x+x^2/2$ gives
\[
\phi(\theta_0)-1
\le -\eps\theta_0+\theta_0^2\bigl(pA^2+(1-p)/2\bigr)
\le -\frac{\eps^2}{16A}+\frac{\eps^2}{128A}<0.
\]
Thus $\theta^*\ge\theta_0$. For $\theta_1:=8\eps/A$, using $e^x\ge1+x+x^2/2$ and $e^{-x}\ge1-x$ gives
\[
\phi(\theta_1)-1
\ge -\eps\theta_1+\frac12pA^2\theta_1^2.
\]
Since $pA^2=(1-\eps)A^2/(A+1)\ge 7A/16$, the right-hand side is positive; hence $\theta^*\le\theta_1$. Also $r^A-1=e^{A\theta^*}-1\ge A\theta^*/2\ge\eps/32$. Combining this with $p\asymp1/A$ and \eqref{eq:1d-beta-pre} proves \eqref{eq:1d-beta-exp}.
\end{proof}

\begin{lemma}\label{lem:1d-long}
There exist universal constants $c,C>0$ such that, for $T\ge C A/\eps^2$,
\[
\E[M_T]
\ge
c\frac{A}{\eps}\log\!\left(1+\frac{\eps^2T}{A}\right).
\]
\end{lemma}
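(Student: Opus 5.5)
The argument is a standard extreme-value bound over i.i.d. regeneration cycles. Set $M:=\lfloor \eps T/12\rfloor$ as in Lemma~\ref{lem:1d-cycle-length}. On the event $\{\Sigma_M\le T\}$, which has probability at least $3/4$, Lemma~\ref{lem:1d-regen} gives $M_T\ge \max_{0\le r<M}P_r^{\mathrm{cyc}}$. The two events are not independent, since cycle lengths and cycle peaks are correlated. We avoid any independence argument and use a union-type bound instead:
\[
\Pbb(M_T\ge k)\ge \Pbb\Bigl(\max_{r<M}P_r^{\mathrm{cyc}}\ge k\Bigr)-\Pbb(\Sigma_M>T)\ge 1-(1-\beta_k)^M-\tfrac14 .
\]
The cycle peaks $P_r^{\mathrm{cyc}}$ are i.i.d.\ with tail $\beta_k$ by Lemma~\ref{lem:1d-regen}, so the first probability equals $1-(1-\beta_k)^M\ge 1-e^{-M\beta_k}$.

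Next we choose the level $k$ so that $M\beta_k\ge \log 4$, say; then $\Pbb(M_T\ge k)\ge 1-\tfrac14-\tfrac14=\tfrac12$, and hence $\E[M_T]\ge k/2$. By \eqref{eq:1d-beta-exp},
\[
M\beta_k\ge M c\,\frac{\eps}{A}\,e^{-C\eps(k+A)/A}.
\]
Since $\eps\le 1/8$, the factor $e^{-C\eps}$ coming from the $+A$ shift is at least a universal constant. It therefore suffices to take
\[
k:=\frac{A}{C\eps}\log\!\Bigl(\frac{c'M\eps}{A}\Bigr)
\]
for a suitable small universal $c'$. Because $M\asymp \eps T$, the argument of the logarithm is $\asymp \eps^2T/A$.

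Two side conditions remain. First, Lemma~\ref{lem:1d-beta} requires $k\ge A$. Second, we need $M\ge1$ and $\log(c'M\eps/A)\gtrsim \log(1+\eps^2T/A)$. The hypothesis $T\ge CA/\eps^2$ with $C$ large handles the second: it makes $\eps^2T/A$ large, so $M\ge1$ (using $\eps T\ge CA/\eps\ge 8C$) and the logarithm is comparable to $\log(1+\eps^2T/A)$. For the first, if $\frac{A}{C\eps}\log(\cdot)$ happens to be below $A$, we simply take $k=\max\{A,\cdot\}$. The bound $M\beta_k\gtrsim 1$ still holds at $k=A$, since there $M\beta_A\gtrsim M\eps/A\gtrsim \eps^2T/A$, which is large. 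The resulting lower bound $\E[M_T]\ge A/2$ dominates $\frac{A}{\eps}\log(\cdot)$ in that regime only up to constants, so it is cleaner to enlarge $C$ so that this regime never occurs.

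The main obstacle is bookkeeping of constants rather than any conceptual step. We must ensure that $\eps^2T/A\ge C$ simultaneously forces $M\ge1$, $k\ge A$, and $\log(c'M\eps/A)\ge \tfrac12\log(1+\eps^2T/A)$. Each of these reduces to choosing $C$ large relative to $c,C$ from Lemma~\ref{lem:1d-theta} and to the constant $12$ in the definition of $M$. The conclusion is $\E[M_T]\ge \tfrac12 k\ge c\frac{A}{\eps}\log(1+\eps^2T/A)$.
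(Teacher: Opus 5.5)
Your proposal is correct and follows essentially the same route as the paper. Both arguments take $M=\lfloor\eps T/12\rfloor$ regeneration cycles and use the union bound $\Pbb(M_T\ge k)\ge\Pbb(\max_{r<M}P_r^{\mathrm{cyc}}\ge k)-\Pbb(\Sigma_M>T)$ to get $\Pbb(M_T\ge k)\ge 1/2$, with the level $k$ calibrated through \eqref{eq:1d-beta-exp} so that $M\beta_k\ge\log 4$. The only difference is cosmetic. You take the full decay rate with a small constant inside $\log(c'M\eps/A)$ and absorb the $+A$ shift as $e^{-C\eps}\ge e^{-C/8}$. The paper instead takes half the rate with $\log(1+\eps^2T/A)$ and subtracts $A$ from $k$, so that $M\beta_k$ grows like $(\eps^2T/A)^{1/2}$.
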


\begin{proof}
Let $M:=\lfloor\eps T/12\rfloor$. Choose
\[
k:=\left\lfloor
\frac{A}{2C_0\eps}\log\!\left(1+\frac{\eps^2T}{A}\right)
\right\rfloor-A,
\]
where $C_0$ is the constant in \eqref{eq:1d-beta-exp}. For $C$ large enough, $k\ge A$ and
\[
\beta_k\ge c_0\frac{\eps}{A}\left(1+\frac{\eps^2T}{A}\right)^{-1/2}.
\]
Also $M\ge\eps T/24$, so $M\beta_k\ge\log 4$ after increasing $C$. Hence
\[
\Pbb\left(\max_{0\le r<M}P_r^{\mathrm{cyc}}\ge k\right)
\ge 1-(1-\beta_k)^M\ge\frac34.
\]
By \eqref{eq:1d-SigmaM-good} and the union bound, the event above intersects $\{\Sigma_M\le T\}$ with probability at least $1/2$. Lemma~\ref{lem:1d-regen} then gives $\Pbb(M_T\ge k)\ge1/2$, and therefore $\E M_T\ge k/2$. The definition of $k$ gives the result.
\end{proof}

\begin{lemma}\label{lem:1d-short}
There exist universal constants $c,C,c'>0$ such that, whenever
\[
C A\le T\le c'\frac{A}{\eps^2},
\]
one has
\[
\E[M_T]\ge c\sqrt{AT}.
\]
\end{lemma}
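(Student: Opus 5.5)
We compare the reflected walk with its unreflected counterpart and show that, over the short horizon, the walk has not yet felt the drift. Since $Q_t\ge W_t:=\sum_{s<t}X_s$ (reflection only raises the path), we have $M_T\ge\max_{0\le t\le T}W_t$. It therefore suffices to lower bound $\E\max_{t\le T}W_t$. The natural decomposition is $W_t=Z_t-\eps t$, where $Z_t:=\sum_{s<t}(X_s+\eps)$ is a martingale with i.i.d.\ centered increments. The one-step variance is
\[
\sigma^2:=\operatorname{Var}(X_0)=pA^2+(1-p)-\eps^2\asymp A,
\]
because $p\asymp1/A$ and $A\ge1$. On the range $T\le c'A/\eps^2$ the drift contribution satisfies $\eps T\le\sqrt{c'}\sqrt{AT}$, so it is at most a small multiple of $\sqrt{AT}$. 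The task thus reduces to showing $\E\max_{t\le T}Z_t\gtrsim\sqrt{AT}$ and then choosing $c'$ small enough.

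For the martingale estimate, the cleanest route avoids maxima and uses the terminal value alone: $\max_{t\le T}Z_t\ge Z_T^+$, hence $\E\max_t Z_t\ge\E Z_T^+=\tfrac12\E|Z_T|$. We lower bound $\E|Z_T|$ by Paley--Zygmund, or more precisely by the Hölder inequality $\E|Z|\ge(\E Z^2)^{3/2}/(\E Z^4)^{1/2}$. Here $\E Z_T^2=T\sigma^2\asymp AT$. For the fourth moment, independence gives
\[
\E Z_T^4=T\,\E(X_0+\eps)^4+3T(T-1)\sigma^4 .
\]
The centered increment is at most $A+1$ in absolute value, so $\E(X_0+\eps)^4\lesssim A^2\sigma^2\asymp A^3$. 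This yields $\E Z_T^4\lesssim TA^3+T^2A^2$. Under the hypothesis $T\ge CA$, the second term dominates, so $\E Z_T^4\lesssim A^2T^2$. Consequently $\E|Z_T|\gtrsim (AT)^{3/2}/(AT)=\sqrt{AT}$.

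Combining the pieces gives
\[
\E M_T\ge \E\max_t W_t\ge \E W_T^+\ge \E Z_T^+-\eps T\ge c_1\sqrt{AT}-\sqrt{c'}\sqrt{AT}.
\]
Choosing $c'\le (c_1/2)^2$ then gives $\E M_T\ge (c_1/2)\sqrt{AT}$. The main obstacle is the fourth-moment control, specifically the heavy-jump term $T\,\E(X_0+\eps)^4\asymp TA^3$. This term is exactly why the hypothesis $T\ge CA$ is needed: below that horizon the walk typically sees no upward jump at all, and the $\sqrt{AT}$ scale fails. A secondary step is to check that $\sigma^2\gtrsim A$ uniformly for $\eps\le1/8$. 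This holds because $pA^2=(1-\eps)A^2/(A+1)\ge 7A/16$, the same computation that appears in Lemma~\ref{lem:1d-theta}.
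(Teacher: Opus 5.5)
Your proof is correct and follows essentially the same route as the paper. Both compare $M_T$ with the unreflected walk $W_T^+$, bound the second and fourth moments of the centered sum $Z_T$ using $T\ge CA$, and use $\E Z_T^+=\tfrac12\E|Z_T|$ and $\E W_T^+\ge \E Z_T^+-\eps T$, with $\eps T$ absorbed by taking $c'$ small. The only difference is that you lower-bound $\E|Z_T|$ with the Hölder form $\E|Z|\ge(\E Z^2)^{3/2}/(\E Z^4)^{1/2}$, whereas the paper applies Paley--Zygmund to $Z_T^2$; this is the same second/fourth-moment argument.
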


\begin{proof}
Let $W_T:=\sum_{t=0}^{T-1} X_t$ and $Z_T:=W_T+\eps T=\sum_{t=0}^{T-1}(X_t+\eps)$. The reflected walk dominates the unreflected walk, so $M_T\ge W_T^+$.
The centered increment $\xi:=X_0+\eps$ has variance $\sigma^2\asymp A$ and fourth moment $\E|\xi|^4\le C A^3$. Hence
\[
\E Z_T^2=T\sigma^2\asymp AT,
\qquad
\E Z_T^4\le C(TA^3+T^2A^2)\le C'T^2A^2
\]
for $T\ge CA$. Paley--Zygmund applied to $Z_T^2$ gives $\Pbb(|Z_T|\ge c_0\sqrt{AT})\ge c_1$, and therefore $\E|Z_T|\ge c_2\sqrt{AT}$. Since $\E Z_T=0$, $\E Z_T^+=\frac12\E|Z_T|$. Thus
\[
\E W_T^+
=\E(Z_T-\eps T)^+
\ge \E Z_T^+-\eps T
\ge c_3\sqrt{AT}-\eps T.
\]
Choosing $c'>0$ small enough makes $\eps T\le(c_3/2)\sqrt{AT}$, proving the claim.
\end{proof}

\begin{lemma}[A constant-scale lower bound]\label{lem:1d-big-jump}
There exist universal constants $c,C>0$ such that, whenever $T\ge C A$,
\[
\E[M_T]\ge cA.
\]
\end{lemma}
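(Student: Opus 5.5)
The plan is to exhibit a single big jump. In the walk \eqref{eq:1d-X-dist}, an arrival of size $A+1$ together with the unit service gives a net increment of $+A$. Because $Q_t\ge0$ always, on the event that some increment $X_t=A$ with $t\le T-1$ we have
\[
Q_{t+1}=(Q_t+A)^+\ge A,
\]
and therefore $M_T\ge A$. This event reduces the claim to a lower bound on the probability that at least one upward jump occurs among the first $T$ increments.

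The increments are i.i.d. with success probability
\[
p=\frac{1-\eps}{A+1}\ge\frac{7/8}{2A}=\frac{7}{16A},
\]
using $A\ge1$ and $\eps\le1/8$. Hence
\[
\Pbb\bigl(\exists\,t<T: X_t=A\bigr)=1-(1-p)^T\ge 1-e^{-pT}\ge 1-e^{-7T/(16A)}.
\]
If $T\ge CA$ with, say, $C=2$, this probability is at least $1-e^{-7/8}\ge 1/2$. Since $M_T\ge0$, it follows that $\E[M_T]\ge A\cdot\tfrac12$, so the lemma holds with $c=1/2$.

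No step here is a real obstacle; the only care needed is the bookkeeping. The process must be in the setting of Theorem~\ref{thm:1d-lower-main}, with arrivals in $\{0,A+1\}$ and unit deterministic service, so that each upward jump adds exactly $A$ after reflection. The lower bound on $p$ must also be uniform in $\eps\le1/8$. This constant-scale bound is what complements Lemmas~\ref{lem:1d-long} and~\ref{lem:1d-short} in the intermediate regime where neither of their horizon conditions applies. In that regime, $\min\{\sqrt{AT},(A/\eps)\log(1+\eps^2T/A)\}$ is comparable to $A$ up to constants depending on the thresholds.
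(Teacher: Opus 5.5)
Your proof is correct and follows the paper's argument essentially verbatim: a single upward jump of size $A$ among the first $T$ increments forces $M_T\ge A$, and $p\ge 7/(16A)$ with $T\ge CA$ makes this happen with probability at least $1/2$. Your explicit choice $C=2$, $c=1/2$ checks out, since $1-e^{-7/8}>1/2$.
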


\begin{proof}
If at least one of the first $T$ increments is equal to $A$, then $M_T\ge A$. Since $\eps\le1/8$ and $A\ge1$,
\[
p=\frac{1-\eps}{A+1}\ge \frac{7}{16A}.
\]
Thus, for $T\ge CA$ with $C$ large enough,
\[
\Pbb(M_T\ge A)
\ge 1-(1-p)^T
\ge 1-\exp\!\left(-\frac{7T}{16A}\right)
\ge \frac12,
\]
and the claim follows.
\end{proof}

\begin{proof}[Proof of Theorem~\ref{thm:1d-lower-main}]
Fix $A\ge1$ and use the construction above. The increment law \eqref{eq:1d-X-dist} is realized by a queue with deterministic service $S_t=s_t=1$ and arrivals equal to $A+1$ with probability $(1-\eps)/(A+1)$ and $0$ otherwise. Thus $A_{\max}=A+1$, $S_{\max}=1$, the upward net-input jump is $A$, and the mean net input is $-\eps$.

Let $(c_s,C_s,c_s')$ be constants for Lemma~\ref{lem:1d-short}, let $(c_\ell,C_\ell)$ be constants for Lemma~\ref{lem:1d-long}, and let $(c_b,C_b)$ be constants for Lemma~\ref{lem:1d-big-jump}. Increase the universal lower-horizon constant in the theorem so that
\[
  T\ge C A
  \quad\Longrightarrow\quad
  T\ge C_sA\quad\text{and}\quad T\ge C_bA.
\]
It remains to prove
\begin{equation}\label{eq:1d-A-target}
\E[M_T]
\ge
c\min\left\{\sqrt{AT},\frac{A}{\eps}\log\!\left(1+\frac{\eps^2T}{A}\right)\right\}.
\end{equation}

If $T\le c_s'A/\eps^2$, Lemma~\ref{lem:1d-short} applies and gives the first term in \eqref{eq:1d-A-target}. If $T\ge C_\ell A/\eps^2$, Lemma~\ref{lem:1d-long} gives the second term. It remains to consider the interval
\[
  c_s'\frac{A}{\eps^2}<T<C_\ell\frac{A}{\eps^2}.
\]
First suppose $c_s'A/\eps^2\ge C_sA$. Then $T_s:=c_s'A/\eps^2$ lies in the range of Lemma~\ref{lem:1d-short}; monotonicity of $T\mapsto M_T$ gives
\[
  \E M_T\ge \E M_{T_s}\ge c_s\sqrt{AT_s}
  =c_s\sqrt{c_s'}\,\frac{A}{\eps}.
\]
On the same interval, the right-hand side of \eqref{eq:1d-A-target} is at most
\[
  \min\left\{\sqrt{C_\ell}\,\frac{A}{\eps},\frac{A}{\eps}\log(1+C_\ell)\right\},
\]
so \eqref{eq:1d-A-target} follows after reducing $c$.

It remains to handle the case $c_s'A/\eps^2<C_sA$. In this case $\eps$ is bounded below by the positive universal constant
\[
\eta:=\min\left\{\frac18,\sqrt{\frac{c_s'}{C_s}}\right\}.
\]
Indeed, if the case can occur under $\eps\le1/8$, then $\eps>\sqrt{c_s'/C_s}\ge\eta$. Since $T<C_\ell A/\eps^2\le C_\ell A/\eta^2$, the right-hand side of \eqref{eq:1d-A-target} is at most a universal multiple of $A$. Lemma~\ref{lem:1d-big-jump}, which applies because $T\ge C_bA$, gives $\E M_T\ge c_bA$ and proves \eqref{eq:1d-A-target} in the last case.

This is exactly the displayed lower bound in Theorem~\ref{thm:1d-lower-main} for all $T\ge CA$.
\end{proof}
\subsection[Geometry-threshold lower bound]{Theorem~\ref{thm:geometry-lower-main}: geometry-threshold lower bound}\label{app:geometry-lower-bound}
\begin{proof}[Proof of Theorem~\ref{thm:geometry-lower-main}]
The construction is the simplex server. Its essential feature is pathwise: after $t$ slots, no sequence of schedules can have touched more than $t$ coordinates.

Fix $d\ge2$ and let
\[
\Sset=\{e_1,\dots,e_d\}\subset\Rp^d.
\]
For this schedule set, the downward-closed capacity region is exactly
\[
\PiC=\left\{x\in\Rp^d:\sum_{i=1}^d x_i\le1\right\}.
\]
Indeed, if $x\le r$ coordinatewise for some $r\in\conv(\Sset)$, then $\sum_i x_i\le\sum_i r_i=1$. Conversely, if $x\in\Rp^d$ and $\sum_i x_i\le1$, then the vector $r$ defined by $r_1=x_1+1-\sum_i x_i$ and $r_i=x_i$ for $i\ge2$ belongs to $\conv(\Sset)$ and satisfies $x\le r$.

There is no post-decision service noise: for any schedule sequence, $S_t=s_t$ in every slot. Thus $S_{\max}=1$. Since the capacity support is evaluated only on nonnegative directions,
\[
H(q)=\max_{x\in\PiC}\ip{q}{x}=\max_{1\le i\le d}q_i,
\qquad q\in\Rp^d.
\]
Therefore
\[
\alphapi
=\inf\left\{\max_{1\le i\le d} w_i:w\in\Rp^d,\ \normtwo{w}=1\right\}
=\frac1{\sqrt d}.
\]
The lower bound follows from $1=\normtwo{w}^2\le d(\max_i w_i)^2$; equality is attained at $w=d^{-1/2}\one$.

For the prescribed $\eps\in[1/4,1/2]$, take deterministic arrivals
\[
A_t\equiv a\one,
\qquad
 a:=\frac{1-\eps}{d}.
\]
Then $\lambda_t=a\one$ and $\normtwo{A_t}=(1-\eps)/\sqrt d\le1$. Since $d^{-1}\one\in\conv(\Sset)\subset\PiC$, we have $\lambda_t=(1-\eps)d^{-1}\one\in(1-\eps)\PiC$. Thus Assumption~\ref{ass:interior} holds with slack $\eps$. Assumption~\ref{ass:bounded-primitives} also holds with $A_{\max}\le1$ and $S_{\max}=1$: the arrival vector is deterministic, $S_t=s_t$ is conditionally degenerate given $\F_t$, the identity $\E[S_t\mid\F_t]=s_t$ is exact, and the conditional independence requirement is therefore automatic.

It remains to prove the lower bound uniformly over scheduling rules. We prove the stronger deterministic claim in the theorem. Fix any sequence $s_0,\dots,s_{T_0-1}\in\Sset$, where $T_0:=\floor{d/2}$. The sequence need not be generated by a nonanticipative rule. For $0\le t\le T_0$, let
\[
N_t:=\left\{i\in\{1,\dots,d\}:s_{\tau,i}=0\text{ for every }0\le\tau<t\right\}.
\]
This is the set of coordinates that have received no service during the first $t$ slots. Each schedule $s_\tau$ is a unit vector, so in one slot at most one new coordinate can leave this set. Hence
\begin{equation}\label{eq:simplex-untouched-count}
|N_t|\ge d-t,
\qquad 0\le t\le T_0.
\end{equation}
For every $i\in N_t$, the coordinate $i$ has had zero service in all slots $0,\dots,t-1$. The queue recursion \eqref{eq:queue-recursion} then gives the exact identity
\begin{equation}\label{eq:simplex-untouched-backlog}
Q_{t,i}=ta,
\qquad i\in N_t.
\end{equation}
To verify \eqref{eq:simplex-untouched-backlog}, start from $Q_{0,i}=0$. If $i\in N_t$, then for each $0\le\tau<t$ we have $S_{\tau,i}=s_{\tau,i}=0$, and therefore $Q_{\tau+1,i}=(Q_{\tau,i}+a)^+=Q_{\tau,i}+a$; induction over $\tau$ gives $Q_{t,i}=ta$.

Combining \eqref{eq:simplex-untouched-count} and \eqref{eq:simplex-untouched-backlog}, for every $0\le t\le T_0$,
\begin{equation*}\label{eq:policy-independent-simplex-lower}
\normtwo{Q_t}^2
\ge \sum_{i\in N_t}Q_{t,i}^2
= |N_t|t^2a^2
\ge (d-t)t^2\left(\frac{1-\eps}{d}\right)^2.
\end{equation*}
Apply this at $t=T_0$. For $d\ge2$, $T_0\ge d/3$ and $d-T_0\ge d/2$. The first inequality is immediate when $d=2$, and for $d\ge3$ follows from $\floor{d/2}\ge(d-1)/2\ge d/3$. Thus
\[
\normtwo{Q_{T_0}}
\ge
\frac{1-\eps}{d}\,T_0\sqrt{d-T_0}
\ge
\frac{1-\eps}{3\sqrt2}\sqrt d
\ge
\frac{1}{6\sqrt2}\sqrt d,
\]
where the last step uses $\eps\le1/2$. Since $\alphapi=1/\sqrt d$ and $\eps\ge1/4$,
\[
\frac{1}{6\sqrt2}\sqrt d
=
\frac{\eps}{6\sqrt2}\frac{1}{\eps\alphapi}
\ge
\frac{1}{24\sqrt2}\frac{1}{\eps\alphapi}.
\]
Therefore
\[
\max_{0\le t\le T_0}\normtwo{Q_t}
\ge \normtwo{Q_{T_0}}
\ge \frac{1}{24\sqrt2}\frac{1}{\eps\alphapi}.
\]
This proves the deterministic sequence-level lower bound with $c=1/(24\sqrt2)$. A randomized nonanticipative policy produces, on each realization of its internal randomization, one admissible schedule sequence; the deterministic bound applies to that realization. Hence the lower bound holds almost surely and, by taking expectations, under every randomized nonanticipative policy. Finally, because $\alphapi=1/\sqrt d$, the horizon $T_0=\floor{d/2}$ is $\Theta(1/\alphapi^2)$ and is polynomial in $1/\alphapi$.
\end{proof}

\subsection[A slack-free square-root envelope]{Theorem~\ref{thm:sqrt-peak}: a slack-free square-root envelope}\label{app:upper-burnin}

\begin{proof}[Proof of Theorem~\ref{thm:sqrt-peak}]
Set
\[
\PeakT:=\max_{0\le s\le T}\normtwo{Q_s}.
\]
From \eqref{eq:queue-recursion} and $\normtwo{A_t-S_t}\le D$ a.s.,
\[
\normtwo{Q_{t+1}}^2
\le
\normtwo{Q_t}^2 + D^2 + 2\ip{Q_t}{A_t-S_t}.
\]
Taking conditional expectations and using $\lambda_t\in\PiC$ together with MaxWeight,
\[
\E[\ip{Q_t}{A_t-S_t}\mid \F_t]
=
\ip{Q_t}{\lambda_t} - \ip{Q_t}{s_t}
=
\ip{Q_t}{\lambda_t} - H(Q_t)
\le 0.
\]
Hence
\[
\E[\normtwo{Q_{t+1}}^2\mid \F_t]\le \normtwo{Q_t}^2 + D^2.
\]

Fix $T\ge 1$ and define, for $0\le t\le T$,
\[
Y_t:=\normtwo{Q_t}^2 + (T-t)D^2.
\]
Then $(Y_t)_{t=0}^T$ is a nonnegative supermartingale and $Y_0=TD^2$. Since $\PeakT^2\le \max_{0\le t\le T} Y_t$, Doob's maximal inequality gives
\[
\Pbb(\PeakT\ge r)
=
\Pbb(\PeakT^2\ge r^2)
\le
\Pbb\!\left(\max_{0\le t\le T}Y_t\ge r^2\right)
\le
\frac{\E[Y_0]}{r^2}
=
\frac{D^2T}{r^2}
\qquad\forall r>0.
\]
Integrating the tail,
\begin{align*}
\E[\PeakT]
&=
\int_0^\infty \Pbb(\PeakT\ge r)\,\dd r\\
&\le
\int_0^{D\sqrt{T}} 1\,\dd r
+
\int_{D\sqrt{T}}^\infty \frac{D^2T}{r^2}\,\dd r
=
2D\sqrt{T}.
\end{align*}
This proves the theorem.
\end{proof}

\section[Proofs for Section 3]{Proofs for Section \ref{sec:selfnorm}}\label{app:deferred-extensions}

\subsection{Queue-Bernstein calculus and examples}\label{app:qb-calculus}

This subsection records the elementary calculations behind Example~\ref{ex:qb-primitives}. The common inequality is
\begin{equation}\label{eq:bernstein-elementary}
e^x-1-x\le \frac{x^2}{2(1-|x|)}
\qquad\text{for } |x|<1.
\end{equation}
We use it conditionally throughout.

\begin{lemma}[Basic queue-Bernstein examples]\label{lem:qb-examples}
The primitives listed in Example~\ref{ex:qb-primitives} satisfy Definition~\ref{def:queue-bernstein} with the stated parameters.
\end{lemma}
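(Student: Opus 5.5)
I will verify each of the five items in Example~\ref{ex:qb-primitives} separately. Throughout, I fix $w\in[0,1]^d$ and work conditionally on $\mathcal G$ with the mean $m$ fixed. The common tool is the elementary inequality \eqref{eq:bernstein-elementary}, applied pointwise to the centered projection or to its individual summands.

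Independent coordinates (items (i), (ii), (v)) reduce to scalars. The log-MGF of $\ip{w}{Z-m}$ is the sum over $i$ of the scalar log-MGFs of $w_i(Z_i-m_i)$. It therefore suffices to show that each scalar term is bounded by $\theta^2\nu w_im_i/(2(1-b|\theta|))$, using $w_i\le1$.

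\emph{Bernoulli.} Let $Z_i\in\{0,1\}$ have mean $p$, and put $x=\theta w_i$ with $|x|<1$. Then
\[
\log\E e^{x(Z_i-p)}=\log(1+p(e^x-1))-xp\le p(e^x-1-x)\le \frac{px^2}{2(1-|x|)}.
\]
Since $x^2=\theta^2w_i^2\le\theta^2w_i$ and $|x|\le|\theta|$, this gives $(\nu,b)=(1,1)$. Binomial coordinates follow by summing independent Bernoulli terms, because the right-hand side is additive in the means.

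\emph{Poisson.} The scalar log-MGF is exactly $\mu(e^x-1-x)$, so the same bound holds.

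\emph{Gamma.} For shape $k$ and scale $\beta_i\le\beta$, the mean is $m_i=k\beta_i$. Set $y=x\beta_i$ with $|y|<1$. Then
\[
-k\log(1-y)-ky=k\sum_{j\ge2}\frac{y^j}{j}\le\frac{ky^2}{2(1-|y|)}.
\]
Using $ky^2=m_i\beta_i x^2\le\beta\, w_im_i\theta^2$ gives $(\nu,b)=(\beta,\beta)$.

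\emph{Bounded total mass} (item (iii)). Let $Y:=\ip{w}{Z}$, so $0\le Y\le\normone{Z}\le L$ and $\E[Y\mid\mathcal G]=\ip{w}{m}$. Apply \eqref{eq:bernstein-elementary} to $x=\theta Y$, which satisfies $|x|\le|\theta|L<1$. Using $Y^2\le LY$,
\[
\E e^{\theta Y}\le 1+\theta\ip{w}{m}+\frac{\theta^2L\ip{w}{m}}{2(1-L|\theta|)}.
\]
Taking logarithms with $\log(1+y)\le y$ and subtracting $\theta\ip{w}{m}$ gives the claim. This works for both signs of $\theta$, since \eqref{eq:bernstein-elementary} is two-sided; it is the signed analogue of Lemma~\ref{lem:bounded-nonneg-mgf}.

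\emph{Compound Poisson} (item (iv)). Let $N$ have mean $\Lambda$ and let the batches be i.i.d. given $\mathcal G$. Then
\[
\log\E e^{\theta\ip{w}{Z}}=\Lambda\bigl(\E e^{\theta Y_1}-1\bigr),\qquad Y_1:=\ip{w}{B_1}\in[0,L],
\]
so it suffices to bound $\E[e^{\theta Y_1}-1-\theta Y_1]\le\theta^2L\,\E Y_1/(2(1-L|\theta|))$ by the same pointwise argument. The result then follows from $\ip{w}{m}=\Lambda\E Y_1$.

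\emph{Main obstacle.} The delicate point is the compound Poisson case when batches are not i.i.d. or $N$ is not independent of them, that is, if the batches may be dependent but merely bounded. I would handle this by conditioning on a Poisson process with predictable marks. If full generality fails, I would restrict the claim to batches that are i.i.d. given $\mathcal G$ and independent of $N$, which appears to be the intended reading of item (iv).
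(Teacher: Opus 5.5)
Your proposal is correct and matches the paper's proof item by item. It uses the same elementary inequality \eqref{eq:bernstein-elementary}, combined with $w_i^2\le w_i$ for the independent-coordinate cases and with $Y^2\le LY$ for the bounded-mass and compound-Poisson cases. Your fallback reading of item (iv), with batches i.i.d.\ given $\mathcal G$ and independent of $N$, is exactly what the paper's proof assumes; some such independence is in fact needed, since with batches coupled to $N$ or to each other the bound can fail. The only cosmetic difference is that you treat both signs of $\theta$ in the bounded-mass case with the two-sided inequality, whereas the paper handles the upper tail via Lemma~\ref{lem:bounded-nonneg-mgf} and the lower tail via $e^{-x}\le 1-x+x^2/2$.
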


\begin{proof}
Fix \(w\in[0,1]^d\).

\emph{Bernoulli and binomial coordinates.}
Let \(Z_i\) be a sum of conditionally independent unit Bernoulli variables, and let \(m_i=\E[Z_i\mid\mathcal G]\). Independence and \eqref{eq:bernstein-elementary} give, for \(|\theta|<1\),
\[
\log \E\!\left[
e^{\theta\ip{w}{Z-m}}\mid\mathcal G
\right]
\le
\sum_i \frac{\theta^2 w_i^2 m_i}{2(1-|\theta|)}
\le
\frac{\theta^2\ip{w}{m}}{2(1-|\theta|)}.
\]
Thus \((\nu,b)=(1,1)\).

\emph{Poisson coordinates.}
If \(Z_i\) are conditionally independent Poisson variables with means \(m_i\), then
\[
\log \E\!\left[
e^{\theta\ip{w}{Z-m}}\mid\mathcal G
\right]
=
\sum_i m_i\bigl(e^{\theta w_i}-1-\theta w_i\bigr).
\]
Using \eqref{eq:bernstein-elementary} and \(w_i^2\le w_i\) gives the same \((1,1)\) bound.

\emph{Bounded total mass.}
If \(\|Z\|_1\le L\) a.s., then \(0\le Y:=\ip{w}{Z}\le L\) and \(m_Y:=\E[Y\mid\mathcal G]=\ip{w}{m}\). For \(0\le\theta<1/L\), Lemma~\ref{lem:bounded-nonneg-mgf} gives
\[
\log \E[e^{\theta(Y-m_Y)}\mid\mathcal G]
\le
\frac{\theta^2 Lm_Y}{2(1-L\theta)}.
\]
For the lower tail, let \(\eta\in[0,1/L)\). Since \(e^{-x}\le1-x+x^2/2\) for \(x\ge0\) and \(Y^2\le LY\),
\begin{align*}
\E[e^{-\eta Y}\mid\mathcal G]
&\le 1-\eta m_Y+\frac{\eta^2Lm_Y}{2},\\
\log \E[e^{-\eta(Y-m_Y)}\mid\mathcal G]
&\le \eta m_Y+\log\!\left(1-\eta m_Y+\frac{\eta^2Lm_Y}{2}\right)\\
&\le \frac{\eta^2Lm_Y}{2}
\le \frac{\eta^2Lm_Y}{2(1-L\eta)}.
\end{align*}
Combining the two signs proves the asserted bound for \(|\theta|<1/L\). Hence \((\nu,b)=(L,L)\).

\emph{Bounded-batch compound Poisson input.}
Let \(Z=\sum_{k=1}^N B_k\), where \(N\) is conditionally Poisson and the batches are conditionally i.i.d. with \(\|B_k\|_1\le L\). Put \(Y=\ip{w}{B_1}\le L\). The compound-Poisson MGF gives
\[
\log \E[e^{\theta\ip{w}{Z-m}}\mid\mathcal G]
=
\eta\,\E\bigl[e^{\theta Y}-1-\theta Y\mid\mathcal G\bigr],
\]
where \(\eta\) is the conditional Poisson intensity. Applying \eqref{eq:bernstein-elementary} with \(|\theta|<1/L\) and using \(Y^2\le LY\) yields
\[
\log \E[e^{\theta\ip{w}{Z-m}}\mid\mathcal G]
\le
\frac{\theta^2 L\,\eta\E[Y\mid\mathcal G]}{2(1-L|\theta|)}
=
\frac{\theta^2 L\,\ip{w}{m}}{2(1-L|\theta|)}.
\]

\emph{Gamma and exponential coordinates.}
Let \(Z_i\) be conditionally independent Gamma variables with scales \(\beta_i\le\beta\), shapes \(k_i\), and means \(m_i=\beta_i k_i\). For \(|\theta|<1/\beta\),
\[
\log \E[e^{\theta w_i(Z_i-m_i)}\mid\mathcal G]
=
k_i\bigl(-\theta\beta_i w_i-\log(1-\theta\beta_i w_i)\bigr).
\]
The inequality \(-x-\log(1-x)\le x^2/[2(1-|x|)]\) for \(|x|<1\), together with \(w_i^2\le w_i\), gives
\[
\log \E[e^{\theta w_i(Z_i-m_i)}\mid\mathcal G]
\le
\frac{\theta^2\beta_i m_i w_i}{2(1-\beta|\theta|)}
\le
\frac{\theta^2\beta m_i w_i}{2(1-\beta|\theta|)}.
\]
Summing over \(i\) proves \((\nu,b)=(\beta,\beta)\). Exponential coordinates are the special case \(k_i=1\).
\end{proof}

\begin{lemma}[Elementary closure properties]\label{lem:qb-closure}
Conditionally independent queue-Bernstein conditions add: if \(Z^{(1)}\) and \(Z^{(2)}\) have parameters \((\nu_1,b_1)\) and \((\nu_2,b_2)\), then their sum is queue-Bernstein with parameters \((\nu_1+\nu_2,\max\{b_1,b_2\})\), and also with \((\nu_1+\nu_2,b_1+b_2)\). Deterministic primitives have parameters \((0,0)\).
\end{lemma}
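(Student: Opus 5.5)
The plan is to work directly with the defining inequality \eqref{eq:QB}. Conditional independence lets the log-MGF of the sum factor, so each bound reduces to adding the two separate envelopes. Fix $w\in[0,1]^d$ and write $m^{(k)}:=\E[Z^{(k)}\mid\mathcal G]$ for $k=1,2$. Put $Z:=Z^{(1)}+Z^{(2)}$, so that $m:=\E[Z\mid\mathcal G]=m^{(1)}+m^{(2)}$ and $Z$ is nonnegative. Conditional independence given $\mathcal G$ gives
\[
\log\E\!\left[e^{\theta\ip{w}{Z-m}}\middle|\mathcal G\right]
=\sum_{k=1}^2\log\E\!\left[e^{\theta\ip{w}{Z^{(k)}-m^{(k)}}}\middle|\mathcal G\right].
\]
This identity holds for every $\theta$ at which both factors are finite.

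For the parameter pair $(\nu_1+\nu_2,\max\{b_1,b_2\})$, take $|\theta|<1/\max\{b_1,b_2\}$. Then $|\theta|<1/b_k$ for each $k$, so both individual bounds apply. Since $b_k\le\max\{b_1,b_2\}$, each denominator satisfies
\[
1-b_k|\theta|\ge 1-\max\{b_1,b_2\}|\theta|>0 .
\]
Hence each term is at most
\[
\frac{\theta^2\nu_k\ip{w}{m^{(k)}}}{2(1-\max\{b_1,b_2\}|\theta|)} .
\]
Next use $\ip{w}{m^{(k)}}\ge0$, which holds because $w\ge0$ and $m^{(k)}\ge0$. This gives $\nu_k\ip{w}{m^{(k)}}\le(\nu_1+\nu_2)\ip{w}{m^{(k)}}$. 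Summing over $k$ then yields $(\nu_1+\nu_2)\ip{w}{m}$ in the numerator, which is the claimed bound.

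The pair $(\nu_1+\nu_2,b_1+b_2)$ is weaker than the first, and follows by monotonicity. The admissible range $|\theta|<1/(b_1+b_2)$ is contained in $|\theta|<1/\max\{b_1,b_2\}$. The right-hand side of \eqref{eq:QB} is nondecreasing in $b$. With the convention $1/0=\infty$, the degenerate cases where some $b_k=0$ are covered by the same argument.

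For deterministic primitives, $Z=m$ almost surely, so the log-MGF is $0$. This is the right-hand side with $\nu=0$, and taking $b=0$ places no restriction on $\theta$.

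The only point needing care is the nonnegativity $\ip{w}{m^{(k)}}\ge0$ used to inflate $\nu_k$ to $\nu_1+\nu_2$. Without it the inequality could fail. It holds here because the definition restricts $w$ to $[0,1]^d$ and assumes the primitives are nonnegative.
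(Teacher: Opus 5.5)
Your proof is correct and follows the paper's argument: factor the centered conditional MGF using conditional independence, compare denominators via $1-\max\{b_1,b_2\}|\theta|\le 1-b_k|\theta|$, and pass to $b_1+b_2$ by monotonicity. The only difference is that you state explicitly the nonnegativity of $\ip{w}{m^{(k)}}$ (and, implicitly, of the $\nu_k$), which the paper's one-line sketch uses without saying so.
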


\begin{proof}
The centered MGF of a conditionally independent sum factorizes. The denominator can be bounded using \(1-\max\{b_1,b_2\}|\theta|\le 1-b_i|\theta|\). The looser parameter \(b_1+b_2\) is sometimes more convenient and is the one used in the theorem statement.
\end{proof}

\subsection{Queue-Bernstein geometric threshold formula}\label{app:qb-entrance-scale}

Theorem~\ref{thm:qb-unbounded-main} involves a key geometric threshold $x_{\rm QB}$, whose precise formula and related properties are provided in this appendix.

When \(b_A+b_S+(\nu_A+\nu_S)/\eps>0\), set
\begin{equation}\label{eq:thetaQB-app}
  \theta_{\rm QB}:=
  c_0\left(b_A+b_S+\frac{\nu_A+\nu_S}{\eps}\right)^{-1},
\end{equation}
where \(c_0>0\) is a sufficiently small universal constant. For a candidate entrance level \(x\ge1\), define
\begin{equation*}\label{eq:rhoQB-app}
  \rho_x(z):=\min\left\{\frac{z^2}{2x},\,2z\right\},
  \qquad z\ge0.
\end{equation*}
The term \(z^2/(2x)\) is the usual Euclidean norm-expansion error for a one-slot change of length \(z\) when \(\|Q_t\|_2\ge x\). The term \(2z\) is a Lipschitz fallback for rare large changes.

For \(\theta>0\), set \(J_t:=\|A_t-S_t\|_2\) and define
\begin{align}
K_{\rm up}(\theta)
&:=
\sup_{t\ge0}\operatorname*{ess\,sup}
\E\!\left[e^{8\theta J_t}\mid\F_t\right],
\label{eq:Kup-qb}\\
\kappa_\theta(x)
&:=
\sup_{t\ge0}\operatorname*{ess\,sup}
\log\E\!\left[
\exp\!\left(2\theta\rho_x(J_t)\right)
\middle|\F_t\right],
\nonumber\\
x_{\rm curv}(\theta)
&:=
\inf\left\{x\ge1:\ \kappa_\theta(x)\le \frac{\theta\eps\alphapi}{4}\right\}.
\nonumber
\end{align}
The numerical factor \(8\) is inessential. It gives a little extra exponential moment, which is useful for the uniform integrability step below.
Finally define the queue-Bernstein entrance scale used in Theorem~\ref{thm:qb-unbounded-main} by
\begin{equation}\label{eq:xQB-app}
  x_{\rm QB}
  :=
  1+x_{\rm curv}(\theta_{\rm QB})
  +\frac{1}{\theta_{\rm QB}}\log K_{\rm up}(\theta_{\rm QB}),
  \qquad b_A+b_S+\frac{\nu_A+\nu_S}{\eps}>0.
\end{equation}
If \(b_A+b_S+(\nu_A+\nu_S)/\eps=0\), all queue-Bernstein conditions are conditionally deterministic. In that degenerate case define
\[
  D_0:=\sup_{t\ge0}\operatorname*{ess\,sup}\|A_t-S_t\|_2,
  \qquad
  x_{\rm QB}:=1+C_0\left(D_0+\frac{D_0^2}{\eps\alphapi}\right),
\]
where \(C_0\) is a sufficiently large universal constant. This deterministic convention is finite under the theorem assumptions, since \(\PiC\) is compact and \(\Sset\) is finite. If also \(\|A_t-S_t\|_2\le D\) a.s., then \(D_0\le D\), so this definition gives \(x_{\rm QB}\le C(1+D+D^2/(\eps\alphapi))\). The logarithmic term is absent; Appendix~\ref{app:qb-unbounded-proof} proves the resulting deterministic pathwise bound.

When the displayed queue-Bernstein scale is positive, \(x_{\rm QB}\) is finite under the assumptions of Theorem~\ref{thm:qb-unbounded-main}. Indeed, since \(\Sset\) is finite and \(\lambda_t\in(1-\eps)\PiC\), the conditional mean total arrival and service masses are uniformly bounded. The all-ones queue-Bernstein projection, applied with a sufficiently small universal constant \(c_0\) in \eqref{eq:thetaQB-app}, gives
\[
  \sup_t\operatorname*{ess\,sup}
  \E\!\left[e^{16\theta_{\rm QB}(\|A_t\|_1+\|S_t\|_1)}\mid\F_t\right]<\infty.
\]
Because \(J_t\le \|A_t\|_1+\|S_t\|_1\), this implies \(K_{\rm up}(\theta_{\rm QB})<\infty\). It also gives uniform integrability for \(e^{4\theta_{\rm QB}J_t}\). To see the required uniform convergence, fix \(R>0\). On \(\{J_t\le R\}\), \(\rho_x(J_t)\le R^2/(2x)\). On \(\{J_t>R\}\), \(e^{2\theta_{\rm QB}\rho_x(J_t)}\le e^{4\theta_{\rm QB} J_t}\), and the displayed exponential moment bounds the tail uniformly by \(Ce^{-4\theta_{\rm QB} R}\). Letting first \(x\to\infty\) and then \(R\to\infty\) gives \(\kappa_{\theta_{\rm QB}}(x)\downarrow0\). Hence \(x_{\rm curv}(\theta_{\rm QB})<\infty\).

If \(\|A_t-S_t\|_2\le D\) a.s., then \(\rho_x(\|A_t-S_t\|_2)\le D^2/(2x)\), so \(x_{\rm curv}(\theta_{\rm QB})\lesssim D^2/(\eps\alphapi)\), while \(\theta_{\rm QB}^{-1}\log K_{\rm up}(\theta_{\rm QB})\le 8D\). Thus
\[
  x_{\rm QB}\le C\left(1+D+\frac{D^2}{\eps\alphapi}\right).
\]
In a nontrivial bounded system \(\alphapi\le S_{\max}\le D\), so the additive \(D\) is absorbed by the displayed threshold.

\subsection[Proof of the unbounded queue-Bernstein theorem]{Proof of Theorem~\ref{thm:qb-unbounded-main}}\label{app:qb-unbounded-proof}

We prove a slightly sharper tilted statement and then choose \(\theta=\theta_{\rm QB}\).

\begin{lemma}[Tilted queue-Bernstein peak estimate]\label{lem:qb-tilted}
Under the assumptions of Theorem~\ref{thm:qb-unbounded-main}, assume \(b_A+b_S+(\nu_A+\nu_S)/\eps>0\). Let \(\theta>0\) satisfy
\begin{equation}\label{eq:theta-qb-app}
  \theta
  \le
  c\left(b_A+b_S+\frac{\nu_A+\nu_S}{\eps}\right)^{-1}
\end{equation}
for a sufficiently small universal constant \(c\). Assume in addition that \(K_{\rm up}(\theta)<\infty\) and \(x_{\rm curv}(\theta)<\infty\). Then, for every \(T\ge1\) and \(\delta\in(0,1)\), with probability at least $1-\delta$,
\begin{equation}\label{eq:qb-tilted-peak-app}
\PeakT
\lesssim
 x_{\rm curv}(\theta)
 +
 \frac1\theta
 \log\frac{(T+1)K_{\rm up}(\theta)}{\delta}.
\end{equation}
\end{lemma}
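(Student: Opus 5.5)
We follow the proof of Theorem~\ref{thm:selfnorm-main} for $X_t:=\normtwo{Q_t}$ and $u_t:=Q_t/X_t$, replacing the bounded-primitive estimates with queue-Bernstein ones. We then apply Lemma~\ref{lm:exponential-peak} at tilt $\theta$ with threshold $x_0:=x_{\rm curv}(\theta)$.

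\textbf{Step 1: exponential drift above $x_0$.}
Set $J_t:=\normtwo{A_t-S_t}$. On $\{X_t\ge x\}$, Lemma~\ref{lem:norm-inc} together with the $1$-Lipschitz fallback gives
\[
\Delta X_t\le \ip{u_t}{A_t-S_t}+\rho_x(J_t).
\]
For the fallback we need a constant-$2$ bound: $\normtwo{q+y}-\normtwo q-\ip{u}{y}\le 2\normtwo y$. We also use nonexpansiveness of $(\cdot)^+$. Cauchy--Schwarz on the tilted MGF then splits it as
\[
\log\E[e^{\theta\Delta X_t}\mid\F_t]\le \tfrac12\log\E\bigl[e^{2\theta\ip{u_t}{A_t-S_t}}\mid\F_t\bigr]+\tfrac12\kappa_\theta(x).
\]
For the linear part, note $u_t\in[0,1]^d$. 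By conditional independence and Definition~\ref{def:queue-bernstein} at tilts $2\theta$ and $-2\theta$, the linear part is at most
\[
2\theta(m_t-g_t)+\frac{4\theta^2(\nu_A m_t+\nu_S g_t)}{2(1-2\theta(b_A+b_S))},
\]
where $m_t=\ip{u_t}{\lambda_t}\le(1-\eps)H(u_t)$ and $g_t=H(u_t)$ (MaxWeight). This is where self-normalization occurs: both terms carry the common factor $H(u_t)$. Hence, for $\theta\le c(b_A+b_S+(\nu_A+\nu_S)/\eps)^{-1}$ with $c$ small, it is at most $-\theta\eps H(u_t)$.

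Since $x_0=x_{\rm curv}(\theta)$, we have $\tfrac12\kappa_\theta(x_0)\le\theta\eps\alphapi/8\le\theta\eps H(u_t)/8$ by Lemma~\ref{lem:H-alpha2}. Altogether,
\[
\log\E[e^{\theta\Delta X_t}\mid\F_t]\le -\tfrac{\theta\eps}{4}H(u_t)\le-\tfrac{\theta\eps\alphapi}{4}<0,
\]
which verifies \eqref{eq:ED}.

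\textbf{Step 2: upward envelope.}
We have $(\Delta X_t)^+\le\normtwo{A_t}\le J_t$; if needed, use $\normtwo{A_t}\le\|A_t\|_1$ controlled by the same exponential moments. Hence $M_+(\theta)\le \sup_t\E[e^{\theta J_t}\mid\F_t]\le K_{\rm up}(\theta)$ by Jensen, since the exponent $8\theta$ in \eqref{eq:Kup-qb} dominates.

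\textbf{Step 3: conclude.}
Since $Q_0=0\le x_0$, Lemma~\ref{lm:exponential-peak} gives
\[
\PeakT\le x_0+\theta^{-1}\log\bigl((T+1)K_{\rm up}(\theta)/\delta\bigr)
\]
with probability at least $1-\delta$, which is \eqref{eq:qb-tilted-peak-app}.

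The main obstacle is Step 1's handling of the curvature term. It is unbounded and correlated with the linear term, so the Cauchy--Schwarz split costs a factor of $2$ in the tilt. We must check that the Bernstein bound at tilt $2\theta$ still stays in its range $|2\theta|<1/b$. We must also check that the factor of $2$ is consistent with the $2\theta\rho_x$ appearing in the definition of $\kappa_\theta$. A further subtlety is the case $X_t=0$ or $X_t<x_0$, which Lemma~\ref{lm:exponential-peak} already handles through excursions.
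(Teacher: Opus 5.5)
Your argument is essentially the paper's proof: the same norm expansion with the $\rho_x$ fallback, conditional Cauchy--Schwarz at tilt $2\theta$, queue-Bernstein bounds for $A_t$ and $S_t$ at tilts $\pm 2\theta$ sharing the common factor $H(u_t)$, and Lemma~\ref{lm:exponential-peak} with $M_+(\theta)\le K_{\rm up}(\theta)$. Two small repairs are needed:
\begin{itemize}
\item In Step~2, the inequality $\normtwo{A_t}\le J_t$ is false (take $A_t=S_t$). The bound you actually need, $(\Delta X_t)^+\le J_t$, holds directly because $\normtwo{(Q_t+A_t-S_t)^+}\le\normtwo{Q_t+A_t-S_t}\le X_t+J_t$.
\item Choosing $x_0=x_{\rm curv}(\theta)$ requires the infimum defining $x_{\rm curv}(\theta)$ to be attained. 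This is true: as $x\downarrow x_{\rm curv}(\theta)$ we have $\rho_x\uparrow\rho_{x_{\rm curv}(\theta)}$, so conditional monotone convergence passes the bound $\kappa_\theta\le\theta\eps\alphapi/4$ to the limit. You should say this, or avoid the issue as the paper does by taking $x_0=2x_{\rm curv}(\theta)+1$.
\end{itemize}
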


\begin{proof}
Let \(X_t:=\|Q_t\|_2\) and, on \(\{X_t>0\}\), set \(u_t:=Q_t/X_t\). On \(\{X_t=0\}\), choose any nonnegative unit vector. Write
\[
\xi_t:=A_t-S_t,
\qquad
J_t:=\|\xi_t\|_2,
\qquad
Z_t:=\ip{u_t}{A_t}+\ip{u_t}{s_t-S_t}.
\]
Since projection onto \(\Rp^d\) is nonexpansive, Lemma~\ref{lem:norm-inc} and the triangle inequality imply
\begin{equation}\label{eq:qB-norm-expansion}
X_{t+1}-X_t
\le
Z_t-H(u_t)+\rho_{X_t}(J_t)
\qquad\text{on }\{X_t>0\}.
\end{equation}

Let \(b_*:=b_A\vee b_S\). Choose \(c\) in \eqref{eq:theta-qb-app} so that
\[
2\theta b_*\le \frac12,
\qquad
\frac{\theta(\nu_A+\nu_S)}{1-2\theta b_*}\le \frac{\eps}{4}.
\]
Conditional independence and the queue-Bernstein bounds, applied to \(A_t\) with tilt \(2\theta\) and to \(S_t\) with tilt \(-2\theta\), give
\begin{align}
\frac12\log\E[e^{2\theta Z_t}\mid\F_t]
&\le
\theta\ip{u_t}{\lambda_t}
+
\frac{\theta^2(\nu_A\ip{u_t}{\lambda_t}+\nu_S\ip{u_t}{s_t})}{1-2\theta b_*}.
\label{eq:qB-dir-proof}
\end{align}
By Assumption~\ref{ass:interior} and MaxWeight,
\[
\ip{u_t}{\lambda_t}\le(1-\eps)H(u_t),
\qquad
\ip{u_t}{s_t}=H(u_t).
\]
Substituting into \eqref{eq:qB-dir-proof} yields
\begin{equation}\label{eq:qB-dir-final}
\frac12\log\E[e^{2\theta Z_t}\mid\F_t]
\le
\theta\left(1-\frac{3\eps}{4}\right)H(u_t).
\end{equation}

Let \(x_0:=2x_{\rm curv}(\theta)+1\). The map \(x\mapsto \rho_x(z)\) is nonincreasing for every fixed \(z\), hence \(x\mapsto\kappa_\theta(x)\) is nonincreasing. Therefore the feasible set in the definition of \(x_{\rm curv}(\theta)\) is upward closed. Since \(x_0>x_{\rm curv}(\theta)\) and \(x_{\rm curv}(\theta)<\infty\), the definition of the infimum gives some feasible \(y<x_0\), and upward closure then makes \(x_0\) feasible. Thus
\begin{equation}\label{eq:qB-curv-proof}
\frac12\log\E\!\left[e^{2\theta\rho_{x_0}(J_t)}\mid\F_t\right]
\le
\frac{\theta\eps\alphapi}{8}.
\end{equation}
On \(\{X_t\ge x_0\}\), combining \eqref{eq:qB-norm-expansion}, conditional Cauchy--Schwarz, \eqref{eq:qB-dir-final}, and \eqref{eq:qB-curv-proof} gives
\begin{align*}
\log\E[e^{\theta(X_{t+1}-X_t)}\mid\F_t]
&\le
-\theta H(u_t)
+\frac12\log\E[e^{2\theta Z_t}\mid\F_t]
+\frac12\log\E[e^{2\theta\rho_{x_0}(J_t)}\mid\F_t]\\
&\le
-\frac{3\theta\eps}{4}H(u_t)+\frac{\theta\eps\alphapi}{8}
\le
-\frac{5\theta\eps\alphapi}{8},
\end{align*}
where the last step uses \(H(u_t)\ge\alphapi\). Thus \(X_t\) satisfies the exponential drift condition \eqref{eq:ED} above threshold \(x_0\) at tilt \(\theta\).

Moreover, \((X_{t+1}-X_t)^+\le J_t\), so the upward-increment envelope \eqref{eq:UP} holds with \(M_+(\theta)\le K_{\rm up}(\theta)\). Since \(X_0=0\le x_0\), Lemma~\ref{lm:exponential-peak} gives
\[
\Pbb\!\left(\PeakT\ge x_0+u\right)
\le
(T+1)K_{\rm up}(\theta)e^{-\theta u}.
\]
Taking \(u=\theta^{-1}\log((T+1)K_{\rm up}(\theta)/\delta)\) proves \eqref{eq:qb-tilted-peak-app}.
\end{proof}

\begin{proof}[Proof of Theorem~\ref{thm:qb-unbounded-main}]
If \(b_A+b_S+(\nu_A+\nu_S)/\eps>0\), the finiteness argument in Appendix~\ref{app:qb-entrance-scale} gives \(K_{\rm up}(\theta_{\rm QB})<\infty\) and \(x_{\rm curv}(\theta_{\rm QB})<\infty\). Thus Lemma~\ref{lem:qb-tilted} applies with \(\theta=\theta_{\rm QB}\). By definition of \(x_{\rm QB}\),
\[
x_{\rm curv}(\theta_{\rm QB})
+
\frac1{\theta_{\rm QB}}\log K_{\rm up}(\theta_{\rm QB})
\le x_{\rm QB},
\qquad
\frac1{\theta_{\rm QB}}\lesssim b_A+b_S+\frac{\nu_A+\nu_S}{\eps}.
\]
This gives the displayed bound in Theorem~\ref{thm:qb-unbounded-main}. The bounded-jump reduction follows from the last paragraph of Appendix~\ref{app:qb-entrance-scale}.

It remains to handle the degenerate case
\(b_A+b_S+(\nu_A+\nu_S)/\eps=0\). The queue-Bernstein inequalities then have zero right-hand side. For each coordinate vector \(e_i\), Jensen's inequality gives
\[
  1
  \le
  \E\!\left[e^{\theta(A_i(t)-\lambda_i(t))}\mid\F_t\right]
  \le 1,
  \qquad \theta>0,
\]
and the equality case for the strictly convex exponential implies \(A_i(t)=\lambda_i(t)\) a.s. Applying the same argument to \(S_t\) gives \(S_t=s_t\) a.s. Thus the primitives are conditionally deterministic.

Let \(X_t:=\|Q_t\|_2\) and let \(\xi_t:=A_t-S_t\). If \(D_0=0\), then \(\xi_t=0\) a.s. for every \(t\), and the path stays at the origin. Assume therefore that \(D_0>0\), and put
\[
  a:=\frac{D_0^2}{\eps\alphapi}.
\]
If \(X_t>0\), set \(u_t=Q_t/X_t\). Lemma~\ref{lem:norm-inc}, Assumption~\ref{ass:interior}, and MaxWeight give
\[
  X_{t+1}-X_t
  \le
  \ip{u_t}{\xi_t}+\frac{\|\xi_t\|_2^2}{2X_t}
  \le
  -\eps H(u_t)+\frac{D_0^2}{2X_t}.
\]
Consequently, whenever \(X_t\ge a\), the last display is at most \(-\eps\alphapi/2\), so \(X_{t+1}\le X_t\). If \(X_t<a\), nonexpansiveness of projection gives \(X_{t+1}\le X_t+D_0<a+D_0\). Since \(X_0=0\), induction yields the pathwise bound
\[
  \PeakT\le a+D_0
  \le x_{\rm QB}
\]
after increasing \(C_0\), if necessary. This proves the theorem in the deterministic degenerate case, with no logarithmic term.
\end{proof}

\subsection{One-dimensional queue-Bernstein peaks and Kingman's scale}\label{app:one-dimensional-qb}

The one-dimensional recursion admits a sharper argument than Theorem~\ref{thm:qb-unbounded-main}: there is no Euclidean curvature term, and the reflected workload is the running maximum of net-input partial sums.

\begin{theorem}[One-dimensional queue-Bernstein peak]\label{thm:one-dimensional-qb}
Let
\[
Q_{t+1}=(Q_t+A_t-S_t)^+,
\qquad Q_0=0,
\]
where, conditional on \(\F_t\), \(A_t\) and \(S_t\) are independent nonnegative random variables. Let \(\lambda_t=\E[A_t\mid\F_t]\) and \(s_t=\E[S_t\mid\F_t]\), and assume \(\lambda_t\le(1-\eps)s_t\) a.s. for every \(t\). Suppose \(A_t\) and \(S_t\) satisfy the one-dimensional queue-Bernstein condition with parameters \((\nu_A,b_A)\) and \((\nu_S,b_S)\), uniformly over \(t\). Then there is a universal constant \(C\) such that, for every \(T\ge1\) and \(\delta\in(0,1)\),
\[
\Pbb\!\left(
\max_{0\le t\le T}Q_t
\le
C\left(b_A+b_S+\frac{\nu_A+\nu_S}{\eps}\right)
\log\frac{T+1}{\delta}
\right)
\ge1-\delta.
\]
\end{theorem}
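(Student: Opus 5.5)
The plan is to apply the scalar exponential drift-to-peak machinery of Lemma~\ref{lm:exponential-peak} directly to $X_t:=Q_t$. In one dimension there is no curvature term, so no entrance threshold is needed beyond $x_0=0$. Write $\xi_t:=A_t-S_t$. On $\{Q_t>0\}$ we still have $Q_{t+1}-Q_t\le \xi_t$, because $(Q_t+\xi_t)^+-Q_t\le \xi_t$ whenever $Q_t+\xi_t\ge0$. When $Q_t+\xi_t<0$ the increment equals $-Q_t\ge\xi_t$, so the inequality fails there. I would therefore work instead with the upper bound $e^{\theta(Q_{t+1}-Q_t)}\le \max\{e^{\theta\xi_t},e^{-\theta Q_t}\}$. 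Even this does not give clean drift near zero, so the plan is to use the Lindley representation instead.

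Concretely, $Q_t=\max_{0\le r\le t}\sum_{k=r}^{t-1}\xi_k$. Hence $\max_{t\le T}Q_t\ge x$ iff there exist $r<t\le T$ with $\sum_{k=r}^{t-1}\xi_k\ge x$. For each fixed start $r$, I consider the process $M^{(r)}_t:=\exp(\theta\sum_{k=r}^{t-1}\xi_k)$ for $t\ge r$.

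The key one-step estimate comes from conditional independence and the queue-Bernstein bounds, applied to $A_t$ with tilt $\theta$ and to $S_t$ with tilt $-\theta$ (taking $w=1$ in one dimension):
\[
\log\E[e^{\theta\xi_t}\mid\F_t]\le \theta(\lambda_t-s_t)+\frac{\theta^2(\nu_A\lambda_t+\nu_S s_t)}{2(1-\theta b_*)},\qquad b_*:=b_A\vee b_S .
\]
Using $\lambda_t\le(1-\eps)s_t\le s_t$, the right side is at most $-\theta\eps s_t+\theta^2(\nu_A+\nu_S)s_t/(2(1-\theta b_*))$. Choose $\theta:=c\,(b_A+b_S+(\nu_A+\nu_S)/\eps)^{-1}$ with $c$ small. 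Then $\theta b_*\le1/2$ and $\theta(\nu_A+\nu_S)\le\eps/2$, which makes the bound $\le -\tfrac12\theta\eps s_t\le0$. This is the self-normalization again: the factor $s_t$ multiplies both drift and variance and cancels, so no lower bound on $s_t$ is needed. Consequently each $M^{(r)}$ is a nonnegative supermartingale started at $1$ at time $r$.

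Ville's (Doob's) maximal inequality then gives $\Pbb(\max_{r<t\le T}\sum_{k=r}^{t-1}\xi_k\ge x)\le e^{-\theta x}$ for each $r$. A union bound over the $T$ start times $r=0,\dots,T-1$ yields $\Pbb(\max_{t\le T}Q_t\ge x)\le Te^{-\theta x}$. Setting $x=\theta^{-1}\log(T/\delta)\le\theta^{-1}\log((T+1)/\delta)$ gives the claim with $C\asymp 1/c$.

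The main obstacle I anticipate is the reflection step. Attempting the peak lemma directly on $Q_t$ runs into the failure of $Q_{t+1}-Q_t\le\xi_t$ at the boundary. The Lindley/union-over-starts route sidesteps this at the cost of only the factor $T$ inside the logarithm, and it also avoids any $M_+(\theta)$ term. That is why no additive $b$-independent constant appears beyond the logarithmic coefficient.
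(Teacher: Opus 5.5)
Your proposal is correct and follows essentially the same route as the paper: the Lindley representation, the conditional queue-Bernstein MGF bound combined with $\lambda_t\le(1-\eps)s_t\le s_t$ so that the common factor $s_t$ cancels, a tilt $\theta\asymp(b_A+b_S+(\nu_A+\nu_S)/\eps)^{-1}$ that makes each $\exp(\theta\sum_{k=r}^{t-1}\xi_k)$ a nonnegative supermartingale, Ville/Doob's maximal inequality, and a union bound over start times. The only omission is the degenerate case $b_A=b_S=\nu_A=\nu_S=0$, where your $\theta$ is undefined. The paper handles it separately by showing the primitives are then conditionally deterministic, so the queue stays at zero; in your framework it also follows immediately, because the MGF bound then holds for every $\theta>0$, and letting $\theta\to\infty$ gives $\Pbb(\max_{t\le T}Q_t\ge x)=0$ for every $x>0$.
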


\begin{proof}
Let
\[
K:=b_A+b_S+\frac{\nu_A+\nu_S}{\eps}.
\]
If $K=0$, then $b_A=b_S=\nu_A=\nu_S=0$. The queue-Bernstein inequalities have zero right-hand side. Jensen's inequality, applied to $e^{\theta(A_t-\lambda_t)}$ and $e^{\theta(S_t-s_t)}$ for $\theta>0$, forces $A_t=\lambda_t$ and $S_t=s_t$ a.s. conditionally on $\F_t$. Since $\lambda_t\le(1-\eps)s_t\le s_t$, the net input is nonpositive in every slot. Starting from $Q_0=0$, the queue remains identically zero, and the stated bound holds with its zero right-hand side. We may therefore assume $K>0$.
Write \(\xi_t:=A_t-S_t\). The Lindley representation gives the pathwise identity
\[
Q_t=\max_{0\le k\le t}\left(\sum_{r=k}^{t-1}\xi_r\right)^+ .
\]
Fix \(\theta>0\) satisfying
\[
\theta\le c\left(b_A+b_S+\frac{\nu_A+\nu_S}{\eps}\right)^{-1}
\]
with \(c>0\) sufficiently small. The queue-Bernstein bounds and conditional independence give
\begin{align*}
\log \E[e^{\theta \xi_t}\mid\F_t]
&\le
\theta(\lambda_t-s_t)
+\frac{\theta^2\nu_A\lambda_t}{2(1-b_A\theta)}
+\frac{\theta^2\nu_Ss_t}{2(1-b_S\theta)} \\
&\le
-\theta\eps s_t
+\frac{\theta^2(\nu_A+\nu_S)s_t}{2(1-(b_A+b_S)\theta)}
\le 0 .
\end{align*}
Here we used \(\lambda_t\le s_t\) and the choice of \(c\) in the last step. Hence, for each deterministic starting time \(k\), the process
\[
\exp\!\left(\theta\sum_{r=k}^{m-1}\xi_r\right),
\qquad m\ge k,
\]
is a nonnegative supermartingale. Doob's maximal inequality yields
\[
\Pbb\!\left(\max_{k\le m\le T}\sum_{r=k}^{m-1}\xi_r\ge u\right)
\le e^{-\theta u}.
\]
A union bound over \(k=0,1,\ldots,T\), together with the Lindley representation, gives
\[
\Pbb\!\left(\max_{0\le t\le T}Q_t\ge u\right)
\le (T+1)e^{-\theta u}.
\]
Taking \(u=\theta^{-1}\log((T+1)/\delta)\) proves the result.
\end{proof}

For i.i.d. primitives with means \(\E A=\rho\tau\), \(\E S=\tau\), and \(\rho=1-\eps\), a variance-to-mean queue-Bernstein scale has the same order as
\[
\nu_A+\nu_S
\asymp
\frac{\operatorname{Var}(A)}{\E A}
+\frac{\operatorname{Var}(S)}{\E S}
=
\rho c_a^2\tau+c_s^2\tau.
\]
Thus the dominant logarithmic coefficient in Theorem~\ref{thm:one-dimensional-qb} is of order
\[
\frac{\nu_A+\nu_S}{\eps}
\asymp
\frac{\tau(\rho c_a^2+c_s^2)}{1-\rho},
\]
which matches Kingman's scale up to universal constants when \(\rho=\Omega(1)\). The theorem is a finite-horizon high-probability peak statement, whereas Kingman's formula is a constant-level approximation for a steady-state mean.

\section[Proofs for Section 4]{Proofs for Section~\ref{sec:geometry}}\label{app:deferred-geometry}
\subsection[The two-queue collapse example]{Proposition~\ref{prop:twoqueue-peak}: the two-queue collapse example}\label{app:prep2queue}

\begin{proof}[Proof of Lemma~\ref{lem:twoqueue-ssc}]
The service is deterministic in Example~\ref{ex:two-queue}, so the realized service in each slot equals the selected vector.
Write
\[
G_t:=Q_1(t)-aQ_2(t).
\]
We first prove the collapse estimate $G_t^+\le1+a^2$ by induction. It holds at $t=0$. Suppose it holds at time $t$.

If MaxWeight serves queue~$1$, then $G_t\ge0$. The service vector is $(1,0)$, so
\[
Q_1(t+1)=(Q_1(t)+A_1(t)-1)^+,
\qquad
Q_2(t+1)=Q_2(t)+A_2(t).
\]
There are two cases. If $Q_1(t)+A_1(t)\ge1$, then one full unit of service is used in the first coordinate, and
\[
G_{t+1}
= G_t-1+A_1(t)-aA_2(t)
\le G_t
\le 1+a^2,
\]
where the last inequality uses $G_t=G_t^+$ and the induction hypothesis. If instead $Q_1(t)+A_1(t)<1$, then $Q_1(t)=A_1(t)=0$. Since queue~$1$ was selected, $G_t\ge0$, hence $Q_2(t)=0$, and therefore
\[
G_{t+1}=-aA_2(t)\le0.
\]
Thus service of queue~$1$ preserves the induction invariant $G_{t+1}^+\le1+a^2$.

If MaxWeight serves queue~$2$, then $G_t\le0$. The service vector is $(0,a)$, so
\[
Q_1(t+1)=Q_1(t)+A_1(t),
\qquad
Q_2(t+1)=(Q_2(t)+A_2(t)-a)^+.
\]
If $Q_2(t)+A_2(t)\ge a$, then the second coordinate receives the full service amount $a$, and
\[
G_{t+1}
= G_t+a^2+A_1(t)-aA_2(t)
\le1+a^2,
\]
because $G_t\le0$, $A_1(t)\le1$, and $A_2(t)\ge0$. If instead $Q_2(t)+A_2(t)<a$, then $Q_2(t)<a$. Since $G_t\le0$, we have $Q_1(t)\le aQ_2(t)<a^2\le1$. The first queue is integer-valued, so $Q_1(t)=0$, and hence
\[
G_{t+1}=A_1(t)\le1\le1+a^2.
\]
Thus service of queue~$2$ also preserves the induction invariant $G_{t+1}^+\le1+a^2$. The induction closes and proves the first claim.

We next prove the local drift statement. If MaxWeight serves queue~$1$, then $Q_1(t)\ge aQ_2(t)$. Since $Y_t\ge2$, this forces $Q_1(t)>0$; otherwise $Q_1(t)=0$ would imply $Q_2(t)=0$ and hence $Y_t=0$. Thus $Q_1(t)\ge1$, so the projection is inactive in the first coordinate and
\[
Y_{t+1}=a(Q_1(t)+A_1(t)-1)+Q_2(t)+A_2(t)
=Y_t-a+aA_1(t)+A_2(t).
\]

If MaxWeight serves queue~$2$, then $Q_1(t)\le aQ_2(t)$, so
\[
Y_t=aQ_1(t)+Q_2(t)\le(1+a^2)Q_2(t).
\]
Thus $Y_t\ge2$ implies
\[
Q_2(t)\ge \frac{2}{1+a^2}\ge a.
\]
The projection is therefore inactive in the second coordinate, and again
\[
Y_{t+1}=a(Q_1(t)+A_1(t))+Q_2(t)+A_2(t)-a
=Y_t-a+aA_1(t)+A_2(t).
\]
Therefore, on $\{Y_t\ge2\}$,
\[
Y_{t+1}=Y_t-a+aA_1(t)+A_2(t).
\]
Taking conditional expectations and using \eqref{eq:twoqueue-slack} gives
\[
\E[Y_{t+1}-Y_t\mid\F_t]\le -a\eps.
\]
\end{proof}

\begin{proof}[Proof of Proposition~\ref{prop:twoqueue-peak}]
Let
\[
W_t:=aA_1(t)+A_2(t).
\]
On $\{Y_t\ge2\}$, Lemma~\ref{lem:twoqueue-ssc} gives
\[
Y_{t+1}-Y_t=W_t-a.
\]
Moreover, by \eqref{eq:twoqueue-slack},
\[
m_t:=\E[W_t\mid\F_t]
=a\,\E[A_1(t)\mid\F_t]+\E[A_2(t)\mid\F_t]
\le a(1-\eps).
\]
Since $0\le W_t\le1+a\le2$ a.s., convexity of $x\mapsto e^{\theta x}$ on $[0,2]$ gives
\[
e^{\theta x}\le1+\frac{x}{2}(e^{2\theta}-1),
\qquad 0\le x\le2.
\]
Thus
\[
\E[e^{\theta W_t}\mid\F_t]
\le1+\frac{m_t}{2}(e^{2\theta}-1).
\]
Choose $\theta:=\eps/4\le1/4$. Using $\log(1+z)\le z$ and $e^{2\theta}-1\le2\theta+4\theta^2$ for $\theta\in[0,1/4]$, we obtain on $\{Y_t\ge2\}$
\begin{align*}
\log\E[e^{\theta(Y_{t+1}-Y_t)}\mid\F_t]
&=-a\theta+\log\E[e^{\theta W_t}\mid\F_t] \\
&\le -a\theta+\frac{m_t}{2}(e^{2\theta}-1) \\
&\le -a\theta+a(1-\eps)(\theta+2\theta^2) \\
&\le -a\eps\theta+2a\theta^2
= -\frac{a\eps^2}{8}.
\end{align*}
Hence $Y_t$ satisfies exponential drift above threshold $2$ at parameter $\theta$. Also $(Y_{t+1}-Y_t)^+\le W_t\le2$, so the upward MGF condition holds with $M_+(\theta)\le e^{2\theta}$. Since $Y_0=0\le2$, Lemma~\ref{lm:exponential-peak} gives
\[
\max_{0\le t\le T}Y_t
\;\lesssim\;
1+\frac1\eps\log\frac{T+1}{\delta}
\qquad\text{with probability at least }1-\delta.
\]

To convert back to queue length, note that $Q_2(t)\le Y_t$ and the collapse bound gives
\[
Q_1(t)\le aQ_2(t)+1+a^2\le Y_t+2.
\]
Thus
\[
\normtwo{Q_t}\le\normone{Q_t}=Q_1(t)+Q_2(t)\le2Y_t+2,
\]
which proves the proposition.
\end{proof}

\subsection[Finite-time state-space collapse under CRP]{Theorem~\ref{thm:finite_horizon_ssc_crp}: finite-time state-space collapse under CRP}\label{app:ssc}
\begin{proof}[Proof of Theorem~\ref{thm:finite_horizon_ssc_crp}]
The proof is a finite-horizon version of the geometric
SSC argument for MaxWeight, whose heavy-traffic form goes
back to \citet*{Stolyar04}.  We use the Lyapunov drift formulation of
this geometry, as in \citet*{EryilmazSrikant12}: the face
margin gives a feasible comparison point in the transverse direction, which
yields a MaxWeight gain proportional to \(\|Q_t^\perp\|_2\), while subtracting
the bottleneck projection removes the ordinary \(v\)-direction drift.  The
new features here are that the fixed nominal load is replaced by the
predictable, time-varying face point
\(\rcirc_t=\lambda_t/(1-\eps)\), and that the resulting scalar drift bound is
converted to a finite-horizon peak estimate by a nonasymptotic drift-to-peak
lemma.

Let
\[
P_\perp z:=z-\ip{v}{z}v,
\qquad
X_t:=\normtwo{Q_t^\perp}=\normtwo{P_\perp Q_t},
\qquad
Y_t:=\ip{v}{Q_t}.
\]
We prove a negative drift for $X_t$ above a deterministic threshold and then
apply Lemma~\ref{lm:Bernstein-peak}. If $v^\perp=\{0\}$, then
$Q_t^\perp=0$ for all $t$ and the theorem is immediate; hence we assume below
that $v^\perp$ contains a unit vector. In this case the face margin is finite
and satisfies
\begin{equation}\label{eq:face-margin-D-bound}
  \delta_{\mathrm{face}}\le 2S_{\max}\le 2D.
\end{equation}
Indeed, $\PiC\subseteq\{x\in\Rp^d:\normtwo{x}\le S_{\max}\}$: if
$x\in\PiC$, then $0\le x\le r$ coordinatewise for some
$r\in\conv(\Sset)$, so $\normtwo{x}\le\normtwo r\le S_{\max}$. Fix any time
$t$ and work on a full-probability event on which Assumption~\ref{ass:crp}
holds for $\rcirc_t$. For any $\rho<\delta_{\mathrm{face}}$ and any unit vector
$e\perp v$, the point $\rcirc_t+\rho e$ lies in $\Hface$ and hence, by the
uniform relative-inradius condition, in $F^\star\subseteq\PiC$; also
$\rcirc_t\in F^\star\subseteq\PiC$. Hence
\[
\rho=\normtwo{(\rcirc_t+\rho e)-\rcirc_t}\le2S_{\max}.
\]
Letting $\rho\uparrow\delta_{\mathrm{face}}$ gives
\eqref{eq:face-margin-D-bound}.

Fix $t$ and work on the event $X_t>0$. Put $e_t:=Q_t^\perp/X_t$ and set $\rho_*:=3\delta_{\mathrm{face}}/4$.
Since $e_t\perp v$, the point $\rcirc_t+\rho_*e_t$ belongs to $\Hface$ and,
because $\rho_*<\delta_{\mathrm{face}}$ and the fixed-face margin assumption
is monotone in the radius, to $F^\star\subseteq\PiC$. Because MaxWeight
maximizes the support function over $\PiC$ for nonnegative queue weights,
\begin{equation*}
\ip{Q_t}{s_t}=H(Q_t)
\ge
\ip{Q_t}{\rcirc_t+\rho_*e_t}
=
\ip{Q_t}{\rcirc_t}+\rho_*X_t.
\end{equation*}
Since $\rcirc_t\in F^\star$, we have $\ip{v}{\rcirc_t}=\mu$. Using \eqref{eq:nominal-load-def} and $Q_t=Y_t v+Q_t^\perp$, we obtain
\begin{align}
\ip{Q_t}{\lambda_t-s_t}
&\le
-\eps\ip{Q_t}{\rcirc_t}-\rho_*X_t  \nonumber\\
&=
-\eps\mu Y_t-\eps\ip{Q_t^\perp}{\rcirc_t}-\rho_*X_t  \nonumber\\
&\le
-\eps\mu Y_t-\bigl(\rho_*-\eps\normtwo{\rcirc_t}\bigr)X_t  \nonumber\\
&\le
-\eps\mu Y_t-\frac{\delta_{\mathrm{face}}}{4}X_t
\nonumber
\end{align}
where the last step uses $\eps\le \delta_{\mathrm{face}}/(2S_{\max}) \le  \delta_{\mathrm{face}}/(2\normtwo{\rcirc_t})$ and $\rho_*=3\delta_{\mathrm{face}}/4$.

We next compare the drifts of the full quadratic Lyapunov function and its
projection onto the bottleneck ray. Since coordinatewise projection onto
$\Rp^d$ can only reduce squared norm,
\begin{align}
\E[\normtwo{Q_{t+1}}^2-\normtwo{Q_t}^2\mid\F_t]
&\le
2\ip{Q_t}{\lambda_t-s_t}
+\E[\normtwo{A_t-S_t}^2\mid\F_t]  \nonumber\\
&\le
-2\eps\mu Y_t-\frac{\delta_{\mathrm{face}}}{2}X_t+D^2.
\label{eq:ssc-total-square}
\end{align}
For the parallel component, use $v\ge0$ to get
\[
Y_{t+1}=\ip{v}{(Q_t+A_t-S_t)^+}
\ge
\bigl(Y_t+\ip{v}{A_t-S_t}\bigr)^+.
\]
For every $y\ge0$ and every real $z$,
$((y+z)^+)^2-y^2\ge2yz$. Therefore
\begin{align}
\E[Y_{t+1}^2-Y_t^2\mid\F_t]
&\ge
2Y_t\,\ip{v}{\lambda_t-s_t}  \nonumber\\
&\ge
-2\eps\mu Y_t,
\label{eq:ssc-parallel-square}
\end{align}
because $\ip{v}{s_t}\le H(v)=\mu$ and
$\ip{v}{\lambda_t}
=
(1-\eps)\mu$.
Since $\normtwo{Q_t^\perp}^2=\normtwo{Q_t}^2-Y_t^2$, subtracting \eqref{eq:ssc-parallel-square} from \eqref{eq:ssc-total-square} gives
\begin{equation*}
\E[X_{t+1}^2-X_t^2\mid\F_t]
\le
-\frac{\delta_{\mathrm{face}}}{2}X_t+D^2.
\end{equation*}
For $X_t>0$,
\[
X_{t+1}-X_t
\le
\frac{X_{t+1}^2-X_t^2}{2X_t}.
\]
Consequently,
\[
\E[X_{t+1}-X_t\mid\F_t]
\le
-\frac{\delta_{\mathrm{face}}}{4}+\frac{D^2}{2X_t}.
\]
Thus, whenever $X_t\ge x_0:=4D^2/\delta_{\mathrm{face}}$,
\begin{equation*}
\E[X_{t+1}-X_t\mid\F_t]
\le
-\frac{\delta_{\mathrm{face}}}{8}.
\end{equation*}

Finally, $|X_{t+1}-X_t|\le\normtwo{Q_{t+1}-Q_t}\le D$ a.s. Hence the centered increments satisfy the conditional Bernstein condition with variance proxy $D^2$ and $b=0$, and the upward MGF condition holds with $M_+(\theta)\le e^{\theta D}$. Applying Lemma~\ref{lm:Bernstein-peak} with drift parameter $\Delta=\delta_{\mathrm{face}}/8$ yields, for a universal constant $C$,
\[
\max_{0\le t\le T}X_t
\le
x_0+D+C\frac{D^2}{\delta_{\mathrm{face}}}\log\frac{e(T+1)}{\delta}
\]
with probability at least $1-\delta$. It remains only to absorb the two nonlogarithmic terms into the stated scale. We have $x_0=4D^2/\delta_{\mathrm{face}}$, and \eqref{eq:face-margin-D-bound} gives $D\le2D^2/\delta_{\mathrm{face}}$. As $\log(e(T+1)/\delta)\ge1$, increasing the universal constant gives \eqref{eq:ssc-Rperp-def}.
\end{proof}

\subsection[Collapse-to-peak conversion near a ray]{Collapse-to-peak conversion near a ray}\label{app:collapse-to-peak}

The proposition below isolates the only place where the peak proof uses collapse. No CRP geometry appears in the statement. One supplies a deterministic high-probability tube around a fixed ray; the proposition converts that tube into a one-dimensional peak estimate plus a deterministic residual term.

Fix $v\in\Rp^d$ with $\normtwo{v}=1$ and set $\mu:=H(v)>0$. Define
\[
Y_t:=\ip{v}{Q_t},
\qquad
Q_t^\parallel:=Y_tv,
\qquad
Q_t^\perp:=Q_t-Q_t^\parallel .
\]
For this fixed direction, a deterministic number $R_\perp(T,\eta)$ is a finite-horizon \emph{single-bottleneck collapse radius} if
\begin{equation}\label{eq:collapse-radius-def}
\Pbb\!\left(
\max_{0\le t\le T}\normtwo{Q_t^\perp}>R_\perp(T,\eta)
\right)
\le \eta .
\end{equation}

\begin{proposition}[Peak law under finite-horizon single-bottleneck collapse]\label{prop:collapse-to-peak}
Consider MaxWeight scheduling under Assumptions~\ref{ass:interior} and~\ref{ass:bounded-primitives}. Let $D:=A_{\max}+S_{\max}$. Fix $v\in\Rp^d$ with $\normtwo{v}=1$, set $\mu:=H(v)>0$, and use the projection decomposition above for this $v$. Fix $T\ge1$ and $\delta\in(0,1)$. Suppose $R_\perp(T,\delta/2)$ is a finite deterministic single-bottleneck collapse radius, i.e., \eqref{eq:collapse-radius-def} holds with $\eta=\delta/2$.
Define
\[
\Delta_{\mathrm{gap}}
:=
\min\{\mu-\ip{v}{s}:s\in\Sset,\ \ip{v}{s}<\mu\},
\]
with the convention $\Delta_{\mathrm{gap}}=\infty$ if every schedule is $v$-optimal, and interpret $a/\infty:=0$ for finite $a$. Let
\[
v_{\min}:=\min\{v_i:v_i>0\}
\]
and set
\[
x_{\mathrm{loc}}(T,\delta)
:=
\max\left\{
\frac{4S_{\max}R_\perp(T,\delta/2)}{\Delta_{\mathrm{gap}}},
\frac{S_{\max}+R_\perp(T,\delta/2)}{v_{\min}}
\right\}.
\]
Then, with probability at least $1-\delta$, both bounds hold:
\[
\PeakT
\;\lesssim\;
\frac{D}{\eps}\log\frac{T+1}{\delta}
+
x_{\mathrm{loc}}(T,\delta)
+
R_\perp(T,\delta/2),
\]
and
\[
\max_{0\le t\le T}\normone{Q_t}
\;\lesssim\;
\frac{\normone{v}\,D}{\eps}\log\frac{T+1}{\delta}
+
\normone{v}\,x_{\mathrm{loc}}(T,\delta)
+
\sqrt d\,R_\perp(T,\delta/2).
\]
If service is deterministic, $S_t=s_t$ a.s., then the same two bounds hold with $D$ replaced by $A_{\max}$ in the logarithmic coefficient.
\end{proposition}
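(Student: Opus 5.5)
The plan is to localize on the collapse event, verify a clean scalar drift for the bottleneck projection $Y_t=\ip{v}{Q_t}$ above $x_{\mathrm{loc}}$, and then apply the exponential drift-to-peak Lemma~\ref{lm:exponential-peak} to a stopped version of $Y_t$. Write $R:=R_\perp(T,\delta/2)$. Define the stopping time $\sigma:=\inf\{t:\normtwo{Q_t^\perp}>R\}$ and the stopped process $\tilde Y_t:=Y_{t\wedge\sigma}$; more precisely, freeze the increment to zero once $\sigma\le t$, i.e.\ on $\{\sigma\le t\}$, which is $\F_t$-measurable. The event $\{\sigma>T\}$ has probability at least $1-\delta/2$, and on it $\tilde Y=Y$ up to time $T$. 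The goal is $\Pbb(\max_t\tilde Y_t\ge x_{\mathrm{loc}}+D+u)\le (T+1)e^{-\theta u}$ with $\theta\asymp\eps/D$, together with $u=\theta^{-1}\log(2(T+1)/\delta)$ and a union bound.

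\textbf{Step 1 (MaxWeight picks $v$-optimal schedules).} On $\{t<\sigma,\ Y_t\ge x_{\mathrm{loc}}\}$, take any $s\in\Sset$ with $\ip{v}{s}\le\mu-\Delta_{\mathrm{gap}}$, and let $s^\star$ be a $v$-optimal schedule. Then
\[
\ip{Q_t}{s^\star-s}=Y_t\ip{v}{s^\star-s}+\ip{Q_t^\perp}{s^\star-s}\ge Y_t\Delta_{\mathrm{gap}}-2S_{\max}R>0
\]
once $Y_t\ge 4S_{\max}R/\Delta_{\mathrm{gap}}$. So $s$ is never chosen, and hence $\ip{v}{s_t}=\mu$.

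\textbf{Step 2 (no waste).} For $i$ with $v_i>0$,
\[
Q_{t,i}\ge v_iY_t-\normtwo{Q_t^\perp}\ge v_{\min}Y_t-R\ge S_{\max}
\]
when $Y_t\ge (S_{\max}+R)/v_{\min}$. Since $S_{t,i}\le S_{\max}$, no truncation occurs in the coordinates where $v_i>0$, and therefore
\[
Y_{t+1}\le Y_t+\ip{v}{A_t}-\ip{v}{S_t},
\]
where truncation in coordinates with $v_i=0$ is irrelevant. Under CRP the nominal points give $\ip{v}{\lambda_t}=(1-\eps)\mu$. Without CRP we only have $\ip{v}{\lambda_t}\le(1-\eps)H(v)=(1-\eps)\mu$, which suffices. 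Lemma~\ref{lem:netinput-mgf}, applied with $u=v$, $m_t\le(1-\eps)\mu$ and $g_t=\mu$, then gives
\[
\log\E[e^{\theta\Delta Y_t}\mid\F_t]\le -\theta\eps\mu+\frac{\theta^2D\mu}{2(1-\theta D)}\le-\frac{\theta\eps\mu}{3}
\]
for $\theta=\eps/(4D)$. This is exactly the self-normalization cancellation, now with the fixed factor $\mu$. Below threshold or after stopping, the frozen increment trivially satisfies \eqref{eq:UP}, and in general $(\Delta Y_t)^+\le\ip{v}{A_t}\le A_{\max}$, so $M_+(\theta)\le e^{\theta D}$. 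Since $\tilde Y_0=0$, Lemma~\ref{lm:exponential-peak} yields the scalar bound
\[
\max_{t\le T}Y_t\lesssim x_{\mathrm{loc}}+D+\frac{D}{\eps}\log\frac{T+1}{\delta}.
\]
Under deterministic service, the arrival-only MGF with $\theta=\eps/(4A_{\max})$ replaces $D$ by $A_{\max}$. The additive $D$, respectively $A_{\max}$, is absorbed into the logarithmic term because $\log\frac{T+1}{\delta}\ge\log 2$ and $1/\eps>1$.

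\textbf{Step 3 (conversion back).} On the intersection of the two good events, the decomposition $Q_t=Y_tv+Q_t^\perp$ gives
\[
\normtwo{Q_t}\le Y_t+R,
\qquad
\normone{Q_t}\le Y_t\normone{v}+\sqrt d\,R.
\]
Inserting the scalar bound proves both displays.

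\textbf{Main obstacle.} The subtle point is the stopping bookkeeping: the drift condition \eqref{eq:ED} must hold for the stopped process at every time where $\tilde Y_t\ge x_{\mathrm{loc}}$, including times at or after $\sigma$. I would handle this by letting the frozen process have increment $0$ there. That makes \eqref{eq:ED} fail, since we would get $e^0=1\not\le e^{-\gamma}$. The fix I would try is to instead define, after $\sigma$, a fictitious increment $-\mu$ (pure deterministic drift), which satisfies \eqref{eq:ED} and \eqref{eq:UP} trivially and coincides with $Y$ on $\{\sigma>T\}$. This gives a well-defined adapted process to feed into Lemma~\ref{lm:exponential-peak}. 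Nonnegativity of this fictitious process is not needed beyond the lemma's use of optional stopping, but if needed I would take its positive part.
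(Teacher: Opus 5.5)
Your proposal is correct and follows the paper's proof step for step. Both arguments stop at tube exit and use $Y_t\Delta_{\mathrm{gap}}-2S_{\max}R>0$ to force a $v$-optimal MaxWeight schedule. Both use the $(S_{\max}+R)/v_{\min}$ threshold to remove truncation on the support of $v$, then apply Lemma~\ref{lem:netinput-mgf} with $\theta=\eps/(4D)$ followed by Lemma~\ref{lm:exponential-peak}. Both finish by converting through $Q_t=Y_tv+Q_t^\perp$ with a $\delta/2+\delta/2$ union bound.

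The only difference is the stopping device. The paper kills the projection, $\widetilde Y_t:=Y_t\one_{\{t<\tau_\perp\}}$. After exit the process sits at $0<x_{\mathrm{loc}}$, so the drift condition is vacuous there, and killing at $t+1$ only lowers the increment. Your deterministic $-\mu$ continuation after $\sigma$ is an equally valid fix for the failure of freezing that you correctly spotted. Nonnegativity is not actually used in the proof of Lemma~\ref{lm:exponential-peak}, so you do not need the positive-part variant.
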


\begin{proof}
Set
\[
R:=R_\perp(T,\delta/2),
\qquad
\tau_\perp:=\inf\{t\ge0:\normtwo{Q_t^\perp}>R\}\wedge(T+1).
\]
The collapse estimate \eqref{eq:collapse-radius-def} gives $\Pbb(\tau_\perp\le T)\le\delta/2$. Let
\[
\widetilde Y_t:=Y_t\one_{\{t<\tau_\perp\}}
\]
be the stopped bottleneck projection.

We first use the collapse event deterministically. On $\{t<\tau_\perp\}$,
\[
Q_t=Y_t v+R_t,
\qquad
R_t:=Q_t^\perp,
\qquad
\normtwo{R_t}\le R .
\]
Let $s^\star\in\argmax_{s\in\Sset}\ip{v}{s}$, so $\ip{v}{s^\star}=\mu$. If every schedule is $v$-optimal, the schedule-gap condition below is void. Otherwise, for any $v$-suboptimal schedule $s$,
\[
\ip{Q_t}{s^\star-s}
=
Y_t\bigl(\mu-\ip{v}{s}\bigr)+\ip{R_t}{s^\star-s}
\ge
Y_t\Delta_{\mathrm{gap}}-2S_{\max}R .
\]
Thus the first term in $x_{\mathrm{loc}}$ rules out $v$-suboptimal MaxWeight maximizers. If $R=0$, the no-waste threshold below implies $Y_t>0$, so the inequality is still strict whenever a suboptimal schedule exists.

The second term in $x_{\mathrm{loc}}$ prevents bottleneck-relevant service from being lost. If
\[
Y_t\ge \frac{S_{\max}+R}{v_{\min}},
\]
then every coordinate with $v_i>0$ satisfies
\[
Q_{t,i}=Y_tv_i+R_{t,i}
\ge S_{\max}+R-|R_{t,i}|
\ge S_{\max}.
\]
Since $\normtwo{S_t}\le S_{\max}$, every coordinate of the realized service vector is at most $S_{\max}$. The positive-part map is therefore inactive in all coordinates with $v_i>0$. Coordinates with $v_i=0$ do not affect $Y_t$. Hence, on
\[
\{t<\tau_\perp,\,Y_t\ge x_{\mathrm{loc}}(T,\delta)\},
\]
we have the exact projection recursion
\begin{equation}\label{eq:local-projection-recursion}
Y_{t+1}=Y_t+\ip{v}{A_t-S_t}.
\end{equation}

On $\{\widetilde Y_t\ge x_{\mathrm{loc}}(T,\delta)\}$, the preceding recursion applies before stopping; if the stopping time occurs at $t+1$, then $\widetilde Y_{t+1}=0\le Y_{t+1}$. Therefore
\[
\widetilde Y_{t+1}-\widetilde Y_t
\le
\ip{v}{A_t}+\ip{v}{s_t-S_t}-\mu,
\]
because the selected MaxWeight schedule is $v$-optimal on this event. For Lemma~\ref{lem:netinput-mgf}, the conditional mean parameters satisfy
\[
 m_t(v)=\ip{v}{\lambda_t}\le(1-\eps)\mu,
 \qquad
 g_t(v)=\ip{v}{s_t}=\mu.
\]
The first inequality is Assumption~\ref{ass:interior}; the second is $v$-optimality. Thus, for every $\theta\in[0,1/D)$,
\[
\begin{aligned}
\log \E\!\left[e^{\theta(\widetilde Y_{t+1}-\widetilde Y_t)}\mid\F_t\right]
&\le
-\theta\mu+\theta m_t(v)+\frac{\theta^2D\mu}{2(1-\theta D)} \\
&\le
-\theta\eps\mu+\frac{\theta^2D\mu}{2(1-\theta D)} .
\end{aligned}
\]
With $\theta:=\eps/(4D)$, the last display is at most $-c\theta\eps\mu$ for a universal constant $c>0$. The upward increments obey
\[
(\widetilde Y_{t+1}-\widetilde Y_t)^+
\le
\ip{v}{A_t}+\ip{v}{s_t}
\le
A_{\max}+S_{\max}=D,
\]
where $v\ge0$ and $\normtwo{v}=1$ were used. Since $\widetilde Y_0=0$, Lemma~\ref{lm:exponential-peak} gives
\[
\max_{0\le t\le T}\widetilde Y_t
\lesssim
x_{\mathrm{loc}}(T,\delta)
+
\frac{D}{\eps}\log\frac{T+1}{\delta}
\]
with probability at least $1-\delta/2$.

If $S_t=s_t$ a.s., the service-noise term vanishes. The deterministic-service part of Lemma~\ref{lem:netinput-mgf} and the upward envelope $(\widetilde Y_{t+1}-\widetilde Y_t)^+\le\ip{v}{A_t}\le A_{\max}$ give the same estimate with $D$ replaced by $A_{\max}$ in the logarithmic coefficient; if $A_{\max}=0$, the projection is nonincreasing above the entrance level and the conclusion is immediate.

On $\{\tau_\perp>T\}$, stopped and unstopped projections agree for all $t\le T$. Since $\normtwo{v}=1$ and $Y_t\ge0$,
\[
\normtwo{Q_t}
\le
\normtwo{Q_t^\parallel}+\normtwo{Q_t^\perp}
\le
Y_t+R .
\]
Also,
\[
\normone{Q_t}
\le
\normone{Q_t^\parallel}+\normone{Q_t^\perp}
\le
\normone{v}\,Y_t+
\sqrt d\,R .
\]
These two deterministic conversions, together with a union bound using $\Pbb(\tau_\perp\le T)\le\delta/2$, prove both displayed bounds.
\end{proof}

\subsection[Local peak law under CRP]{Theorem~\ref{thm:crp-main}: local peak law under CRP}\label{app:locallaw}
\begin{proof}[Proof of Theorem~\ref{thm:crp-main}]
Theorem~\ref{thm:finite_horizon_ssc_crp} supplies exactly the tube required by Proposition~\ref{prop:collapse-to-peak}: the function $R_\perp(T,\eta)$ in \eqref{eq:ssc-Rperp-def} is a deterministic single-bottleneck collapse radius for the CRP normal $v$. Applying the proposition with this radius gives the threshold $x_{\mathrm{loc}}(T,\delta)$ appearing in Theorem~\ref{thm:crp-main} and yields both displayed estimates. The deterministic-service improvement in the logarithmic coefficient is inherited from the same proposition.
\end{proof}

\section[Proofs for Section 5]{Proofs for Section~\ref{sec:iqs}}\label{app:deferred-iqs}
\subsection[Generalized IQS upper bound]{Theorem~\ref{thm:iqs-upper-main}: generalized IQS upper bound}\label{app:iqs-general-upper}
\begin{proof}[Proof of Proposition~\ref{prop:iqs-alpha2}]
Every permutation matrix has exactly $n$ ones, so its Frobenius norm is $\sqrt n$. The inequality $\normone{Q}\le n\normtwo{Q}$ is Cauchy--Schwarz in dimension $n^2$.

For the lower bound on $\alphapi$, let $W\in\Rp^{n\times n}$ satisfy $\norm{W}_F=1$, and let $\sigma$ be a uniformly random permutation. Then
\[
Z:=\sum_{i=1}^n W_{i,\sigma(i)} \le H(W)\qquad\text{a.s.}
\]
Since $W\ge 0$, all cross terms in $Z^2$ are nonnegative, and therefore
\[
\E[Z^2]
\ge
\E\Big[\sum_{i=1}^n W_{i,\sigma(i)}^2\Big]
=
\frac{1}{n}\sum_{i,j}W_{ij}^2
=
\frac{1}{n}.
\]
Hence $H(W)\ge \sqrt{\E[Z^2]}\ge 1/\sqrt n$. Equality is attained by a matrix supported on a single row or column with equal entries, so $\alphapi=1/\sqrt n$.
\end{proof}

\begin{proof}[Proof of Theorem~\ref{thm:iqs-upper-main}]
Apply Theorem~\ref{thm:selfnorm-main} (deterministic service part) with $A_{\max}=\sqrt n$, $S_{\max}=\sqrt n$, and $\alphapi=1/\sqrt n$. This gives
\[
\PeakT
\;\lesssim\;
\frac{n^{3/2}}{\eps}
+
\frac{\sqrt n}{\eps}\log\frac{T+1}{\delta}
\qquad\text{with probability at least }1-\delta.
\]
Now multiply by $n$ using Proposition~\ref{prop:iqs-alpha2}.
\end{proof}
\subsection[Generalized IQS upper bound under CRP]{Corollary~\ref{cor:iqs-crp}: generalized IQS upper bound under CRP}\label{app:iqs-local-upper}
\begin{proof}[Proof of Corollary~\ref{cor:iqs-crp}]
Assume, without loss of generality, that the unique bottleneck face is row $i^\star$; the column case is identical. Let $e^{(i^\star)}$ denote the $n\times n$ row-indicator matrix for row $i^\star$. The normalized bottleneck vector is
\[
v=\frac1{\sqrt n}e^{(i^\star)},
\qquad
\normone{v}=\sqrt n,
\qquad
v_{\min}=\frac1{\sqrt n}, \qquad
\Delta_{\text{gap}}=\infty.
\]
The corresponding face is
\[
F^\star=
\left\{r\in\PiC:\sum_{j=1}^n r_{i^\star j}=1\right\}.
\]
The CRP assumption supplies a uniform lower bound on the relative distance from
$\rcirc_t$ to the boundary of this face. Pointwise in $t$, that distance is
\begin{align*}
\delta_t
:=
\min\Bigg\{&
\min_{i\neq i^\star,\;1\le j\le n}\rcirc_{ij}(t),
\sqrt{\frac{n}{n-1}}\min_{1\le j\le n}\rcirc_{i^\star j}(t),\\
&\frac{1}{\sqrt n}\min_{i\neq i^\star}\left(1-\sum_{j=1}^n\rcirc_{ij}(t)\right),
\sqrt{\frac{n}{n^2-1}}\min_{1\le j\le n}\left(1-\sum_{i=1}^n\rcirc_{ij}(t)\right)
\Bigg\}.
\end{align*}
Thus any deterministic constant satisfying
\[
0<\delta_{\mathrm{face}}
\le
\operatorname*{ess\,inf}_{\omega}\inf_{t\ge0}\delta_t(\omega)
\]
is a valid face margin in Assumption~\ref{ass:crp}.
Here is the distance calculation. The tangent space of the supporting hyperplane is
\[
\mathcal T:=\left\{h:\sum_{j=1}^n h_{i^\star j}=0\right\}.
\]
For a nonnegativity constraint outside the bottleneck row, the normal vector
already lies in \(\mathcal T\), so the relative distance is \(\rcirc_{ij}(t)\).
For a nonnegativity constraint in the bottleneck row, the normal
\(e_{i^\star j}\) projects onto \(\mathcal T\) with norm
\(\sqrt{(n-1)/n}\), giving distance
\(\sqrt{n/(n-1)}\,\rcirc_{i^\star j}(t)\). For a nonbottleneck row constraint,
the row-indicator normal has norm \(\sqrt n\) and lies in \(\mathcal T\), so the
distance is the row slack divided by \(\sqrt n\). For a column constraint, the
column-indicator normal has projection onto \(\mathcal T\) of squared norm
\(n-1/n=(n^2-1)/n\), giving the factor \(\sqrt{n/(n^2-1)}\). Since each
\(\rcirc_t\) lies in the relative interior of the face, the relative boundary is
the union of these active supporting hyperplanes. Taking the deterministic lower bound over both sample paths and time gives the uniform margin used above.

For the IQS, the ambient dimension is $d=n^2$, and the generalized arrival envelope gives $A_{\max}=\sqrt n$ and $S_{\max}=\sqrt n$. Thus $D=A_{\max}+S_{\max}=2\sqrt n$. Substituting this value into Theorem~\ref{thm:finite_horizon_ssc_crp} gives a universal constant $C_0$ such that
\[
R_\perp(T,\delta)
\le
\frac{C_0 n}{\delta_{\mathrm{face}}}
\log\frac{e(T+1)}{\delta}
=:R_{\perp,\mathrm{IQS}}(T,\delta).
\]
For a row or column bottleneck, every full permutation schedule is $v$-optimal, so the schedule-gap term is absent. The local threshold in Theorem~\ref{thm:crp-main} satisfies
\[
x_{\mathrm{loc}}(T,\delta)
=
\sqrt n\bigl(\sqrt n+R_\perp(T,\delta/2)\bigr)
\le C\bigl(n+\sqrt n\,R_{\perp,\mathrm{IQS}}(T,\delta/2)\bigr),
\]
where $R_{\perp,\mathrm{IQS}}(T,\delta/2)$ is the displayed IQS upper bound on $R_\perp(T,\delta/2)$. Applying Theorem~\ref{thm:crp-main} and using $\normone{v}=\sqrt n$, $A_{\max}=\sqrt n$, $d=n^2$, and $x_{\mathrm{loc}}(T,\delta)=O\bigl(n+\sqrt n\,R_{\perp,\mathrm{IQS}}(T,\delta/2)\bigr)$ yields
\begin{align*}
    \max_{0\le t\le T}\normone{Q_t}
&\;\lesssim\;
\frac{n}{\eps}\log\frac{T+1}{\delta}
+
\sqrt n\,x_{\mathrm{loc}}(T,\delta)
+
n\,R_\perp(T,\delta/2) \\
& \;\lesssim\; \frac{n}{\eps}\log\frac{T+1}{\delta}
+
n^{3/2}+n\,R_{\perp,\mathrm{IQS}}(T,\delta/2)
\end{align*}
with probability at least $1-\delta$.
\end{proof}

\subsection[Generalized IQS lower bounds]{Theorems~\ref{thm:iqs-lower-main} and~\ref{thm:iqs-local-lower-main}: generalized IQS lower bounds}\label{app:iqslowerbound}
\begin{proof}[Proof of Theorems~\ref{thm:iqs-lower-main} and~\ref{thm:iqs-local-lower-main}]
Fix an arbitrary nonanticipative full-permutation deterministic-service scheduling rule and start from $Q_0=0$. The lower-bound comparisons below use only that such a schedule can serve at most $n$ queues in total and at most one queue in any fixed row.
We first prove the all-ports lower bound. Fix $n\ge4$ and $\eps\in(0,1/16]$. Let $J$ be the $n\times n$ all-ones matrix, and define i.i.d. arrivals by
\[
A_t=
\begin{cases}
n^{-1/2}J,&\text{with probability }p:=\dfrac{1-\eps}{\sqrt n},\\[2mm]
0,&\text{with probability }1-p.
\end{cases}
\]
Then $\norm{A_t}_F\le\sqrt n$, and every row sum and every column sum of the mean arrival matrix is $1-\eps$. Thus Assumption~\ref{ass:gIQS} holds in the all-ports-loaded slack form.

Let $L_t:=\normone{Q_t}$. A permutation schedule can reduce total backlog by at most $n$ in one slot. Hence $L_t$ dominates the reflected random walk $R_t$ with $R_0=0$ and increments
\[
X_t=
\begin{cases}
n^{3/2}-n,&\text{with probability }p,\\[2mm]
-n,&\text{with probability }1-p.
\end{cases}
\]
After scaling by $n$, $Y_t:=R_t/n$ has increments
\[
Z_t=
\begin{cases}
\sqrt n-1,&\text{with probability }(1-\eps)/\sqrt n,\\[2mm]
-1,&\text{otherwise},
\end{cases}
\]
with mean $-\eps$. Since $n\ge4$, this reflected walk is the one-dimensional construction with deterministic unit service and upward net-input scale $A=\sqrt n-1\asymp\sqrt n$. Applying Theorem~\ref{thm:1d-lower-main} to this scaled walk and then multiplying by $n$ gives
\[
\E\Big[\max_{0\le t\le T}\normone{Q_t}\Big]
\gtrsim
\frac{n^{3/2}}{\eps}
\log\!\Big(1+\frac{\eps^2}{\sqrt n}T\Big)
\]
for $T\ge C\sqrt n/\eps^2$, after adjusting constants. This proves \eqref{inequ:IQSlower1}; finitely many smaller values of $n$ can be absorbed into the constants if desired.

We now prove the CRP lower bound. The point is to keep the row bottleneck in the relative interior of its face while preserving rare row-wide bursts. Let $E^{(1)}$ be the matrix whose first row is all ones and whose other entries are zero, and set $\bar E^{(1)}:=E^{(1)}/n$. Define a strictly positive reference point on the row-one face by
\[
\rho_{1j}=\frac1n,
\qquad
\rho_{ij}=\frac1{4n}\quad (i\ge2),
\qquad 1\le j\le n.
\]
Then $\rho\in\PiC_{\rm IQS}$, its first row sum is one, every other row sum is $1/4$, and every column sum is
\[
\frac1n+\frac{n-1}{4n}<1.
\]
Hence $\rho$ belongs to the relative interior of the row-one facet. Let $\eta:=\eps/4$ and
\[
\rcirc:=(1-\eta)\bar E^{(1)}+\eta\rho.
\]
Then $\rcirc$ also lies in the relative interior of the row-one facet, and no other row or column constraint is tight. Thus the CRP assumption holds with the bottleneck vector $v=E^{(1)}/\sqrt n$.

Define i.i.d. arrivals by
\[
A_t=
\begin{cases}
E^{(1)},&\text{with probability }p:=\dfrac{(1-\eps)(1-\eta)}{n},\\[2mm]
\rho,&\text{with probability }q:=(1-\eps)\eta,\\[2mm]
0,&\text{with probability }1-p-q.
\end{cases}
\]
The Frobenius norm is at most $\sqrt n$, since $\norm{E^{(1)}}_F=\sqrt n$ and $\norm{\rho}_F\le\sqrt n$. Moreover
\[
\E[A_t]=(1-\eps)\rcirc,
\]
so the generalized IQS slack condition holds and the nominal capacity point satisfies the CRP assumption above.

Let $R_t$ be the total backlog in the first row. A permutation schedule can serve at most one first-row queue per slot. Since the background arrival $\rho$ is nonnegative, $R_t$ dominates the reflected random walk with increments
\[
Z'_t=
\begin{cases}
n-1,&\text{with probability }p,\\[2mm]
-1,&\text{with probability }1-p.
\end{cases}
\]
The drift of this walk is
\[
pn-1
=
(1-\eps)(1-\eta)-1
=:-\eps',
\]
where $\eps\le\eps'\le5\eps/4$ for $\eps\in(0,1/16]$. In particular, $\eps'\le5/64<1/8$, so the slack range in Theorem~\ref{thm:1d-lower-main} applies. This is the one-dimensional construction with deterministic unit service and upward net-input scale $A=n-1\asymp n$. Applying Theorem~\ref{thm:1d-lower-main} with slack $\eps'$ gives
\[
\E\Big[\max_{0\le t\le T}\normone{Q_t}\Big]
\ge
\E\Big[\max_{0\le t\le T}R_t\Big]
\gtrsim
\frac{n}{\eps}
\log\!\Big(1+\frac{\eps^2}{n}T\Big)
\]
for $T\ge C'n/\eps^2$. This proves \eqref{inequ:IQSlower2}.
\end{proof}

\subsection[Bernoulli IQS upper bound]{Theorem~\ref{thm:bernoulli-iqs-upper}: Bernoulli IQS upper bound}\label{app:bernoulli-iqs}

\begin{proof}
We apply Theorem~\ref{thm:qb-unbounded-main} and the entrance-scale definition in Appendix~\ref{app:qb-entrance-scale}.
Let
\[
N_t:=\sum_{i,j}A_{ij}(t).
\]
The row and column assumptions imply $\lambda_t\in(1-\varepsilon)\Pi_{\rm IQS}$
a.s. for every $t$. Thus the uniform conditional interior-slack condition
holds. Service is deterministic in an input-queued switch, so
$\nu_S=b_S=0$.

We first verify the queue-Bernstein condition for arrivals. Fix
$w\in[0,1]^{n\times n}$ and write
\[
M_{w,t}:=\langle w,\lambda_t\rangle
=\sum_{i,j}\lambda_{ij}(t)w_{ij}.
\]
By conditional independence, for $|\theta|<1$,
\[
\mathbb E\left[
e^{\theta\langle w,A_t-\lambda_t\rangle}
\,\middle|\,\mathcal F_t
\right]
=
\prod_{i,j}
\exp\{-\theta \lambda_{ij}(t)w_{ij}\}
\left(1-\lambda_{ij}(t)+\lambda_{ij}(t)e^{\theta w_{ij}}\right).
\]
For $0\le w_{ij}\le 1$, the scalar Bernoulli Bernstein bound gives
\[
\log
\mathbb E\left[
e^{\theta\langle w,A_t-\lambda_t\rangle}
\,\middle|\,\mathcal F_t
\right]
\le
\frac{\theta^2}{2(1-|\theta|)}
\sum_{i,j}\lambda_{ij}(t)w_{ij}
=
\frac{\theta^2 M_{w,t}}{2(1-|\theta|)} .
\]
Therefore $A_t$ is queue-Bernstein with parameters
$(\nu_A,b_A)=(1,1)$, uniformly in $t$. The canonical tilt \(\theta_{\rm QB}\) in \eqref{eq:thetaQB-app} may hence be chosen as
\[
\theta=c\varepsilon
\]
for a sufficiently small universal constant $c>0$.

It remains to bound the two terms entering $x_{\rm QB}$. Every service
schedule is a permutation matrix, so
$\|S_t\|_F^2=n.$
Since the arrivals are Bernoulli,
$\|A_t\|_F^2=N_t.$
Thus,
\[
J_t^2:=\|A_t-S_t\|_F^2\le 2N_t+2n.
\]
We next bound the curvature term. Since $\rho_x(z)\le z^2/(2x)$,
for $x\ge 1$,
\[
\kappa_\theta(x)
\le
\sup_{t\ge 0}\operatorname*{ess\,sup}
\log\mathbb E\left[
\exp\left\{\frac{2\theta(N_t+n)}{x}\right\}
\,\middle|\,\mathcal F_t
\right].
\]
Conditional on $\mathcal F_t$, $N_t$ is a Poisson-binomial random variable.
Moreover,
\[
m_t:=\mathbb E[N_t\mid\mathcal F_t]
=
\sum_{i,j}\lambda_{ij}(t)
\le n(1-\varepsilon)\le n.
\]
Hence, for $a\ge 0$,
\[
\mathbb E[e^{aN_t}\mid\mathcal F_t]
=
\prod_{i,j}\left(1-\lambda_{ij}(t)+\lambda_{ij}(t)e^a\right)
\le
\exp\{(e^a-1)m_t\}
\le
\exp\{n(e^a-1)\}.
\]
Taking $a=2\theta/x$ gives, for $x\ge 1$ and $\theta\le 1$,
\[
\log\mathbb E\left[
\exp\left\{\frac{2\theta(N_t+n)}{x}\right\}
\,\middle|\,\mathcal F_t
\right]
\le
\frac{2\theta n}{x}
+
n\left(e^{2\theta/x}-1\right)
\le
C\frac{\theta n}{x}.
\]
Since $\alphapi=1/\sqrt n$ by Proposition~\ref{prop:iqs-alpha2}, choosing
\[
x\ge C\frac{n^{3/2}}{\eps}
\]
makes $\kappa_\theta(x)\le \theta\eps\alphapi/4$. Hence
\begin{equation}\label{eq:iqs-xcurv-qb}
x_{\rm curv}(\theta)
\le C\frac{n^{3/2}}{\eps}.
\end{equation}
For the upward-jump envelope, $J_t\le\sqrt{2N_t+2n}\le N_t+n+1$.
Since $K_{\rm up}(\theta)$ is defined with the harmless factor $8$ in
\eqref{eq:Kup-qb}, for $0<\theta\le1/8$,
\[
K_{\rm up}(\theta)
\le
\exp(8\theta(n+1))
\sup_{t\ge0}\operatorname*{ess\,sup}
\mathbb E[e^{8\theta N_t}\mid\mathcal F_t].
\]
Using the conditional Poisson-binomial MGF and $m_t\le n$,
\[
\mathbb E[e^{8\theta N_t}\mid\mathcal F_t]
\le
\exp\{n(e^{8\theta}-1)\}.
\]
Hence
\[
\log K_{\rm up}(\theta)\le C\theta n.
\]
With $\theta_{\rm QB}=c\eps$, \eqref{eq:iqs-xcurv-qb} and the preceding display give \(x_{\rm QB}\le Cn^{3/2}/\eps\). Theorem~\ref{thm:qb-unbounded-main} then gives
\[
\max_{0\le t\le T}\normtwo{Q_t}
\le
C\left[
\frac{n^{3/2}}{\eps}
+
\frac1\eps\log\frac{T+1}{\delta}
+
 n
\right]
\]
with probability at least $1-\delta$. Since $n\le n^{3/2}/\eps$ for $n\ge1$ and $\eps\in(0,1)$, this proves the Euclidean bound. Multiplying by $n$ using $\normone{Q}\le n\normtwo{Q}$ proves the total-backlog statement.
\end{proof}

\end{document}